\numberwithin{equation}{section} 
\theoremstyle{plain}
\newtheorem{theorem}{Theorem}[section]
\newtheorem{lemma}[theorem]{Lemma}
\newtheorem{corollary}[theorem]{Corollary}
\newtheorem{proposition}[theorem]{Proposition}
\theoremstyle{definition}
\newtheorem{definition}[theorem]{Definition}
\newtheorem{remark}{Remark}[section]
\newcommand{\tr}{\mathrm{Tr}}
\title{\bf Edge Universality for Inhomogeneous Random Matrices}
\author[1]{Dang-Zheng Liu
  \thanks{dzliu@ustc.edu.cn}}
\author[1,2]{Guangyi Zou
  \thanks{zouguangyi2001@gmail.com}}
\affil[1]{School of Mathematical Sciences, University of Science and Technology of China
}
\affil[2]{Department of Mathematics, University of California, Irvine
}
\date{\today}
\begin{document}

 \maketitle

\begin{abstract}
We consider symmetric and Hermitian random matrices whose entries are   independent and symmetric random variables with an arbitrary   variance  pattern. Under   a  novel Short-to-Long Mixing condition, which is sharp in the sense that it precludes a corrected shift at the spectral edge,  we establish GOE/GUE edge universality for such inhomogeneous random matrices. This condition effectively reduces the universality problem to verifying the mixing properties of  random walk governed by the variance profile matrix.

Our universality results are  applicable  to 
 a remarkably broad class of random matrix ensembles   that may be highly inhomogeneous, sparse or  
 far beyond the mean-field setting of classical random matrix theory. Notable examples include 
 \begin{enumerate}
 \item   Inhomogeneous Wishart-type random  matrices;
  \item   
  Random band matrices whose  entries are independent random variables   with   general  variance profile, particularly with an optimal bandwidth in dimensions $d \le 2$;

    \item   Sparse  random matrices with structured variance profiles;
    
    \item 
    Generalized Wigner matrices under significantly weaker sparsity constraints and heavy-tailed entry distributions;

    \item 
     Wegner orbital   models under sharp mixing assumptions; 
    
    \item Random 2-lifts of random $d$-regular graphs where    $d\geq 
    N^{2/3+\epsilon}$ for any $\epsilon>0$.
    
\end{enumerate}
\end{abstract}

\hfill{\textit{Guangyi dedicates this work to his years at USTC.}}

\tableofcontents

\section{Introduction}

\subsection{Inhomogeneous random matrices}

Random Matrix Theory (RMT), which lies at the intersection of matrix theory and probability theory---both ubiquitous in mathematics, science, and engineering---plays a central role in numerous areas of mathematics and physics; see the excellent survey \cite{akemann2011oxford}. When 
  regarded as non-commutative random variables, the eigenvalues of random matrices    form strongly correlated and highly complex systems. Remarkably, this correlation structure exhibits universality, depending only on the symmetry type of the matrix. This universality class, commonly referred to as \textit{random matrix statistics}, differs significantly from the Gaussian universality class of the classical Central Limit Theorem \cite{MR2334189}.
Such universality phenomena lie at the very core of RMT and extend far beyond the realm of random matrices themselves, including the KPZ universality class~\cite{MR2930377,PhysRevLett.56.889}  in stochastic surface growth and the BGS conjecture in quantum chaos~\cite{BGS1984}. 
Now, random matrices have proven to be invaluable tools, with critical applications in number theory, statistics, machine learning, population dynamics, wireless communications, and mathematical finance \cite{MR2760897,bai2010spectral,erdos2017dynamical,forrester2010log}.

The physical origins of  RMT can be traced back to Wigner’s seminal work in the 1950s~\cite{Wigner1955Characteristic}, in which he proposed large random Hamiltonians---including both mean-field and band matrices with random signs (Rademacher entries)---as models for complex quantum systems. 
Later,   Wigner refined two fundamental questions concerning random matrices~\cite{wigner1967random}:
\begin{quote}
\textit{The first question, then, is what are the admissible Hamiltonians, and what is the proper measure in the ensemble of these Hamiltonians. The second question is, of course, whether, given the ensemble of admissible Hamiltonians with a proper measure, the properties in which we are interested are common for the vast majority of them.}
\end{quote}
These questions have become central to modern random matrix theory. The first is addressed by defining specific matrix ensembles---such as real symmetric or Hermitian matrices with independent entries---while the second leads to the celebrated \textit{\bf Universality Conjecture}: local eigenvalue statistics of large random matrices are asymptotically independent of the entry distributions and coincide with those of the Gaussian orthogonal and unitary ensembles (GOE/GUE).

While universality is well understood for mean-field ensembles---such as Wigner matrices \cite{erdos2017dynamical}, with nearly identical entry variances, and invariant ensembles \cite{MR2514781}, with conjugation-invariant   law---real-world systems typically deviate from these idealized settings. 
Prominent examples---such as random band matrices
\cite{bourgade2018random},  adjacency matrices of sparse  networks \cite{MR3966517}, and  
  structured covariance matrices \cite{MR3966514,MR2334195}---exhibit strong inhomogeneity and locality, which present substantial analytical challenges.
  This naturally raises a  question:  {\bf why can mean-field ensembles still effectively predict the behavior of such non-mean-field systems? }

This line of inquiry, combined with Wigner's original question cited above, naturally leads to introduction of a very general class of matrix  models:   inhomogeneous random matrices (IRM).
Informally, an inhomogeneous Wigner-type matrix $H_N$ is an $N \times N$ symmetric/Hermitian  matrix with independent centered entries (up to symmetry) defined by
\[
H_{xy} = \sigma_{xy} W_{xy}, \quad x,y\in [N]:=\{1,2,\ldots,N\},
\]
where $W_{xy}$ are independent   random variables with finite  variance and the non-negative values $\sigma_{xy}$ form a variance profile matrix $(\sigma_{xy}^2)$. By appropriately choosing this profile, one recovers several classical models:
\begin{itemize}[leftmargin=*]
\item Wigner matrix: $\sigma_{ij}^2 = N^{-1}$;
\item Band matrix: $\sigma_{ij}^2 \propto W^{-d} f((i-j)/W)$ for some bandwidth $W$;
\item Sparse matrix (e.g., Erdős-Rényi-type): $\sigma_{ij}^2 \in \{0, p^{-1}\}$ with sparsity parameter $p \ll N$.
\end{itemize}

Recently, inhomogeneous random matrices have attracted considerable attention, with notable progress on spectral norms \cite{MR3878726,MR3837269} and non-asymptotic analysis \cite{bandeira2024matrix,MR4635836,brailovskaya2024extremal,MR4823211}. Despite these advances, the universality of local eigenvalue statistics---crucial  for both theoretical insights and practical applications---remains far less understood in inhomogeneous settings.

\medskip
\noindent

This paper addresses a fundamental   question concerning the local spectral statistics of these inhomogeneous  ensembles, with    particular emphasis   on the spectral edge due to its role in governing the matrix norm.
\begin{enumerate}[leftmargin=*,label=\textbf{Question }$\spadesuit$]
\item\label{rmt_question}
\textit{Under what conditions do inhomogeneous random matrices exhibit universal   eigenvalue statistics  identical to those of  mean-field  matrix ensembles? }
\end{enumerate}

\subsection{Model and main results}

The primary models we consider in addressing \ref{rmt_question} are additive deformations of symmetric and Hermitian inhomogeneous random matrices (IRM). Their independent entries are symmetrically distributed and $\theta$-sub-Gaussian, subject to the standard symmetry constraints.

\begin{definition}[Inhomogeneous (deformed) random matrices] \label{def:inhomo}
Let $W_N$ be an $N\times N$ real symmetric (or complex Hermitian) Wigner matrix, let $\Sigma_N$ be a deterministic variance-profile matrix and $A_N$ be a finite-rank perturbation.  Define
\begin{equation} \label{HN}
H_N=\Sigma_N \circ W_N,  
\end{equation}
where “\(\circ\)” is the Hadamard (entrywise) product, and the corresponding deformed inhomogeneous random  matrix
\begin{equation}\label{eq:deformed-iid}
    X_N=H_N+A_N.
\end{equation}
Here,  the following conditions are assumed: 

\begin{enumerate}[leftmargin=*,label=(A\arabic*)]
  \item \label{itm:A1} \textbf{(Wigner matrix)}  
    The entries $\{W_{ij}\}_{i\le j}$ are independent, symmetric, and satisfy
    \begin{equation}
      \begin{cases}
        \mathbb{E}[W_{ii}^2]=2,\;\mathbb{E}[W_{ij}^2]=1, & \text{real 
        case},\\
        \mathbb{E}[W_{ii}^2]=1,\;\mathbb{E}[|W_{ij}|^2]=1,\;\mathbb{E}[W_{ij}^2]=0, & \text{complex 
        case}.
      \end{cases}
    \end{equation}
    Moreover, there exists $\theta\ge 1$ such that for all $k\ge2$,
    \begin{equation}
      \mathbb{E}\bigl[|W_{ij}\bigr|^{2k}]
      \;\le\;
      \theta^{\,k-1}(2k-1)!!,
     \end{equation}
    where  $\theta:=\theta_N\rightarrow\infty$ may be  allowed   if needed. 
  \item \label{itm:A2} \textbf{(Variance profile)}  
    The profile matrix $\Sigma_N=(\sigma_{ij})$ has nonnegative entries such that 
    \(\sum_{j=1}^N\sigma_{ij}^2=1\) for each~$i$.  Equivalently, 
    $P_N:=(\sigma_{ij}^2)_{i,j=1}^N
    $ is a Markov transition matrix.

  \item \label{itm:A3} \textbf{(Finite-rank deformation)}  
    The perturbation $A_N$ is symmetric (resp.\ Hermitian) of rank $r$, with spectral decomposition
    \begin{equation}\label{SpetralA}
        A_N = Q\,\Lambda\,Q^*, 
      \qquad
      \Lambda = \mathrm{diag}(a_1,\dots,a_r,0,\dots,0),
    \end{equation}

    where $Q$ is an orthogonal (resp.\  unitary) matrix.  
\end{enumerate}
\end{definition}

To establish the universality of Tracy-Widom distributions \cite{TW1994,TW1996} and their BBP deformations   \cite{baik2005phase,bloemendal2013limits,bloemendal2016limits}, we introduce a key concept: the mixing property of a Markov chain $([N], P_N)$, which characterizes its convergence rate to the uniform distribution on $[N]$. We remark that $[N]$ can be replaced throughout by an arbitrary finite state space $S$ (with $N = |S|$), and the index $i \in [N]$ by any $i \in S$.

\begin{definition}[Short-to-Long Mixing] \label{mixingdef}
Let $t_N$ be a non-decreasing sequence of positive integers. A Markov chain $([N], P_N)$  is said to be \emph{Short-to-Long Mixing} at time \(t_N\) if its $n$-step transition probability $p_n(x,y)$ satisfies the following two conditions:

\begin{enumerate}[leftmargin=*,label=(B\arabic*)]
    \item \label{itm:B1} \textbf{(Short-time average mixing)}  
    There exists a constant $\gamma \ge 1$ and $N_0 > 0$ such that for all $N \ge N_0$ and all $x, y \in [N]$,
    \begin{equation}
        \frac{1}{t_N} \sum_{n=1}^{t_N} p_n(x, y) \le \frac{\gamma}{N}.
    \end{equation}

    \item \label{itm:B2} \textbf{(Long-time uniform mixing)} 
    There exists  $\delta \in (0, 0.1)$, such that for all $n \ge t_N$ and all $x, y \in [N]$,
    \begin{equation}
        \left| p_n(x,y) - \frac{1}{N} \right| \le \frac{\delta}{N}.
    \end{equation}
\end{enumerate}
The additional assumption that  $\delta = \delta_N \to 0$  may be imposed   when necessary.
\end{definition}

Our principal theorem  establishes the Tracy-Widom law and BBP transition for a broad class of deformed inhomogeneous random matrices.   
This can be interpreted as a Lindeberg-type central limit theorem for the Tracy-Widom law  within random matrix theory,   
even in the non-deformed case.    

\begin{theorem}[{\bf Edge Universality}]\label{thm:main_thm} 
For the matrix  $X_N=\Sigma_N \circ W_N +A_N$ as in Definition \ref{def:inhomo}, let $r$ be any fixed nonnegative integer and $0 \leq q \leq r$, assume that eigenvalues of $A_N$ satisfy   
        \begin{equation}
            a_j = 1 + \tau_j N^{-\frac{1}{3}}, \quad\tau_j \in \mathbb{R},\quad j=1,\ldots,q, 
        \end{equation} 
        and   
        \begin{equation}
           a_j \in (-1,1),  \quad j=q+1,\ldots,r.
        \end{equation} 
          If Markov chain   $([N],P_N)$ satisfies  the Short-to-Long Mixing conditions \ref{itm:B1} and \ref{itm:B2} with $\theta\, t_N\ll N^{\frac{1}{3}}$, then the first $k$ largest  eigenvalues of $X_N$ 
converge   in distribution  to those of  deformed GOE/GUE matrix. 
\end{theorem}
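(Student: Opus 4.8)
The plan is to establish edge universality through a comparison argument combining the moment method with a Green's function comparison / Lindeberg replacement scheme, using the Short-to-Long Mixing condition to control the combinatorial sums that arise. First I would reduce the general statement to the non-deformed case ($r=0$, $A_N=0$) plus a separate treatment of the finite-rank perturbation: once one has the universality of the top eigenvalue (and more generally the top edge point process) of $H_N=\Sigma_N\circ W_N$ against the GOE/GUE, the BBP transition for $X_N=H_N+A_N$ follows from a now-standard argument---representing the outlier eigenvalues via the secular equation $\det(1+\Lambda Q^*(H_N-z)^{-1}Q)=0$, using the isotropic local law for $H_N$ near the edge, and invoking the known limiting behaviour of the deformed Gaussian ensembles from \cite{bloemendal2013limits,bloemendal2016limits}. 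So the crux is the $A_N=0$ case.

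\medskip

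For $H_N$ itself, the natural route is the \emph{moment method for the edge}: one computes $\mathbb{E}\,\tr f(H_N)$ for test functions localized at the edge, equivalently high moments $\mathbb{E}\,\tr H_N^{2k}$ with $k\sim N^{2/3-\epsilon}$ scaling, and shows that all such moments agree asymptotically with those of the corresponding Gaussian ensemble, with the error going to zero. The key point is that $\mathbb{E}\,\tr H_N^{2k}=\sum_{\text{closed walks}} \mathbb{E}\prod \sigma_{x_i x_{i+1}}^2 \cdot (\text{moment factors})$, and after expanding over pair partitions of the $2k$ edges, the leading contribution comes from tree-like (noncrossing) pairings. For such a pairing the weight factorizes into a product over the $k$ matched edges, and summing over the free vertices produces exactly a quantity of the form $\sum_{y} p_{n_1}(x,y_1)\cdots$, i.e. products of transition probabilities of the Markov chain $([N],P_N)$. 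Condition \ref{itm:B1} (short-time average mixing, $\frac1{t_N}\sum_{n\le t_N}p_n(x,y)\le \gamma/N$) bounds exactly these ``short excursion'' sums by the right Gaussian-type count $\gamma^{k}N^{k+1}/N^{k}$ up to the stochasticity normalization in \ref{itm:A2}, while condition \ref{itm:B2} (long-time uniform mixing) guarantees that walks returning after more than $t_N$ steps see an essentially uniform profile, so the corresponding sub-sums coincide with the Wigner/Gaussian answer up to the multiplicative error $1+O(\delta)$; the requirement $\theta\, t_N\ll N^{1/3}$ is precisely what keeps the accumulated error (there are $O(k/t_N)$ ``long'' returns over a walk of length $2k\sim N^{2/3}$, each costing $\delta$, and the sub-Gaussian tail costs $\theta$) negligible at the $N^{2/3}$ moment scale. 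The non-leading, non-tree pairings must be shown to be lower order: each extra ``crossing'' or coincidence forces either an extra closed loop (gaining a factor $\sigma^2$-trace $\le$ something $\ll 1$ by \ref{itm:B2}) or a higher even moment (costing $\theta$ but saving a summation), and the bookkeeping has to confirm these are suppressed by a power of $k/N^{2/3}$ or of $\theta t_N/N^{1/3}$.

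\medskip

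I expect the analysis to proceed in the following order: (i) set up the isotropic and averaged local semicircle law for $H_N$ up to the edge --- this is where \ref{itm:B1}--\ref{itm:B2} first enter, via a self-consistent vector Dyson equation whose solution $m_i(z)$ is shown to be uniformly close to the semicircle $m_{sc}(z)$ because the variance profile $P_N$ acts like a near-uniform stochastic matrix on the relevant time scale; (ii) establish rigidity of eigenvalues near the edge and a bound on the extreme eigenvalue $\|H_N\|\le 2+o(N^{-2/3})$; (iii) run the moment computation / Green's function comparison to upgrade rigidity to the Tracy--Widom fluctuation, controlling all pair-partition sums through the mixing bounds; (iv) add back $A_N$ via the secular equation and the isotropic law to get the BBP limits, using $a_j-1=\tau_j N^{-1/3}$ in the critical window and $a_j\in(-1,1)$ subcritical. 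The main obstacle, I anticipate, is step (iii): precisely tracking the error in the high-moment expansion so that it is $o(1)$ at scale $k\sim N^{2/3}$ while only assuming the two mixing bounds (rather than a uniform spectral gap for $P_N$). In particular one must handle walks that mix slowly on intermediate scales --- those are exactly what \ref{itm:B1} is designed to average out, and the sharpness claim in the abstract (that the condition ``precludes a corrected shift at the spectral edge'') suggests that if \ref{itm:B1} failed, these intermediate-scale returns would shift the edge location by an amount comparable to $N^{-2/3}$, destroying universality; making that dichotomy quantitative, and showing that under \ref{itm:B1} the shift is genuinely negligible, is the technical heart of the proof.
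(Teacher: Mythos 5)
Your route is genuinely different from the paper's, and as sketched it has a serious gap at its foundation. The paper never establishes (and never uses) a local law, rigidity, or a Green's function comparison: its entire argument is a Chebyshev-polynomial moment method. In the Gaussian case it derives an exact diagrammatic (ribbon-graph) expansion for mixed moments of $\tr\,U_{n}(X/2)$ with $n\sim N^{1/3}$ (Proposition \ref{prop:T=Gamma}, Theorem \ref{Chebyshevmoment}), bounds and asymptotically evaluates the resulting diagram functions using \ref{itm:B1}--\ref{itm:B2} (Section \ref{sec:section3}), and deduces convergence of the edge point process via Chebyshev scaling limits and tail bounds (Section \ref{sec:section4}); the deformation $A_N$ is absorbed directly into the expansion through open (boundary) edges, not through a secular equation. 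The sub-Gaussian case is then handled by the Erd\H{o}s--Knowles non-backtracking path expansion and a term-by-term comparison with a $\theta$-GOE, with ``red-point'' and edge-coupling estimates controlling the non-Gaussian corrections under $\theta t_N\ll N^{1/3}$ (Section \ref{sec:section5}).

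The concrete problem with your plan is step (i): deriving an isotropic local semicircle law up to the edge from \ref{itm:B1}--\ref{itm:B2} alone is not a known consequence of the vector Dyson equation machinery, and in the regimes the theorem is designed for it is plausibly false to expect it from current technology. Conditions \ref{itm:B1}--\ref{itm:B2} admit, e.g., $d=1$ band matrices with bandwidth $W\gg L^{2/3}$, sparse matrices with $\theta$ as large as $N^{1/3-\epsilon}$, and entries with only about twelve finite moments; in none of these settings is an optimal-scale edge local law (let alone an isotropic one, which your secular-equation treatment of $A_N$ also needs) available, and the flatness-type variance bounds that edge stability of the Dyson equation requires are exactly what the inhomogeneous profile lacks. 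This is why the paper works with moments in the first place. Your step (iii) is closer in spirit to the actual proof, but the plain power-moment expansion at $k\sim N^{2/3}$ does not by itself produce the factorization into transition probabilities you describe: the Catalan-tree excursions must first be cancelled, which is precisely the role of the Chebyshev polynomials (and, in the sub-Gaussian case, of the non-backtracking powers $V_n$ together with the $\Phi_2,\Phi_3$ insertions), and the error analysis for coincidences under growing $\theta$ requires the coupling/red-point bookkeeping of Section \ref{sec:section5}. Without these devices the ``suppression of non-tree pairings'' you invoke cannot be made quantitative at the $N^{-2/3}$ scale, so the technical heart of the theorem is missing from the proposal.
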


A universal  result analogous to Theorem~\ref{thm:main_thm} also holds for inhomogeneous Wishart-type ensembles, as presented in Theorem~\ref{thm:main_thm_Wishart} in Section~\ref{sec:section6}.
As direct applications, we establish edge universality
for  a remarkably broad class of random matrix ensembles 
that may be highly inhomogeneous, sparse or far beyond the mean-field setting of classical random
matrix theory, notable examples including 
(see Section~\ref{sec:section7} for more details):  
\begin{enumerate}[leftmargin=*]
    \item Inhomogeneous Wishart-type random matrices (Theorem \ref{thm:main_thm_Wishart});
    \item Random band matrices with independent entries and general variance profiles, including the optimal bandwidth in dimensions $d \le 2$ (Theorem \ref{thm:RBM});
    \item Sparse random matrices with structured variance profiles (Theorem \ref{thm:sparse});
    \item Generalized Wigner matrices under significantly weaker sparsity and heavy-tailed entry assumptions (Theorem \ref{thm:GW});
    \item Wegner orbital models under sharp mixing conditions (Theorem \ref{thm:block});
    \item Random $2$-lifts of $d$-regular graphs for $d \ge N^{2/3+\epsilon}$, for any $\epsilon > 0$ (Theorem \ref{thm:2lift}).
\end{enumerate}

We conclude with several remarks closely related to the aforementioned universality result, aimed at clarifying the core assumptions. 
\begin{remark}
Conditions~\ref{itm:B1} and~\ref{itm:B2} are both essential for edge universality, and the violation of either leads to non-universal behavior. For instance, the block-diagonal GUE model satisfies Condition~\ref{itm:B1} but violates Condition~\ref{itm:B2}, resulting in a clear failure of universality at the edge. Conversely, in dimensions $d > 2$, random band matrices satisfy Condition~\ref{itm:B2} but violate Condition~\ref{itm:B1}, necessitating a non-universal correction at the spectral edge, as shown in \cite{liu2023edge}.

Furthermore, the condition $\theta t_N\ll N^{{1}/{3}}$ is also sharp. In the sparse Wigner case with $t_N = 1$, if $\theta \gg N^{1/3}$,
 a non-universal correction emerges, as demonstrated in \cite{lee2018}.

\end{remark}

\begin{remark}
The doubly stochastic variance profile (Assumption \ref{itm:A2}) and the finite-rank condition (Assumption \ref{itm:A3}) in Definition \ref{def:inhomo} together guarantee the semicircle law, which facilitates the asymptotic analysis of the largest eigenvalue. Although these constraints may be relaxed in more general settings, doing so requires   additional insight.
\end{remark}

\begin{remark}
\label{rmk:thouless}
Our universality  results are consistent with the Thouless criterion for localization, adapted to the band matrix setting by Fyodorov and Mirlin \cite{fyodorov1994statistical}; see also \cite{sodin2010spectral,spencer2011random,Wang1992}. 
Informally, the Thouless criterion \cite{thouless1977maximum} compares a dynamical (diffusive/mixing) time scale with the spectral resolution scale and thereby predicts whether local spectral statistics follow GOE/GUE laws or degenerate to Poisson statistics.

More concretely, let  $t_{\mathrm{mix}}:=t_N$
denote the mixing time of the variance-profile Markov chain  $([N],P_N)$ and let  $t_{\mathrm{Th}}$ be the \textbf{Thouless time}. 
One expects the following dichotomy:
\begin{equation}\label{eq:rm-vs-poisson}
\begin{cases}
t_{\mathrm{mix}}\ll t_{\mathrm{Th}}
&\implies
\begin{aligned}
&\text{Random matrix statistics (extended eigenvectors),}
\end{aligned}\\[6pt]
t_{\mathrm{mix}}\gg t_{\mathrm{Th}}
&\implies
\begin{aligned}
&\text{Poisson statistics (localized eigenvectors).}
\end{aligned}
\end{cases}
\end{equation}
In particular, in the bulk ($\alpha_0 \in (-2,2)$) the Thouless time scales as $t_{\mathrm{Th}} \sim N$, while at the spectral edge ($\alpha_0 \approx \pm 2$), it satisfies $t_{\mathrm{Th}} \sim N^{1/3}$. For example, in random band matrices with mixing time $t_{\mathrm{mix}} \sim (L/W)^2$, the Thouless criterion predicts a critical bandwidth $W \sim \sqrt{L}$ in the bulk for $d = 1$, and $W \sim L^{1 - d/6}$ at the edge for $d < 6$.

Our long-time mixing condition~\ref{itm:B2} in Theorem~\ref{thm:main_thm} ensures that the model lies firmly in the regime where $t_{\mathrm{mix}} \ll t_{\mathrm{Th}}$. For further physical intuition regarding Conditions~\ref{itm:B1} and~\ref{itm:B2}, we refer to \cite[Section~VI, (iii)]{guhr1998random}.
 
\end{remark}

\subsection{Previous results}
\paragraph{Universality of local eigenvalue   
  statistics}
Local universality for Wigner matrices is by now well-established, with foundational breakthroughs achieved over the past three decades \cite{erdHos2010bulk, erdHos2011universality, johansson2001universality, soshnikov1999universality, tao2010random, tao2011random}, followed by extensions to generalized Wigner matrices \cite{bourgade2016fixed,erdos2012generalized, MR3253704}. We refer to surveys such as \cite{MR2760897, bourgade2018random, erdos2017dynamical} for comprehensive overviews. Parallel advances have established universality for sparse random matrices: in the bulk spectrum \cite{MR3098073, MR3429490}, and at the spectral edge after appropriate corrections \cite{MR2964770, MR4089498, MR4288336, huang2022edge, lee2018}. For random $d$-regular graphs, bulk universality is known for $d \geq N^{\epsilon}$ \cite{MR3729611}, and edge universality for all $d \geq 3$ was recently proved in \cite{huang2024ramanujan}.
Later, universality and delocalization  on some specific non-mean-field random matrix models, typically random band matrices, were explored. 
Important contributions include \cite{bao2017delocalization, bourgade2020random, benaych2014largest, erdHos2011quantum, EK11Quantum, erdHos2013delocalization, he2019diffusion, liu2023edge, sodin2010spectral,Shcherbina2021, shcherbina2014second,
yang2021delocalization,
yang2022delocalization, MR4736267} (though this list is not exhaustive). 
Very    recent breakthrough results on  universality and delocalization  of  random band matrices  have    been established  in \cite{dubova2025delocalization,dubova2025delocalization2,erdHos2025zigzag,fan2025blockreductionmethodrandom,liu2023edge,MR4736267,yang2025delocalization2,yau2025delocalization}. Most focus on the specific block variance profile or  complex Gaussian entries,  with the exception of the
one-dimensional case \cite{erdHos2025zigzag}. We refer to \cite{erdHos2025zigzag,dubova2025delocalization}   and references therein for state-of-the-art results and surveys.

Edge universality serves as the central focus of this paper.  The study of spectral edge statistics originated in the seminal contributions of Tracy and Widom \cite{TW1994,TW1996}, Forrester \cite{forrester1993spectrum},  which introduced the now ubiquitous Tracy-Widom distributions and Airy point processes for GUE and GOE  ensembles. These limiting distributions were later shown to hold for general Wigner matrices under suitable moment conditions \cite{soshnikov1999universality, johansson2012universality, tao2010random}, culminating in a necessary and sufficient characterization established by Lee and Yin \cite{MR3161313}. A parallel line of inquiry concerns universality in deformed random matrices and the BBP phase transition \cite{baik2005phase, peche2006largest, bloemendal2013limits, bloemendal2016limits, lee2015edge}; we refer to \cite{peche2014deformed} for a comprehensive survey.

\paragraph{On inhomogeneous random matrices}

Inhomogeneous (or structured) random matrices arise naturally in a wide range of applications,  such as   statistical inference, numerical linear algebra, compressed sensing, and network analysis. These have attracted significant interest in studies of extreme singular values~\cite{MR4779853,MR4255145,MR2827856}, spectral outliers~\cite{geng2024outliers,han2024outliers,MR4234995}, operator norms~\cite{bandeira2016sharp,MR3878726,MR3837269}, and concentration or deviation inequalities~\cite{MR4635836,bandeira2024matrix,tropp2015introduction}. 
For comprehensive overviews, we refer to~\cite{brailovskaya2024extremal,tropp2015introduction,Vershynin2012,MR3837269}.
 
For Wigner-type inhomogeneous matrices, a growing body of research has addressed both global and extremal spectral properties. In particular, \cite{MR4552698,MR3416062} demonstrated that the empirical spectral distribution  converges to the semicircle law under the mild condition that $\max_{x,y} \sigma_{xy} \to 0$. Under sharp moment conditions, almost sure convergence of the largest eigenvalue was established in~\cite{altschuler2024spectral}. The behavior of outliers in deformed inhomogeneous ensembles has been investigated in~\cite{geng2024outliers,bandeira2024matrix}, and several non-asymptotic deviation bounds for the operator norm have been derived in~\cite{brailovskaya2024extremal}.

Although much of the literature has focused on non-asymptotic bounds, such results are often suboptimal and may fail to capture the correct fluctuation scale in general settings (see~\cite[Section~1.2.2]{brailovskaya2024extremal}). In contrast, asymptotic fluctuation theory---such as Tracy-Widom-type laws for the spectral edge---remains underdeveloped, with the exception of results on outlier fluctuations in~\cite{geng2024outliers}.

\paragraph{The method of moments}
In essence, while the method of moments is central to our approach, it requires the analysis of very high-order moments, which presents significant technical challenges. The method of moments in probability theory was first rigorously treated by P. L. Chebyshev in 1887 \cite{chebyshev1890deux}, who used it to provide the first fully rigorous proof of the central limit theorem \cite{diaconis11987application,billingsley}. In random matrix theory, Wigner pioneered this method in 1955 to establish the semicircle law \cite{Wigner1955Characteristic,wigner1958distribution}, and it was later employed by Soshnikov in 1999 to prove the Tracy-Widom law \cite{soshnikov1999universality}.
Subsequently, Feldheim and Sodin \cite{feldheim2010universality} developed a more sophisticated Chebyshev polynomial moment approach, extending universality results to both Wigner and sample covariance matrices.  While studying the longest increasing subsequence problem in random permutations, Okounkov \cite{okounkov2000random} rederived the Tracy-Widom law and the corresponding Laplace transform for the classical GUE ensemble.

A major breakthrough was achieved in Sodin’s seminal work \cite{sodin2010spectral}, which identified the critical bandwidth scaling 
 $W_c=L^{5/6}$
  for one-dimensional random band matrices with unimodular entries and a cutoff variance profile. This work rigorously characterized edge statistics in both extended and localized regimes. Subsequently, the Chebyshev polynomial framework was extended into a polynomial moment method by \cite{liu2023edge}, enabling the analysis of edge statistics for random band matrices in higher dimensions. Furthermore, the method of large moments has also been applied to investigate outliers of deformed  IRM ensembles in very  general settings~\cite{geng2024outliers}.

\subsection{Key contributions}

This paper highlights the powerful polynomial moment method, developing a general strategy to 
resolve previously inaccessible universality problems  for random matrices with arbitrary variance profiles and arbitrary entry distributions, requiring only the essential sole assumption of independence.  This approach  builds upon the combinatorial and diagrammatic expansion techniques developed in \cite{feldheim2010universality, sodin2010spectral, EK11Quantum, liu2023edge, geng2024outliers}, employing a Chebyshev polynomial moment framework.   

The core contributions of this paper are summarized below:

\begin{enumerate}
\item \textbf{Short-to-Long Mixing Condition.}
 We introduce a novel Short-to-Long Mixing Condition, which is sharp in the sense that it prevents any corrected shift at the spectral edge. This condition effectively reduces the universality problem to verifying mixing properties of a random walk governed by the variance profile matrix. Violations of any aspect of this condition lead to natural counterexamples.

  \item \textbf{New Chebyshev expansion for inhomogeneous Gaussian matrices.}  
We establish  a closed-form diagrammatic expansion for Chebyshev polynomial moments in inhomogeneous Gaussian ensembles by combining Wick’s formula with classical identity for Chebyshev polynomials (see Proposition~\ref{prop:T=Gamma} and Theorem~\ref{Chebyshevmoment} below).
Unlike previous methods that rely on combinatorial recursions and non-backtracking paths within the unimodular framework \cite{feldheim2010universality,sodin2010spectral,EK11Quantum,liu2023edge}, our approach reveals a new combinatorial identity unique to the Gaussian setting. 
This formulation provides a clearer and more efficient reduction from trace moments to diagrammatic (i.e., reduced ribbon graph) representations in the Gaussian case.
Whereas diagram functions in the unimodular framework are only introduced after an approximation step, as in \cite{sodin2010spectral,liu2023edge}, our Gaussian-specific expansion avoids such intermediate approximations.

\item \textbf{A unified framework for edge spectral statistics.}
We develop a comprehensive framework for establishing the Tracy-Widom law---and even the BBP phase transition---for the largest eigenvalue of deformed inhomogeneous  matrix ensembles. Our strategy consists of three main steps: first, we derive diagram expansion in the Gaussian case via the Wick formula; next, we analyze  upper bounds and asymptotics of the diagram functions; finally, we extend these to the sub-Gaussian setting through moment control and graph reduction estimates.
Further extensions and more powerful implications of this framework will be detailed in forthcoming work.

\item \textbf{Wide-ranging applicability   and  sharpness.}
Our universality results 
 substantially extend recent important   advances on random band matrices in the delocalized regime---previously limited to unimodular entries and highly structured variance profiles \cite{sodin2010spectral,liu2023edge}. Moreover, our framework encompasses a broad spectrum of random matrix ensembles, including highly inhomogeneous, sparse, and non-mean-field models, extending well beyond classical settings. As detailed in Section~\ref{sec:section7}, Theorem~\ref{thm:main_thm} in several cases establishes \emph{sharp} criteria for edge universality.

\end{enumerate}

\subsection{Structure   of this paper}

This paper is structured  as follows. Section~\ref{sec:section2} treats the Gaussian case, where Wick’s formula is used to express mixed moments of Chebyshev polynomials via diagram functions arising from reduced ribbon graphs. Section~\ref{sec:section3} derives upper bounds and asymptotic estimates for these  {diagram functions}. These estimates are combined in Section~\ref{sec:section4} to establish edge universality for inhomogeneous Gaussian matrices. The universality result is extended to sub-Gaussian matrix ensembles in Section~\ref{sec:section5}. Section~\ref{sec:section6} adapts the method to inhomogeneous sample-covariance   matrices. Section~\ref{sec:section7} then illustrates applications to several central models, including inhomogeneous sparse random matrices, generalized Wigner matrices, random band and block matrices, and weighted $d$-regular random matrices. Additional technical material---including Chebyshev expansions in the deformed setting and supporting probabilistic inequalities---is provided in Appendices~\ref{Chebyshevdeform}--\ref{sec:clt-upper}.

{\bf Notation.}  Throughout  this  paper,  constants such  as  $C, C_1, \ldots$  are independent of  $N$.   $f(n)=o(g(n))$ or $f(n) \ll g(n)$ means that   $ {f(n)}/{g(n)}\to 0$ as $n\rightarrow\infty$, while   $f(n)=O(g(n))$  means  that   $ {f(n)}/{g(n)}$    is    bounded. 
The symbols   $a\wedge b=\min\{a,b\}$,   $a\lor b=\max\{a,b\}$.

\section{Chebyshev moments  for Gaussian   matrices}\label{sec:section2}

\subsection{Ribbon graph expansion} \label{sec:ribbow_expansion}

To compute the mixed moments $\prod_{j=1}^{s} \tr (X^{m_j})$ of the deformed Gaussian IRM ensemble via the Wick formula, one must consider all possible pairwise gluings of edges of several regular polygons, see e.g. Figure \ref{fig:ribbon_graph_example} and \ref{fig:ribbon_graph_example_open}. These gluings correspond to combinatorial structures called \emph{ribbon graphs}---also known as maps---which encode how graphs are embedded into surfaces (possibly non-orientable). The resulting surfaces form a natural framework for organizing such moment expansions.

The correspondence between ribbon graphs on orientable surfaces and Hermitian random matrix ensembles (e.g., GUE) is classical and well-studied; see, for example, \cite{harer1986euler,kontsevich1992intersection,okounkov2000random,lando2004graphs}. For real ensembles (e.g., GOE), similar but subtler relations exist, involving ribbon graphs on possibly non-orientable surfaces, as formalized via the notion of \emph{Möbius graphs} in \cite{mulase2003duality}.

Informally, a ribbon graph is a graph embedded into a compact surface (possibly with boundary), such that the complement of the embedding is a disjoint union of open disks, called faces. It therefore defines a cell decomposition of the surface and uniquely determines its topological type (orientable or not). Recent developments include the study of \emph{ribbon graphs with boundary}, which arise naturally in open intersection theory; see, e.g., \cite{buryak2017matrix,tessler2023combinatorial}.

\begin{figure}[htbp]
    \centering
    \begin{minipage}[t]{0.48\textwidth}
        \centering
        \includegraphics[width=\textwidth]{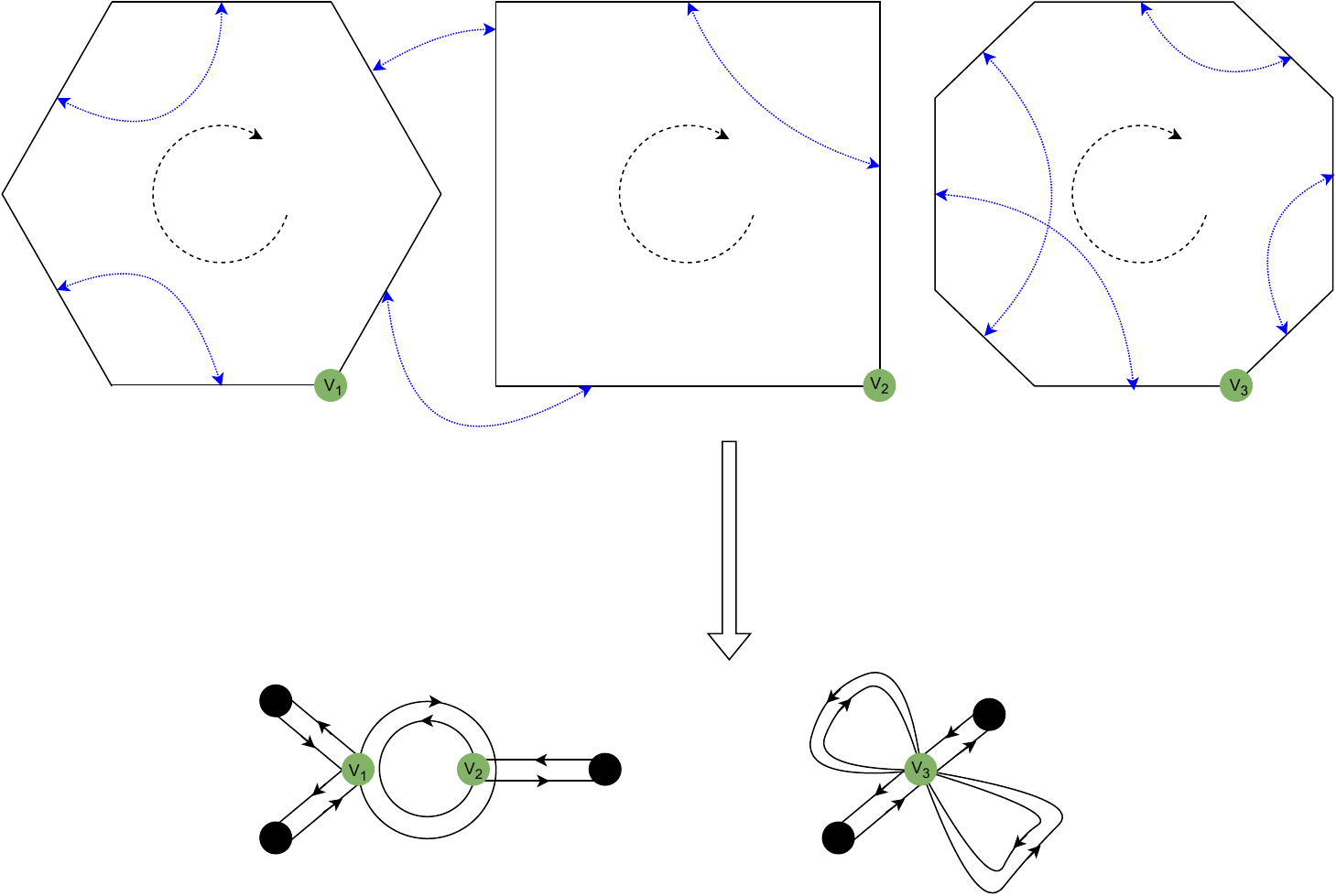}
        \caption{Example of one possible gluing in Hermitian case of $\mathbb{E}[\tr X^{6}\tr X^{4}\tr X^{8}]$. The green vertices are marked vertices.}
        \label{fig:ribbon_graph_example}
    \end{minipage}
    \hfill
    \begin{minipage}[t]{0.48\textwidth}
    \centering
    \includegraphics[width=0.8\textwidth]{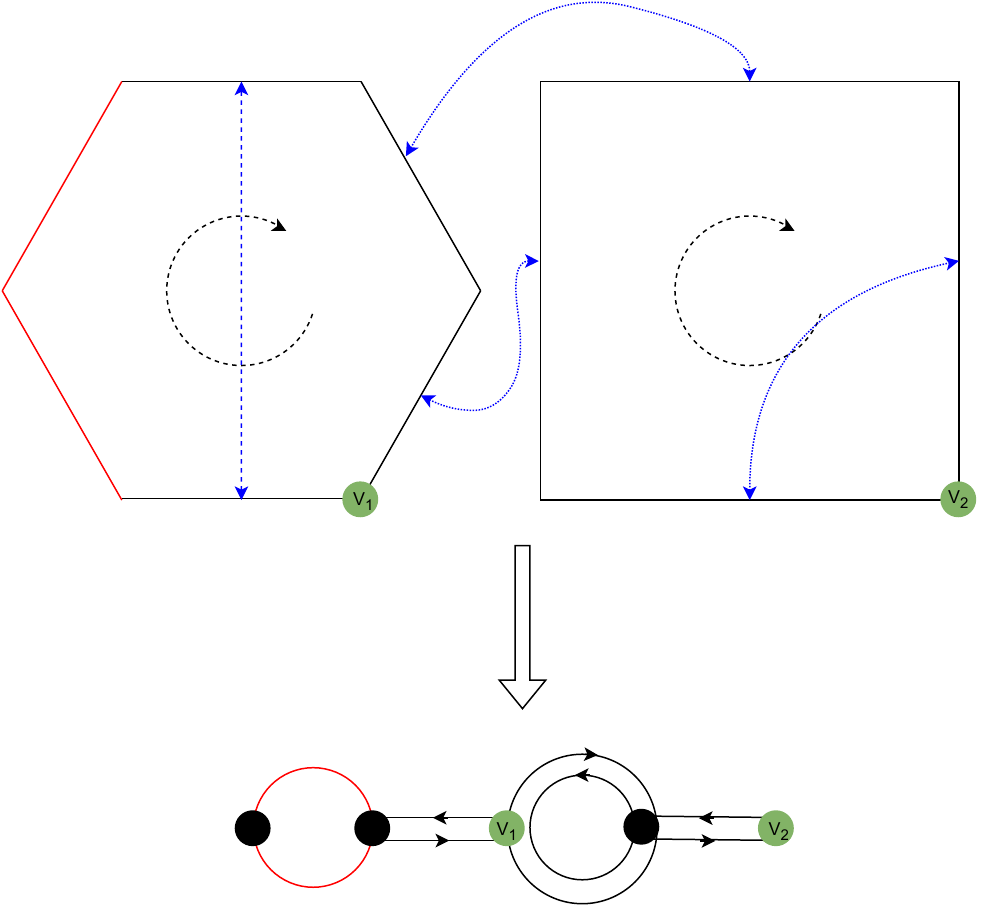}
    \caption{Example of one possible gluing of $\mathbb{E}[\tr X^{6}\tr X^{4}]$ with open edges.}
    \label{fig:ribbon_graph_example_open}
    \end{minipage}
\end{figure}
\begin{figure}[htbp]
    \centering
    \begin{minipage}{\textwidth}
        \centering
        \includegraphics[width=\textwidth]{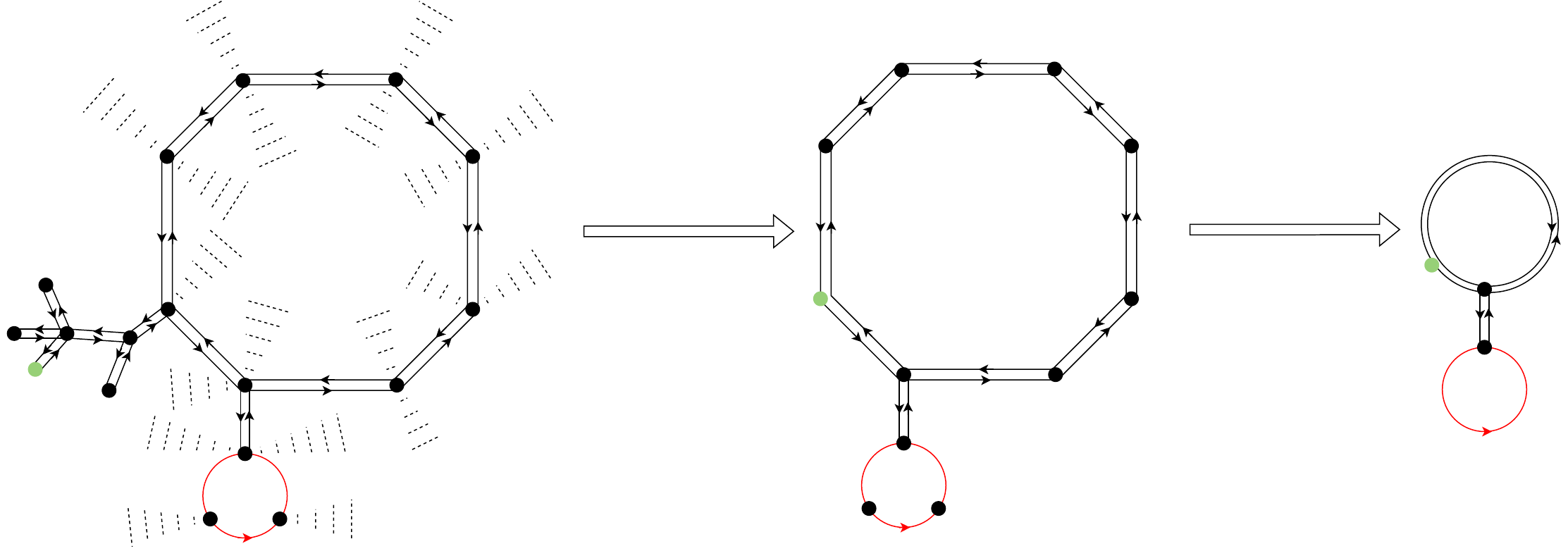}
        \caption{Okounkov contraction. The green vertex is the marked vertex, and dashed lines represent the Catalan trees. The red line indicates the open edge. Each interior point is traversed twice—hence two dashed lines—since a Catalan tree may grow on each traversal. In the first arrow (steps (i) and (ii) of Definition \ref{def:Okounkov_contraction}), all Catalan trees are collapsed and the marked vertex is moved to the tree root. In the second arrow (step (iii)), all degree-2 vertices are removed, yielding the reduced ribbon graph.}
        \label{fig:OK_contraction}
    \end{minipage}
\end{figure}
\begin{figure}[htbp]
    \centering
    \begin{minipage}{\textwidth}
        \centering
        \includegraphics[width=\textwidth]{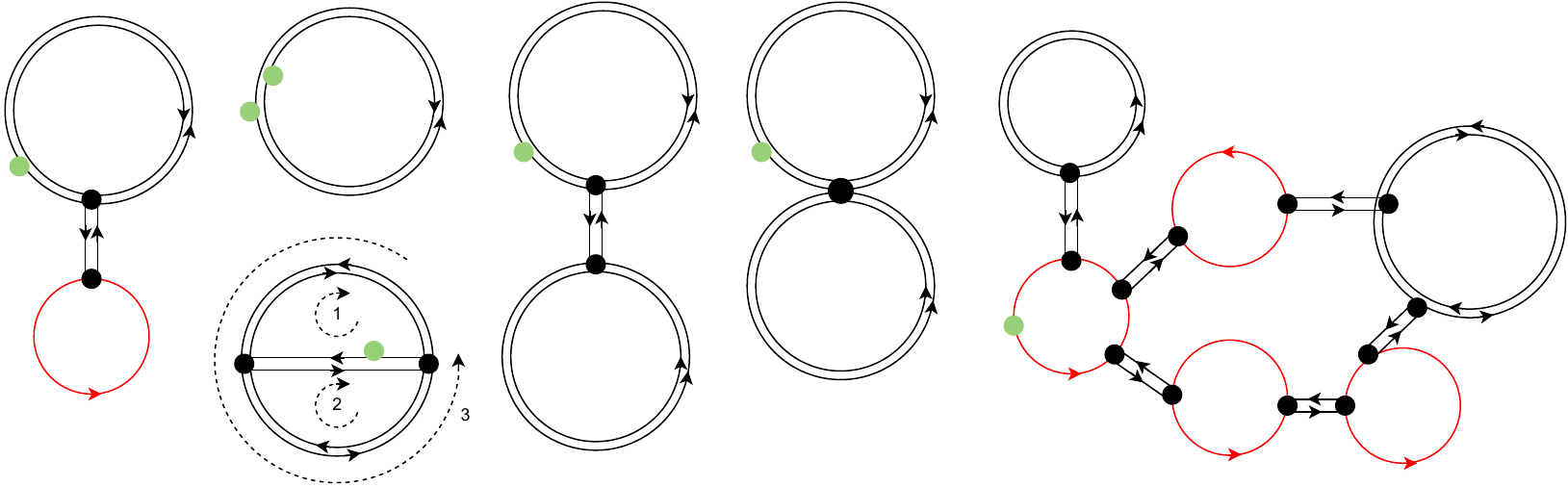}
        \caption{Examples of reduced ribbon graphs (diagrams). In the left lower graph, the circuit starts at the marked vertex and traverses with order $1,2,3$.}
        \label{fig:reduced_ribbon_example}
    \end{minipage}
\end{figure}
\begin{figure}[htbp]
    \centering
    \begin{minipage}{\textwidth}
        \centering
        \includegraphics[width=\textwidth]{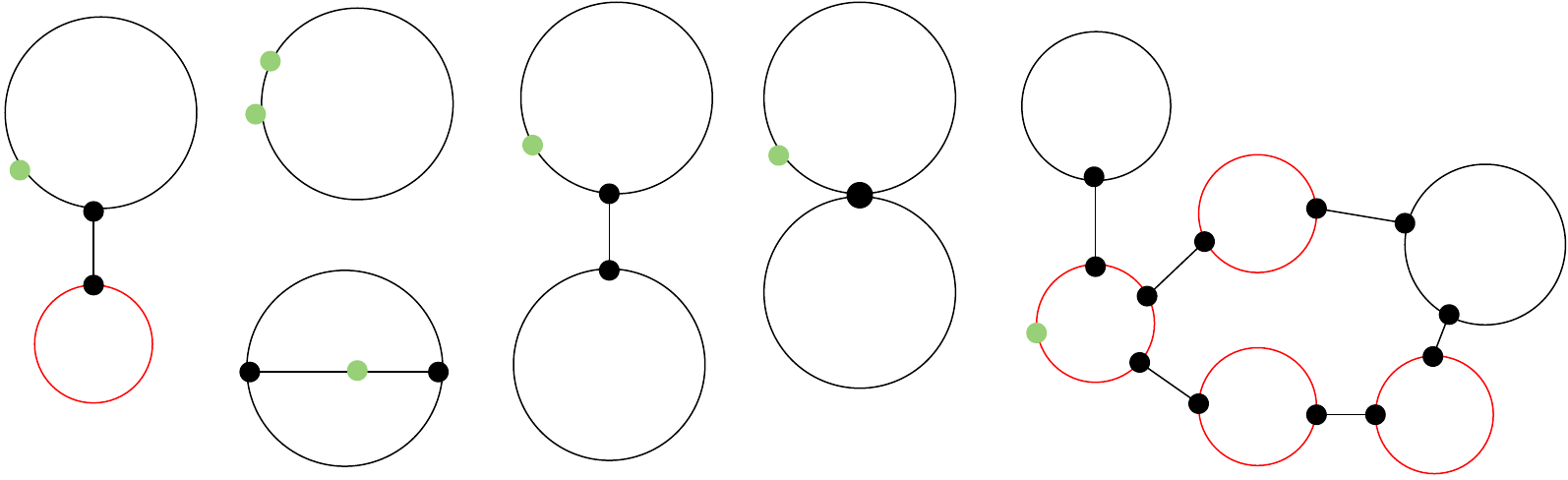}
        \caption{The underlying graphs corresponding to  diagrams in Figure~\ref{fig:reduced_ribbon_example}.}
        \label{fig:underlying_graph}
    \end{minipage}
\end{figure}

\paragraph{Canonical construction via polygon gluing.}
We describe a standard way to construct a punctured ribbon graph $\Upsilon$. Let $D_1,\dots,D_s$ be $s$ oriented polygons with $k_1,\dots,k_s$ sides, each with a marked vertex. Let $k = k_1 + \cdots + k_s$, and define the permutation
\[
\gamma = (1~2~\cdots~k_1)(k_1+1~\cdots~k_1+k_2)\cdots(k_1+\cdots+k_{s-1}+1~\cdots~k) \in S_k.
\]
Label the vertices   in cyclic order as $v_1,\dots,v_k$, and define directed edges as $\vec{e}_j = \overrightarrow{v_j v_{\gamma(j)}}$.

Let $J \subset [k]$ be a subset of edge labels and let $\pi$ be a pairing on $J$. For each $(s,t)\in \pi$,
\begin{itemize}
    \item In the complex case ($\beta=2$), glue $\vec{e}_s$ and $\vec{e}_t$ in opposite directions;
    \item In the real case ($\beta=1$), glue $\vec{e}_s$ and $\vec{e}_t$ either in the same or opposite direction (possibly yielding a non-orientable surface).
\end{itemize}
After gluing, we identify $v_s = v_{\gamma(t)}$ and $v_t = v_{\gamma(s)}$. Edges in $J^c$ are left unglued and correspond to the entries of external deformation $A_N$, which are called \textit{open edges}. This construction leads to   an $s$-cell ribbon graph with perimeter $(k_1,\dots,k_s)$ on a compact surface. For an illustration, see Figures \ref{fig:ribbon_graph_example} and \ref{fig:ribbon_graph_example_open}.

\paragraph{Ribbon graphs and diagrams.} Now we introduce some definitions that have appeared in the previous articles, for instance,\cite{geng2024outliers}.

\begin{definition}[\bf Ribbon graph]
Let $\Sigma$ be a compact surface (with or without boundary). A (punctured) ribbon graph $\Upsilon$ with $s$ faces and perimeters $(m_1,\dots,m_s)$ is a quadruple $(\mathcal{V}(\Upsilon), \mathcal{E}(\Upsilon), \iota, \phi)$, where
\begin{enumerate}
    \item[(1)] $(\mathcal{V}, \mathcal{E})$ is a graph;
    \item[(2)] $\iota: (\mathcal{V}, \mathcal{E}) \hookrightarrow \Sigma$ is an embedding;
    \item[(3)] $\phi: [s] \to \mathcal{V}$ assigns a marked vertex to each face;
\end{enumerate}
such that 
\begin{itemize}
    \item The boundary of the surface lies in the graph: $\partial \Sigma \subset \iota(\Upsilon)$;
    \item The complement $\Sigma \setminus \iota(\Upsilon) = \bigsqcup_{i=1}^{s} D_i$, where each $D_i$ is an oriented $m_i$-gon and $\phi(i) \in \partial D_i$.
\end{itemize}
\end{definition}

\begin{definition}[\bf Diagram/Reduced ribbon graph]\label{def:diagram}
A \emph{diagram} $\Gamma$ is a ribbon graph satisfying the following two conditions:
\begin{itemize}
    \item Every unmarked vertex has degree at least 3;
    \item Every marked vertex has degree at least 2.
\end{itemize}

For the diagram, we will frequently use   the following notation:
\begin{itemize}
    \item  {\bf Edge set}: let \(E(\Gamma)\) denote the set of all edges of \(\Gamma\), with \(E_{\mathrm{int}}(\Gamma)\) and \(E_{\mathrm{b}}(\Gamma)\) denoting the sets of interior and open (boundary) edges, respectively;
    \item {\bf Vertex   set}: let \(V(\Gamma)\), \(V_{\mathrm{int}}(\Gamma)\) and \(V_{\mathrm{b}}(\Gamma)\) denote the sets of all, interior, and boundary vertices, respectively;
    \item  {\bf Boundary  set}: for each \(j \in [s]\), let \(\partial D_j \subset E(\Gamma)\) denote the set of edges forming the boundary of face \(j\).
     \item  {\bf Diagram  set}: let $\mathscr{D}_{s;\beta}$ be the set of all $s$-cell diagrams (on orientable surfaces when $\beta=2$) and  \(\mathscr{D}_{s;\beta}^* \subset \mathscr{D}_{s;\beta}\) be the subset of connected diagrams.
\end{itemize}

\end{definition}

\paragraph{Okounkov contraction.}
We now describe a procedure to reduce ribbon graphs to diagrams by eliminating tree structures and divalent vertices. 

\begin{definition}[\bf Okounkov contraction]\label{def:Okounkov_contraction}
Let $\Upsilon$ be an $s$-cell ribbon graph with perimeters $(m_1,\dots,m_s)$. The \emph{Okounkov contraction} is a map
\[
\Phi: \left\{ \Upsilon \right\} \longrightarrow \left\{ \Gamma \in \mathscr{D}_{s;\beta} \right\}
\]
constructed via the following operations:
\begin{enumerate}[leftmargin=*,label=(\roman*)]
    \item If the marked vertex lies on a tree, move it to the root of that tree;
    \item Collapse all univalent (degree-1) vertices;
    \item Remove all divalent (degree-2) vertices, except for marked ones;
    \item Record a weight $w_e$ on each edge $e$, which is  equal to one plus the number of divalent vertices removed along it.
\end{enumerate}
\end{definition}

This contraction preserves topological features of the original ribbon graph. For trivial cases (e.g., isolated vertices), special treatment is required. See Figure \ref{fig:OK_contraction} for the contraction process and Figure \ref{fig:reduced_ribbon_example} for diagram examples.

\paragraph{Ribbon graph expansion for mixed moments.}
We now state the ribbon graph expansion for the deformed Gaussian IRM ensemble as established in \cite{geng2024outliers}.

\begin{proposition}[\cite{geng2024outliers}]\label{prop:ribbon}
Let $X$ be the deformed Gaussian random matrix from Definition \ref{def:inhomo}, where $W_N$ is GOE ($\beta=1$) or GUE ($\beta=2$). Define
\begin{equation}\label{bcat}
b_{m} = \begin{cases}
    \left(\frac{1}{2} - \frac{1}{k+1}\right)\binom{2k}{k}, & m=2k, \\
    0, & m=2k+1,
\end{cases}
\end{equation}
and for any diagram $\Gamma$,
\begin{equation}\label{frakF}
    \mathfrak{F}_{\Gamma}(\{l_i\}) =
\sum_{\eta: V(\Gamma) \to [N]} 
\sum_{\substack{w_e \ge 1 \\ \sum_{e \in \partial D_j} w_e = l_j \ \forall j}}
\prod_{(x,y) \in E_{\mathrm{int}}} p_{w_e}(\eta(x), \eta(y))
\prod_{(z,w) \in E_{\mathrm{b}}} (A^{w_e})_{\eta(z) \eta(w)},
\end{equation}
with the convention that for each trivial diagram (i.e., a single isolated vertex), the contribution is \(N \delta_{l_i, 0}\). Here, the outer sum is taken over all vertex labelings \(\eta : V(\Gamma) \to [N]\).
Then for any integers $m_1,\dots,m_s \ge 0$,
\[
\mathbb{E}\left[ \prod_{j=1}^{s} \tr(X^{m_j} + b_{m_j}\mathbf{I}) \right] =
{\sum_{\substack{0 \le l_j \le m_j \\ l_j \equiv m_j \!\!\!\!\pmod{2}}}}'
\prod_{j=1}^{s} \binom{m_j}{\frac{m_j - l_j}{2}}
\sum_{\Gamma \in \mathscr{D}_{s;\beta}} \mathfrak{F}_{\Gamma}(\{l_i\}),
\]
where in ${\sum}'$ the binomial coefficient is replaced by $\frac{1}{2}\binom{m_j}{m_j/2}$ if $l_j = 0$.
\end{proposition}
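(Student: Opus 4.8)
The plan is to compute the mixed trace moment by expanding each trace as a sum over closed walks on the complete graph on $[N]$, applying the Wick/Isserlis formula to the Gaussian entries of $H_N=\Sigma_N\circ W_N$, and organizing the resulting pairings as gluings of the $s$ polygons $D_1,\dots,D_s$ (with $m_j$ sides) into ribbon graphs. First I would write
\[
\tr(X^{m_j}) = \sum_{\substack{i_0^{(j)},\dots,i_{m_j}^{(j)}\in[N]\\ i_0^{(j)}=i_{m_j}^{(j)}}} \prod_{t=1}^{m_j} X_{i_{t-1}^{(j)} i_{t}^{(j)}},
\qquad X_{ab}=\Sigma_{ab}W_{ab}+A_{ab},
\]
and split each factor into its random part $H_{ab}=\sigma_{ab}W_{ab}$ and its deterministic part $A_{ab}$. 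Expanding the product, each term assigns to every edge of every polygon either an "$H$-label" or an "$A$-label"; the $A$-labelled edges become the open (boundary) edges, and taking the Gaussian expectation forces a pairing $\pi$ on the $H$-labelled edges by Wick's theorem, with each pair $(s,t)$ contributing $\mathbb{E}[H_{\vec e_s}H_{\vec e_t}]=\sigma_{\eta(s)\eta(\gamma(s))}^2\,\delta(\text{indices match})$ (with the orientation bookkeeping distinguishing $\beta=1$ from $\beta=2$). This is exactly the polygon-gluing construction described in the excerpt, producing a ribbon graph $\Upsilon$ (on an orientable surface when $\beta=2$) with perimeters $(m_1,\dots,m_s)$, and the constraint $\sum_j\sigma^2=1$ turns each $\sigma^2$-weight into a single step of the Markov chain $P_N$.

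Next I would reduce the sum over ribbon graphs $\Upsilon$ to a sum over diagrams $\Gamma$ via the Okounkov contraction $\Phi$. The key combinatorial input here is the classical Chebyshev/Catalan identity: a closed walk of length $m$ on a graph decomposes into a "backbone" walk (the reduced part, of some even length $l\le m$, $l\equiv m\bmod 2$) with Catalan-many planted trees (whose total edge-count is $(m-l)/2$ on each side since each tree edge is traversed twice) hanging off it, and the number of ways to insert $(m-l)/2$ tree-edges into a walk of length $l$ giving a closed walk of length $m$ is $\binom{m}{(m-l)/2}$ — with the $l=0$ correction $\tfrac12\binom{m}{m/2}$ accounting for the fully-collapsed walk, which is also the origin of the shift constant $b_{m_j}$ (since $b_{2k}=\tfrac12\binom{2k}{k}-\tfrac{1}{k+1}\binom{2k}{k}$ and the Catalan number $\tfrac{1}{k+1}\binom{2k}{k}$ is precisely the count of trivial trees, i.e. the mean $\mathbb{E}\,\tr(X^{2k})$ contribution from length-$2k$ walks collapsing to a point). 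Collapsing the univalent and non-marked divalent vertices (steps (i)–(iii) of Definition~\ref{def:Okounkov_contraction}) and recording the edge weights $w_e$ (step (iv)) converts the sum over gluings into: choose the diagram $\Gamma\in\mathscr D_{s;\beta}$, choose the split $l_j$ of each perimeter with combinatorial factor $\binom{m_j}{(m_j-l_j)/2}$, then sum over vertex labelings $\eta:V(\Gamma)\to[N]$ and over admissible edge weights $w_e\ge1$ with $\sum_{e\in\partial D_j}w_e=l_j$, weighting interior edges by the $w_e$-step transition $p_{w_e}(\eta(x),\eta(y))$ (the iterated $\sigma^2$-sums along a collapsed path of divalent vertices telescope into exactly $p_{w_e}$) and open edges by $(A^{w_e})_{\eta(z)\eta(w)}$ (the consecutive $A$-factors along a divalent chain multiply into a matrix power). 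This is precisely $\mathfrak{F}_\Gamma(\{l_i\})$, with the isolated-vertex convention $N\delta_{l_i,0}$ matching the degenerate walk.

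The main obstacle I anticipate is the careful bookkeeping in the contraction step: one must verify that (a) the Catalan-tree count really produces the stated binomial coefficients uniformly, including the $l_j=0$ boundary case and the interaction with the additive shift $b_{m_j}\mathbf I$ (whose role is exactly to cancel the leading "all-trees" term so that only genuine diagrams of the correct topological type survive); (b) the Okounkov contraction $\Phi$ is well-defined and surjective onto $\mathscr D_{s;\beta}$, and the fibers are correctly parametrized by the tree-insertion data and the edge-weight data — this is where orientability ($\beta=2$) versus possible non-orientability ($\beta=1$) must be tracked, since in the real case both same- and opposite-direction gluings are allowed and one must check the Wick weights still collapse to $p_{w_e}$; and (c) the telescoping of $\sigma^2$-sums along divalent chains genuinely yields $p_{w_e}$ — this uses the semigroup property $p_{a+b}=p_a p_b$ of the transition kernel and the stochasticity from~\ref{itm:A2}. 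Since Proposition~\ref{prop:ribbon} is quoted from \cite{geng2024outliers}, I would cite that reference for the full combinatorial verification and here only indicate the structure of the argument; for the present paper's purposes the statement can be taken as given.
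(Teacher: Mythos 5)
Your proposal is correct in outline and ends up in the same place as the paper: the paper's entire proof of Proposition~\ref{prop:ribbon} is a citation of Propositions 2.5 and 2.6 of \cite{geng2024outliers}, and your sketch (Wick pairings realized as polygon gluings, Okounkov contraction collapsing Catalan trees into the binomial factors $\binom{m_j}{(m_j-l_j)/2}$ with the $l_j=0$ halving and the shift $b_{m_j}$, divalent chains telescoping via Chapman--Kolmogorov into $p_{w_e}$ and powers $A^{w_e}$) is precisely the framework of Section~\ref{sec:ribbow_expansion} that underlies the cited result, with the remaining bookkeeping deferred to that reference exactly as the paper does.
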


\begin{proof}
This follows directly from Propositions 2.5 and 2.6 of \cite{geng2024outliers}.
\end{proof}

\subsection{Graphical expansion for Chebyshev moments}

The Chebyshev polynomials of first and second kind $T_{n}(x)$  and $U_{n}(x)$ with  $n\ge 0$ are defined as 
\begin{equation} \label{TUdef}
    T_n(\cos \theta)=\cos (n\theta), \quad U_n(\cos\theta)=\frac{\sin(n+1)\theta}{\sin\theta},\quad x=\cos\theta.
\end{equation}
There is a well-known relation between them  
\begin{equation} \label{TU}
    2T_n(x)=U_{n}(x)-U_{n-2}(x),\quad n\ge 1.
\end{equation}
Also, by rescaling    $\widetilde{T}_n(x):=2T_n({x}/{2})$, the $m$-th power can be expressed   via Chebyshev polynomials of first  kind as 
\begin{equation}\label{equ:x=T}
    x^m={\sum_{0\le j\le m}}'\binom{m}{\frac{m-j}{2}}2T_j(\frac{x}{2})={\sum_{0\le j\le m}}'\binom{m}{\frac{m-j}{2}}\widetilde{T}_j(x),
\end{equation}
where the prime  notation  $\sum'$ is over all $j\equiv m\pmod2$ and  the term of $j=0$  is to be halved if there is one; see \cite[eq (2.14)]{mason2002chebyshev}. In the reverse direction the exact formula for   Chebyshev polynomials   in terms of powers of $x$ is  
    \begin{equation}\label{equ:T=x}
        \widetilde{T}_n(x)=\sum_{m=0}^{[\frac{n}{2}]}(-1)^m
        \frac{n}{n-m}\binom{n-m}{m}
        x^{n-2m},
    \end{equation}
 see \cite[eq (2.14)-(2.18)]{mason2002chebyshev}.

\begin{proposition}\label{prop:T=Gamma} With the same assumption and notation as in Proposition \ref{prop:ribbon}, 
let \begin{equation}
    b'_{n}=\begin{cases}
        -\frac{1}{2},&n=0,\\
        1,&n=2,\\
        0,&\text{otherwise},
    \end{cases}
\end{equation} then  for any  non-negative integers $n_1,\dots ,n_s$ we have 
    \begin{equation}
        \mathbb{E}\big[\prod_{j=1}^s \tr (\widetilde{T}_{n_j}(X)+b'_{n_j}\mathbf{I})\big]=\sum_{\Gamma\in \mathscr{D}_{s;\beta}} \mathfrak{F}_{\Gamma}(\{n_j\}).
    \end{equation}

\end{proposition}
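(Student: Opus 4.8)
The plan is to pass from the power-moment expansion of Proposition~\ref{prop:ribbon} to the Chebyshev-moment expansion by means of the two classical inversion formulas \eqref{equ:x=T} and \eqref{equ:T=x}, treating the rescaled Chebyshev polynomials $\widetilde T_{n}$ as the natural ``edge-adapted'' basis. The key observation is that the diagrammatic side $\sum_{\Gamma}\mathfrak F_\Gamma(\{l_i\})$ in Proposition~\ref{prop:ribbon} already has $l_j$ acting as a formal variable recording the total edge-weight along the boundary of face~$j$, so the whole computation decouples over the $s$ faces: it suffices to understand, for a single index, how the linear functional $m\mapsto \mathbb{E}[\cdots\tr(X^m+b_m\mathbf I)\cdots]$ transforms under the change of basis $x^m=\sum_j'\binom{m}{(m-j)/2}\widetilde T_j(x)$.

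Concretely, I would first substitute $\widetilde T_{n_j}(X)=\sum_{m}(-1)^{(n_j-m)/2}\frac{n_j}{(n_j+m)/2}\binom{(n_j+m)/2}{(n_j-m)/2}X^{m}$ (the rewriting of \eqref{equ:T=x} in terms of $m=n-2k$) into the left-hand side, expand the product over $j$, and apply Proposition~\ref{prop:ribbon} to each resulting mixed power-moment $\mathbb{E}[\prod_j\tr(X^{m_j}+b_{m_j}\mathbf I)]$. One must be careful that $\tr(\widetilde T_{n_j}(X)+b'_{n_j}\mathbf I)$ is not literally a linear combination of $\tr(X^{m}+b_m\mathbf I)$ term by term, because the additive constants $b_m\mathbf I$ and $b'_{n_j}\mathbf I$ do not match up naively; the role of $b'_{n_j}$ (namely $-\tfrac12$ at $n=0$ and $+1$ at $n=2$) is precisely to absorb the discrepancy $\sum_m'(\text{coeff})\,b_m$ coming from the $b_{m_j}$ corrections, using $b_0=-\tfrac12\binom00=-\tfrac12$... more precisely using the known identity relating $b_m$ to the Chebyshev coefficients. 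This bookkeeping, done face by face, should collapse the triple sum (over $m_j$, over the truncation indices $l_j\le m_j$, and over the binomials) into a single sum over $\Gamma$ with coefficient exactly the Kronecker-type indicator that forces $l_j=n_j$.

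The mechanism for that collapse is the orthogonality/inversion pair \eqref{equ:x=T}--\eqref{equ:T=x}: composing the map $x^m\mapsto\{\binom{m}{(m-l)/2}\}_l$ with the map $\widetilde T_n\mapsto\{(-1)^{(n-m)/2}\tfrac{n}{n-k}\binom{n-k}{k}\}_m$ yields the identity matrix, i.e.
\[
{\sum_{k\ge 0}}'(-1)^{\frac{n-m}{2}}\frac{n}{n-k}\binom{n-k}{k}\binom{m}{\frac{m-l}{2}}=\delta_{n,l}
\]
(with the usual halving convention at $0$), where $m=n-2k$ and the sum is over the relevant residue class. I would isolate and prove this scalar identity first — it is a standard Chebyshev fact, provable either by generating functions ($\sum T_n(x)t^n$ vs.\ $\sum (2x)^m t^m$) or by the defining substitution $x=\cos\theta$ — and then feed it into the face-wise computation. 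Since $\mathfrak F_\Gamma(\{l_i\})$ depends on the $l_i$ only through the constraints $\sum_{e\in\partial D_j}w_e=l_j$, summing the coefficient against $\mathfrak F_\Gamma$ over all admissible $l$-tuples simply replaces every $l_j$ by $n_j$, which is exactly the claimed right-hand side $\sum_{\Gamma\in\mathscr D_{s;\beta}}\mathfrak F_\Gamma(\{n_j\})$.

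The main obstacle I anticipate is not the combinatorial identity itself but the careful handling of the boundary/degenerate terms: the halving conventions at $j=0$ (present on both sides of \eqref{equ:x=T} and in the $\sum'$ of Proposition~\ref{prop:ribbon}), the trivial-diagram contribution $N\delta_{l_i,0}$, and the interplay between the two families of additive corrections $b_{m}\mathbf I$ and $b'_{n}\mathbf I$. One has to check that, after the change of basis, the accumulated constant terms reorganize into exactly $\widetilde T_0=2$ being corrected by $b'_0=-\tfrac12$ and $\widetilde T_2$ by $b'_2=1$, and nothing else survives — this is where the specific numerical values of $b'_n$ are forced. I would verify the small cases $s=1$, $n_1\in\{0,1,2,3\}$ by hand to pin down signs and conventions, and then the general case follows by linearity and the multiplicativity of the expansion over faces. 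Everything else is routine substitution.
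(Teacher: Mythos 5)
Your proposal is correct and follows essentially the same route as the paper: expand $\widetilde T_{n_j}$ in powers via \eqref{equ:T=x}, apply Proposition~\ref{prop:ribbon}, collapse the sums with the orthogonality relation obtained by composing \eqref{equ:x=T} and \eqref{equ:T=x} (the paper's \eqref{equ:orth_relation}, which is your scalar identity), and identify $b'_n$ as the image of the $b_m$ corrections under the same transform, forcing $b'_0=-\tfrac12$, $b'_2=1$. The only cosmetic difference is that you would prove the inversion identity by generating functions or the $x=\cos\theta$ substitution, while the paper gets it directly by substituting one expansion into the other; otherwise the arguments coincide.
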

\begin{proof}
Substitution of       \eqref{equ:x=T} into \eqref{equ:T=x} yields  the   orthogonal relation for $n\ge 1$
    \begin{equation}\label{equ:orth_relation}
        \sum_{m=0}^{[\frac{n}{{2}}]}(-1)^m   \frac{n}{n-m}\binom{n-m}{m}
        \binom{n-2m}{\frac{n-2m-j}{2}}=
        \delta_{n,j},
    \end{equation}
    where the  summation   runs   over all $n-2m-j \equiv 0\pmod2$ and $n-2m-j \geq 0$.  
   With $b_m$  defined in \eqref{bcat}, introduce a new sequence 
    \begin{equation}
        b'_n=\sum_{m=0}^{[\frac{n}{2}]}(-1)^m   \frac{n}{n-m}\binom{n-m}{m}
        b_{n-2m},
    \end{equation}
    for all  $n_j\ge 1$,  first by  \eqref{equ:T=x} and then by Proposition \ref{prop:ribbon}  we see   
    \begin{align}
        &\mathbb{E}\Big[\prod_{j=1}^s \tr (\widetilde{T}_{n_j}(X)+b'_{n_j}\mathbf{I})\Big]\notag\\
        &=\sum_{m_j\le n_j,j=1,\ldots s}\prod_{j=1}^{s}(-1)^{m_j} \frac{n_j}{n_j-m_j}\binom{n_j-m_j}{m_j}
        \mathbb{E}\big[\prod_{j=1}^{s}(\tr X^{m_j}+b_{m_j} N)\big]
        \\
        &=\sum_{m_j\le n_j,j=1,\ldots s}\prod_{j=1}^{s}(-1)^{m_j} \frac{n_j}{n_j-m_j}\binom{n_j-m_j}{m_j}{\sum_{t_j\le n_j-2m_j}}'\binom{n_j-2m_j}{\frac{n_j-2m_j-t_j}{2}} \sum_{\Gamma\in \mathscr{D}_{s;\beta}}\mathfrak{F}_{\Gamma}(\{t_j\}) 
        \\
        &=\sum_{t_j}\prod_{j=1}^s\delta_{n_j,t_j}\sum_{\Gamma\in \mathscr{D}_{s;\beta}}\mathfrak{F}_{\Gamma}(\{t_j\})
      =\sum_{\Gamma\in \mathscr{D}_{s;\beta}}\mathfrak{F}_{\Gamma}(\{n_j\}),
    \end{align}
    where in the last two  equalities  the orthogonal relation \eqref{equ:orth_relation} for $n\ge 1$ has been used.
    
    Next, we just compute the sequence $b'_n$  for even $n$ since it equals to zero for odd $n$.  Note that 
     \begin{equation}
         b_{n-2m}= 
          \binom{n-2m}{\frac{n}{2}-m-1}-\frac{1}{2}\binom{n-2m}{\frac{n}{2}-m},
     \end{equation}
    use the orthogonality  \eqref{equ:orth_relation} with $j=0,2$ and we  derive 
$  
        b'_n= \delta_{n,2}-\frac{1}{2} \delta_{n,0}.
  $   

This thus completes the proof. \end{proof}

Using the functional relation  \eqref{TU} between Chebyshev polynomials of the first kind 
  and second kind, we derive an explicit formula for the mixed moments associated with $U_n(x)$. To facilitate this analysis, we introduce a new family of diagrammatic functions via \eqref{frakF} defined as follows:
\begin{equation}\label{equ:F=f}
    F_{\Gamma}(\{n_j\}) = \sum_{\substack{
        1 \le m_j \le n_j \\
        m_j \equiv n_j \pmod{2}
    }}
    \mathfrak{F}_{\Gamma}(\{m_j\}).
\end{equation}
More explicitly,  
\begin{equation}\label{equ:F_formula}
    F_{\Gamma}(\{n_j\}) = 
    \sum_{\eta: V(\Gamma) \to [N]}
    \sum_{\substack{
        t_j \ge 0,\; w_e \ge 1 \\
        2t_j + \sum\limits_{e \in \partial D_j} w_e = n_j
    }}
    \prod_{(x, y) \in E_{\mathrm{int}}}
        p_{w_e}(\eta(x), \eta(y)) 
    \prod_{(z, w) \in E_b}
        (A^{w_e})_{\eta(z)\eta(w)},
\end{equation}
where the second summation is taken over all  possible  integers \( t_j \ge 0 \) and \( w_e \ge 1 \)  that satisfy the constraint     \( 2t_j + \sum_{e \in \partial D_j} w_e = n_j \) for each $j$.

Now we state the exact graphical expansion of   mixed moments for Chebyshev polynomials of second kind   that  is of  central  importance    in this paper.
\begin{theorem}  \label{Chebyshevmoment} Let   $X$  be the deformed   Gaussian      matrix  in   Definition \ref{def:inhomo},  
then for any non-negative integers $n_j\ge 0$, 
   we have  a diagrammatic expansion       
    \begin{equation}
        \mathbb{E}\Big[\prod_{j=1}^s \tr \big({U}_{n_j}\big(\frac{X}{2}\big)\big)\Big]=\sum_{\Gamma\in \mathscr{D}_{s;\beta}}F_{\Gamma}(\{n_j\}).
    \end{equation}
\end{theorem}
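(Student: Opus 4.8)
The plan is to reduce the statement about Chebyshev polynomials of the second kind to Proposition~\ref{prop:T=Gamma}, which already provides the diagrammatic expansion for products of $\widetilde{T}_{n}(X)$. The bridge is the classical identity \eqref{TU}, namely $2T_n(x) = U_n(x) - U_{n-2}(x)$, or equivalently, after rescaling $x \mapsto x/2$, the telescoping relation $\widetilde{T}_n(x) = U_n(x/2) - U_{n-2}(x/2)$ for $n \ge 1$, together with $U_0 = 1$ and $U_{-1} = 0$. Inverting this telescoping sum gives $U_n(x/2) = \sum_{k \ge 0} \widetilde{T}_{n-2k}(x)$, where the sum runs over $k$ with $n - 2k \ge 0$ and the $k = n/2$ term (when $n$ is even) contributes $\widetilde{T}_0 = 2$, which must be halved — i.e.\ $U_n(x/2) = {\sum_{0 \le m \le n}}' \widetilde{T}_m(x)$ with $m \equiv n \pmod 2$ and the $m = 0$ term halved. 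This is exactly the index structure appearing in the definition \eqref{equ:F=f} of $F_\Gamma$ in terms of $\mathfrak{F}_\Gamma$.

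First I would record this inversion of \eqref{TU} carefully, paying attention to the halved $m=0$ term and to the edge cases $n = 0, 1$. Next I would expand the product $\prod_{j=1}^s \tr\, U_{n_j}(X/2)$ by substituting $U_{n_j}(X/2) = {\sum_{m_j}}' \widetilde{T}_{m_j}(X)$ for each $j$ and multiplying out, obtaining a sum over tuples $(m_1, \dots, m_s)$ with $m_j \equiv n_j \pmod 2$, $0 \le m_j \le n_j$, of $\mathbb{E}[\prod_j \tr\, \widetilde{T}_{m_j}(X)]$ with the appropriate halving. The subtlety here is that Proposition~\ref{prop:T=Gamma} is stated for $\mathbb{E}[\prod_j \tr(\widetilde{T}_{n_j}(X) + b'_{n_j}\mathbf{I})]$ with the correction terms $b'_{n_j}$, not for the bare traces; so I would need to account for these shifts. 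The key observation is that $b'_{m}$ is nonzero only for $m = 0$ (value $-1/2$) and $m = 2$ (value $1$), and — crucially — the halved $m=0$ term in the $U$-expansion is designed precisely so that, when $n$ is even, the coefficient $\tfrac12$ in front of $\widetilde{T}_0(X) = \mathbf{I}$ combines with the contributions to produce a clean cancellation; I would check that the $b'$ corrections assemble exactly into the shift needed to pass from bare traces to the shifted traces, or alternatively verify directly that the $m_j = 0$ (trivial-diagram, $N\delta$) and $m_j = 2$ contributions reorganize so that the net effect is the unshifted identity. In fact, the cleanest route is: since $U_n(x/2) = {\sum_m}' \widetilde{T}_m(x)$ has no additive constant correction of its own (the telescoping is exact), the combination $\sum_{\Gamma} F_\Gamma = \sum_\Gamma \sum_{m_j} \mathfrak{F}_\Gamma(\{m_j\})$ must already equal $\mathbb{E}[\prod_j \tr\, U_{n_j}(X/2)]$ once one checks that $\sum_m' b'_m = 0$ over the relevant $m \equiv n \pmod 2$ range with the halving — i.e.\ for $n$ even, $-\tfrac12 \cdot \tfrac12 + 1 \cdot [2 \le n] $ versus what is needed — so I would verify this bookkeeping identity explicitly.

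Once the additive corrections are shown to cancel, the proof concludes by swapping the order of summation: $\mathbb{E}[\prod_j \tr\, U_{n_j}(X/2)] = \sum_\Gamma \big( {\sum_{m_j}}' \mathfrak{F}_\Gamma(\{m_j\}) \big) = \sum_\Gamma F_\Gamma(\{n_j\})$, using \eqref{equ:F=f} directly — noting that in \eqref{equ:F=f} the constraint is $1 \le m_j \le n_j$, so one also checks the $m_j = 0$ boundary term is absorbed correctly into the trivial-diagram convention (the isolated-vertex diagram contributing $N\delta_{l_i,0}$ matches the $\tr\, U_0 = \tr\, \mathbf{I} = N$ term). The main obstacle, as signaled above, is the careful accounting of the constant/identity corrections: tracking exactly where the factor-of-$\tfrac12$ halvings, the $b'_n$ shifts inherited from Proposition~\ref{prop:T=Gamma}, and the trivial-diagram conventions all meet, and confirming they produce zero net additive discrepancy. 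This is a finite, deterministic bookkeeping check rather than a conceptual difficulty, but it is where a sign or a missing factor of two would most easily slip in, so I would do it with care, perhaps first verifying the $s = 1$ case of low degree ($n_1 = 0, 1, 2, 3, 4$) by hand as a sanity check.
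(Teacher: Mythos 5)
Your proposal is correct and follows essentially the same route as the paper: invert the relation $\widetilde{T}_n(x)=U_n(x/2)-U_{n-2}(x/2)$ into a telescoping sum over $m\equiv n \pmod 2$, apply Proposition~\ref{prop:T=Gamma} factor by factor, and recognize the resulting sum over the $m_j$ as $F_\Gamma$ via \eqref{equ:F=f}. The paper avoids the bookkeeping you flag by summing the shifted quantities $\widetilde{T}_m+b'_m$ over $1\le m\le n$ with $m\equiv n\pmod 2$, which telescopes exactly to $U_n(x/2)$ (the $b'_2=1$ term cancels the leftover $-U_0$), so no separate reconciliation of a halved $m=0$ term with the $b'$ shifts is required.
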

\begin{proof}
Using  the functional relation 
\begin{equation}
    \widetilde{T}_{n}(x)=U_{n}\big(\frac{x}{2}\big)-U_{n-2}\big(\frac{x}{2}\big), \quad n\ge 1,
\end{equation}
we see  
\begin{equation} \label{Usum}
  \sum_{\substack{
    1 \le m \le n \\
    m \equiv n \!\!\! \pmod{2}
  }}
  \bigl( \widetilde{T}_m(x) + b'_m \bigr)
  =
  U_n\!\left( \tfrac{x}{2} \right)
  + \bigl(
    -U_0\!\left( \tfrac{x}{2} \right) + b'_2
  \bigr)\mathbf{1}({n \equiv 0 \bmod 2}) 
  =
  U_n\!\left( \tfrac{x}{2} \right).
\end{equation}

Without loss of generality, we assume that $n_j\ge 1$ for all $j$. Then by \eqref{Usum} we have
\begin{align}
  \mathbb{E}\!\Bigl[\,
      \prod_{j=1}^{s}
      \tr \big(U_{n_j}\!\bigl(\tfrac{X}{2}\bigr)\big)
  \Bigr]
  &=
  \sum_{\substack{
          1 \le m_j \le n_j\\
          m_j \equiv n_j \pmod{2}
       }}
  \mathbb{E}\!\Bigl[\,
      \prod_{j=1}^{s}
      \tr\!\bigl(
        \tilde{T}_{n_j}(X) + b'_{n_j}\mathbf{I}
      \bigr)
  \Bigr]                                          \notag\\[4pt]
  &=
  \sum_{\substack{
          1 \le m_j \le n_j\\
          m_j \equiv n_j \pmod{2}
       }}
  \sum_{\Gamma \in \mathscr{D}_{s;\beta}}
      \mathfrak{F}_{\Gamma}\bigl(\{n_j\}_{j=1}^{s}\bigr)
      \notag\\[4pt]
  &=
  \sum_{\Gamma \in \mathscr{D}_{s;\beta}}
      F_{\Gamma}\bigl(\{n_j\}_{j=1}^{s}\bigr),
\end{align}
where in the last  two  equalities   Proposition~\ref{prop:T=Gamma}  and the definition  \eqref{equ:F=f} have been used respectively. 

This thus completes the proof.
\end{proof}

In order to convert full diagram sums into connected diagram sums, we introduce cluster decomposition of Chebyshev mixed moments.  

\begin{definition}[Cumulants] \label{Cumu}
The  joint cumulants of Chebyshev mixed moments can be defined recursively as follows:
\begin{equation}\label{equ:T}
\begin{gathered}
			\kappa_X(n_1)=\mathbb{E}\Big[\tr \big(U_{n_1}\big(\frac{X}{2}\big)\big)\Big],\\
			\kappa_X\big(n_1,n_2\big)=
            \mathbb{E}\Big[\tr \big(U_{n_1}\big(\frac{X}{2}\big)\big)\tr \big(U_{n_2}\big(\frac{X}{2}\big)\big)\Big]-\kappa_X\big(n_1\big)\kappa_X\big(n_2\big),\\
			\cdots	  \\
			\kappa_X\big(n_1,\cdots,n_s\big)=\mathbb{E}\Big[\tr\big( U_{n_1}\big(\frac{X}{2}\big)\big)\tr \big(U_{n_2}\big(\frac{X}{2}\big)\big)\cdots \tr \big(U_{n_s}(\frac{X}{2})\big)\Big]-\sum_{\Pi}\prod_{P\in \Pi}\kappa_X\big(\{n_j\}_{j\in P}\big),
\end{gathered}
\end{equation}
 where the summation  is over all nontrivial partitions $\Pi$  of $[s]$ (i.e., excluding the partition $[s]$ itself). 
 \end{definition}
 
\begin{lemma}\label{thm:u=F} For all integers $n_j\ge 1, j=1,\ldots,s$, we have    
\begin{equation}
    \kappa_X\big(n_1,\cdots,n_s\big)=\sum_{\Gamma\in \mathscr{D}_{s;\beta}^*} F_{\Gamma}(\{n_j\}_{j=1}^s).
\end{equation}
\end{lemma}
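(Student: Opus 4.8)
The plan is to deduce Lemma~\ref{thm:u=F} from Theorem~\ref{Chebyshevmoment} by a standard M\"obius-inversion (exponential-formula) argument, the only genuine input being the \emph{multiplicativity} of the diagram functions $F_\Gamma$ over connected components. First I would record the decomposition $\mathscr{D}_{s;\beta}=\bigsqcup_{\Pi}\prod_{P\in\Pi}\mathscr{D}^*_{|P|;\beta}$, reading off a diagram $\Gamma$ on $s$ faces as the disjoint union of its connected components, which induces a set partition $\Pi$ of the face-index set $[s]$ (faces in the same component lie in the same block). Conversely, any choice of a connected diagram on each block of a partition $\Pi$ assembles into a unique $\Gamma\in\mathscr{D}_{s;\beta}$. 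The key structural fact is that $\mathfrak F_\Gamma$, hence $F_\Gamma$, factorizes: if $\Gamma=\Gamma_1\sqcup\cdots\sqcup\Gamma_\ell$ with $\Gamma_i$ carrying faces indexed by the block $P_i$, then the vertex-labeling sum in \eqref{equ:F_formula} splits as a product over components (the vertex sets are disjoint and no edge joins distinct components, so the products of $p_{w_e}$ and of $(A^{w_e})$ factor), and the $t_j,w_e$ constraints are block-local; therefore $F_\Gamma(\{n_j\}_{j\in[s]})=\prod_{i=1}^\ell F_{\Gamma_i}(\{n_j\}_{j\in P_i})$.

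Granting this, I would define $M(\{n_j\}_{j\in S}):=\mathbb{E}\big[\prod_{j\in S}\tr(U_{n_j}(X/2))\big]$ for any finite index set $S$, and observe that Theorem~\ref{Chebyshevmoment} together with the component decomposition and the factorization gives, for every $S$,
\begin{equation}
M(\{n_j\}_{j\in S})=\sum_{\Gamma\in\mathscr{D}_{|S|;\beta}}F_\Gamma(\{n_j\}_{j\in S})=\sum_{\Pi\vdash S}\prod_{P\in\Pi}\Big(\sum_{\Gamma\in\mathscr{D}^*_{|P|;\beta}}F_\Gamma(\{n_j\}_{j\in P})\Big),
\end{equation}
where the sum is over all set partitions $\Pi$ of $S$. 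Comparing this with the recursive definition \eqref{equ:T} of the cumulants $\kappa_X$, which asserts precisely $M(\{n_j\}_{j\in S})=\sum_{\Pi\vdash S}\prod_{P\in\Pi}\kappa_X(\{n_j\}_{j\in P})$, both $\kappa_X(\{n_j\}_{j\in P})$ and $G(\{n_j\}_{j\in P}):=\sum_{\Gamma\in\mathscr{D}^*_{|P|;\beta}}F_\Gamma(\{n_j\}_{j\in P})$ satisfy the same partition recursion with the same left-hand sides. Since the exponential-formula / M\"obius-inversion relation over the partition lattice determines the ``connected part'' uniquely from the ``full part'', an induction on $s=|S|$ finishes the proof: the base case $s=1$ is immediate (the partition lattice of a point is trivial, so $\kappa_X(n_1)=M(n_1)=G(n_1)$), and for the inductive step the two expressions for $M$ agree term-by-term on all proper sub-partitions by the inductive hypothesis, forcing $\kappa_X(\{n_j\}_{j\in[s]})=G(\{n_j\}_{j\in[s]})$.

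I expect the main (and essentially only) obstacle to be a careful verification of the factorization $F_{\Gamma_1\sqcup\Gamma_2}=F_{\Gamma_1}F_{\Gamma_2}$, which requires checking that the combinatorial data in \eqref{equ:F_formula} — the vertex map $\eta$, the edge weights $w_e$, and the slack variables $t_j$ — genuinely decouple across connected components, including the correct bookkeeping of the trivial (isolated-vertex) diagrams and their $N\delta_{l_i,0}$ convention from Proposition~\ref{prop:ribbon}. Two minor points also deserve a line each: that the bijection between $\mathscr{D}_{s;\beta}$ and $\bigsqcup_\Pi\prod_P\mathscr{D}^*_{|P|;\beta}$ respects which face-indices land in which component (so that the arguments $\{n_j\}$ are distributed correctly), and that one may restrict to $n_j\ge1$ as in the statement, which removes degenerate issues with $U_0$. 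Everything else is the formal manipulation of the partition-lattice recursion, which is routine.
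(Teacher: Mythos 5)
Your proposal is correct and follows essentially the same route as the paper: the paper's (much terser) proof likewise rests on the factorization of $F_\Gamma$ over connected components, so that the disconnected-diagram contributions in Theorem~\ref{Chebyshevmoment} are exactly the terms subtracted in the cumulant recursion \eqref{equ:T}. Your partition-lattice/M\"obius-inversion formulation is just a more explicit write-up of this same cancellation argument.
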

\begin{proof}

The case \( s = 1 \) is trivial. For \( s > 1 \),  we partition the set of diagrams   \(\mathscr{D}_{s;\beta}\) according to their connected components.
If \(\Gamma \notin \mathscr{D}_{s;\beta}^*\) (i.e., \(\Gamma\) is disconnected), 
then its associated diagram function factorizes into a product of diagram functions corresponding to each connected component. By construction, these terms are precisely canceled by the subtraction in \eqref{equ:T}.

Therefore, only the connected diagrams \(\Gamma \in \mathscr{D}_{s;\beta}^*\) contribute to the final expression, which concludes the proof.
\end{proof}

\section{Analysis of diagram  functions}\label{sec:section3}
In this section,  we focus on analyzing key analytic properties of diagram functions $F_{\Gamma}(\{n_j\})$ in \eqref{equ:F_formula} for  any connected diagram $\Gamma\in \mathscr{D}_{s;\beta}^*$, including upper-bound estimates and precise asymptotic behavior.

\subsection{Upper bound estimates}

\begin{proposition}\label{prop:F_upper_bound}
With the same notation and assumptions \ref{itm:B1}-\ref{itm:B2} in Definition \ref{mixingdef}, with $\|A_N\|_{\mathrm{op}}=a$, the following upper bounds hold.
\begin{enumerate}[label=(\roman*)] 
    \item \label{item:upper_noboundary} 
        If $\Gamma\in \mathscr{D}_{s;\beta}^*$  is a diagram without any open edge,  then 
         \begin{equation}\label{equ:3.2}
        \left|F_{\Gamma}(\{n_j\})\right|\le\frac{n^{|V|-1}}{(|V|-1)!}\Big(\frac{(\gamma t_N)\lor n}{N}\Big)^{|E|-|V|+1} N,
    \end{equation}
        while particularly 
        for $|E|\le \frac{n}{t_N}$, 
    \begin{equation}\label{equ:upper_no_boundary}
         \left|F_{\Gamma}(\{n_j\})\right|\le \frac{(5 \gamma n)^{|E|}}{(|E|-1)!}N^{|V|-|E|}.
    \end{equation}

    \item \label{item:upper_boundary} If   $\Gamma\in\mathscr{D}_{s;\beta}^{*}$ is a diagram with at least one open edge,  then  \begin{equation}
         \left|F_{\Gamma}(\{n_j\})\right|\le (1+a^n)r^{|V_{\mathrm{b}}|}\Big(\frac{(\gamma t_N)\lor n}{N}\Big)^{|E_{\mathrm{int}}|-|V_{\mathrm{int}}|}\frac{n^{|V|}}{(|V|)!},
    \end{equation}
    while particularly 
    for $|E_{\mathrm{int}}|\le \frac{n}{t_N}$, 
    \begin{equation}\label{equ:upper_boundary}
         \left|F_{\Gamma}(\{n_j\})\right|\le 10^{|E|}\gamma^{|E_{\mathrm{int}}|}(1+a^n)r^{|V_{\mathrm{b}}|}\frac{n^{|E|}}{(|E|-1)!}N^{|V_{\mathrm{int}}|-|E_{\mathrm{int}}|}.
    \end{equation}
\end{enumerate}

\end{proposition}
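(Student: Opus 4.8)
## Proof Strategy for Proposition \ref{prop:F_upper_bound}

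The plan is to estimate the diagram function $F_\Gamma(\{n_j\})$ in \eqref{equ:F_formula} by separating the two layers of summation: the vertex labeling $\eta:V(\Gamma)\to[N]$ together with the edge weights $\{w_e\}$, and the internal "slack" variables $\{t_j\}$ encoding how much of each face perimeter $n_j$ is consumed by the $2t_j$ term.

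\textbf{Setup and the no-boundary case.} First I would fix the edge weights $\{w_e\}_{e\in E}$ and count: with these weights prescribed, the constraint $2t_j+\sum_{e\in\partial D_j}w_e=n_j$ pins down each $t_j$, so the inner sum over $\{t_j\}$ contributes at most a factor of $1$ once the $w_e$ are chosen — the real combinatorial cost is in choosing the $w_e$ themselves, subject to $\sum_{e\in\partial D_j}w_e\le n_j$ for every face $j$. Since each edge lies on at most two faces and $\sum_j n_j=n$ (write $n:=\sum_j n_j$), the number of admissible weight vectors with $\sum_e w_e=L$ is controlled by a standard stars-and-bars bound of the shape $\binom{n+|E|}{|E|}$ or, when summing $L$ itself, by something like $n^{|E|}/(|E|-1)!$ after absorbing combinatorial constants. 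With the weights fixed, the sum over $\eta$ of $\prod_{(x,y)\in E_{\mathrm{int}}}p_{w_e}(\eta(x),\eta(y))$ is a walk-counting quantity: I would root the connected graph $\Gamma$ at the marked vertex, traverse a spanning tree, and use that $\sum_{y}p_w(x,y)=1$ to collapse the $|V|-1$ tree edges trivially (each contributes a free sum giving factor $1$), leaving the $|E|-|V|+1$ excess (non-tree) edges. Each excess edge $e$ contributes a factor $p_{w_e}(\eta(x),\eta(y))$; here I invoke the Short-to-Long Mixing conditions \ref{itm:B1}–\ref{itm:B2}: for $w_e\ge t_N$ condition \ref{itm:B2} gives $p_{w_e}\le (1+\delta)/N\le 2/N$, while for $w_e<t_N$ I use the short-time average bound \ref{itm:B1}, $\frac1{t_N}\sum_{n\le t_N}p_n(x,y)\le\gamma/N$, after noting that the sum over the remaining free vertex label of such an edge can be bounded by $\max_x\sum_y(\cdots)$. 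Pulling this together, each excess edge yields a factor of order $((\gamma t_N)\vee n)/N$ — the $\gamma t_N$ from the worst-case short-time contribution, the $n$ as a trivial bound — times an overall factor $N$ from the root vertex label, and the $n^{|V|-1}/(|V|-1)!$ from the leftover weight/slack combinatorics on the tree. This is \eqref{equ:3.2}. The refinement \eqref{equ:upper_no_boundary} for $|E|\le n/t_N$ comes from redoing the count while tracking constants more carefully: when there are few edges relative to $n/t_N$, one can afford to bound every edge contribution by $\gamma/N$ rather than $(\gamma t_N)/N$ because the total weight budget $n$ forces most weights to be large only if $|E|$ is large; the explicit $(5\gamma n)^{|E|}/(|E|-1)!\cdot N^{|V|-|E|}$ follows by bounding the number of weight configurations by $(2n)^{|E|}/|E|!$-type factors and collecting the $\gamma/N$ per edge plus $N^{|V|}$ for all vertex labels.

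\textbf{The boundary case.} For $\Gamma$ with open edges I would split $E=E_{\mathrm{int}}\sqcup E_{\mathrm{b}}$ and $V=V_{\mathrm{int}}\sqcup V_{\mathrm{b}}$. The key structural fact is that boundary vertices $V_{\mathrm{b}}$ must be labeled by indices in the image of the rank-$r$ projector — more precisely, each factor $(A^{w_e})_{\eta(z)\eta(w)}$ with $A=Q\Lambda Q^*$ forces the relevant labels to interact only through the $r$-dimensional range of $\Lambda$; expanding $A^{w_e}=\sum_{l}a_l^{w_e}q_l q_l^*$ and bounding $|a_l|^{w_e}\le a^{w_e}\le a^n$ (with $a=\|A_N\|_{\mathrm{op}}$, or $1$ if $a\le 1$) gives the factor $(1+a^n)$ and, summing the at most $r$ choices of eigenindex per boundary vertex, the $r^{|V_{\mathrm{b}}|}$. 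The internal part is handled exactly as before, now with $|V_{\mathrm{int}}|$ interior vertices and $|E_{\mathrm{int}}|$ interior edges, giving $((\gamma t_N)\vee n/N)^{|E_{\mathrm{int}}|-|V_{\mathrm{int}}|}$; the $n^{|V|}/|V|!$ accounts for the combined weight/slack combinatorics over all vertices and faces. The refined bound \eqref{equ:upper_boundary} for $|E_{\mathrm{int}}|\le n/t_N$ is obtained, just as in the no-boundary case, by replacing the per-edge cost $(\gamma t_N)/N$ with $\gamma/N$ and bookkeeping the $10^{|E|}$ and $\gamma^{|E_{\mathrm{int}}|}$ constants from the stars-and-bars and mixing estimates.

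\textbf{Expected main obstacle.} The routine parts are the spanning-tree collapse and the stars-and-bars counting. The delicate step — and the one I would spend the most care on — is the interface between the short-time regime ($w_e<t_N$) and the long-time regime ($w_e\ge t_N$) when bounding the product over excess edges. Condition \ref{itm:B1} only controls the \emph{average} of $p_n$ over $n\le t_N$, not individual values, so to use it I must sum over the weight $w_e$ of each short-time edge \emph{before} bounding — i.e., the estimate has to be organized so that for each such edge one forms $\sum_{w_e=1}^{t_N}(\text{other factors})\,p_{w_e}(\cdot,\cdot)$ and then applies \ref{itm:B1}, which requires that the "other factors" either not depend on $w_e$ or be monotone/bounded uniformly in it. Making this compatible with the global constraints $2t_j+\sum_{e\in\partial D_j}w_e=n_j$ (which couple the weights across edges sharing a face) is the technical heart of the argument, and getting the clean form $(\gamma t_N)\vee n$ rather than something weaker requires choosing the order of summation — over $\eta$, over $\{w_e\}$ on tree edges, over $\{w_e\}$ on excess edges — judiciously so that each mixing bound is applied in a position where the averaging hypothesis \ref{itm:B1} is genuinely available.
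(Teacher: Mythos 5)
Your overall architecture for the crude bounds agrees with the paper's proof: you relax the face constraints to a single budget $\sum_e w_e\le n$, choose a spanning tree (in the open-edge case, effectively a forest whose components are anchored at boundary vertices, which is what makes the exponent $|E_{\mathrm{int}}|-|V_{\mathrm{int}}|$ come out), collapse tree edges by stochasticity, charge each excess edge a factor $((\gamma t_N)\lor n)/N$ by summing its weight and invoking \ref{itm:B1}--\ref{itm:B2}, count the tree-edge weights by a simplex bound $n^{|V|-1}/(|V|-1)!$, and handle the open cycles through the spectral decomposition $A=\sum_l a_l q_l q_l^*$ (with a Cauchy--Schwarz step you leave implicit) to produce $(1+a^n)r^{|V_{\mathrm{b}}|}$. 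This is essentially the paper's Steps 2--3.

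The genuine gap is in your derivation of the refined bounds \eqref{equ:upper_no_boundary} and \eqref{equ:upper_boundary}, which are the ones that carry the crucial $1/(|E|-1)!$ used later for summability over diagrams. You propose to ``collect $\gamma/N$ per edge'' and multiply by a stars-and-bars count $\sim(2n)^{|E|}/|E|!$ of weight configurations. But no pointwise bound $p_{w_e}(x,y)\le\gamma/N$ is available: condition \ref{itm:B1} controls only the average of $p_w$ over $w\le t_N$ (for a band profile, $p_1(x,y)\asymp W^{-d}\gg N^{-1}$), and condition \ref{itm:B2} is pointwise only for $w\ge t_N$. You flag this average-versus-pointwise tension yourself as the ``main obstacle,'' but the resolution you sketch does not address it, and the stated heuristic (``the budget forces most weights to be large only if $|E|$ is large'') is backwards. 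What is actually needed --- and what the paper does --- is a per-edge partition of the weight range into short ($w_e\le t_N$) and long ($w_e>t_N$) pieces, indexed by $S\subset E_{\mathrm{int}}$: short non-forest edges get $\gamma t_N/N$ each from \ref{itm:B1} after freeing their weight sums, long edges get $(1+\delta)/N$ pointwise from \ref{itm:B2} together with a \emph{joint} lattice-point count over all long weights, which is the sole source of the factorial $1/k!$; short tree edges contribute $t_N$ each and long tree edges join the simplex count. One then observes that when $|E_{\mathrm{int}}|\le n/t_N$ the sum over $k=|S^c|$ is dominated by $k=|E_{\mathrm{int}}|$, which converts $(\gamma t_N)^{|E_{\mathrm{int}}|}\cdot\frac{1}{|E_{\mathrm{int}}|!}(n/(\gamma t_N))^{|E_{\mathrm{int}}|}$ into the claimed $(5\gamma n)^{|E|}/(|E|-1)!$. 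Without this decomposition your route either loses the factorial (if you apply \ref{itm:B1} edge by edge, giving only $(\gamma t_N/N)$ per excess edge as in the crude bound) or rests on a false pointwise estimate, so the refined bounds do not follow from the argument as written.
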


\begin{proof}  The proof is in three steps.  We first consider  \ref{item:upper_boundary} with $|E_{\mathrm{int}}|\le  {n}/{t_N}$, then \ref{item:upper_boundary} with general $|E_{\mathrm{int}}|$, and thirdly turn to case  \ref{item:upper_noboundary}.

{\bf Step 1: Case  \ref{item:upper_boundary} with $|E_{\mathrm{int}}|\le  {n}/{t_N}$.}

Starting from the exact expression  of $F_{\Gamma}$ defined in \eqref{equ:F_formula}, we  take the absolute value and divide vertices into interior and open ones to derive  
\begin{align}\label{equ:3.5}
    \left|F_{\Gamma}(\{n_j\})\right|
    &\le 
    \sum_{\eta: V_{\mathrm{b}}(\Gamma) \to [N]}
    \sum_{\substack{
        2 t_j + \sum_{e \in\partial D_j} w_e = n_j
    }}
         \prod_{(z,w) \in E_{\mathrm{b}}}  \left|(A^{w_e})_{\eta(z)\eta(w)}
     \right|
    \sum_{\eta: V_{\mathrm{int}}(\Gamma) \to [N]}
    \prod_{(x,y) \in E_{\mathrm{int}}} p_{w_e}(\eta(x), \eta(y)) \notag \\
    &\le 
    \sum_{\eta: V_{\mathrm{b}}(\Gamma) \to [N]}
    \sum_{\substack{
        \sum_{e \in E} w_e \le n 
    }}
        \prod_{(z,w) \in E_{\mathrm{b}}} \left|(A^{w_e})_{\eta(z)\eta(w)}
    \right|
    \sum_{\eta: V_{\mathrm{int}}(\Gamma) \to [N]}
    \prod_{(x,y) \in E_{\mathrm{int}}} p_{w_e}(\eta(x), \eta(y)),
\end{align} where $n:=\sum_{j=1}^s n_j$,  we have added  all $s$ linear restrictions together and forgot the evenness of the sum restriction  in the last inequality.

Fix a labeling $\eta$ on the open vertex set $V_{\mathrm{b}}$ and fix an edge weight $w_e$ for all $e\in E_{\mathrm{b}}$. Denote by  $m=n-\sum_{e\in E_{\mathrm{b}}}w_e$  the  weight sum on all interior edges,  and also introduce edge subset $S=\{e\in E_{\mathrm{int}}|w_e\le t_N\}$ and     $S^c=E_{\mathrm{int}}\backslash S$. Obviously, $|S^c|\le  {m}/{t_N}$ and  we further get  an upper bound according to the set $S$
\begin{equation}
\begin{aligned}
    &\sum_{\substack{\sum\limits_{e\in E_{\mathrm{int}}} w_e \le m}}
    \;\sum_{\eta:V_{\mathrm{int}}(\Gamma) \to [N]}
    \;\prod_{(x,y) \in E_{\mathrm{int}}} p_{w_e}(\eta(x), \eta(y))\\
    &\le
    \;\sum_{\eta: V_{\mathrm{int}}(\Gamma) \to [N]}
      \sum_{S \subset E_{\mathrm{int}}}
    \;\sum_{\substack{
      w_e \le t_N, \;\; \forall e \in S \\
      w_e \ge t_N, \;\; \forall e \in S^c \\
      \sum\limits_{e \in S^c} w_e \le m
    }}
    \prod_{e \in E} p_{w_e}(\eta(x), \eta(y)).
\end{aligned}
\end{equation}
\begin{figure}
    \centering
    \includegraphics[width=\linewidth]{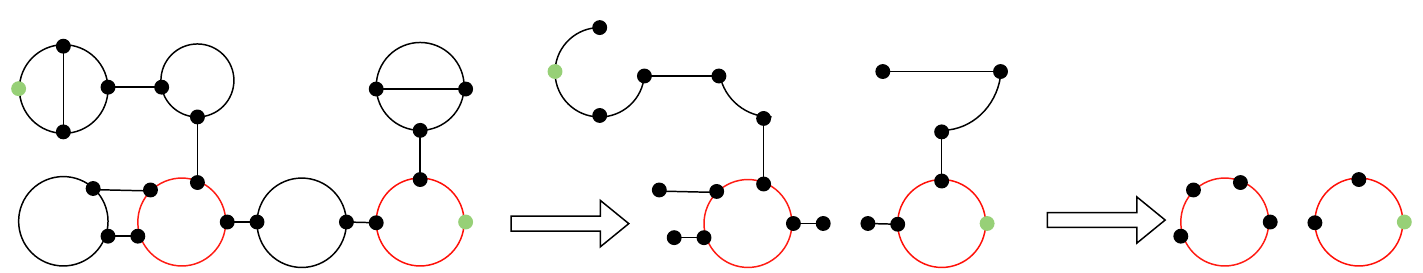}
    \caption{\textbf{Step 1:} Remove all edges in \(\mathcal{F}^c\), leaving a spanning forest \(\mathcal{F}\) where each tree has exactly one ``open’’ root vertex. \textbf{Step 2:} Sum over labels on each tree (collapsing them), so only the open edges and open vertices remain.
    }
    \label{fig:forest}
\end{figure}

Given the interior edge set $E_{\mathrm{int}}$ viewed as a graph, we select one spanning forest $\mathcal{F}$ such that each  connected component  of it contains exactly one open vertex, see Figure \ref{fig:forest} for example.
  In other word,  the subset of interior edges not in $\mathcal{F}$, denoted by $\mathcal{F}^c:=E_{\mathrm{int}}\backslash \mathcal{F}$,   is a minimal cut such that all open vertices are disconnected. 
    We first sum up all edges in $\mathcal{F}^c$. So for the edges in $S\cap \mathcal{F}^c$, by the assumption  \ref{itm:B1}   in Definition \ref{mixingdef}, we have
    \begin{equation}
        \sum_{w_e\le  t_N, e\in S\cap \mathcal{F}^c}\prod_{e\in S\cap \mathcal{F}^c}p_{w_e}(\eta(x),\eta(y))\le \big(\frac{\gamma t_N}{N}\big)^{|S\cap \mathcal{F}^c|},
    \end{equation}
    where $|S\cap \mathcal{F}|$   denotes the number of edges.
   While  for every  edge from  $S^c\cap \mathcal{F}^c$  we see from the assumption \ref{itm:B2} that  $p_{w_e}(\eta(x),\eta(y))\le (1+\delta)N^{-1}$,  which implies 
\begin{equation}\label{equ:3.8}
  \sum_{\substack{
    w_e \ge t_N\;, e \in S^c \cap \mathcal{F}^c \\
    \sum\limits_{e \in S^c \cap \mathcal{F}^c} w_e \le m
  }}
  \;\prod_{e \in S^c \cap \mathcal{F}^c}
    p_{w_e}(\eta(x), \eta(y))
  \;\le\;
  \frac{
    1
  }{
    \bigl( |S^c \cap \mathcal{F}^c| \bigr)!
  }\left( \dfrac{(1+\delta)m}{N} \right)^{|S^c \cap \mathcal{F}^c|}.
\end{equation}   
Here in the last inequality simple counting for integer linear eqnation  has been used.  
Hence,  use the above two upper bounds first and then sum over trees of the  forest $\mathcal{F}$,   we obtain 
\begin{align}
  &\sum_{S \subset E_{\mathrm{int}}}
    \;\sum_{\eta: V_{\mathrm{int}}(\Gamma) \to [N]}
    \;\sum_{\substack{
      w_e \le t_N, \;\;\forall e \in S \\
      w_e \ge t_N, \;\;\forall e \in S^c \\
      \sum\limits_{e \in S^c} w_e \le m
    }}
    \prod_{e \in E_{\mathrm{int}}}
      p_{w_e}(\eta(x), \eta(y))\notag
  \\
  &\le \sum_{S \subset E_{\mathrm{int}}}
    \left( \frac{(1+\delta)m}{N} \right)^{|S^c \cap \mathcal{F}^c|}
    \frac{1}{\bigl( |S^c \cap \mathcal{F}^c| \bigr)!}
    \left( \frac{\gamma t_N}{N} \right)^{|S \cap \mathcal{F}^c|}
    \sum_{\substack{
      \sum\limits_{e \in S^c \cap \mathcal{F}} w_e \le m \\
      w_e \le t_N,\;\;\forall e \in S \cap \mathcal{F}
    }}
    \sum_{\eta: V_{\mathrm{int}}(\Gamma) \to [N]}
    \prod_{e \in \mathcal{F}} p_{w_e}(\eta(x), \eta(y))\tag{by \eqref{equ:3.8}}
  \\
  &= \sum_{S \subset E_{\mathrm{int}}}
    \left( \frac{(1+\delta)m}{N} \right)^{|S^c \cap \mathcal{F}^c|}
    \frac{1}{\bigl( |S^c \cap \mathcal{F}^c| \bigr)!}
    \left( \frac{\gamma t_N}{N} \right)^{|S \cap \mathcal{F}^c|}
    \sum_{\substack{
      \sum\limits_{e \in S^c \cap \mathcal{F}} w_e \le m \\
      w_e \le t_N,\;\; e \in S \cap \mathcal{F}
    }} 1\tag{summing over trees}
  \\
  &\le \sum_{S \subset E_{\mathrm{int}}}
    \left( \frac{(1+\delta)m}{N} \right)^{|S^c \cap \mathcal{F}^c|}
    \frac{1}{\bigl( |S^c \cap \mathcal{F}^c| \bigr)!}
    \left( \frac{\gamma t_N}{N} \right)^{|S \cap \mathcal{F}^c|}
    \frac{m^{|S^c \cap \mathcal{F}|}}{(|S^c \cap \mathcal{F}|)!}
    t_N^{|S \cap \mathcal{F}|}.
\end{align}

Setting   $k=|S^c|\le \frac{m}{ t_N}\land |E_{\mathrm{int}}|$,   we further get from the above estimate that 
\begin{align}\label{equ:3.10}
  &\sum_{S \subset E_{\mathrm{int}}}
    \;\sum_{\eta: V_{\mathrm{int}}(\Gamma) \to [N]}
    \;\sum_{\substack{
      w_e \le t_N, \;\;\forall e \in S \\
      w_e \ge t_N, \;\;\forall e \in S^c \\
      \sum\limits_{e \in S^c} w_e \le m
    }}
    \prod_{e \in E}
      p_{w_e}(\eta(x), \eta(y))\\
  &\le \sum_{k \le \frac{m}{t_N} \wedge |E_{\mathrm{int}}|}
    \binom{|E_{\mathrm{int}}|}{k}
    \frac{(2+2\delta)^k}{k!}
    \cdot m^k (\gamma t_N)^{|E_{\mathrm{int}}| - k}
    \cdot N^{-|\mathcal{F}^c|}
    \tag{using \( k = |S^c| \) and \( \binom{k}{|\mathcal{F}|} \le 2^k \)} \\
  &\le (4+4\delta)^{|E_{\mathrm{int}}|} \cdot
    N^{|V_{\mathrm{int}}| - |E_{\mathrm{int}}|}
    \cdot (\gamma t_N)^{|E_{\mathrm{int}}|}
    \sum_{k \le \frac{m}{t_N} \wedge |E_{\mathrm{int}}|}
    \frac{1}{k!}
    \left( \frac{m}{\gamma t_N} \right)^k\tag{$\binom{|E_{\mathrm{int}}|}{k}\le 2^{|E_{\mathrm{int}}|}$, $|\mathcal{F}^c|=|E_{\mathrm{int}}| - |V_{\mathrm{int}}|$}\\
    &\le (5\gamma t_N)^{|E_{\mathrm{int}}|}
    \cdot N^{|V_{\mathrm{int}}| - |E_{\mathrm{int}}|}
    \sum_{k \le \frac{m}{t_N} \wedge |E_{\mathrm{int}}|}
    \frac{1}{k!}
    \left( \frac{m}{\gamma t_N} \right)^k, \notag
\end{align}
where  in the last inequality    $\delta\in(0,0.1)$ has been used.

This upper bound is irrelevant to the labeling on open vertices and weight on open edges. Since the open edges are also connected end to end into the union of several circles (as the boundary of Riemann surface are union of circles) for any given circle with $k$ open vertices, by the spectral decomposition $A_N=\sum_{t=1}^r a_t \mathbf{q}_{t}^{}\mathbf{q}_{t}^{*}$ with $Q=(\mathbf{q}_{1},\ldots,\mathbf{q}_{N})$ defined in  \eqref{SpetralA}, we have 
\begin{align}
    &\sum_{x_1,\ldots, x_k}
    \left|
        (A^{s_1})_{x_1x_2} (A^{s_2})_{x_2x_3} \cdots (A^{s_k})_{x_kx_1}
    \right|
    =
    \sum_{x_1,\ldots, x_k}
    \Big|
        \sum_{1 \le t_1, \ldots, t_k \le r}
        \prod_{i=1}^{k}
        a_{t_i}^{s_i} \,
        \big(\mathbf{q}_{t_i} \mathbf{q}_{t_i}^{*}\big)_{x_i x_{i+1}}
    \Big|\notag \\[4pt]
    &\le
    \sum_{1 \le t_1, \ldots, t_k \le r}
    \sum_{x_1,\ldots, x_k}
        \prod_{i=1}^{k} \left|
        a_{t_i}^{s_i} \,
        \big(\mathbf{q}_{t_i} \mathbf{q}_{t_i}^*\big)_{x_i x_{i+1}}
    \right| \le
    a^{s_1 + \cdots + s_k}
    \sum_{1 \le t_1, \ldots, t_k \le r}
    \sum_{x_1, \ldots, x_k}
    \Big|
        \prod_{i=1}^{k} \big(\mathbf{q}_{t_i} \mathbf{q}_{t_{i+1}}^*\big)_{x_i x_{i}}
    \Big| \notag
    \\[4pt]
    &\le
    a^{s_1 + \cdots + s_k}
    \sum_{1 \le t_1, \ldots, t_k \le r} 1  =
    a^{s_1 + \cdots + s_k} \, r^k,\label{equ:3.11}
\end{align}
where  the  Cauchy-Schwarz inequality has been used in the last inequality.
So by \eqref{equ:3.11}   we obtain 
\begin{align} 
    \sum_{\eta: V_{\mathrm{b}}(\Gamma) \to [N]}
    \sum_{\substack{
        \sum\limits_{e \in E_{\mathrm{b}}} w_e = n - m
    }}
    \Big|
        \prod_{(z, w) \in E_{\mathrm{b}}}
        (A^{w_e})_{\eta(z)\eta(w)}
    \Big|
    &\le
    a^{\sum_{e \in E_{\mathrm{b}}} w_e} \cdot r^{|V_{\mathrm{b}}|}
    \sum_{\substack{
        \sum\limits_{e \in E_{\mathrm{b}}} w_e = n - m
    }}
    1
     \notag\\[4pt]
    &\le
    a^{n - m} \cdot r^{|V_{\mathrm{b}}|} \cdot
    \frac{(n - m)^{|E_{\mathrm{b}}| - 1}}{(|E_{\mathrm{b}}| - 1)!} \label{bsum}.
\end{align}
Hence, combining \eqref{equ:3.10} and \eqref{bsum} yields 
    \begin{align}\label{equ:3.13}
         \left|F_{\Gamma}(\{n_j\})\right|\le (5\gamma t_N)^{|E_{\mathrm{int}}|}N^{|V_{\mathrm{int}}|-|E_{\mathrm{int}}|}r^{|V_{\mathrm{b}}|}\sum_{m=|E_{\mathrm{int}}|}^{n-|E_{\mathrm{b}}|}\sum_{k\le \frac{m}{ t_N}\land |E_{\mathrm{int}}|}\frac{1}{k!}\big(\frac{m}{\gamma t_N}\big)^k a^{n-m}\frac{(n-m)^{|E_{\mathrm{b}}|-1}}{(|E_{\mathrm{b}}|-1)!}.
    \end{align}
Note that 
\begin{align}
    &\sum_{m = |E_{\mathrm{int}}|}^{n - |E_{\mathrm{b}}|}
    \mathbf{1}_{\left\{ k \le \frac{m}{t_N} \wedge |E_{\mathrm{int}}| \right\}}
    \cdot m^k (n - m)^{|E_{\mathrm{b}}| - 1} \cdot a^{n - m} \notag \\
    &\le \sum_{m = |E_{\mathrm{int}}|}^{n - |E_{\mathrm{b}}|}
    \mathbf{1}_{\left\{ m \ge t_N k \right\}}
    \cdot m^k (n - m)^{|E_{\mathrm{b}}| - 1} \cdot a^{n - m} \notag \\
    &\le (1 + a^n) \cdot n^{|E_{\mathrm{b}}| - 1}
    \sum_{m = 1}^n m^k  \le (1 + a^n) \cdot \frac{(n + 1)^{|E_{\mathrm{b}}| + k}}{k + 1}\label{equ:3.14}, 
\end{align}
when   $|E_{\mathrm{int}}|\le  {n}/{ t_N}$,     combine \eqref{equ:3.13} and \eqref{equ:3.14} and we  obtain 
\begin{align}
     \left|F_{\Gamma}(\{n_j\})\right|
    &\le (5 \gamma t_N)^{|E_{\mathrm{int}}|} \,
    N^{|V_{\mathrm{int}}| - |E_{\mathrm{int}}|} \,
    r^{|V_{\mathrm{b}}|} \, (1 + a^n) \,
    \sum_{k \le \frac{m}{t_N} \wedge |E_{\mathrm{int}}|}
    \frac{1}{(k+1)!}
    \left( \frac{n+1}{\gamma t_N} \right)^k
    \frac{(n+1)^{|E_{\mathrm{b}}|}}{(|E_{\mathrm{b}}| - 1)!}\notag
     \\[6pt]
    &\le (5 \gamma t_N)^{|E_{\mathrm{int}}|} \,
    N^{|V_{\mathrm{int}}| - |E_{\mathrm{int}}|} \,
    r^{|V_{\mathrm{b}}|} \, (1 + a^n) \,
    \frac{1}{(|E_{\mathrm{int}}|)!}
    \left( \frac{n+1}{t_N} \right)^{|E_{\mathrm{int}}|}
    \frac{(n+1)^{|E_{\mathrm{b}}|}}{(|E_{\mathrm{b}}| - 1)!}
    \notag  \\[6pt]
    &\le 10^{|E|} \gamma^{|E_{\mathrm{int}}|} (1 + a^n) r^{|V_{\mathrm{b}}|} \,
    \frac{n^{|E|}}{(|E| - 1)!} \,
    N^{|V_{\mathrm{int}}| - |E_{\mathrm{int}}|}
    \label{equ:3.15},
\end{align} 
where in the second inequality we bound the sum by the term at \(k = |E_{\mathrm{int}}|\).

{\bf Step 2: Case  \ref{item:upper_boundary} with general  $|E_{\mathrm{int}}|$.}  

For general $|E_{\mathrm{int}}|$,  as in the derivation of   \eqref{equ:3.5} we can obtain  
\begin{align} \label{ub-eq1}
    \left|F_{\Gamma}(\{n_j\})\right|
    &\le 
    \sum_{\substack{w_e \\ \sum w_e \le n}} 
    \sum_{\eta: V(\Gamma) \to [N]} 
    \prod_{e \in E_{\mathrm{int}}} p_{w_e}(\eta(x), \eta(y))
    \prod_{(z,w) \in E_{\mathrm{b}}} \left| (A^{w_e})_{\eta(z) \eta(w)} \right|. 
    \end{align}
 The assumption  \ref{itm:B1}   in Definition \ref{mixingdef} shows that  \begin{equation}
 \sum\limits_{w_e=1}^{n}p_{w_e}(\eta(x),\eta(y))\le \frac{1}{N}(\gamma t_N)\lor n, \quad \forall e\in \mathcal{F}^c,\end{equation} 
  from which summing over $w_e$  from 1 to $n$ (relaxing the sum restriction) on the right-side hand of \eqref{ub-eq1} for every  edge $e\in \mathcal{F}^c$,   we obtain
 \begin{align}  
    \left|F_{\Gamma}(\{n_j\})\right|
    &\le 
    \sum_{\substack{\sum\limits_{e \in E_{\mathrm{b}} \cup \mathcal{F}} w_e \le n  }}
    \left(\frac{(\gamma t_N) \lor n}{N}\right)^{|\mathcal{F}^c|}
    \sum_{\eta: V(\Gamma) \to [N]} 
    \prod_{e \in \mathcal{F}} p_{w_e}(\eta(x), \eta(y)) 
    \prod_{(z,w) \in E_{\mathrm{b}}} \left|(A^{w_e})_{\eta(z) \eta(w)}\right| 
    \notag \\[6pt]
    &= 
    \sum_{\substack{\sum\limits_{e \in E_{\mathrm{b}} \cup \mathcal{F}} w_e \le n}}
    \left(\frac{(\gamma t_N) \lor n}{N}\right)^{|\mathcal{F}^c|}
    \sum_{\eta: V_{\mathrm{b}}(\Gamma) \to [N]}
    \prod_{(z,w) \in E_{\mathrm{b}}} \left|(A^{w_e})_{\eta(z) \eta(w)}\right| 
    \tag{summing over tree} \\[6pt]
    &\le 
    \sum_{\substack{\sum\limits_{e \in E_{\mathrm{b}} \cup \mathcal{F}} w_e \le n}}
    \left(\frac{(\gamma t_N) \lor n}{N}\right)^{|\mathcal{F}^c|} (1 + a^n) r^{|V_{\mathrm{b}}|}
    \tag{by \eqref{equ:3.11}} \\[6pt]
    &\le 
    (1 + a^n) r^{|V_{\mathrm{b}}|} \left(\frac{(\gamma t_N) \lor n}{N}\right)^{|\mathcal{F}^c|} 
    \frac{n^{|E_{\mathrm{b}} \cup \mathcal{F}|}}{(|E_{\mathrm{b}} \cup \mathcal{F}|)!} 
    \notag \\[6pt]
    &=
    (1 + a^n) r^{|V_{\mathrm{b}}|} \left(\frac{(\gamma t_N) \lor n}{N}\right)^{|E_{\mathrm{int}}| - |V_{\mathrm{int}}|} 
    \frac{n^{|E_{\mathrm{b}}| + |V_{\mathrm{int}}|}}{(|E_{\mathrm{b}}| + |V_{\mathrm{int}}|)!}.
    \label{equ:3.16}
 \end{align}
 Here  we sum over the evaluation of $\eta$ along the forest    in the second equality  and use  \eqref{equ:3.11} in the third inequality.

Since $|E_{\mathrm{b}}|=|V_{\mathrm{b}}|$, this indeed completes the general case. 

{\bf Step 3: Case  \ref{item:upper_noboundary}.} 

In this case, since $\Gamma$ contains no open edges, the proof follows similarly but more straightforwardly. The key distinction lies in taking a spanning tree instead of a spanning forest. By fixing the labeling of one vertex and viewing   the remaining vertices as $V_{\mathrm{int}}$ with corresponding edges $E_{\mathrm{int}}$, we follow the procedure in \eqref{equ:3.5}-\eqref{equ:3.10}. Note that $m = n$ here, and when $|E| \leq  {n}/{t_N}$, we obtain 
\begin{align}
    F_{\Gamma}(\{n_j\}) 
    &\le 
    N \cdot 5^{|E_{\mathrm{int}}|} \cdot 
    N^{|V_{\mathrm{int}}| - |E_{\mathrm{int}}|} \cdot 
    (\gamma t_N)^{|E_{\mathrm{int}}|} 
    \sum_{k \le \frac{n}{t_N} \wedge |E_{\mathrm{int}}|} 
    \frac{1}{k!} \left( \frac{n}{\gamma t_N} \right)^k 
    \tag{similar to \eqref{equ:3.10}} \\[6pt]
    &\le 
    N \cdot 5^{|E_{\mathrm{int}}|} \cdot 
    N^{|V_{\mathrm{int}}| - |E_{\mathrm{int}}|} \cdot 
    (\gamma t_N)^{|E_{\mathrm{int}}|} \cdot 
    \frac{1}{(|E_{\mathrm{int}}| - 1)!} 
    \left( \frac{n}{t_N} \right)^{|E_{\mathrm{int}}|}
    \tag{similar to \eqref{equ:3.15}} \\[6pt]
    &= 
    (5\gamma)^{|E_{\mathrm{int}}|} \cdot 
    \frac{n^{|E_{\mathrm{int}}|}}{(|E_{\mathrm{int}}| - 1)!} 
    \cdot N^{|V_{\mathrm{int}}| - |E_{\mathrm{int}}| + 1}
    \tag{in this case $|E|=|E_{\mathrm{int}}|, ~ |V|=|V_{\mathrm{int}}|-1$} \\[6pt]
    &= 
    \frac{(5 \gamma n)^{|E|}}{(|E| - 1)!} \cdot 
    N^{|V| - |E|}.
\end{align}

If $|E| \geq {n}/{t_N}$, we also fix the labeling of one vertex and designate the remaining vertices as $V_{\mathrm{int}}$ with corresponding edges $E_{\mathrm{int}}$. Noting that $|V_{\mathrm{b}}| = |E_{\mathrm{b}}| = 0$ and setting $a = 0$, we follow the procedure in \eqref{equ:3.16} to obtain
\begin{align}
    F_{\Gamma}(\{n_j\})&\le N (\frac{(\gamma t_N)\lor n}{N})^{|E_{\mathrm{int}}|-|V_{\mathrm{int}}|}\frac{n^{|E_{\mathrm{b}}|+|V_{\mathrm{int}}|}}{(|E_{\mathrm{b}}|+V_{\mathrm{int}})!}\notag\\
    &=\frac{n^{|V|-1}}{(|V|-1)!}(\frac{(\gamma t_N)\lor n}{N})^{|E|-|V|+1} N.
\end{align}

This completes case \ref{item:upper_noboundary},  and thus   the entire   proof.
\end{proof}

\subsection{Asymptotics for  diagram functions}
In order to  establish  approximations for   diagram functions,  let $\widetilde{F}_{\Gamma}(\{n_j\}_{j=1}^s)$ represent   diagram functions  corresponding to the special  case where  variance profile $\sigma_{ij}^2=1/N$ for all $1\le i,j\le N$.
\begin{theorem}\label{thm:F=tildeF}
Given  any fixed diagram $\Gamma$ with or without open edges, if all $n_j \gg t_N$ for $j=1,\ldots, s$,    and $\| A_N\|_{\mathrm{op}} \le 1+\tau n^{-1}$ with  $n=\sum_{j=1}^s n_j$,  for any fixed integer $s\geq 1$  and     constant   $\tau >0$,  then  
    \begin{equation}\label{equ:F_asy_equal}
     F_{\Gamma}(\{n_j\})=\widetilde{F}_{\Gamma}(\{n_j\}_{j=1}^s)+O\Big(\big(\frac{t_N}{n}+\delta\big)  n^{|E|}N^{|V_{\mathrm{int}}|-|E_{\mathrm{int}}|}\Big).
    \end{equation}
\end{theorem}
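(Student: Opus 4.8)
The goal is to show $F_\Gamma(\{n_j\})$ is well-approximated by $\widetilde F_\Gamma(\{n_j\})$, the corresponding quantity for the flat profile $\sigma_{ij}^2 = 1/N$. Looking at the explicit formula (3.?) for $F_\Gamma$, the object is a sum over vertex labelings $\eta: V(\Gamma)\to[N]$ and edge-weight assignments $w_e\ge 1$, $t_j\ge 0$ satisfying $2t_j + \sum_{e\in\partial D_j} w_e = n_j$, of a product $\prod_{E_{\rm int}} p_{w_e}(\eta(x),\eta(y)) \prod_{E_{\rm b}} (A^{w_e})_{\eta(z)\eta(w)}$. For the flat profile, $p_{w_e}(x,y)$ is exactly $1/N$ for every $w_e\ge 1$, so $\widetilde F_\Gamma$ is obtained by replacing each interior factor $p_{w_e}$ by $1/N$. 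The strategy is a two-region split on each interior edge weight: "short" edges with $w_e \le t_N$ and "long" edges with $w_e > t_N$. On long edges, condition (B2) gives $|p_{w_e}(x,y) - 1/N| \le \delta/N$, so replacing $p_{w_e}$ by $1/N$ costs a relative error $\delta$ per such edge. On short edges we cannot control $p_{w_e}$ pointwise, but we will argue that the total contribution of any configuration with at least one short interior edge is negligible — of order $(t_N/n)$ times the main term — because forcing $w_e\le t_N$ on even a single edge removes a factor of roughly $n/t_N$ from the available range of that weight (the weights otherwise range freely up to $O(n)$), while (B1) still bounds the short-edge sum by $\gamma t_N/N$, matching the $1/N$ scale up to the constant $\gamma$.

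Concretely, I would proceed as follows. First, partition the configuration space by $S = \{e\in E_{\rm int}: w_e \le t_N\}$. The term $S=\emptyset$ is the "main" term. Second, for $S=\emptyset$, telescope: write $\prod p_{w_e} = \prod (1/N + (p_{w_e}-1/N))$ and expand; the leading piece is exactly $\widetilde F_\Gamma$ restricted to all-long configurations, and each cross term carries at least one factor $|p_{w_e}-1/N|\le\delta/N$, hence contributes $O(\delta\, n^{|E|} N^{|V_{\rm int}|-|E_{\rm int}|})$ after summing over the remaining weights and labelings (this bookkeeping is exactly parallel to Step 2 of the proof of Proposition 3.2, using a spanning forest/tree to collapse labelings and the bound (3.11) on the boundary factors). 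Third, one must add back the all-long-but-complement-of-main contribution to $\widetilde F_\Gamma$, i.e. the difference between $\widetilde F_\Gamma$ and its restriction to $S=\emptyset$: in the flat case the short-edge configurations are also present, and the same counting shows they are $O((t_N/n)\, n^{|E|}N^{|V_{\rm int}|-|E_{\rm int}|})$, since each short edge restricts one weight to a window of size $t_N$ instead of $\sim n$. Fourth, bound the $S\ne\emptyset$ part of $F_\Gamma$ itself: using (B1) for edges in $S$ and (B2) for edges in $S^c$ exactly as in (3.7)–(3.10), each nonempty $S$ contributes with an extra factor $(\gamma t_N/n)^{|S|}\le \gamma t_N/n$ relative to the main term, so the sum over all $S\ne\emptyset$ is $O((t_N/n)\, n^{|E|}N^{|V_{\rm int}|-|E_{\rm int}|})$; here the hypothesis $n_j\gg t_N$ (so $t_N/n\to 0$) is what makes this an error term, and $\|A_N\|_{\rm op}\le 1+\tau/n$ controls $a^{n-m}\le a^n \le e^\tau$ uniformly so the boundary sums stay bounded. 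Combining, $F_\Gamma = \widetilde F_\Gamma + O((t_N/n + \delta)\, n^{|E|} N^{|V_{\rm int}|-|E_{\rm int}|})$.

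The main obstacle is the bookkeeping in the telescoping expansion for the $S=\emptyset$ term: one has to sum $\prod_{e\in E_{\rm int}}(1/N + \epsilon_e)$ with $|\epsilon_e|\le\delta/N$ over all weight configurations and labelings while keeping the error genuinely proportional to $\delta$ (not $\delta$ times a growing combinatorial factor in $|E|$), and do so simultaneously with the constraint-counting $2t_j + \sum_{e\in\partial D_j}w_e = n_j$ and the boundary factors $(A^{w_e})_{\eta(z)\eta(w)}$. The right way to organize this is to fix first the weights on boundary edges and on the forest-complement $\mathcal F^c$ exactly as in Steps 1–2 of Proposition 3.2, collapse the spanning-forest labelings to get the $r^{|V_{\rm b}|}$ and $1/(|\,\cdot\,|)!$ factors, and only then do the telescoping on the remaining interior edges; the factorial denominators from the integer-composition counts absorb the binomial factors $\binom{|E_{\rm int}|}{k}$ and keep the $\delta$-error clean. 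A secondary subtlety is handling diagrams with open edges versus those without uniformly — but since the open-edge treatment only contributes the bounded factor $(1+a^n)r^{|V_{\rm b}|}$, which is $O(1)$ for fixed $\Gamma$ under $\|A_N\|_{\rm op}\le 1+\tau/n$, this does not affect the error order and both cases go through with the same argument (taking $a=0$, $|V_{\rm b}|=|E_{\rm b}|=0$ in the closed case).
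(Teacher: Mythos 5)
Your proposal is correct and follows essentially the same route as the paper: compare $F_\Gamma$ and $\widetilde F_\Gamma$ configuration-by-configuration, split according to the set $S$ of interior edges with $w_e\le t_N$, use \ref{itm:B2} to make the $S=\emptyset$ difference an $O(\delta)$ relative error (your telescoping is equivalent to the paper's bound $(1+\delta)^{|E_{\mathrm{int}}|}-1\le 2^{|E_{\mathrm{int}}|}\delta$, harmless since $\Gamma$ is fixed), and show every configuration with a short edge — in either $F_\Gamma$ or $\widetilde F_\Gamma$ — is suppressed by a factor $\gamma t_N/n$ via the same spanning-forest/weight-counting bookkeeping as in (3.7)–(3.10), with $(1+a^n)r^{|V_{\mathrm b}|}=O(1)$ handling the boundary edges. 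The paper simply folds your steps 3 and 4 into one trivial bound $\lvert\prod p_{w_e}-\prod N^{-1}\rvert\le \prod p_{w_e}+\prod N^{-1}$ for $S\neq\emptyset$; otherwise the arguments coincide.
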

\begin{proof}
We consider the case that $\Gamma$ has open edges   since the no open edge case is quite similar.  In this case $|V|-|E|=|V_{\mathrm{int}}|-|E_{\mathrm{int}}|$.

Comparing the diagram functions $F_{\Gamma}$ in \eqref{equ:F_formula} with their GOE/GUE analogue $\widetilde{F}_{\Gamma}$, which is exactly expressed via 
\begin{equation}
    \widetilde{F}_{\Gamma}(\{n_j\}_{j=1}^s) = 
    \sum_{\eta : V(\Gamma) \to [N]}
    \sum_{\substack{2t_j + \sum_{e \in \partial D_j} w_e = n_j}}
    \prod_{(x, y) \in E_{\mathrm{int}}} \frac{1}{N}
    \prod_{(z, w) \in E_{\mathrm{b}}} (A^{w_e})_{\eta(z)\eta(w)},
\end{equation} we obtain
\begin{align}
    &\left| F_{\Gamma}(\{n_j\}_{j=1}^s) - \widetilde{F}_{\Gamma}(\{n_j\}_{j=1}^s) \right| \notag\\
    &\le 
    \sum_{\eta : V(\Gamma) \to [N]}
    \sum_{\substack{2t_j + \sum_{e \in \partial D_j} w_e = n_j}}
    \bigg| 
        \prod_{(x, y) \in E_{\mathrm{int}}} p_{w_e}(\eta(x), \eta(y)) 
        - 
        \prod_{(x, y) \in E_{\mathrm{int}}} \frac{1}{N}
    \bigg| 
    \prod_{(z, w) \in E_{\mathrm{b}}} 
    \left| (A^{w_e})_{\eta(z)\eta(w)} \right|.
\end{align}    
Introduce   partitions of interior edges,  denoted by  $S := \{e \in E_{\mathrm{int}} \mid w_e \le t_N\}.$ Fix the weights on the open edges and   labelings on all vertices,    consider the sum 
\begin{align}
    &\sum_{\substack{w_e,\ e \in E_{\mathrm{int}} \\ 2t_j + \sum_{e \in \partial D_j} w_e = n_j}}
    \bigg| 
        \prod_{(x, y) \in E_{\mathrm{int}}} p_{w_e}(\eta(x), \eta(y)) 
        - 
        \prod_{(x, y) \in E_{\mathrm{int}}} \frac{1}{N}
    \bigg|= \sum_{S \subset E_{\mathrm{int}}}
    \sum_{\substack{w_e,\ e \in E_{\mathrm{int}} \\ 2t_j + \sum_{e \in \partial D_j} w_e = n_j}}\notag\\
    &
    \bigg| 
        \prod_{(x, y) \in E_{\mathrm{int}}} p_{w_e}(\eta(x), \eta(y)) 
        - 
        \prod_{(x, y) \in E_{\mathrm{int}}} \frac{1}{N}
    \bigg| \mathbf{1}\left(\{w_e \le t_N\}_{e \in S} \right) 
    \cdot \mathbf{1}\left(\{w_e \ge t_N\}_{e \in S^c} \right). 
\end{align}

For  the trivial partition    $S=\emptyset$,   by   assumption  \ref{itm:B2}   of Definition \ref{mixingdef}   we have
\begin{equation}
     \bigg|\prod_{(x,y) \in E_{\mathrm{int}}}Np_{w_e}(\eta(x),\eta(y))-1\bigg| \le (1+\delta)^{|E_{\mathrm{int}}|}-1\le 2^{|E_{\mathrm{int}}|}\delta, 
\end{equation}
for fixed diagram 
and $N\ge N_{\delta}$,  which further implies  
\begin{equation}
    \bigg|\prod_{(x,y) \in E_{\mathrm{int}}}p_{w_e}(\eta(x),\eta(y))- \prod_{(x,y) \in E_{\mathrm{int}}}\frac{1}{N}\bigg|\le 2^{|E_{\mathrm{int}}|}\delta \,\bigg(\prod_{(x,y) \in E_{\mathrm{int}}}p_{w_e}(\eta(x),\eta(y))+\prod_{(x,y) \in E_{\mathrm{int}}}\frac{1}{N}\bigg).
\end{equation}
While for the non-trivial partition $S\ne \emptyset$, we use a trivial bound
\begin{equation}
    \bigg|\prod_{(x,y) \in E_{\mathrm{int}}}p_{w_e}(\eta(x),\eta(y))- \prod_{(x,y) \in E_{\mathrm{int}}}\frac{1}{N}\bigg|\le   \prod_{(x,y) \in E_{\mathrm{int}}}p_{w_e}(\eta(x),\eta(y))+\prod_{(x,y) \in E_{\mathrm{int}}}\frac{1}{N}.
\end{equation}

We proceed as in   the derivation  of \eqref{equ:3.10}. 
 Note that $S=\emptyset$  exactly corresponds  to the case $k=|E_{\mathrm{int}}|$ there, and also that  $ {n}/{ t_N}\ge |E_{\mathrm{int}}|$  for sufficiently  large $N$ and fixed  $\Gamma$, similar to \eqref{equ:3.10}  we conclude that   
\begin{align}
     \left| F_{\Gamma}(\{n_j\}) - \widetilde{F}_{\Gamma}(\{n_j\}_{j=1}^s) \right| &\le 
    2 \cdot 5^{|E_{\mathrm{int}}|} 
    N^{|V_{\mathrm{int}}| - |E_{\mathrm{int}}|} 
    (\gamma t_N)^{|E_{\mathrm{int}}|} 
    r^{|V_{\mathrm{b}}|} 
    (1 + a^n)\notag\\
    & \times 
    \sum_{k \le \frac{m}{t_N} \wedge |E_{\mathrm{int}}|}
    \frac{
        \mathbf{1}(k < |E_{\mathrm{int}}|) 
        + 2^{|E_{\mathrm{int}}|} \delta \cdot \mathbf{1}(k = |E_{\mathrm{int}}|)
    }{
        (k + 1)!
    }
    \left( \frac{n}{\gamma t_N} \right)^k 
    \frac{n^{|E_{\mathrm{b}}|}}{(|E_{\mathrm{b}}| - 1)!} \notag\\
    &\le 
    \Big( \frac{2 \gamma t_N}{n} + 2^{|E_{\mathrm{int}}| + 1} \delta \Big)
    \cdot 
    10^{|E|} 
    \gamma^{|E_{\mathrm{int}}|} 
    (1 + a^n) 
    r^{|V_{\mathrm{b}}|} 
    \frac{n^{|E|}}{(|E| - 1)!} 
    N^{|V_{\mathrm{int}}| - |E_{\mathrm{int}}|}.
\end{align}

Since $r$ and   $|E_{\mathrm{int}}|$ are fixed,  
  the desired result  
\begin{equation}
    F_{\Gamma}(\{n_j\})=\widetilde{F}_{\Gamma}(\{n_j\}_{j=1}^s)+O\Big(\big(\frac{t_N}{n}+\delta\big)  n^{|E|}N^{|V_{\mathrm{int}}|-|E_{\mathrm{int}}|}\Big) 
\end{equation}
immediately follows.
\end{proof}

\begin{remark}[\bf Sharpness]
    Meanwhile, for the case $A=\text{diag}(a,0,\ldots 0)$ with $|a-1|\le \tau n^{-1}$, one can easily show
    \begin{equation}
    \widetilde{F}_{\Gamma}(\{n_j\}_{j=1}^s)=\Omega\left((\min_{j} n_j)^{|E|}N^{|V|-|E|}\right).
\end{equation}
    Thus, the upper bound of Proposition \ref{prop:F_upper_bound} \ref{item:upper_noboundary} and \ref{item:upper_boundary} and the error term in \eqref{equ:F_asy_equal} are of good sharpness.
    
\end{remark}

\section{Asymptotic equivalence of mixed moments}\label{sec:section4}

\subsection{Mixed moments in Gaussian case}

 Crucially, when both $|E|$ and $|V|$ increase by one, the upper bounds established in \eqref{equ:upper_no_boundary} and \eqref{equ:upper_boundary} remain non-decreasing. To reduce   arbitrary diagrams $\Gamma\in \mathscr{D}_{s;\beta}^*$ into trivalent forms where each vertex has degree at most 3 while preserving $|E|-|V|$, we will construct a vertex-splitting map in the following lemma. For this, we need to introduce typical connected diagrams.

\begin{definition}[Typical   diagram]
A \emph{typical connected diagram} is a connected diagram 
$\Gamma\in \mathscr{D}_{s;\beta}^*$   satisfying    the following conditions: (i) exactly $s$  marked vertices have degree 2, and (ii) every other (unmarked) vertex has degree 3.
Introduce   a parameterization 
\begin{equation}
  |E| = 3\ell + s,
  \quad
  |V| = 2\ell + s,
\end{equation}
and denote  the  set of all typical connected diagrams with $\ell=|E(\Gamma)| - |V(\Gamma)|$ and   $s$ marked points 
  \begin{equation}
   \mathcal{T}_{\ell,s}
  := \Bigl\{
     \Gamma \in \mathscr{D}_{s;\beta}^* : 
       \substack{
      \substack{  \text{ $\Gamma$ is a typical diagram with $s$ marked points, } \\
      |E(\Gamma)|=3\ell+s,  \quad  |V(\Gamma)| = 2\ell+s}
       }
  \Bigr\}
 \end{equation}
and  the set of all typical connected diagrams with $s$ marked points 
\begin{equation}
    \mathcal{T}_{s} := \bigcup_{\ell \ge -1} \mathcal{T}_{\ell,s}.
\end{equation}
In the case $\ell = -1$, the set $\mathcal{T}_{\ell,s}$ is non-empty and consists of a single point diagram if and only if $s = 1$.
\end{definition}

\begin{lemma}[Vertex splitting]\label{lem:vertex_split}
There exists a map from the set of diagrams   with all unmarked vertices of degree at least $3$ to the set of typical connected diagrams 
\begin{equation}
    \phi : \mathscr{D}_{s;\beta}^* \to \mathcal{T}_{\ell, s}, \quad \Gamma \mapsto \phi(\Gamma)
\end{equation}
  such that  the following three conditions hold: 
\begin{itemize}
    \item[(i)] (Preserving $|E|-|V|$) 
    \begin{equation}
        |E(\phi(\Gamma))| - |V(\phi(\Gamma))| = |E(\Gamma)| - |V(\Gamma)|,
    \end{equation}
    
    \item[(ii)] (Monotonic decreasing) 
    \begin{equation}
        |E(\phi(\Gamma))| \ge |E(\Gamma)|, \qquad |V(\phi(\Gamma))| \ge |V(\Gamma)|,
    \end{equation}
    
    \item[(iii)] (Multiplicity controlling)  
       \begin{equation}
        |\phi^{-1}(\Gamma)| \le 2^{|E(\Gamma)|},\quad \forall \,\Gamma \in \mathcal{T}_{\ell, s}.
    \end{equation}
   
\end{itemize}
\end{lemma}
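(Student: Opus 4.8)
The plan is to construct $\phi$ by a greedy vertex-splitting procedure that repeatedly replaces a high-degree vertex with two lower-degree vertices joined by a new edge, continuing until the diagram becomes typical. Concretely, suppose $\Gamma$ has an unmarked vertex $v$ of degree $d \ge 4$ (or a marked vertex of degree $\ge 3$). The cyclic (ribbon) structure at $v$ gives a cyclic order of the $d$ half-edges incident to $v$; I would pick a contiguous arc of these half-edges containing, say, the first two in this cyclic order, detach them onto a fresh vertex $v'$, and add an interior edge $\{v,v'\}$ carrying the two detached half-edges together with the rest at $v$ in a way compatible with the ribbon structure at $v$ (so that the surface and its face structure are only modified in a controlled, local way). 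Each such split increases $|V|$ by $1$ and $|E|$ by $1$, hence preserves $|E|-|V|$, which gives (i); and since splits only ever add vertices and edges, we get (ii). One must check the split keeps the diagram connected (it does: $v'$ is joined to $v$), keeps every unmarked vertex of degree $\ge 3$ (the new vertex $v'$ has degree $3$: two old half-edges plus the new edge; the old vertex $v$ drops from $d$ to $d-1 \ge 3$), and respects the marked-vertex convention (if $v$ is marked we leave the mark on $v$ and arrange the split so that $v$ ends up with degree $2$ only at the very last step). Iterating, the sum of "excess degree" strictly decreases, so the process terminates at a typical diagram $\phi(\Gamma) \in \mathcal{T}_{\ell,s}$ with $\ell = |E(\Gamma)|-|V(\Gamma)|$.

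For the multiplicity bound (iii), I would bound the number of preimages of a fixed typical diagram $\Gamma' \in \mathcal{T}_{\ell,s}$ by reconstructing any $\Gamma$ with $\phi(\Gamma) = \Gamma'$ through the reverse operation, namely edge-contractions: contracting an interior edge of $\Gamma'$ merges its two endpoints (adding their cyclic orders at the contraction point) and is exactly the inverse of a split. Since $|E(\Gamma')| = 3\ell+s$ is the maximal edge count among all diagrams in $\phi^{-1}(\Gamma')$ (by (ii)), every preimage is obtained from $\Gamma'$ by contracting some subset of its interior edges — possibly iteratively, but the net effect is determined by which original edges of $\Gamma'$ survive as edges of $\Gamma$ versus get contracted. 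The set of contracted edges is a subset of $E(\Gamma')$, and each subset determines at most one diagram $\Gamma$ (the combinatorial type after contraction is forced by the ribbon structure), so $|\phi^{-1}(\Gamma')| \le 2^{|E(\Gamma')|}$. To get the sharper bound $2^{|E(\Gamma)|}$ stated in (iii) — note the exponent there is $|E(\Gamma)|$, the preimage's edge count, not $|E(\Gamma')|$ — I would instead run the counting from the preimage side: a preimage $\Gamma$ is recovered from $\Gamma'$ by specifying, for each edge of $\Gamma$, whether it is "original" or "created by a split", and the splits are organized along the edges of $\Gamma$; one shows the number of split-histories leading back to a common $\Gamma$ that could have produced the given $\Gamma'$ is controlled by a binary choice per edge of $\Gamma$, giving $2^{|E(\Gamma)|}$.

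The main obstacle I anticipate is making the vertex-splitting operation well-defined \emph{as an operation on ribbon graphs}, i.e. tracking what happens to the embedding surface, the faces, and the perimeters $(m_1,\dots,m_s)$ under a split, and verifying that the result still lies in $\mathscr{D}_{s;\beta}$ (in particular, that orientability is preserved in the $\beta = 2$ case, and that no face is destroyed or created so that $s$ is unchanged). This is a local surgery — cutting the ribbon along a chord at $v$ and inserting a thin rectangle — but one must be careful that the cyclic arc chosen to be detached does not separate the face structure in a way that changes $s$; the safe choice is to detach two \emph{cyclically adjacent} half-edges at $v$, which inserts the new edge inside a single corner of a single face and therefore only lengthens that face's perimeter by $2$ (matching the $+1$ edge count traversed twice), leaving $s$ and orientability intact. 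The second delicate point is the interplay with marked vertices: a marked vertex must retain its mark and must be allowed degree $2$, so when reducing a marked vertex of degree $\ge 3$ one splits off the \emph{non-marked} side, and a short case analysis ensures the terminal configuration has exactly $s$ marked vertices of degree $2$ and all others of degree $3$. Once these structural checks are in place, properties (i) and (ii) are immediate from the $+1/+1$ count, and (iii) follows from the edge-subset reconstruction argument sketched above.
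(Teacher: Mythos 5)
Your construction is essentially the paper's proof: the paper also reduces each vertex of excess degree by replacing it with a tree of trivalent vertices joined by auxiliary edges (your one-at-a-time splits compose to exactly this), notes that each such replacement adds equally many edges and vertices so $|E|-|V|$ is preserved, and bounds $|\phi^{-1}(\Gamma)|$ by observing that every preimage arises from contracting a subset of the edges of the typical diagram, giving at most $2^{|E(\Gamma)|}$ choices. Your worry in the last part stems from a misreading of (iii): there $\Gamma$ ranges over $\mathcal{T}_{\ell,s}$, i.e.\ it is the \emph{image} diagram, so the exponent $|E(\Gamma)|$ is the typical diagram's edge count and your first, clean subset-of-contracted-edges argument already yields exactly the stated bound — the additional ``sharper bound'' paragraph (which is the only vague part of your write-up) is not needed.
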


\begin{proof}
The construction of $\phi$ follows the vertex-splitting procedure introduced in \cite{feldheim2010universality} (see e.g. Step (iii) in Definition II.1.3, Figure 5 in \cite{geng2024outliers} or Lemma 2.22 in \cite{liu2023edge}). The key idea is to iteratively replace each unmarked vertex of degree $d > 3$ with  a tree of trivalent vertices connected via auxiliary edges, such that the total number of edges increases by $d-3$ and the total number of vertices increases by $d-3$. Crucially, such a replacement preserves the   difference $|E|-|V|$.

Each preimage in $\phi^{-1}(\Gamma)$ can be obtained by shrinking edges in $\Gamma$, so the total number of preimages under $\phi$ is at most $2^{|E|}$, which proves the last part.
\end{proof}
We will need an upper bound  for  the number of typical connected diagrams.

\begin{lemma}[{\cite[Proposition II.2.3]{feldheim2010universality}; \cite[Proposition 3.7]{geng2024outliers}}] \label{diagramno}
There exists a universal constant $C > 0$ such that for any  integers $\ell \ge 0$ and $s \ge 1$,
\begin{equation}
  \left| \mathcal{T}_{\ell,s} \right| \le \frac{(C(\ell+1))^{\ell+1}}{(s-1)!}.
\end{equation}
\end{lemma}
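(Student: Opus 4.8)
The plan is to prove the bound by an explicit combinatorial enumeration of typical connected diagrams, along the lines of \cite[Proposition II.2.3]{feldheim2010universality}. The first step is to pin down the topological constraints. Since a diagram $\Gamma\in\mathcal{T}_{\ell,s}$ is a cellular embedding of its underlying graph into a compact surface $\Sigma$ with exactly $s$ faces, Euler's formula gives $|V(\Gamma)|-|E(\Gamma)|+s=\chi(\Sigma)$, which with the parametrization $|E|=3\ell+s$, $|V|=2\ell+s$ reads $\chi(\Sigma)=s-\ell$. As $\chi(\Sigma)=2-2g$ with $g\ge 0$ in the orientable case ($\beta=2$) and $\chi(\Sigma)=2-k$ with $k\ge 1$ in the non-orientable case ($\beta=1$), this forces $s\le \ell+2$; thus $\mathcal{T}_{\ell,s}=\emptyset$ unless $1\le s\le \ell+2$, and after disposing of the degenerate small cases ($\ell\in\{-1,0\}$, where $\mathcal{T}_{\ell,s}$ is a fixed finite set) by inspection, it suffices to treat $\ell\ge 1$ and $1\le s\le\ell+2$. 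In this range every $\Gamma\in\mathcal{T}_{\ell,s}$ has $|E|=3\ell+s\le 4(\ell+1)$ and $|V|=2\ell+s\le 3(\ell+1)$, and the genus of its surface is $(\ell+2-s)/2$.

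The second step reduces the count with labelled faces to one with unlabelled cells. In a typical diagram the marked vertices are precisely the $s$ divalent vertices (every other vertex being trivalent), so the marked-vertex \emph{set} is determined by the underlying graph; only the assignment of the $s$ face-labels to those vertices is free, and since each divalent vertex lies on at most two faces, a short induction (a Br\'egman-type estimate) shows there are at most $2^{s}\le 4^{\ell+1}$ admissible assignments. Hence $|\mathcal{T}_{\ell,s}|\le 4^{\ell+1}\,u_{\ell,s}$, where $u_{\ell,s}$ is the number of \emph{unlabelled} ribbon graphs with degree sequence $(3^{2\ell},2^{s})$ and exactly $s$ faces. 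One then bounds $u_{\ell,s}$ by encoding such a map through a traversal of its $s$ face boundaries (a union of closed walks of total length $2|E|$ traversing each directed edge once): recording for each step whether it opens a fresh edge to a fresh vertex, closes a cycle to an already-visited vertex, or re-uses an edge, together with the $O(1)$ local rotation datum at each vertex (of degree $\le 3$) and, when $\beta=1$, the $O(1)$ twist per edge. The only steps carrying a non-trivial multiplicity are the cycle-closing ones, of which there are exactly $|E|-|V|+1=\ell+1$, each with at most $|V|\le 3(\ell+1)$ choices; all the remaining data contribute at most $C^{\ell+1}$. This already yields $u_{\ell,s}\le (C(\ell+1))^{\ell+1}$, hence the lemma whenever $s=O(1)$.

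The remaining, and genuinely delicate, point is to gain the factor $1/(s-1)!$ when $s$ is large (up to $s=\ell+2$), where the crude bound $(C(\ell+1))^{\ell+1}$ is too lossy by roughly $(\ell+1)!$. The mechanism is that for large $s$ the genus $(\ell+2-s)/2$ is small, and maps of bounded genus on $m$ edges are only exponentially --- rather than super-exponentially --- numerous (for instance planar maps with $m$ edges number of order $12^{m}m^{-5/2}$, with analogous polynomial-times-exponential bounds at each fixed genus); comparing $C^{m}$ at $m\asymp\ell$ against Stirling's formula for $(s-1)!\le(\ell+1)!$ recovers the claimed bound with a universal constant. Interpolating these two regimes --- or, following \cite{feldheim2010universality}, running a single face-by-face assembly argument in which a connected gluing of $s$ cells is organized along a spanning tree of the cell-adjacency structure, the $(s-1)!$ emerging from the over-counting in the order of assembly --- gives $|\mathcal{T}_{\ell,s}|\le (C(\ell+1))^{\ell+1}/(s-1)!$ uniformly. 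The main obstacle is precisely this last step: one must extract the factorial while avoiding the spurious super-exponential factor $(2|E|-1)!!$ that a naive polygon-gluing count produces, which is exactly where the rigidity of typical diagrams --- few faces, almost all vertices trivalent, connectedness --- has to be exploited; the careful execution is the content of \cite[Proposition II.2.3]{feldheim2010universality} (see also \cite[Proposition 3.7]{geng2024outliers}).
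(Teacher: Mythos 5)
The paper offers no proof of this lemma at all: it is quoted directly from \cite[Proposition II.2.3]{feldheim2010universality} and \cite[Proposition 3.7]{geng2024outliers}, so the only question is whether your argument stands on its own — and it does not, because the step you yourself flag as ``genuinely delicate'', the uniform gain of the factor $1/(s-1)!$, is never established. Your two-regime strategy does not cover the intermediate range: ``bounded genus implies only exponentially many maps'' is a fixed-genus statement, whereas here the genus $(\ell+2-s)/2$ grows linearly in $\ell$ as soon as, say, $s\asymp \ell/2$, and in that regime the genus-dependent constants and polynomial factors in the fixed-genus map asymptotics are of no use; the promised ``interpolation'', or alternatively the face-by-face assembly along a spanning tree of the cell-adjacency structure, is only gestured at, and you close by deferring ``the careful execution'' to the very propositions to be proved. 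That is a citation, not a proof; it happens to coincide with what the paper does, but then the write-up should present itself as such.

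Two of the steps you do carry out are also unsound as written, and one of them sits exactly where the factorial lives. First, the reduction from face-labelled diagrams to unlabelled maps at cost $2^{s}$ conflates two different pieces of data: under the paper's definition a diagram carries the assignment $\phi:[s]\to\mathcal V(\Gamma)$ with $\phi(j)\in\partial D_j$, so the identification of \emph{which} face is face $j$ is itself part of the structure; recovering a labelled diagram from the unlabelled ribbon graph requires choosing a labelling of the $s$ faces in addition to the matching with the divalent vertices, which naively costs up to $s!\,2^{s}$ rather than $2^{s}$ — and $s!$ is precisely of the order of the factor $1/(s-1)!$ you are trying to gain, so this bookkeeping is the crux of the lemma, not a side remark that a Br\'egman-type bound disposes of. Second, the traversal-encoding bound $u_{\ell,s}\le (C(\ell+1))^{\ell+1}$ is asserted rather than proved: there are of order $|E|$ ``edge re-use'' steps in the face-boundary walks, and you must argue that the partially reconstructed rotation system determines their continuation so that only the $\ell+1$ cycle-closing steps are charged a factor $|V|$; without that argument the encoding yields a super-exponential count. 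The Euler-characteristic constraint $s\le\ell+2$ and the identification of the divalent vertices with the marked ones are fine, but as it stands the proposal establishes neither the $(C(\ell+1))^{\ell+1}$ envelope nor the factorial gain, and the decisive content remains exactly where the paper leaves it — in the cited references.
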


\begin{theorem}\label{prop:U_asy}
For the deformed Gaussian matrix $X$ in Definition \ref{def:inhomo} and  under the assumptions \ref{itm:B1}-\ref{itm:B2} of Definition \ref{mixingdef}, with $n = \sum_{j=1}^s n_j$, $a=\|A_N\|_{\mathrm{op}}$, the following hold.
\begin{enumerate}[label=(\roman*)]
  \item \label{item:U_upper} For all  $n_j\ge 1$, there exists a constant $C > 0$ such that
\begin{equation} \label{eq:Chebyshev_trace_bound}
\mathbb{E}\left[\prod_{j=1}^s \operatorname{Tr} \left(U_{n_j}\left(\frac{X}{2}\right)\right)\right]
\le (1 + a^n)(C n)^s \left\{e^{C \frac{n^{\frac{3}{2}}}{\sqrt{N}}} + (1\lor \frac{t_N}{n})^s e^{C \frac{n^2 t_N}{N}} + \Big((\frac{C n^2 t_N}{N})^{[\frac{n}{t_N}]}\land 1\Big) e^{C \frac{n^3}{N}}\right\}.
\end{equation}
  \item \label{item:U_comparison} If   $a \le 1 + O( N^{-\frac{1}{3}})$ and $t_N \ll n_j = O(N^{\frac{1}{3}})$, then
  \begin{equation}
   \mathbb{E}\Big[\prod_{j=1}^s \tr \big(U_{n_j}\big(\frac{X}{2}\big)\big)\Big]
    = \mathbb{E}\Big[\prod_{j=1}^s \tr \big( U_{n_j}\big(\frac{\widetilde{X}}{2}\big)\big)\Big]
    + o(n^s),
  \end{equation}
  where $\widetilde{X} =  \frac{1}{\sqrt{N}} W_{N}^{(\mathrm{G}\beta \mathrm{E})}+A_N$.
\end{enumerate}
\end{theorem}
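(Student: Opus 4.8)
\textbf{Part \ref{item:U_upper}: the upper bound.} The plan is to start from the exact diagrammatic expansion of Theorem \ref{Chebyshevmoment}, namely $\mathbb{E}[\prod_j \tr(U_{n_j}(X/2))] = \sum_{\Gamma \in \mathscr{D}_{s;\beta}} F_\Gamma(\{n_j\})$, and reduce the sum over all diagrams to a sum over typical connected diagrams. First I would pass from $\mathscr{D}_{s;\beta}$ to connected diagrams: a disconnected diagram factorizes as a product of connected pieces over the blocks of a partition of $[s]$, so the full moment is expressed (via the cumulant/cluster expansion around Lemma \ref{thm:u=F}) in terms of the connected quantities $\kappa_X(\{n_j\}_{j\in P}) = \sum_{\Gamma \in \mathscr{D}_{|P|;\beta}^*} F_\Gamma$. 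Next, for each connected diagram I apply the vertex-splitting map $\phi$ of Lemma \ref{lem:vertex_split}: since the upper bounds in \eqref{equ:upper_no_boundary} and \eqref{equ:upper_boundary} are non-decreasing when $|E|$ and $|V|$ both increase by one and depend only on $|E|$, $|E_{\mathrm{int}}|$, $|V_{\mathrm{int}}|$, $|V_{\mathrm{b}}|$, the bound for $F_\Gamma$ is controlled by the bound for $F_{\phi(\Gamma)}$, and the multiplicity factor $|\phi^{-1}(\Gamma)| \le 2^{|E(\Gamma)|}$ is absorbed into the geometric constants. Then I sum over typical connected diagrams in $\mathcal{T}_{\ell,s}$ using the enumeration bound $|\mathcal{T}_{\ell,s}| \le (C(\ell+1))^{\ell+1}/(s-1)!$ from Lemma \ref{diagramno}, together with the parameterization $|E| = 3\ell + s$, $|V| = 2\ell + s$. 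For a typical diagram $|E| - |V| = \ell$, so Proposition \ref{prop:F_upper_bound} gives, schematically, $|F_\Gamma| \lesssim (1+a^n)(Cn)^{|E|} N^{-\ell}/(|E|-1)!$ in the small-$|E|$ regime and a comparable bound via \eqref{equ:3.2} in the large-$|E|$ regime; summing the series $\sum_\ell |\mathcal{T}_{\ell,s}| \cdot (\text{bound})$ produces the three exponential terms $e^{Cn^{3/2}/\sqrt N}$, $(1\vee t_N/n)^s e^{Cn^2 t_N/N}$, and $((Cn^2 t_N/N)^{[n/t_N]}\wedge 1)e^{Cn^3/N}$. Concretely, the first term comes from diagrams with $|E| \sim \sqrt{nN}$ contributing $(n/\sqrt N)^{|E|}/|E|! $; the second from the regime $|E| \le n/t_N$ where \eqref{equ:upper_no_boundary}/\eqref{equ:upper_boundary} give $(\gamma n)^{|E|}(t_N)^{\#}/|E|! \cdot N^{-\ell}$; the third from the crossover where $|E| > n/t_N$ forces using \eqref{equ:3.2}. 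Finally I repackage the connected bounds into the full moment bound via the partition sum, where the $(s-1)!$ in the denominator of the diagram count matches the combinatorial weight of partitions so that the product over blocks collapses to the stated $(Cn)^s$ prefactor times the brace.

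\textbf{Part \ref{item:U_comparison}: the comparison.} Here the regime is $t_N \ll n_j = O(N^{1/3})$ and $a \le 1 + O(N^{-1/3})$, so $n = \sum_j n_j = O(N^{1/3})$ and all three error exponents in part \ref{item:U_upper} are $O(1)$: $n^{3/2}/\sqrt N = O(1)$, $n^2 t_N/N \ll 1$, $n^3/N = O(1)$. The plan is to compare termwise. By Theorem \ref{Chebyshevmoment} applied both to $X$ (profile $P_N$) and to $\widetilde X$ (profile $\sigma_{ij}^2 = 1/N$), the difference of moments is $\sum_{\Gamma} (F_\Gamma(\{n_j\}) - \widetilde F_\Gamma(\{n_j\}))$. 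For each fixed diagram $\Gamma$, Theorem \ref{thm:F=tildeF} gives $F_\Gamma - \widetilde F_\Gamma = O((t_N/n + \delta)\, n^{|E|} N^{|V_{\mathrm{int}}|-|E_{\mathrm{int}}|})$, and since $|V_{\mathrm{int}}| - |E_{\mathrm{int}}| = -\ell \le 0$ for connected diagrams with $\ell \ge 0$, this is $o(n^{|E|} N^{-\ell}) \le o(1) \cdot n^{|E|} N^{-\ell}$; for $\ell = -1$ (the single-point diagram, only when $s=1$) one has $F_\Gamma = \widetilde F_\Gamma$ exactly by construction. I then need to sum this error over all diagrams: applying the same vertex-splitting reduction and the enumeration of $\mathcal{T}_{\ell,s}$ as in part \ref{item:U_upper}, with the extra gain $t_N/n + \delta = o(1)$ pulled out, the sum $\sum_{\Gamma}|F_\Gamma - \widetilde F_\Gamma|$ is bounded by $o(1)$ times the same convergent series that gave $(Cn)^s \cdot O(1)$ in part \ref{item:U_upper}; since the dominant diagrams have $|E| = \Theta(n)$, the total is $o(n^s)$. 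Care is needed that Theorem \ref{thm:F=tildeF} is stated for \emph{fixed} $\Gamma$ with hidden constants depending on $|E|$, $r$; to sum over unboundedly many diagrams one re-derives the error estimate with explicit geometric-in-$|E|$ constants (which the proof of Theorem \ref{thm:F=tildeF} in fact provides, tracking $2^{|E_{\mathrm{int}}|}$, $5^{|E|}$, etc.), so that the tail in $\ell$ still converges after multiplying by the $o(1)$ factor.

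\textbf{Main obstacle.} The technical heart is the careful bookkeeping in part \ref{item:U_upper}: organizing the sum over $\ell$ so that the three distinct saddle regimes — $|E| \sim \sqrt{nN}$, $|E| \le n/t_N$, and $|E| > n/t_N$ — are each extracted with the correct exponential rate, while keeping every constant explicitly geometric in $|E|$ and absorbing the $2^{|E|}$ multiplicity from vertex splitting and the $(C(\ell+1))^{\ell+1}$ diagram count against the $1/(|E|-1)!$ factorial decay. A secondary subtlety is handling the boundary-edge diagrams: the factor $r^{|V_{\mathrm{b}}|}(1+a^n)$ from \eqref{equ:upper_boundary} is uniformly bounded since $r$ is fixed and $a^n = (1+O(N^{-1/3}))^{O(N^{1/3})} = O(1)$, but one must check that summing over the number of open edges (bounded by $|E|$) does not destroy the factorial gain — it does not, because each open edge still carries a factor $n/N$ or $n$ that is controlled within the same series. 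Once these are in place, part \ref{item:U_comparison} follows essentially formally from Theorem \ref{thm:F=tildeF} plus the same summation machinery.
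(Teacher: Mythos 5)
Your plan follows essentially the same route as the paper's proof: reduce to connected diagrams via the cumulant expansion of Lemma \ref{thm:u=F}, apply the vertex-splitting map and the typical-diagram count of Lemmas \ref{lem:vertex_split}--\ref{diagramno}, split the sum over $\ell=|E|-|V|$ at $n/t_N$ using Proposition \ref{prop:F_upper_bound}, and for part \ref{item:U_comparison} compare diagram-wise via Theorem \ref{thm:F=tildeF}. The only deviations are cosmetic (the paper truncates to finitely many diagrams using the absolute summability from part \ref{item:U_upper} instead of re-deriving uniform-in-$\Gamma$ error constants, and your heuristic attribution of the three exponential terms to the $\ell$-regimes, e.g.\ $|E|\sim\sqrt{nN}$ for the first term, does not match the actual dominant scale $\ell\sim n^{3/2}/\sqrt{N}$), but these do not affect the validity of the argument.
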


\begin{proof}
    We prove \ref{item:U_upper} first. By Lemma   \ref{thm:u=F}, it suffices to obtain upper bounds for  every diagram function.
For this, let $\ell=|E|-|V|$ and introduce  a function  $G_{\Gamma}(n)$ defined by 
\begin{equation} \label{Gn-1}
G_{\Gamma}(n) = 
\begin{cases}
\displaystyle
\frac{(4 \gamma n)^{|E|}}{(|E|-1)!} N^{|V|-|E|} 
+ 10^{|E|} \gamma^{|E_{\mathrm{int}}|} (1+a^n) r^{|V_b|} \frac{n^{|E|}}{(|E|-1)!} N^{|V_{\mathrm{int}}| - |E_{\mathrm{int}}|},
&   \ell \le \frac{n}{t_N}, \\[10pt]
\displaystyle
\frac{n^{|V|-1}}{(|V|-1)!} 
\Big( \frac{(\gamma t_N) \lor n}{N} \Big)^{|E| - |V| + 1} N 
+ (1+a^n) r^{|V_b|} 
\Big( \frac{(\gamma t_N) \lor n}{N} \Big)^{|E_{\mathrm{int}}| - |V_{\mathrm{int}}|} 
\frac{n^{|V|}}{(|V|)!},
&  \frac{n}{t_N} < \ell < n, \\[10pt]

0, & \ell \ge n.
\end{cases}
\end{equation}
    As shown in   Proposition \ref{prop:F_upper_bound},  
    \begin{equation}
        F_{\Gamma}(\{n_j\}_{j=1}^s)\le G_{\Gamma}(n).
    \end{equation}
Since $|V|, |E| \le n$ and the quantity $|E| - |V|$ is preserved under the map $\phi$,  by Lemma~\ref{lem:vertex_split} we have
\begin{equation}
    G_{\Gamma}(n) \le G_{\phi(\Gamma)}(n).
\end{equation}
Hence, by the expansion of the cumulants established in  Lemma   \ref{thm:u=F}, 
\begin{align}
  \kappa_X(n_1,\dots,n_s) 
  &\le \sum_{\Gamma \in \mathscr{D}_{s;\beta}^*} G_\Gamma(n)  
   \le \sum_{\Gamma \in \mathcal{T}_s} |\phi^{-1}(\Gamma)| \cdot G_\Gamma(n) \notag \\
  &\le \sum_{\Gamma \in \mathcal{T}_s} 2^{|E|} \cdot G_\Gamma(n).
\end{align}

Note that
\begin{equation}\label{equ:EVL}
    |E| = 3\ell + s, \quad |V| = 2\ell + s, \quad |E_{\mathrm{int}}| - |V_{\mathrm{int}}| = |E| - |V| = \ell,
\end{equation}
where we use the fact $|E_{\mathrm{b}}|=|V_{\mathrm{b}}|$ (see \cite[{Lemma 2.9}]{geng2024outliers}). Rewriting   $G_\Gamma(n): = G(n,\ell)$  that depends only on $\ell$ and $s$, we use Lemma \ref{diagramno}  to further  arrive at  
\begin{equation} \label{kappab-1}
     \kappa_X(n_1,\dots,n_s) 
    \le \sum_{\ell \le \frac{n}{t_N}} \frac{(C(\ell+1))^{\ell+1}}{(s-1)!} G(n,\ell)
    + \sum_{\frac{n}{t_N} < \ell \le n} \frac{(C(\ell+1))^{\ell+1}}{(s-1)!} G(n,\ell).
\end{equation}
On the other hand, for any fixed $r$ and $\gamma$, it's easy to derive   from \eqref{Gn-1} that  
\begin{equation}
    G(n,\ell) \le 
    \begin{cases}
        (1+a^n)\dfrac{(Cn)^{3\ell+s}}{(3\ell+s-1)!} N^{-\ell}, & \ell \le \dfrac{n}{t_N}, \\[5pt]
        (1+a^n)\dfrac{(Cn)^{2\ell + s -1} \cdot (C(\gamma t_N \lor n))^{\ell+1}}{(2\ell + s)!} N^{-\ell}, & \dfrac{n}{t_N} < \ell \le n, \\[5pt]
        0, & \ell > n,
    \end{cases}
\end{equation}
for some constant $C>0$. Thus, we immediately see from  \eqref{kappab-1} that 
\begin{multline}\label{equ:4.20}
    \kappa_{X}(n_1,\ldots, n_s)
    \le \frac{(1+a^n)}{(s-1)!} \bigg\{
        \sum_{\ell\le \frac{n}{t_N}} \frac{(C(\ell+1))^{\ell+1} (Cn)^{3\ell+s}}{(3\ell+s-1)!} N^{-\ell} \\
        + \sum_{\ell > \frac{n}{t_N}} \frac{(C(\ell+1))^{\ell+1} (Cn)^{2\ell+s-1} (C(\gamma t_N \lor n))^{\ell+1}}{(2\ell + s)!} N^{-\ell}
    \bigg\}:=\frac{(1+a^n)}{(s-1)!}\big(\Sigma_{1}+\Sigma_{2}\big).
\end{multline}

For the first sum $\Sigma_{1}$ in \eqref{equ:4.20},   we  use  the simple fact  $(\ell+1)^{\ell+1} \le C^\ell \ell!$ for $\ell\ge 0$  to obtain 
\begin{align}
  \Sigma_{1} 
    &\le (Cn)^s \sum_{\ell=0}^\infty \frac{1}{(2\ell)!} \Big(\frac{Cn^3}{N}\Big)^\ell 
    \le (Cn)^s \exp\Big(C \frac{n^{\frac{3}{2}}}{\sqrt{N}}\Big).
\end{align}
While for the second sum in \eqref{equ:4.20}, we distinguish two cases according to  the factor $\gamma t_N \lor n$:
\begin{enumerate}
    \item[(1)] When   $n \le \gamma t_N$,   
    \begin{align}
        \Sigma_2
        &\le (Cn)^s \sum_{\ell=0}^\infty \frac{1}{\ell!}\frac{t_N}{n} \Big( \frac{C n^2 t_N}{N} \Big)^\ell 
        = (Cn)^{s-1}t_N\cdot \exp\Big(C \frac{n^2 t_N}{N} \Big).
    \end{align}
    
    \item[(2)] When $n \ge \gamma t_N$, let $m=[\frac{n}{t_N}]\ge 1$, we have
    \begin{align} \label{kappab-3}
         \Sigma_2
        &\le (Cn)^s \sum_{\ell=\frac{n}{t_N}}^\infty \frac{1}{\ell!} \Big( \frac{C n^3}{N} \Big)^\ell \le (Cn)^s \frac{1}{m!}\Big( \frac{C n^3}{N} \Big)^m\sum_{\ell=0}^\infty \frac{1}{\ell!} \Big( \frac{C n^3}{N} \Big)^\ell
        \le (Cn)^s \Big( \frac{C' n^2 t_N}{N} \Big)^m \exp\Big(C \frac{n^3}{N}\Big).
    \end{align}
    It is also crucial to note that  
    \begin{align} \label{equ:4.22}
        & (Cn)^s \sum_{\ell=\frac{n}{t_N}}^\infty \frac{1}{\ell!} \Big( \frac{C n^3}{N} \Big)^\ell \le (Cn)^s  \exp\Big(C \frac{n^3}{N}\Big).
    \end{align}
\end{enumerate}
Since $s$ is a fixed positive integer,  put \eqref{equ:4.20}-\eqref{equ:4.22} together and we obtain    \begin{equation} \label{kappab-4}
     \kappa_{X}(n_1,\ldots, n_s)
    \le (1 + a^n)(C n)^s \Big\{e^{C \frac{n^{\frac{3}{2}}}{\sqrt{N}}} + \frac{t_N}{n}e^{C \frac{n^2 t_N}{N}} + \Big((\frac{C n^2 t_N}{N})^{[\frac{n}{t_N}]}\land 1\Big) e^{C \frac{n^3}{N}}\Big\}.
  \end{equation}

Using \eqref{equ:T} in Definition \ref{Cumu} and rewriting  the mixed moment via the cumulants $\kappa$ and using
\begin{equation}
    (a+b+c)^{t}\le 3^t (a^t+b^t+c^t)
\end{equation}
  conclude the proof of  part \ref{item:U_upper}.

Next, we turn to part  \ref{item:U_comparison}.  Noticing the assumptions  $a \le 1 + O(N^{-1/3})$ and  $n_j = O(N^{\frac{1}{3}})$, the upper bound in  \eqref{kappab-4}  shows that the sum
\begin{equation}
    \sum_{\Gamma \in \mathscr{D}_{s;\beta}^*} \frac{1}{n^s} F_{\Gamma}(\{n_j\}_{j=1}^s)
\end{equation}
is absolutely summable. Therefore, for any sufficiently large constant $K > 0$, the contribution from those diagram functions $F_{\Gamma}$  such that   $|E(\Gamma)|, |V(\Gamma)| > K$ can be negligible.

Since the number of diagrams with $|E|, |V| \leq K$ is finite, it suffices to  analyze the  diagram-wise limit. By Theorem \ref{thm:F=tildeF}, for all  $n_j \gg t_N$, we have
\begin{equation}
    \frac{1}{n^s} F_{\Gamma}(\{n_j\}_{j=1}^s)
    = \frac{1}{n^s} \widetilde{F}_{\Gamma}(\{n_j\}_{j=1}^s) + o\bigl(n^{|E| - s} N^{|V| - |E|}\bigr)
    = \frac{1}{n^s} \widetilde{F}_{\Gamma}(\{n_j\}_{j=1}^s) + o(1).
\end{equation}

Hence, under the condition of part \ref{item:U_comparison}, we conclude
\begin{equation}
    \sum_{\Gamma \in \mathscr{D}_{s;\beta}^*} \frac{1}{n^s} F_{\Gamma}(\{n_j\}_{j=1}^s)
    = \sum_{\Gamma \in \mathscr{D}_{s;\beta}^*} \frac{1}{n^s} \widetilde{F}_{\Gamma}(\{n_j\}_{j=1}^s) + o(1),
\end{equation}
which completes the proof of part \ref{item:U_comparison}.

\end{proof}

\begin{theorem}\label{prop:super_4}
Given fixed integers $s \geq 1$ and $k_i \geq 1$ for $1 \leq i \leq s$, if  
\begin{equation}
    t_N \ll n_{i,j} \leq \tau N^{\frac{1}{3}}, \quad \forall  1 \leq i \leq s, \quad 1 \leq j \leq k_i,
\end{equation} for any fixed constant $\tau > 0$, and $ 
    \|A_N\|_{\mathrm{op}} \leq 1 + O(N^{-\frac{1}{3}}),
$ 
then  
\begin{equation}
    \mathbb{E}\bigg[\prod_{i=1}^s \tr \Big(\prod_{j=1}^{k_i} \frac{1}{n_{i,j}+1} U_{n_{i,j}}\big(\frac{X}{2}\big)\Big)\bigg]
    =  \mathbb{E}\bigg[\prod_{i=1}^s \tr \Big(\prod_{j=1}^{k_i} \frac{1}{n_{i,j}+1} U_{n_{i,j}}\big(\frac{\widetilde{X}}{2}\big)\Big)\bigg] + o(1).
\end{equation}
\end{theorem}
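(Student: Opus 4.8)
The plan is to reduce the statement to the single‑Chebyshev case already established in Theorems~\ref{prop:U_asy} and~\ref{thm:F=tildeF} by \emph{linearising} each product of Chebyshev polynomials, and then to control the resulting weighted sum of mixed moments by isolating a ``good'' range of indices, where the diagram‑wise comparison of Theorem~\ref{thm:F=tildeF} applies, from a ``bad'' range, which is killed by an anti‑concentration property of the linearisation coefficients. First I would iterate the product rule $U_aU_b=\sum_{c}U_c$ (over $c$ from $|a-b|$ to $a+b$ in steps of $2$) to write $\prod_{j=1}^{k_i}U_{n_{i,j}}(y)=\sum_{m\ge 0}c^{(i)}_m\,U_m(y)$ with $c^{(i)}_m\ge 0$ supported on $m\le N_i:=\sum_j n_{i,j}$, $m\equiv N_i\pmod 2$; evaluating at $y=1$ and using $U_m(1)=m+1$ gives $\sum_m c^{(i)}_m(m+1)=\prod_j(n_{i,j}+1)$, so the weights $w^{(i)}_m:=c^{(i)}_m/\prod_j(n_{i,j}+1)$ satisfy $\sum_m w^{(i)}_m(m+1)=1$, i.e. $q^{(i)}_m:=(m+1)w^{(i)}_m$ is a probability measure (the spin–composition law). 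Consequently
\[
\mathbb{E}\Big[\prod_{i=1}^s\tr\Big(\prod_{j=1}^{k_i}\tfrac1{n_{i,j}+1}U_{n_{i,j}}\big(\tfrac X2\big)\Big)\Big]
=\sum_{m_1,\dots,m_s}\Big(\prod_{i=1}^s w^{(i)}_{m_i}\Big)\,\mathbb{E}\Big[\prod_{i=1}^s\tr U_{m_i}\big(\tfrac X2\big)\Big],
\]
and the same identity holds with $\widetilde X$ in place of $X$.

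Next I would pass to cumulants: expanding each mixed moment via Definition~\ref{Cumu} and Lemma~\ref{thm:u=F}, and using that the partition sum over clusters has boundedly many terms, the claim reduces to showing $\sum_{m}\big(\prod_i w^{(i)}_{m_i}\big)\big(F_\Gamma(\{m_i\})-\widetilde F_\Gamma(\{m_i\})\big)=o(1)$ for each fixed connected diagram $\Gamma\in\mathscr{D}_{s;\beta}^{*}$, together with a tail bound over $\Gamma$ that is summable uniformly in $N$ — the latter supplied by Theorem~\ref{prop:U_asy}\ref{item:U_upper} together with the vertex‑splitting and counting Lemmas~\ref{lem:vertex_split} and~\ref{diagramno}. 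I would then fix a threshold $B_N$ with $t_N\ll B_N\ll\min_{i,j}n_{i,j}$ (for instance $B_N=(t_N\min_{i,j}n_{i,j})^{1/2}$) and split the sum over $(m_1,\dots,m_s)$ into the good set $\mathcal G=\{m:m_i\ge B_N\ \forall i\}$ and the bad set $\mathcal B=\mathcal G^{c}$.

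On $\mathcal G$ every $m_i\gg t_N$ and $m_i=O(N^{1/3})$, so Theorem~\ref{thm:F=tildeF} gives, diagram‑wise, $|F_\Gamma(\{m_i\})-\widetilde F_\Gamma(\{m_i\})|=O\big((t_N/\!\sum_i m_i+\delta_N)(\sum_i m_i)^{|E|}N^{|V_{\mathrm{int}}|-|E_{\mathrm{int}}|}\big)$, with $t_N/\!\sum_i m_i\le t_N/B_N\to 0$. Summing this against $\prod_i w^{(i)}_{m_i}$ and over diagrams, the bookkeeping rests on two elementary estimates for the linearisation weights: $\sum_{m_i}w^{(i)}_{m_i}m_i^{\,p}\le\mathbb E_{q^{(i)}}[m^{p-1}]\lesssim_{p,k_i}\bar n_i^{\,p-1}$ for $p\ge 1$, and $\sum_{m_i}w^{(i)}_{m_i}\lesssim_{k_i}\bar n_i^{-1}$, where $\bar n_i:=(\sum_j n_{i,j}^2)^{1/2}\asymp\max_j n_{i,j}$. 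These follow from the Casimir identity $\sum_m q^{(i)}_m\,m(m+2)=\sum_j n_{i,j}(n_{i,j}+2)$ (whence $\mathbb E_{q^{(i)}}[m^2]\asymp\bar n_i^2$), its higher‑order analogues $\mathbb E_{q^{(i)}}[m^{2\kappa}]\lesssim\bar n_i^{2\kappa}$, interpolation, and a dyadic decomposition using the anti‑concentration bound below. With these, $\sum_m\big(\prod_i w^{(i)}_{m_i}\big)(\sum_i m_i)^{|E|}N^{|V_{\mathrm{int}}|-|E_{\mathrm{int}}|}$ is bounded uniformly in $N$ for each $\Gamma$, the contribution of diagrams of a given $|E|-|V|$ forming a convergent series, so the whole $\mathcal G$‑part is $O(t_N/B_N+\delta_N)=o(1)$.

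It remains to handle $\mathcal B$, and this is where I expect the main difficulty. There at least one coordinate $m_{i_0}$ is $<B_N$; using the crude bounds of Proposition~\ref{prop:F_upper_bound} one gets, for each fixed $\Gamma$ and $N$ large, $|F_\Gamma(\{m_i\})|+|\widetilde F_\Gamma(\{m_i\})|\lesssim(\sum_i m_i)^{s}$ (the diagrams with $|E|>|V|$ contribute even less since $(\sum_i m_i)^3\ll N$), so it suffices that $\sum_{m\in\mathcal B}\big(\prod_i w^{(i)}_{m_i}\big)(\sum_i m_i)^{s}\to 0$. A union bound over which $i_0$ is small factors this through $\big(\sum_{m_{i_0}<B_N}w^{(i_0)}_{m_{i_0}}(\cdot)\big)\prod_{i\ne i_0}\big(\sum_{m_i}w^{(i)}_{m_i}(\cdot)\big)$, and the decay is provided by $\sum_{m_{i_0}<B_N}w^{(i_0)}_{m_{i_0}}\le q^{(i_0)}(m<B_N)\to 0$. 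The hard part will be the anti‑concentration estimate $q^{(i)}(m<r)\lesssim_{k_i}(r/\bar n_i)^{2}$ — i.e. the fact that the spin–composition law puts essentially no mass below its natural scale $\bar n_i$ — which feeds both the $\mathcal B$ bound and the weight estimates of the previous paragraph; I would prove it either by an explicit count of the reflected lattice paths underlying $c^{(i)}_m$, or via Paley–Zygmund applied dyadically together with $\mathbb E_{q^{(i)}}[m^2]\asymp\bar n_i^2$ and $\mathbb E_{q^{(i)}}[m^4]\lesssim\bar n_i^4$. The other delicate point is precisely the $N$‑power bookkeeping with variable scales: the naive bound $\sum_m w^{(i)}_m\le 1$ would lose a factor $N^{(s-1)/3}$, and it is the sharper $\sum_m w^{(i)}_m\asymp\bar n_i^{-1}$ (again tied to the scale of $q^{(i)}$) that restores the correct order of magnitude.
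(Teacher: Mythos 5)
Your proposal is correct and follows essentially the same route as the paper's proof: linearize each product of Chebyshev polynomials via the product formula, normalize by evaluating at $x=1$ so that $(m+1)c^{(i)}_m/\prod_j(n_{i,j}+1)$ is a probability weight, split the linearized sum at a threshold between $t_N$ and $\min_j n_{i,j}$, kill the low range using the smallness of the coefficient mass together with the uniform bound of Theorem~\ref{prop:U_asy}\ref{item:U_upper}, and apply the comparison of Theorem~\ref{prop:U_asy}\ref{item:U_comparison} on the high range. One remark: the anti-concentration estimate you single out as the hard step is exactly the paper's two-line counting bound $c(\{m_j\};k)\le\prod_{j=2}^{t-1}(m_j+1)$ (your ``explicit count'' option), which already gives $q^{(i)}(m\le M)\lesssim (M/\min_j n_{i,j})^2$ and suffices; the Paley--Zygmund alternative, by contrast, only lower-bounds the upper tail and would not yield the required vanishing of $q^{(i)}(m<B_N)$, so that route should be dropped.
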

\begin{proof}
We first prove the case $s = 1$ with $t_1 = t$ and $n_{1,j} = m_j$ for $j = 1,\ldots, t$. The case $s > 1$ follows similarly.

Using iteratively   the product formula for Chebyshev polynomials of the second kind 
\begin{equation}
    U_k(x) U_l(x) = \sum_{m=0}^{\min(k,l)} U_{|l-k| + 2m}(x),
\end{equation} which can be directly verified by   summing terms on the right-hand side via their definition in \eqref{TUdef},
    we derive the expansion
\begin{equation} \label{prodexpansion}
    \prod_{j=1}^t 
    U_{m_j}(x)
    = \prod_{j=1}^t 
    \sum_{k \geq 0} c(\{m_j\}; k) 
    U_k(x).
\end{equation} Here $c(\{m_j\}; k)$ 
  denotes the number of distinct ways to obtain  $U_k$
 in the expansion, defined recursively as follows:
\begin{equation} 
    c(\{m_j\}; k)= \sum_{i_1=0}^{ m_1 \wedge m_2}
       \sum_{i_2=0}^{ \Delta_1 \wedge m_3} \cdots
     \sum_{i_{t-1}=0}^{ \Delta_{t-2} \wedge m_t}
     \mathbf{1}(\Delta_{t-1} = k)
\end{equation}
with
\[
\begin{aligned}
    \Delta_1  = |m_1 - m_2| + 2 i_1,  \ 
    \Delta_2  = |\Delta_1 - m_3| + 2 i_2,  \  \ldots,  \  
    \Delta_{t-1}  = |\Delta_{t-2} - m_t| + 2 i_{t-1}.
\end{aligned}
\]

Evaluating at $x = 1$ on both sides of  \eqref{prodexpansion} and using the fact $U_k(1)=k+1$, we get
\begin{equation}\label{equ:4.31}
    \sum_{k \geq 0} c(\{m_j\}; k)(k+1) = \prod_{j=1}^t (m_j + 1).
\end{equation}
Combine  the  upper  bound  for coefficients 
\begin{align}
    c(\{m_j\}; k)
    &\leq \sum_{i_1=0}^{m_2} \sum_{i_2=0}^{m_3} \cdots \sum_{i_{t-2}=0}^{m_{t-1}} 1 
    = \prod_{j=2}^{t-1} (m_j + 1),
\end{align}
 and choose a large number $M$ satisfying   $t_N \ll M \ll m_0:=\min_{1\leq j\leq t} \{m_j + 1\}$, we  obtain 
  \begin{align}
  \sum_{0 \leq k \leq M} c(\{m_j\}; k)(k+1) 
  \leq \prod_{j=2}^{t-1} (m_j + 1) \cdot \sum_{0 \leq k \leq M} (k+1)  
 \leq \Big( \frac{M}{m_0} \Big)^2 \prod_{j=1}^t (m_j + 1).
  \label{equ:upper_M}
  \end{align}

By the expansion \ref{prodexpansion}, we     divide   the expectation into two sums 
\begin{align}
    \mathbb{E}\Big[\tr \prod_{j=1}^t   U_{m_j}\big(\frac{X}{2}\big)\Big]
    &=   \Big(\sum_{0\leq k \leq M}   + \sum_{k > M} \Big)  
    \,c(\{m_j\}; k)  \,  \mathbb{E}\big[\tr   U_k\big(\frac{X}{2}\big)\big].
\end{align}
For the sum over $k \leq M$,   combining  Theorem ~\ref{prop:U_asy}~\ref{item:U_upper} and~\eqref{equ:upper_M},  we obtain
\begin{equation}
    \Big| \sum_{0\leq k \leq M}    
    \,c(\{m_j\}; k)  \,  \mathbb{E}\big[\tr   U_k\big(\frac{X}{2}\big)\big]  \Big| 
    \leq C \Big( \frac{M}{m_0} \Big)^2 \prod_{j=1}^t (m_j + 1)  = o(\prod_{j=1}^t (m_j + 1)).
\end{equation}
While for the sum over $k > M \gg t_N$,  applying Theorem ~\ref{prop:U_asy}~\ref{item:U_comparison} yields 
\begin{align}
    \sum_{k > M} \,c(\{m_j\}; k)  \,  \mathbb{E}\big[\tr   U_k\big(\frac{X}{2}\big)\big] 
    &= \sum_{k > M}   c(\{m_j\}; k) 
    \Big( \mathbb{E}\big[\tr   U_k\big(\frac{\widetilde{X}}{2}\big)\big] + o(\prod_{j=1}^t (m_j + 1)) \Big).
\end{align}

Put the above two estimates together, combining both estimates yields
\begin{equation}
   \mathbb{E}\Big[\tr \prod_{j=1}^t   U_{m_j}\big(\frac{X}{2}\big)\Big] 
    = \mathbb{E}\Big[\tr \prod_{j=1}^t   U_{m_j}\big(\frac{\widetilde{X}}{2}\big)\Big]  + o\big(\prod_{j=1}^t (m_j + 1)\big).
\end{equation}
This completes the proof in the case $s=1$.

For the case of $s > 1$, we can proceed in a similar way  to deal with every   trace  term separately and thus   complete the proof.
\end{proof}

\subsection{Proof of Theorem \ref{thm:main_thm}: Gaussian case}
We state two standard properties of Chebyshev polynomials, while their proofs can be found in the  literature.

\begin{lemma}[{\cite[Lemma 11.2]{erdHos2011quantum}}]\label{lem:B.8}
 \begin{equation}\label{equ:U_upper}
    |U_n(1+x)|\le 2n\land \frac{1}{\sqrt{-2x}},\quad -1\le x\le 0,
\end{equation} 
and  for some    constant $C>0$, 
\begin{equation}\label{equ:U_lower}
\frac{1}{n+1}U_n(1+x)\ge e^{C n\sqrt{x}}, \quad x\in [0,0.1).
\end{equation}
\end{lemma}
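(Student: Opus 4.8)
The plan is to derive both bounds from the trigonometric and hyperbolic closed forms $U_n(\cos\theta)=\sin((n+1)\theta)/\sin\theta$ and $U_n(\cosh t)=\sinh((n+1)t)/\sinh t$, together with a handful of elementary inequalities; no probabilistic input is required.

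For \eqref{equ:U_upper}, I would fix $x\in[-1,0]$ and set $1+x=\cos\theta$ with $\theta\in[0,\pi/2]$, so that $U_n(1+x)=\sin((n+1)\theta)/\sin\theta$. The bound $|U_n(1+x)|\le 2n$ (for $n\ge1$) follows from the elementary estimate $|\sin((n+1)\theta)|\le(n+1)|\sin\theta|$, proved by induction on $n$ via the addition formula $\sin((n+1)\theta)=\sin(n\theta)\cos\theta+\cos(n\theta)\sin\theta$, combined with $n+1\le 2n$. For the bound $|U_n(1+x)|\le(-2x)^{-1/2}$, I would use $|\sin((n+1)\theta)|\le 1$ and the half-angle identity $-x=1-\cos\theta=2\sin^2(\theta/2)$, which yields $\sin\theta=2\sin(\theta/2)\cos(\theta/2)=\sqrt{-2x}\,\cos(\theta/2)$; since $\theta/2\in[0,\pi/4]$ one has $\cos(\theta/2)\in[1/\sqrt2,1]$, so $|U_n(1+x)|\le 1/\sin\theta$, which is $(-2x)^{-1/2}$ up to the harmless factor $1/\cos(\theta/2)\le\sqrt2$. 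Taking the smaller of the two bounds gives \eqref{equ:U_upper}.

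For \eqref{equ:U_lower}, I would fix $x\in[0,0.1)$ and set $1+x=\cosh t$ with $t\ge0$, so that $\tfrac{1}{n+1}U_n(1+x)=\tfrac{\sinh((n+1)t)}{(n+1)\sinh t}$. The identity $x=\cosh t-1=2\sinh^2(t/2)$ gives $t\asymp\sqrt x$ on this range (a direct check using $x<0.1$ produces explicit constants, e.g.\ $\sqrt x\le t\le 2\sqrt x$). Then I would estimate $\sinh t\le t\cosh t=(1+x)t\le 2t$ from above and $\sinh((n+1)t)\ge\tfrac12 e^{(n+1)t}\bigl(1-e^{-2(n+1)t}\bigr)$ from below, and also keep the convexity bound $\sinh((n+1)t)\ge(n+1)\sinh t$ (valid because $\sinh$ is convex on $[0,\infty)$ and vanishes at $0$). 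Combining these, $\tfrac{1}{n+1}U_n(1+x)\ge e^{c(n+1)t}\ge e^{Cn\sqrt x}$ for a universal $C>0$, which is \eqref{equ:U_lower}.

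The step that needs care is this last one: passing from $\tfrac{\sinh((n+1)t)}{(n+1)\sinh t}$ to a clean exponential $e^{c(n+1)t}$. When $(n+1)t$ is bounded below by a fixed constant, the passage is immediate from the two displayed bounds on $\sinh$, at the cost of absorbing the polynomial prefactor $1/\bigl((n+1)t\bigr)$ into a slightly smaller exponent; in the complementary regime $(n+1)t=O(1)$ one relies instead on the series $\sinh u/u=1+u^2/6+\cdots$ and the convexity inequality, which already force the ratio above $1$ — this is the only place where one must be mildly careful about how small $n\sqrt x$ is allowed to be, but the restriction is innocuous since the lower bound is only ever applied in the regime $n\sqrt x\gtrsim1$ (the Tracy--Widom scaling). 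Everything else is a routine chain of elementary estimates.
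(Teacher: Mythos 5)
The paper itself offers no proof of this lemma---it is quoted from Erd\H{o}s--Knowles \cite{erdHos2011quantum} with the remark that the proof is in the literature---and your elementary route through the closed forms $U_n(\cos\theta)=\sin((n+1)\theta)/\sin\theta$ and $U_n(\cosh t)=\sinh((n+1)t)/\sinh t$ is exactly the standard argument, so there is no methodological divergence to report. The individual steps you use are correct: the induction bound $|\sin((n+1)\theta)|\le(n+1)\sin\theta$, the half-angle identity, the comparison $t\asymp\sqrt{x}$ on $[0,0.1)$, and the upper/lower $\sinh$ estimates all check out.

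Be aware, though, that your two ``harmless'' caveats are in fact unavoidable, because the lemma as transcribed is not literally true, and you should state the versions you actually prove. (a) Your argument yields $|U_n(1+x)|\le (n+1)\wedge(-x)^{-1/2}$; the factor $1/\cos(\theta/2)\le\sqrt{2}$ cannot be removed, since at $x=-1$ and even $n\ge 2$ one has $|U_n(0)|=1>1/\sqrt{-2x}=1/\sqrt{2}$, so the stated constant $1/\sqrt{-2x}$ is unattainable and your weaker bound is the correct form (all downstream uses are insensitive to the constant). (b) Your lower-bound argument is valid only when $(n+1)t\gtrsim 1$, i.e.\ $n\sqrt{x}\gtrsim 1$, and in the complementary regime the normalized inequality is simply false: for $n=1$ one has $\tfrac12 U_1(1+x)=1+x$, which lies below $e^{C\sqrt{x}}=1+C\sqrt{x}+O(x)$ for every fixed $C>0$ once $x$ is small, so ``the ratio is above $1$'' cannot be upgraded to $e^{Cn\sqrt{x}}$ there. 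Either state the restriction $n\sqrt{x}\ge c$ explicitly (which, as you note, is the only regime in which the bound is invoked, e.g.\ in Corollary~\ref{coro:tail_bound}), or prove the unnormalized bound $U_n(1+x)\ge e^{Cn\sqrt{x}}$, which does hold on all of $[0,0.1)$ because $U_n(1+x)\ge U_n(1)=n+1$ absorbs the regime $n\sqrt{x}=O(1)$. With either fix, your write-up is a complete proof of the corrected form of the lemma.
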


By the above inequality  we  can afford  a right-tail  estimate  for the spectral norm. 
\begin{corollary}\label{coro:tail_bound} Under the assumption of Theorem \ref{thm:main_thm}, if $x>0$ and $x^2 t_N N^{-\frac{1}{3}}< c_0$ for some small absolute constant, then there exist constants $c, C>0$, such that
\begin{equation}
    \mathbb{P}\Big(\|\frac{1}{2}X\|_{\mathrm{op}}>1+xN^{-\frac{2}{3}}\Big)\le C(1+ e^{C \tau x})e^{-c x^{\frac{3}{2}}}.
\end{equation}
\end{corollary}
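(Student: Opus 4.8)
The plan is to exploit the exponential lower bound \eqref{equ:U_lower} from Lemma \ref{lem:B.8} together with Markov's inequality applied to a single Chebyshev trace. First I would observe that if $\|\tfrac{1}{2}X\|_{\mathrm{op}} > 1 + x N^{-2/3}$, then the largest eigenvalue $\lambda$ of $\tfrac{1}{2}X$ satisfies $\lambda > 1 + xN^{-2/3}$ (the same argument at the bottom edge is symmetric, costing only a factor $2$), and hence by the positivity of $U_n$ on $[1,\infty)$ and \eqref{equ:U_lower},
\begin{equation}
\operatorname{Tr}\,U_n\!\bigl(\tfrac{1}{2}X\bigr) \;\ge\; U_n(\lambda) \;\ge\; (n+1)\, e^{C n \sqrt{x N^{-2/3}}} \;=\; (n+1)\, e^{C n \sqrt{x}\, N^{-1/3}},
\end{equation}
provided $xN^{-2/3} \in [0,0.1)$, i.e.\ $x \le 0.1\,N^{2/3}$, which is implied by the hypothesis $x^2 t_N N^{-1/3} < c_0$ and $t_N \ge 1$ for $N$ large. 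I would then choose $n = \lfloor \tau' N^{1/3} \rfloor$ for a constant $\tau'$ small enough that $n$ falls in the admissible range of Theorem \ref{prop:U_asy}\ref{item:U_upper} (in particular $\theta t_N \ll N^{1/3}$, so $t_N \le n$ and $\theta t_N \le n$ hold); this makes $n\sqrt{x} N^{-1/3} \asymp \sqrt{x}$, so the lower bound becomes $\gtrsim n\, e^{c\sqrt{x}}$.

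The second step is Markov's inequality with the $s=1$ case of Theorem \ref{prop:U_asy}\ref{item:U_upper}:
\begin{equation}
\mathbb{P}\bigl(\|\tfrac12 X\|_{\mathrm{op}} > 1 + xN^{-2/3}\bigr) \;\le\; \frac{\mathbb{E}\bigl[\operatorname{Tr}\,U_n(\tfrac12 X)\bigr]}{(n+1)\,e^{c n\sqrt{x} N^{-1/3}}} \;\le\; \frac{(1+a^n)\,(Cn)\bigl\{ e^{Cn^{3/2}/\sqrt N} + (t_N/n)\,e^{C n^2 t_N/N} + e^{Cn^3/N}\bigr\}}{(n+1)\,e^{c\sqrt{x}}}.
\end{equation}
Wait — Theorem \ref{prop:U_asy}\ref{item:U_upper} is stated for the Gaussian matrix $X$, so strictly this corollary is being proved in the Gaussian case here (the general case presumably follows once the sub-Gaussian comparison of Section \ref{sec:section5} is in place); I would flag this. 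With $n \asymp N^{1/3}$ we have $n^{3/2}/\sqrt N = O(1)$, $n^3/N = O(1)$, and $n^2 t_N / N \asymp t_N N^{-1/3} \to 0$, so each of the three bracketed terms is $O(1)$, and $Cn/(n+1) = O(1)$; the surviving factors are the $(1+a^n)$ term and $e^{-c\sqrt x}$. For $a \le 1 + O(N^{-1/3})$ we get $a^n \le e^{O(nN^{-1/3})} = O(1)$, but to capture the $\tau$-dependence sharply one uses $a = 1 + \tau_j N^{-1/3}$ (the BBP scaling in Theorem \ref{thm:main_thm}), giving $a^n \le e^{\tau' \tau}$ up to constants, which produces the claimed factor $(1 + e^{C\tau x})$ — actually here only $e^{C\tau}$, so the $x$ in the exponent of that factor must come from optimizing $n$ in $x$ rather than fixing $n \asymp N^{1/3}$; see below.

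The refinement that yields the stated $x$-dependent exponents is to let $n$ grow with $x$: take $n \asymp N^{1/3}/\sqrt{x}$ so that $n\sqrt{x}N^{-1/3} \asymp 1$ is replaced by choosing instead $n$ as large as allowed and tracking $e^{-cn\sqrt{x}N^{-1/3}}$ against $a^n \le e^{C\tau n N^{-1/3}}$; balancing gives the net exponent $-c(\sqrt x - C\tau)\, nN^{-1/3}$, and with the maximal admissible $n$ this is $\asymp -c x^{3/2} + C\tau x$ once one also checks (via $x^2 t_N N^{-1/3} < c_0$) that the error terms $e^{Cn^2 t_N/N}$ and $e^{Cn^3/N}$ stay bounded for this choice of $n$. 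The main obstacle is precisely this last bookkeeping: one must choose $n = n(x,N)$ simultaneously large enough that $e^{-cn\sqrt x N^{-1/3}}$ beats $a^n$ and gives the $x^{3/2}$ rate, yet small enough (controlled by $x^2 t_N N^{-1/3} < c_0$) that none of the three error exponentials in \eqref{eq:Chebyshev_trace_bound} blow up; verifying that the constraint $x^2 t_N N^{-1/3} < c_0$ is exactly what makes this window nonempty, and extracting the clean form $C(1 + e^{C\tau x}) e^{-c x^{3/2}}$, is the only genuinely delicate point — the rest is Markov plus Lemma \ref{lem:B.8}.
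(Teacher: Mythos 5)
Your overall strategy (Chebyshev lower bound \eqref{equ:U_lower}, a moment bound from Theorem~\ref{prop:U_asy}~\ref{item:U_upper}, Markov's inequality, and then letting $n$ grow like $xN^{1/3}$, which is indeed what the paper does via $t\propto x$) is the right one, but the central device is broken. The inequality $\tr\,U_n\bigl(\tfrac12X\bigr)\ge U_n(\lambda)$ is false: the eigenvalues of $\tfrac12X$ lying in $[-1,1]$ each contribute a value in $[-(n+1),\,n+1]$, so the trace can be smaller than $U_n(\lambda)$ by as much as $N(n+1)$ (and for odd $n$ the eigenvalues below $-1$ contribute large \emph{negative} values, so the ``factor $2$'' remark about the bottom edge does not help). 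Worse, Markov's inequality cannot be applied to $\tr\,U_n(\tfrac12X)$ at all, since it is not a nonnegative random variable. If you repair this by adding $N(n+1)$ to make the observable nonnegative, the resulting tail bound acquires a factor $N$, which is exactly what Corollary~\ref{coro:tail_bound} (and later Theorem~\ref{thm:deviation}) is designed to avoid. The paper's proof circumvents this with an even-power trick: it works with the pointwise nonnegative quantity $\tr\bigl(\tfrac{1}{n+1}U_n(\tfrac{X}{2})\bigr)^4$, which dominates $e^{C_1 n\sqrt{(\|X/2\|_{\mathrm{op}}-1)_+}}$ term by term, and whose expectation is of order $(1+a^n)$ times the bracket in \eqref{eq:Chebyshev_trace_bound} — with no factor $N$ — thanks to the Chebyshev product formula and the weight identity \eqref{equ:4.31} combined with Theorem~\ref{prop:U_asy}~\ref{item:U_upper}. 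This is the missing idea in your proposal.

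There is a second, quantitative gap in your bookkeeping at $n\asymp xN^{1/3}$. When quoting \eqref{eq:Chebyshev_trace_bound} you dropped the prefactor $\bigl((Cn^2t_N/N)^{[n/t_N]}\wedge 1\bigr)$ multiplying $e^{Cn^3/N}$, and your assertion that $e^{Cn^3/N}$ ``stays bounded'' for the maximal admissible $n$ is not true: with $n\asymp xN^{1/3}$ this term equals $e^{Cx^3}$, and the hypothesis $x^2t_NN^{-1/3}<c_0$ allows $x$ up to order $N^{1/6}$, so $e^{Cx^3}$ can overwhelm $e^{-cx^{3/2}}$. The paper kills this term precisely through the prefactor you omitted: writing $\epsilon=t_NN^{-1/3}$, under $x^2\epsilon$ small and $x^3\lesssim x\epsilon^{-1}$ one gets $(C_{10}x^2\epsilon)^{C_{11}x\epsilon^{-1}}e^{C_{12}x^3}\le e^{-C_{12}x^3}$, which is where the hypothesis $x^2t_NN^{-1/3}<c_0$ actually enters. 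Your flag that the argument is carried out in the Gaussian case is consistent with the paper (the corollary is proved in the Gaussian section), and your identification of the optimization $n\propto xN^{1/3}$ producing the factors $e^{-cx^{3/2}}$ and $(1+e^{C\tau x})$ matches the paper's choice $t=\tfrac12\sqrt{C_1C_2^{-1}}\,x$; but as written the proposal neither justifies the Markov step nor controls the $e^{Cn^3/N}$ term.
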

\begin{proof}
By \eqref{equ:4.31} and the upper bound in Theorem  \ref{prop:U_asy} \ref{item:U_upper}, if $n\gg t_N$, we have
\begin{align}
    \mathbb{E}\big[\tr (\frac{1}{n+1}U_{n}(\frac{H}{2}))^4\big]\le C(1+a^n)\left\{
e^{C \frac{n^{3/2}}{\sqrt{N}}}
+ e^{C \frac{n^2 t_N}{N}}
+ \left(\frac{C n^2 t_N}{N}\right)^{\left\lfloor\frac{n}{t_N}\right\rfloor} e^{C \frac{n^3}{N}}
\right\}.
\end{align}
Thus using \eqref{equ:U_lower} we have for some constants $C_1, C_2$,  
    \begin{align}
        \mathbb{E}\Big[e^{C_1n\sqrt{(\|\frac{1}{2}X\|_{\mathrm{op}}-1})_{+}}\Big]&\le \mathbb{E}\Big[\tr (\frac{1}{n+1}U_{n}(\frac{H}{2}))^4\Big]\notag\\
        &\le C(1+a^n)\left\{
e^{C \frac{n^{3/2}}{\sqrt{N}}}
+ e^{C \frac{n^2 t_N}{N}}
+ \left(\frac{C n^2 t_N}{N}\right)^{\left\lfloor\frac{n}{t_N}\right\rfloor} e^{C \frac{n^3}{N}}
\right\}.
    \end{align}

    So  take $t_N\ll n=tN^{\frac{1}{3}}$, let $\epsilon=t_N N^{-1/3}$ and we have
    \begin{equation}
        \mathbb{E}\Big[e^{C_1tN^{\frac{1}{3}}\sqrt{\|\frac{1}{2}X\|_{\mathrm{op}}-1}}\Big]\le C_2(1+e^{t\tau})(e^{C_2t^{\frac{3}{2}}}+e^{C_3 \epsilon t^2}+(C_4 t^2\epsilon)^{\frac{t}{\epsilon}}e^{C_5 t^3} ).
    \end{equation}
    By Markov inequality and then by Theorem \ref{prop:U_asy}\ref{item:U_upper}, we have
    \begin{align}
        \mathbb{P}\Big(\|\frac{X}{2}\|_{\mathrm{op}}>1+xN^{-\frac{2}{3}}\Big)&\le \mathbb{E}\Big[e^{C_1t N^{\frac{1}{3}}\sqrt{(\|\frac{X}{2}\|_{\mathrm{op}}-1)_+}}\Big]e^{-C_1t\sqrt{x}} \nonumber\\
        &\le C_2(1+e^{t\tau})\big(e^{C_2t^{\frac{3}{2}}}+e^{C_3 \epsilon t^2}+(C_4 t^2 \epsilon)^{\frac{t}{\epsilon}}e^{C_5 t^3} \big)e^{-C_1t\sqrt{x}}.
    \end{align}
    Take $t=\frac{1}{2}\sqrt{(C_1C_2^{-1})}x$, we obtain
    \begin{equation}
        \mathbb{P}\Big(\|\frac{X}{2}\|_{\mathrm{op}}>1+xN^{-\frac{2}{3}}\Big)\le C_5(1+ e^{C_6 \tau x})(e^{-C_7 x^{\frac{3}{2}}}+e^{C_8 \epsilon x^2-C_9 x^{\frac{3}{2}}} + (C_{10} x^2 \epsilon)^{C_{11} x\epsilon^{-1}}e^{C_{12} x^3}).
    \end{equation}
When  $\epsilon \le \frac{C_9}{2C_8\sqrt{x}}$ and $C_{10}x^2\epsilon \le e^{-1}$ and $2C_{12}x^3 \le C_{11}x\epsilon^{-1}$, we have
\begin{equation}
    C_8 \epsilon x^2-C_9 x^{\frac{3}{2}}\le -\frac{C_{9}}{2}x^{\frac{3}{2}},~~~~(C_{10}x^2\epsilon)^{C_{11} x\epsilon^{-1}}e^{C_{12} x^3}\le e^{-C_{12} x^3}\le C_{13}e^{-C_{14} x^{\frac{3}{2}}}.
\end{equation}
   
    This thus completes  the proof.
\end{proof}
\begin{lemma}\label{lem:B.9_Chebyshev}
For any  fixed  number $t>0$, let $n = \lfloor tM\rfloor$, then for  $y\in\mathbb{R}$,
\begin{equation}
\lim_{M\to\infty}
\frac{1}{n+1}\,
U_n\!\Bigl(1 + \frac{y}{2M^2}\Bigr)
\;=\;
\frac{\sin\bigl(t\sqrt{-y}\bigr)}{t\sqrt{-y}},
\end{equation}
\end{lemma}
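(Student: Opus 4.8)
The plan is to reduce everything to the elementary trigonometric identity $U_n(\cos\theta)=\sin((n+1)\theta)/\sin\theta$ recorded in \eqref{TUdef}, together with its hyperbolic analogue, and then to track the scaling of the relevant angle as $M\to\infty$. The case $y=0$ is immediate, since $U_n(1)=n+1$, so that $\frac{1}{n+1}U_n(1)=1$, matching $\lim_{z\to 0}\frac{\sin z}{z}=1$.

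For $y<0$, I would observe that $1+\frac{y}{2M^2}\in(0,1)$ once $M$ is large, so one may write $1+\frac{y}{2M^2}=\cos\theta_M$ with $\theta_M\in(0,\pi/2)$; then $\frac{1}{n+1}U_n(1+\frac{y}{2M^2})=\frac{\sin((n+1)\theta_M)}{(n+1)\sin\theta_M}$. The half-angle identity $2\sin^2(\theta_M/2)=1-\cos\theta_M=\frac{-y}{2M^2}$ gives $\sin(\theta_M/2)=\frac{\sqrt{-y}}{2M}$, hence $\theta_M=\frac{\sqrt{-y}}{M}+O(M^{-3})$. Since $n=\lfloor tM\rfloor$ forces $n+1=tM+O(1)$, this yields $(n+1)\theta_M\to t\sqrt{-y}$, and because $\sin\theta_M=\theta_M(1+O(M^{-2}))$ one also gets $(n+1)\sin\theta_M\to t\sqrt{-y}$; continuity of $\sin$ then produces the claimed limit $\frac{\sin(t\sqrt{-y})}{t\sqrt{-y}}$.

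For $y>0$ I would run the identical argument with the hyperbolic representation $U_n(\cosh\phi)=\sinh((n+1)\phi)/\sinh\phi$, obtained from \eqref{TUdef} via the substitution $\theta\mapsto i\phi$: writing $1+\frac{y}{2M^2}=\cosh\phi_M$ and using $2\sinh^2(\phi_M/2)=\cosh\phi_M-1=\frac{y}{2M^2}$ gives $\phi_M=\frac{\sqrt{y}}{M}+O(M^{-3})$, so the limit is $\frac{\sinh(t\sqrt{y})}{t\sqrt{y}}$, which equals $\frac{\sin(t\sqrt{-y})}{t\sqrt{-y}}$ upon recalling $\sqrt{-y}=i\sqrt{y}$ and $\sin(iz)=i\sinh z$. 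Since $z\mapsto\frac{\sin z}{z}$ is even and entire, the right-hand side is well defined for every real $y$ regardless of the branch of the square root.

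No step here is genuinely difficult; the only bookkeeping point needing a little attention is the $O(M^{-1})$ error in $(n+1)\theta_M$ arising from the floor in $n=\lfloor tM\rfloor$, together with checking that in the two non-degenerate regimes the limiting denominator $t\sqrt{|y|}>0$ is nonzero so the ratio of sines (resp. sinh's) genuinely converges. An alternative would be a single unified argument via the substitution $z=\cos(w/M)$ using that $U_n$ is an entire function of its argument, but I expect the short case analysis above to be the cleanest presentation.
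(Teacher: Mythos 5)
Your proof is correct and follows essentially the same route as the paper: both rest on the representation $U_n(\cos\theta)=\sin((n+1)\theta)/\sin\theta$ (and its hyperbolic analogue), expand the angle as $\sqrt{-y}/M$ plus lower-order corrections, and pass to the limit using $n=\lfloor tM\rfloor$. The only cosmetic difference is that the paper packages the two sign regimes into a single complex angle $\Theta$, whereas you treat $y<0$ and $y>0$ separately; the underlying computation is identical.
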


This lemma addresses the scaling limit at the hard edge, which  occurs  in   orthogonal polynomials and random matrices. While its proof is well-established in the literature, we provide a detailed derivation here for completeness.

\begin{proof}
Recalling  the representation for  the Chebyshev polynomial of the second kind  
\begin{equation}
U_n(x)
=
\begin{cases}
\dfrac{\sin\bigl((n+1)\theta\bigr)}{\sin\theta}, &x=\cos\theta,\;\theta\in(0,\pi),\\[1em]
\dfrac{\sinh\bigl((n+1)\varphi\bigr)}{\sinh\varphi},
&x=\cosh\varphi,\;\varphi>0,
\end{cases}
\end{equation}
  in either regime we may set
\begin{equation}
 x = 1 + \frac{y}{2M^2}, 
 \quad
 \Theta = 
 \begin{cases}
 \arccos(x), &y<0,\\
 i\,\mathrm{arccosh}(x), &y>0.
 \end{cases}
\end{equation}

So that in unified form
\begin{equation}
U_n(x) = \frac{\sin\bigl((n+1)\Theta\bigr)}{\sin\Theta},
\qquad
\Theta = \frac{\sqrt{-y}}{M}\,\Bigl(1 -\frac{y}{24M^2}+O(y^2M^{-4})\Bigr),
\end{equation}
Therefore, wo obtain 
\begin{equation}
\begin{aligned}
\frac{1}{n+1}U_n(x)&=\frac{\sin\bigl((n+1)\Theta\bigr)}{(n+1)\Theta}\cdot \frac{\Theta}{\sin \Theta}\\
&= (1+O(yM^{-2}))\frac{\sin\bigl((n+1)\Theta\bigr)}{(n+1)\Theta}\\ 
&=\frac{1+O(yM^{-2})}{t\sqrt{-y}}\sin(t\sqrt{-y})+O(|y|^{\frac{1}{2}}M^{-2}).
\end{aligned}
\end{equation}
As $M\rightarrow \infty$, we finish the proof.
\end{proof}

Now we are ready to prove the Gaussian case of  Theorem~\ref{thm:main_thm}.

\begin{proof}[Proof of Theorem~\ref{thm:main_thm}]  We first prove  the Gaussian case and leave the general case in the next section. 

   In Theorem~\ref{prop:super_4},   take   $k_i = t$ for $1 \le i \le s$,  and also  take $n_{i,j} = m_i=[\tau_i N^{1/3}]$ for $1 \le i \le s$, $1 \le j \le t$, we  then obtain 
    \begin{equation}\label{equ:4.64}
        \mathbb{E}\left[\prod_{i=1}^{s} \tr \Big(\frac{1}{m_i+1} U_{m_i}\big(\frac{X}{2}\big)\Big)^t\right]
        = \mathbb{E}\left[\prod_{i=1}^{s} \tr \Big(\frac{1}{m_i+1} U_{m_i}\big(\frac{\widetilde{X}}{2}\big)\Big)^t\right]+ o(1).
    \end{equation}
    Similarly, for $t = 4,8,10$,  we have
    \begin{multline}\label{equ:U7=U7}
\mathbb{E}\left[\prod_{i=1}^{s} \tr \bigg(\Big(\frac{1}{m_i+1} U_{m_i}\big(\frac{X}{2}\big)\Big)^{t-1}  \Big(\frac{1}{m_i+2} U_{m_i+1}\big(\frac{X}{2}\big)\Big)\bigg)
\right]\\
=\mathbb{E}\left[\prod_{i=1}^{s} \tr \bigg(\Big(\frac{1}{m_i+1} U_{m_i}\big(\frac{\widetilde{X}}{2}\big)\Big)^{t-1}  \Big(\frac{1}{m_i+2} U_{m_i+1}\big(\frac{\widetilde{X}}{2}\big)\Big)\bigg)
\right]+o(1). 
    \end{multline}

    Now consider the case $s=1$.  Let $n = [\tau N^{1/3}]$ and let   $\lambda_1 \le \cdots \le \lambda_N$ be the  eigenvalues of $X$. Given any  small constant  $\delta > 0$,  split the trace into three parts and we have   
   \begin{equation}
       \mathbb{E}\left[\tr \Big(\frac{1}{n+1} U_n(X)\Big)^8 \right] = \mathbb{E}\left[ \sum_{i=1}^N \Big(\frac{1}{n+1} U_n\big(\frac{\lambda_i}{2}\big)\Big)^8 \right] = \Sigma_1 + \Sigma_2 + \Sigma_3,
    \end{equation}
    where
    \begin{align}
        \Sigma_1 =   \sum_{i: |\lambda_i| \le 2 - N^{-\frac{5}{12} + \delta}} &\Big(\frac{1}{n+1} U_n\big(\frac{\lambda_i}{2}\big)\Big)^8,  \quad \Sigma_2 =   \sum_{i:|\lambda_i| \ge 2 + N^{-\frac{2}{3} + \delta}} \Big(\frac{1}{n+1} U_n\big(\frac{\lambda_i}{2}\big)\Big)^8, \\
        \Sigma_3 &=   \sum_{i: 2 - N^{-\frac{5}{12} + \delta} < |\lambda_i| < 2 + N^{-\frac{2}{3} + \delta}} \Big(\frac{1}{n+1} U_n\big(\frac{\lambda_i}{2}\big)\Big)^8.
    \end{align}
   For the first   sum,   by Lemma~\ref{lem:B.8}, we can obtain  
    \begin{equation}
        \Sigma_1 \le N \cdot N^{-8/3} \cdot N^{4(5/12 - \delta)} = O(N^{-4\delta}).
    \end{equation}
   Here we emphasize that we can  improve $N^{-5/12 + \delta}$ to $N^{-2/3 + \delta}$ by replacing  the exponent $8$   by a   sufficiently large constant $2k$.

    The second sum   $\Sigma_2$   is negligible    due to the exponential tail bound   in    Corollary~\ref{coro:tail_bound} and the uniform estimate in Theorem~\ref{prop:U_asy}~\ref{item:U_upper}. Consequently,   both  the probability and the expected mass of eigenvalues lying   in $[2 + N^{-2/3 + \delta}, \infty)$ decay exponentially.

    Lastly, for the third sum $\Sigma_3$,   we need to introduce a change of variables   $\lambda_i = 2 + N^{-2/3} y_i$ and apply Lemma~\ref{lem:B.9_Chebyshev}.  Combine these with     the matching moment estimates in \eqref{equ:4.64} and \eqref{equ:U7=U7}, we conclude from   the continuity theorem~\cite[Theorem B.10]{liu2023edge}  that in the supercritical regime, all $k$-point correlation functions of $X$ converge weakly to those of $\widetilde{X}$.

This indeed completes the proof by 
following the almost same strategy   pioneered by Soshnikov~\cite{soshnikov1999universality} and Sodin~\cite{sodin2010spectral}.
    \end{proof}

A related and interesting corollary is the evaluation of the sinc transform for correlation functions of (deformed) Airy point processes, including both real and complex cases. In the non-deformed setting, this result was first obtained by Sodin \cite{sodin2010spectral}.

\begin{definition}[Limit diagram function]
Given a typical diagram $\Gamma$,  for      $t_1,\dots,t_s>0$ introduce   a family of  linear constraint   inequalities    for edge–weighted variables $\{\alpha_e\geq 0\}$
\begin{equation}
\mathfrak{C}(\{\alpha_e\})
:\quad
\sum_{e\in E(\Gamma)} c_i(e)\,\alpha_e \;\le\; t_i, 
\quad i=1,\dots,s,
\end{equation}
with \(c_i(e)\in \{0,1,2\}\) counting  the times  of   \(e\) appearing in $\partial D_i$, a diagram 
 function   is defined as 
 \begin{equation}\label{eq:cor-F-integral}
\mathcal{F}_\Gamma(t_1,\dots,t_s)
\;=\;
\frac{C_\Gamma}{\,t_1\cdots t_s\,}\,
\int \cdots \int_{\mathfrak{C}(\{\alpha_e\})}
\prod_{b_j}\Bigl(\sum_{i=1}^q e^{\tau_i\,\ell_{b_j}}\Bigr)
\,\prod_{e\in E(\Gamma)}d\alpha_e,
\end{equation}
where for each open cycle \(b_j\subset E_b(\Gamma)\)
\begin{equation}
    \ell_{b_j}:\;=\;\sum_{e\in b_j}\alpha_e,
\end{equation} 
and \(C_\Gamma>0\) is a combinatorial constant depending only on \(\Gamma\) defined by
\begin{equation}
    C_\Gamma=\lim_{n_j\rightarrow\infty}\frac{\#\{(w_e)_{e\in E(\Gamma)}|
        t_j, w_e\in \mathbb{N},~t_j \ge 0,\; w_e \ge 1,~
        2t_j + \sum\limits_{e \in \partial D_j} w_e = n_j,~ j=1,\dots,s\}}{\mathrm{Vol}\{(\alpha_e)_{e\in E(\Gamma)}|\alpha_e\in \mathbb{R}^+, \sum_{e\in E(\Gamma)} c_i(e)\,\alpha_e \;\le\; n_j, 
\quad j=1,\dots,s\}}.
\end{equation}
\end{definition}
\begin{corollary} For any given real numbers  $t_1,\dots,t_s>0$,    $\tau_1,\dots,\tau_q$ and  $a_{q+1}, \ldots,a_{r}\in (-a,a)$, suppose that 
\begin{equation}
    n_i\,N^{-1/3}\;\longrightarrow\;t_i, \quad i=1,\ldots,s,
\end{equation}
and  \begin{equation}
N^{1/3}(a_j-1)\;\longrightarrow\;\tau_j,
\quad j=1,\dots,q, \end{equation}
as \(N\to\infty\).   Then the  scaled cumulants 
\begin{equation}\label{eq:cor-K-sum}
\lim_{N\to\infty}
\frac{\kappa_{\widetilde X}(n_1,\ldots, n_s)}{(n_1+1) \cdots (n_s+1)}=\widetilde{\mathcal{K}}(t_1,\dots,t_s):
=\sum_{\Gamma\in\mathcal{T}_s}
\mathcal{F}_\Gamma(t_1,\dots,t_s).
\end{equation}
\end{corollary}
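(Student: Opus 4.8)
The plan is to start from the exact diagrammatic identity for the cumulants of $\widetilde X$ and pass to the continuum limit one diagram at a time, controlling the tail over diagrams uniformly in $N$ by the estimates already in hand. Applying Lemma~\ref{thm:u=F} to the homogeneous profile $\sigma_{ij}^2=1/N$ gives
\[
\kappa_{\widetilde X}(n_1,\dots,n_s)=\sum_{\Gamma\in\mathscr{D}_{s;\beta}^*}\widetilde F_\Gamma(\{n_j\}).
\]
Since $n_i+1\sim t_iN^{1/3}$, we have $\prod_i(n_i+1)\sim\big(\prod_it_i\big)N^{s/3}$, so it suffices to study $\widetilde F_\Gamma(\{n_j\})/N^{s/3}$. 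For any connected diagram the degree constraints force $2|E|\ge 3|V|-s$, hence $|E|\le 3\ell+s$ with $\ell=|E|-|V|$, equality holding exactly for typical diagrams; moreover $|E|\le 3\ell+s-1$ whenever $\Gamma$ is not typical. Because $t_N\ll n\sim N^{1/3}$, Proposition~\ref{prop:F_upper_bound} yields $\widetilde F_\Gamma=O_\Gamma\big(n^{|E|}N^{-\ell}\big)=O_\Gamma\big(N^{|E|/3-\ell}\big)$, which is $O(N^{s/3})$ for typical $\Gamma$ and $O(N^{(s-1)/3})$ for every non-typical $\Gamma$. Thus after dividing by $N^{s/3}$ each non-typical diagram contributes $O(N^{-1/3})$; since for each fixed $\ell$ there are only finitely many diagrams (again by $|E|\le 3\ell+s$) and the tail $\sum_{\ell>L}$ is uniformly small by the absolutely convergent bound of Theorem~\ref{prop:U_asy}\ref{item:U_upper} (the $\ell$-series there being of the form $\sum_\ell (C\tau^3)^\ell/(2\ell)!$), one may interchange $\lim_N$ with $\sum_\Gamma$ and reduce to computing $\lim_N\widetilde F_\Gamma(\{n_j\})/\prod_i(n_i+1)$ for each fixed typical $\Gamma\in\mathcal T_s$.

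For such a $\Gamma$, the homogeneity of the profile makes each interior edge contribute the factor $1/N$ independently of the vertex labels, so the sum over interior vertex labels produces the prefactor $N^{|V_{\mathrm{int}}|-|E_{\mathrm{int}}|}=N^{-\ell}$. The remaining sum over boundary-vertex labels factorizes along the boundary circles $b_j\subset E_{\mathrm b}(\Gamma)$: by the spectral decomposition $A_N=\sum_{t=1}^r a_t\mathbf q_t\mathbf q_t^*$ and orthonormality of the $\mathbf q_t$, the contribution of a cycle collapses exactly, with no cross terms, to $\tr\big(\prod_{e\in b_j}A^{w_e}\big)=\sum_{t=1}^r a_t^{\ell_{b_j}}$ with $\ell_{b_j}=\sum_{e\in b_j}w_e$. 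Rescaling the edge weights by $\alpha_e=w_eN^{-1/3}$, the equality constraints $2t_j+\sum_{e\in\partial D_j}w_e=n_j$ (with slack $2t_j\ge 0$) become in the limit the polytope $\mathfrak C(\{\alpha_e\})$; for $t\le q$ one has $a_t^{\ell_{b_j}}=(1+\tau_tN^{-1/3})^{\ell_{b_j}}\to e^{\tau_t\ell_{b_j}}$ in continuum notation, while for $t>q$ one has $|a_t|^{\ell_{b_j}}\to 0$ pointwise on $\{\alpha_{b_j}>0\}$. Interpreting the $|E|$-fold weight sum as a Riemann sum of mesh $N^{-1/3}$ over $\mathfrak C$ with the bounded, a.e.\ continuous integrand $\prod_{b_j}\big(\sum_{i=1}^q e^{\tau_i\ell_{b_j}}\big)$, dominated convergence gives that it equals $N^{|E|/3}$ times $\int_{\mathfrak C}\prod_{b_j}\big(\sum_{i=1}^q e^{\tau_i\ell_{b_j}}\big)\prod_ed\alpha_e$, up to an explicit combinatorial constant $C_\Gamma$ absorbing the factor $2^{-s}$ from the parity condition $n_j-\sum_e c_j(e)w_e\in 2\mathbb Z_{\ge0}$ and any ribbon-graph multiplicity. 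Collecting powers, $N^{|E|/3}\cdot N^{-\ell}=N^{s/3}$ since $|E|=3\ell+s$, so dividing by $\prod_i(n_i+1)\sim\big(\prod_it_i\big)N^{s/3}$ leaves exactly $\mathcal F_\Gamma(t_1,\dots,t_s)$. Summing over $\Gamma\in\mathcal T_s$ with the uniform tail control from the first step yields $\widetilde{\mathcal K}(t_1,\dots,t_s)=\sum_{\Gamma\in\mathcal T_s}\mathcal F_\Gamma(t_1,\dots,t_s)$.

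The main obstacle is the continuum limit for a single typical diagram: one must show that the multi-index sum over the integer edge weights $w_e$, subject to the $s$ affine equality constraints together with the integrality and parity of the slack variables $t_j$, is genuinely a convergent Riemann sum for the volume integral over $\mathfrak C$, and that this convergence is compatible with the simultaneous limits $a_t^{w}\to e^{\tau_t\alpha}$ for $t\le q$ and $|a_t|^{w}\to 0$ for $t>q$; the subdominant eigenvalues force one to localize away from the faces $\{\alpha_{b_j}=0\}$ of $\mathfrak C$ and invoke dominated convergence there, and the constant $C_\Gamma$ must be bookkept consistently in both the orientable ($\beta=2$) and non-orientable ($\beta=1$) conventions. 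The remaining ingredients---absolute summability over diagrams and the vanishing of all non-typical contributions---are direct consequences of Proposition~\ref{prop:F_upper_bound}, Lemma~\ref{diagramno}, and Theorem~\ref{prop:U_asy}\ref{item:U_upper}.
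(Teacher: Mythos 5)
Your proposal is correct and follows essentially the same route as the paper's own proof: reduce to connected typical diagrams, factor the boundary-label sum over cycles via the spectral decomposition of $A_N$ into $\sum_i a_i^{m_j}$, and pass from the constrained integer weight sums to the volume integral over $\mathfrak{C}(\{\alpha_e\})$ with $a_i^{m_j}\to e^{\tau_i\ell_{b_j}}$, after normalizing by $(n_1+1)\cdots(n_s+1)\sim N^{s/3}t_1\cdots t_s$. The only difference is that you spell out steps the paper leaves implicit (the degree-count argument showing non-typical diagrams are $O(N^{-1/3})$ after rescaling, the interchange of the $N\to\infty$ limit with the diagram sum via the uniform tail bounds, and the treatment of the weight function inside the Riemann sum together with dominated convergence for the subcritical eigenvalues), which is a faithful and slightly more careful rendering of the same argument.
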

\begin{proof}
Consider GOE/GUE case only and  only connected “typical’’ diagrams \(\Gamma\in\mathcal{T}_s\) contribute. By \eqref{equ:F_formula}, we have
\begin{equation}\label{pf1}
\begin{aligned}
        F_{\Gamma}(\{n_j\}) 
        &=N^{|V_{\mathrm{int}}|-|E_{\mathrm{int}}|}\sum_{\eta: V_{\mathrm{b}}(\Gamma) \to [N]}
    \sum_{\substack{
        t_j \ge 0,\; w_e \ge 1 \\
        2t_j + \sum\limits_{e \in \partial D_j} w_e = n_j
    }}\prod_{(z, w) \in E_b}
        (A^{w_e})_{\eta(z)\eta(w)},    
\end{aligned}
\end{equation}
Moreover, summing over boundary labels \(\eta:V_{\mathrm{b}}\to[N]\) factorizes into cycles \(b_j\)
\begin{equation}\label{pf3}
\sum_{\eta:V_{\mathrm{b}}\to[N]}
\prod_{(z,w)\in E_{\mathrm{b}}}(A^{w_e})_{\eta(z)\eta(w)}
= \prod_{j=1}^s \Bigl(\sum_{i=1}^r a_i^{m_j}\Bigr),
\end{equation}
where \(m_j=\sum_{e\in b_j}w_e\).  Next, let
\begin{equation}\label{pf4}
H_{\Gamma}(n_1,\dots,n_s)
=\sum_{\substack{t_j\ge0,\,w_e\ge1\\2t_j+\sum_{e\in\partial D_j}w_e=n_j}}1,
\end{equation}
counting these integer solutions.  As \(n_j=N^{1/3}t_j\to t_j\), a lattice-point to volume argument yields
\begin{equation}\label{pf5}
N^{-\lvert E(\Gamma)\rvert/3} H_{\Gamma}(n_1,\dots,n_s)
\;\longrightarrow\;
C_{\Gamma}\int_{\mathfrak{C}(\{\alpha_e\})}d\alpha_e.
\end{equation}

Finally, since \(a_i=1+\tau_iN^{-1/3}\) and \(m_j=N^{1/3}\ell_{b_j}\), we have
\begin{equation}\label{pf6}
\prod_{j=1}^s\Bigl(\sum_{i=1}^r a_i^{m_j}\Bigr)
\;\longrightarrow\;
\prod_{j=1}^s\Bigl(\sum_{i=1}^q e^{\tau_i\,\ell_{b_j}}\Bigr).
\end{equation}
Combining \eqref{pf1}–\eqref{pf6}, dividing by \( (n_1+1)\cdots (n_s+1)\sim N^{s/3}t_1\cdots t_s\), and letting \(N\to\infty\) gives \eqref{eq:cor-F-integral}.
\end{proof}

\section{Universality for sub-Gaussian inhomogenous  matrices}\label{sec:section5}

In this section, we extend the edge universality results for Gaussian IRM established previously to general $\theta$-sub-Gaussian cases, thereby completing the proof of Theorem \ref{thm:main_thm}.   For clarity of exposition, we restrict our analysis to the real symmetric case without deformation. We note that all arguments presented herein extend naturally to both deformed  and Hermitian settings; see  \eqref{prop:path_expansion_H_plus_A} for a deformed version of Chebyshev path expansion.

\begin{definition}
For a  GOE matrix   $\widetilde{W}=(\widetilde{W}_{ij})$ and for $\theta\geq 1$, we  
 define a \(\theta\)-GOE matrix \(\widehat{W} = (\widehat{W}_{ij})\) by
\begin{equation}
    \widehat{W}_{ij} = B_{ij} \cdot \widetilde{W}_{ij},
\end{equation}
\(\{B_{ij} \}_{i\leq j}\)
are   independent Bernoulli random variables,    distributed    as 
$\sqrt{\theta} \cdot \mathrm{Bern}(\theta^{-1})$. 
Also, 
let \( W_N = (W_{ij}) \) be a symmetric \emph{sub-Gaussian} matrix with  parameter \(\theta \geq 1\) as in 
Definition \ref{def:inhomo}, set 
\begin{equation}
    H=\Sigma_N\circ W_N,~~~\widehat{H}=\Sigma_N\circ \widehat{W}_N.
\end{equation}

\end{definition}

We need to introduce some necessary notations  where the first two ones have been defined in  \cite[Section 6.1]{EK11Quantum}.
\begin{definition}  \label{def:nonbacktracking}
\begin{itemize}
    \item[(i)] 

For $n\ge 2$,  we define  $V_n$ as the $n$-th non-backtracking power of $H$,
\begin{equation}\label{equ:def_V}
    \begin{aligned}(V_n)_{xy}:=\sum_{x_0,x_1,\ldots,x_n}\delta_{x,x_0}\delta_{x_n,y}\left[\prod_{i=0}^{n-2}\mathbf{1}(x_i\ne x_{i+2})\right] H_{x_0x_1}H_{x_1x_2}\cdots H_{x_{n-1}x_n},
    \end{aligned}
\end{equation}
while   $V_0:=\mathbb{I}, V_1:=H$ and $V_n:=0$ for $n<0$.

\item[(ii)] We introduce  two   matrices $\Phi_2$ and $\Phi_3$ through
\begin{equation}
    (\Phi_2)_{xy}:=\delta_{xy}\sum_{z}(|H_{xz}|^2-\sigma_{xz}^2), \quad (\Phi_3)_{xy}:=-|H_{xy}|^2H_{xy},
\end{equation}
and  also introduce the shorthand $\underline{\Phi_3V_n}$ defined by
\begin{equation}
(\underline{\Phi_3V_n})_{xy}:=\sum_{x_0,x_1,\ldots x_{n-2}}\delta_{x,x_0}\delta_{x_{n-2},y}\left[\prod_{i=0}^{n-4}\mathbf{1}(x_i\ne x_{i+2})\right] (\Phi_{3})_{x_0x_1}H_{x_1x_2}\cdots H_{x_{n-3}x_{n-2}},
\end{equation}
while by  convention  $\underline{\Phi_3V_0}=\underline{\Phi_3V_1}=\underline{\Phi_3V_2}=0$,  $\underline{\Phi_3V_3}=\Phi_3$ and $\underline{\Phi_2 V_n}:=\Phi_2V_{n-2}$. 

\item[(iii)]  Similarly, we introduce 
\begin{equation}\label{equ:Phi+}
    (\widehat{\Phi}_3)_{xy}:=|\widehat{H}_{xy}|^2\widehat{H}_{xy}, ~~(\widehat{\Phi}_2)_{xy}:=\delta_{xy}\sum_{z}B_{xz}^2\sigma_{xz}^2(|\widetilde{W}_{xz}|^2-1),
\end{equation}
and 
\begin{equation}
(\underline{\widehat{\Phi_3V_n}})_{xy}:=\sum_{x_0,x_1,\ldots x_{n-2}}\delta_{x,x_0}\delta_{x_{n-2},y}\left[\prod_{i=0}^{n-4}\mathbf{1}(x_i\ne x_{i+2})\right] (\widehat{\Phi}_{3})_{x_0x_1}\widehat{H}_{x_1x_2}\cdots \widehat{H}_{x_{n-3}x_{n-2}},
\end{equation}
while by   convention  $\underline{\widehat{\Phi_3V_3}}=\widehat{\Phi}_3$ and $\underline{\widehat{\Phi_2 V_n}}:=\widehat{\Phi}_2\widehat{V}_{n-2}$. Here $\widehat{V}$ is the resulting  matrix after replacing all $H$ by   $\widehat{H}$   in \eqref{equ:def_V}. 
\end{itemize} 
\end{definition}

We now present the Chebyshev path expansion
  due to Erd{\H o}s and Knowles, with  a deformed version   established  in  Proposition \ref{prop:path_expansion_H_plus_A}.   
\begin{proposition}[{\cite[Proposition 6.2]{EK11Quantum}}]
    \begin{equation}
        U_{n}\big(\frac{H}{2}\big)=\sum_{k\ge 0}\sum_{{{a}}\in \{2,3\}^k}\sum_{l_0+\cdots +l_k=n}V_{l_0}\underline{\Phi_{a_1}V_{l_1}}\cdots\underline{\Phi_{a_k}V_{l_k}},
    \end{equation}
    where the sum ranges over $l_i\ge 0$ for $i=0,\ldots, k$, and 
    $a=(a_1,\ldots ,a_k)$.
\end{proposition}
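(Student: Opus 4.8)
The plan is to prove the expansion by induction on $n$, using that both sides obey the same three‑term recurrence. Write $\mathcal{U}_n:=U_n(H/2)$; the Chebyshev recurrence $U_n=2xU_{n-1}-U_{n-2}$ gives $\mathcal{U}_n=H\mathcal{U}_{n-1}-\mathcal{U}_{n-2}$ for $n\ge 2$, with $\mathcal{U}_0=\mathbb{I}$ and $\mathcal{U}_1=H$. Let $\mathcal{E}_n$ denote the right‑hand side of the claimed identity. It then suffices to check the base cases $\mathcal{E}_0=\mathbb{I}$ and $\mathcal{E}_1=H$ (both immediate, since every term with $k\ge 1$ vanishes by the conventions $\underline{\Phi_2V_j}=\Phi_2V_{j-2}=0$ and $\underline{\Phi_3V_j}=0$ for $j\le 1$), and then to prove $\mathcal{E}_n=H\mathcal{E}_{n-1}-\mathcal{E}_{n-2}$ for $n\ge 2$.

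The combinatorial core is a one‑step recursion for the non‑backtracking powers:
\begin{equation}\label{eq:plan-NB}
H V_m=V_{m+1}+\underline{\Phi_2 V_{m+1}}+\underline{\Phi_3 V_{m+1}}+V_{m-1},\qquad m\ge 0,
\end{equation}
with the convention $V_{-1}:=0$. To prove \eqref{eq:plan-NB} I would expand $(HV_m)_{xy}=\sum_z H_{xz}(V_m)_{zy}$ and split according to whether prepending the edge $x\to z$ creates a backtrack, i.e.\ whether $z_1=x$ for the first internal vertex $z_1$ of a path realizing $(V_m)_{zy}$. The non‑backtracking part reproduces exactly $(V_{m+1})_{xy}$, since all internal constraints of $V_m$ carry over and only the new constraint $x\neq z_1$ is added. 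In the backtracking part one has $H_{xz}H_{zx}=|H_{xz}|^2$, and summing the now‑free variable $z$ subject to the single surviving constraint $z\neq z_2$ gives $\sum_z|H_{xz}|^2\mathbf{1}(z\neq z_2)=\bigl(1+(\Phi_2)_{xx}\bigr)-|H_{xz_2}|^2$, using $\sum_z\sigma_{xz}^2=1$; the $1$ contributes $V_{m-1}$, the $(\Phi_2)_{xx}$ contributes $\Phi_2V_{m-1}=\underline{\Phi_2V_{m+1}}$, and the correction $-|H_{xz_2}|^2$ merges with the edge $H_{xz_2}$ into $(\Phi_3)_{xz_2}$, the residual walk (which still carries the constraint $x\neq z_3$) being precisely what $\underline{\Phi_3V_{m+1}}$ encodes. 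Small values of $m$ match the stated conventions and can be checked by hand.

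With \eqref{eq:plan-NB} in hand, I would substitute it into $H\mathcal{E}_{n-1}=\sum_k\sum_{a}\sum_{l_0+\cdots+l_k=n-1}(HV_{l_0})\underline{\Phi_{a_1}V_{l_1}}\cdots\underline{\Phi_{a_k}V_{l_k}}$ and re‑index the four resulting sums. The $V_{l_0+1}$‑term, after the shift $l_0\mapsto l_0+1$, recovers all terms of $\mathcal{E}_n$ with leading index $l_0\ge 1$. The $\underline{\Phi_2V_{l_0+1}}$‑ and $\underline{\Phi_3V_{l_0+1}}$‑terms prepend a new $\Phi$‑block, which in the bookkeeping of $\mathcal{E}_n$ corresponds exactly to the terms with $k\ge 1$, $l_0=0$ and leading label $a_1=2$, resp.\ $a_1=3$ (the implicit leading factor being $V_0=\mathbb{I}$). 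The $V_{l_0-1}$‑term, after the shift $l_0\mapsto l_0-1$, is exactly $\mathcal{E}_{n-2}$. Since for $n\ge 2$ every term of $\mathcal{E}_n$ has either $l_0\ge 1$ or ($l_0=0$ with $k\ge 1$), the first three pieces reassemble $\mathcal{E}_n$, giving $H\mathcal{E}_{n-1}=\mathcal{E}_n+\mathcal{E}_{n-2}$, which closes the induction.

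The main obstacle is establishing \eqref{eq:plan-NB} with all non‑backtracking constraints tracked correctly: one must verify that deleting the single backtrack leaves a genuine non‑backtracking power of the right length, and that the one constraint linking it to the external endpoint $x$ is faithfully captured by the definition of the underlined symbols $\underline{\Phi_a V_{m+1}}$ — this is where the somewhat ad hoc small‑index conventions must be reconciled with the combinatorics. A secondary nuisance is the re‑indexing in the last step: terms that vanish by convention (e.g.\ $\underline{\Phi_2V_1}$) must be matched consistently on both sides, and one must ensure no term of $\mathcal{E}_n$ is double‑counted or omitted. All of this is a finite computation, since each $\Phi$‑block contributes at least $2$ to $\sum_i l_i=n$, so $k\le n/2$ and every sum is finite.
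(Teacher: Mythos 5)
Your proposal is correct and takes essentially the same route as the paper: the statement itself is quoted from Erd\H{o}s--Knowles, and the paper's own proof of the deformed analogue (Proposition \ref{prop:path_expansion_H_plus_A} in Appendix \ref{Chebyshevdeform}) is exactly your induction on the Chebyshev three-term recurrence combined with the one-step identity $HV_m=V_{m+1}+V_{m-1}+\Phi_2V_{m-1}+\underline{\Phi_3V_{m+1}}$, which the paper cites as \cite[Lemma 6.1]{EK11Quantum}. The only difference is that you additionally sketch a derivation of that identity (correctly, including the split into the $1$, $(\Phi_2)_{xx}$ and $-|H_{xz_2}|^2$ pieces and the small-index conventions) instead of citing it.
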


\begin{lemma}\label{lem:5.4}
    If all   $l_i^{(j)}\ge 0$, then  
    \begin{equation}\label{equ:5.9}
        \left|\mathbb{E}\Big[\prod_{j=1}^{s} \tr \Big(V_{l^{(j)}_0}\underline{\Phi_{a_1^{(j)}}V_{l_1^{(j)}}}\cdots\underline{\Phi_{a_{k_j}^{(j)}}V_{l_{k_j}^{(j)}}}\Big)\Big]\right|\le 4^{k_1+\cdots+k_s} \mathbb{E}\Big[\prod_{j=1}^{s} \tr \Big( \widehat{V}_{l^{(j)}_0}\underline{\widehat{\Phi_{2}V_{l_1^{(j)}}}}\cdots\underline{\widehat{\Phi_{2}V_{l_{k_j}^{(j)}}}}\Big)\Big].
    \end{equation}
\end{lemma}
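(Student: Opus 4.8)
The strategy is to compare the $\theta$-sub-Gaussian model with the $\theta$-GOE model term by term in the Wick/moment expansion, exploiting the fact that both the non-backtracking blocks $V_{l}$ and the block insertions $\underline{\Phi_a V_l}$ expand as sums over walks on $[N]$ with explicit edge weights, and that the Bernoulli thinning in $\widehat{W}$ exactly reproduces the combinatorial factor $\theta^{\#\{\text{edges}\}}$ that controls the high moments under condition \ref{itm:A1}. First I would expand the left-hand side: taking the trace of the product of blocks, each factor $\mathrm{Tr}(V_{l_0^{(j)}}\underline{\Phi_{a_1^{(j)}}V_{l_1^{(j)}}}\cdots)$ becomes a sum over closed walks $w_j$ on $[N]$ of total length $m_j=l_0^{(j)}+\cdots+l_{k_j}^{(j)}$, carrying a product of entries $H_{xy}$ (or $|H_{xy}|^2 H_{xy}$ at a $\Phi_3$-slot, or $\sum_z(|H_{xz}|^2-\sigma_{xz}^2)$ at a $\Phi_2$-slot) together with the non-backtracking indicators $\mathbf{1}(x_i\neq x_{i+2})$. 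Taking the expectation over the independent symmetric entries, only walks in which every distinct edge is traversed an even number of times (counting the $\Phi$-slots appropriately) survive, and the resulting expectation factorizes over the distinct edges.

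The heart of the argument is a slot-by-slot and edge-by-edge domination. For each distinct edge $\{x,y\}$ traversed with total multiplicity $2p$ (aggregating ordinary $H$-traversals and the extra two factors contributed by each $\Phi_3$-slot sitting on that edge), the moment condition gives $\mathbb{E}[|W_{xy}|^{2p}]\le \theta^{p-1}(2p-1)!!$, whereas for the $\theta$-GOE matrix one computes directly $\mathbb{E}[|\widehat W_{xy}|^{2p}] = \theta\cdot\theta^{-1}\cdot\mathbb{E}[|\widetilde W_{xy}|^{2p}]\cdot\theta^{p} = \theta^{p}(2p-1)!!$ once $p\ge 1$ (the Bernoulli factor $\sqrt\theta\cdot\mathrm{Bern}(\theta^{-1})$ contributes $\theta^{p/2}\cdot\theta^{p/2}\cdot\theta^{-1}$ over the $2p$ copies on that edge); hence edgewise the sub-Gaussian moment is bounded by the $\theta$-GOE moment. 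The $\Phi_2$-slot variance term $|H_{xz}|^2-\sigma_{xz}^2$ is matched by the corresponding $\widehat\Phi_2$ term $B_{xz}^2\sigma_{xz}^2(|\widetilde W_{xz}|^2-1)$, again using the same Bernoulli bookkeeping; and replacing $\Phi_3$ by $\Phi_2$ on the right-hand side is legitimate because converting a $\Phi_3$-slot into a $\Phi_2$-slot only changes how the walk is forced to close up, and after summing over all sign/direction choices for gluing edges in the real case each such slot-type replacement costs at most a factor $4$, accounting for the $4^{k_1+\cdots+k_s}$. Finally I would observe that the $\theta$-GOE walk sums have only non-negative contributions (the entries $|\widehat W_{xy}|^{2p}$, the $\widehat\Phi_2$ diagonal terms, and $\widehat V_l$ appear only through even powers and variances after taking expectation), so dropping the $H$-model signs and bounding each edge-moment by its $\widehat W$-counterpart yields exactly the right-hand side of \eqref{equ:5.9}.

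The main obstacle I anticipate is the careful bookkeeping at the $\Phi_3$-slots: a $\Phi_3$-factor $-|H_{xy}|^2 H_{xy}$ contributes three entry-copies on the edge $\{x,y\}$, which is odd, so such a slot cannot close by itself — it must pair with another traversal of the same edge, and the non-backtracking constraints interact with which pairings are allowed. Tracking that each $\Phi_3 \to \Phi_2$ replacement is accompanied by a legitimate rerouting of the underlying walk (so that the RHS expansion genuinely dominates, slot by slot, rather than merely after a global rearrangement) while keeping the combinatorial overhead at $4$ per slot is the delicate point; this is precisely the place where one must mirror the argument of \cite[Section 6]{EK11Quantum} and check that the presence of the variance profile $\sigma_{xy}^2$ (rather than the constant $1/N$) does not disturb the edgewise comparison — it does not, because the profile factors $\sigma_{xy}$ are identical on both sides and can be pulled out of every edge-moment before the comparison of the $W$- versus $\widehat W$-moments is made.
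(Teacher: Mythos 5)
There is a genuine gap. The first half of your plan coincides with the paper's route: expand both sides, use independence to reduce to a single-edge comparison, and use the moment hypothesis $\mathbb{E}[|W_{xy}|^{2k}]\le\theta^{k-1}(2k-1)!!$ to dominate $W$-moments by $\widehat W$-moments (note a small slip: $\mathbb{E}[|\widehat W_{xy}|^{2p}]=\theta^{p-1}(2p-1)!!$, not $\theta^{p}(2p-1)!!$). But the decisive step is not there. On a fixed edge the left side is (after taking absolute values and expanding $(|W_{xy}|^2-\sigma_{xy}^2)^{t_2}$ binomially, which costs $2^{t_2}$) a \emph{raw} moment $\mathbb{E}[|\widehat W_{xy}|^{3t_1+2t_2+t_3}]$, while the right side of \eqref{equ:5.9} is the \emph{centered} quantity $\mathbb{E}\bigl[\bigl(B_{xy}^2(|\widetilde W_{xy}|^2-1)\bigr)^{t_1+t_2}\widehat W_{xy}^{t_1+t_3}\bigr]$. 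One must prove that the raw Gaussian moment is dominated by the centered one, i.e.\ $\mathbb{E}[g^{2a+2b}]\le 2^{a}\,\mathbb{E}[(g^2-1)^a g^{2b}]$ with $a=t_1+t_2$, $b=\tfrac12(t_1+t_3)$, $a+b>1$; this is exactly the paper's Lemma \ref{lem:Gaussian_moment}(ii), proved by a Stein-identity recursion, and it is what produces the factor $2^{t_1+t_2}$ which, combined with $2^{t_2}$, gives $4^{t_1+t_2}$. Your proposal never states or proves this comparison; instead you attribute the factor $4$ to "sign/direction choices for gluing edges in the real case" and to a rerouting of walks when converting $\Phi_3$-slots into $\Phi_2$-slots --- an argument you yourself flag as unresolved, and which is not needed in the paper, where the slot conversion is handled purely through the single-edge inequality \eqref{equ:5.15} with matching total degree $3t_1+2t_2+t_3=2a+2b$ on both sides.

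A second, related flaw: you justify dropping absolute values on the right-hand side by saying the $\theta$-GOE contributions "appear only through even powers and variances," but the exponent $a=t_1+t_2$ on $(g^2-1)$ is in general odd, so non-negativity of $\mathbb{E}[(g^2-1)^a g^{2b}]$ is not automatic from parity; it (together with the needed quantitative lower bound) follows from the monotone recursion $I(a,b)=2a\,I(a-1,b)+(2b-1)I(a,b-1)$ in the appendix. Without this ingredient the claimed edgewise domination, and hence the lemma, does not follow from what you wrote.
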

\begin{proof} 
Expanding both sides through matrix multiplication and trace, we reduce the problem to obtaining term-by-term upper bounds.  
Consider any non-vanishing path under expectation 
in the expansion. By the independence of the entries of 
$H$, we may focus on individual matrix elements indexed by $\{x,y\}$. 
For the real symmetric case, note that  $H_{xy}=H_{yx}$, the proof reduces to verifying the inequality:
 \begin{equation}\label{equ:5.15}
    \left|\mathbb{E}\left[(-|H_{xy}|^2 H_{xy})^{t_1} (|H_{xy}|^2 - \sigma_{xy}^2)^{t_2} H_{xy}^{t_3} \right]\right|
    \leq 4^{t_1 + t_2} \, \mathbb{E}\left[ \left( (B_{xy})^2 \sigma_{xy}^2 (|\widetilde{W}_{xy}|^2 - 1) \right)^{t_1 + t_2} \widehat{H}_{xy}^{t_1 + t_3} \right],
\end{equation}
where  the  constraints  $t_1 + t_3 \equiv 0 \pmod{2}$ and $t_1+t_2+\frac{1}{2}(t_1+t_3)>1$ are assumed  to ensure
non-vanishing moments.

Since $H_{xy} = \sigma_{xy} W_{xy}$ and $\widehat{H}_{xy} = \sigma_{xy} \widehat{W}_{xy}$, dividing both sides by the common factor $\sigma_{xy}^{3t_1 + 2t_2 + t_3}$,   the left-hand side of \eqref{equ:5.15}  gives us 
\begin{align}
    &\left| \mathbb{E} \left[ (-|W_{xy}|^2 W_{xy})^{t_1} (|W_{xy}|^2 - 1)^{t_2} W_{xy}^{t_3} \right] \right| \notag \\
    &\quad = \left| \mathbb{E} \left[ (-|W_{xy}|^2 W_{xy})^{t_1} \sum_{i=0}^{t_2} \binom{t_2}{i} (-1)^{t_2 - i} |W_{xy}|^{2i} W_{xy}^{t_3} \right] \right| \notag \\
    &\quad \leq \sum_{i=0}^{t_2} \binom{t_2}{i} \left| \mathbb{E} \left[ (|W_{xy}|^2 W_{xy})^{t_1} |W_{xy}|^{2i} W_{xy}^{t_3} \right] \right| \label{eq:binomial_expansion} \\
    &\quad \leq 2^{t_2} \, \mathbb{E} \left[ |W_{xy}|^{3t_1 + 2t_2 + t_3} \right]   \leq 2^{t_2} \, \mathbb{E} \left[ |\widehat{W}_{xy}|^{3t_1 + 2t_2 + t_3} \right]. \notag
\end{align}

Let 
\begin{equation}
    a := t_1 + t_2, \quad b := \frac{1}{2}(t_1 + t_3),
\end{equation}
and let $g \sim \mathcal{N}(0,1)$, then   for the sub-Gaussian variable $\widehat{W}_{xy} = \theta^{1/2} \mathrm{Bern}(\theta^{-1}) g$ we have
\begin{equation}
    \mathbb{E} \left[ |\widehat{W}_{xy}|^{3t_1 + 2t_2 + t_3} \right]
    = \theta^{a + b - 1} \, \mathbb{E} \left[ |g|^{2a + 2b} \right].
\end{equation}
 It remains to control $\mathbb{E}[|g|^{2a + 2b}]$ by $\mathbb{E}[(g^2 - 1)^a g^{2b}]$. For $a + b > 1$, by Lemma \ref{lem:Gaussian_moment} (ii),  
  we have
\begin{align}
    2^{t_2} \, \mathbb{E} \left[ |\widehat{W}_{xy}|^{3t_1 + 2t_2 + t_3} \right]&\le 4^{t_1+t_2}\theta^{a + b - 1}\mathbb{E} \left[ (g^2 - 1)^a g^{2b} \right]\notag\\
    &=4^{t_1+t_2}\mathbb{E}\left[ \left((B_{xy})^2 (|\widetilde{W}_{xy}|^2 - 1) \right)^{t_1 + t_2} \widehat{W}_{xy}^{t_1 + t_3} \right].
\end{align}
This indeed 
  yields the inequality \eqref{equ:5.15} with a constant factor $4^{t_1 + t_2}$.

Consequently, for every non-vanishing path, the absolute value of the left-hand side expectation in \eqref{equ:5.9} 
is bounded by its right-hand counterpart up to a multiplicative factor of  $4^{k_1+\cdots+k_s}$, which completes the proof.
\end{proof}

An immediate result   of    Lemma \ref{lem:5.4} follows. 
\begin{lemma}\label{lem:lemma5.5}     Set 
\begin{equation}\label{def:Dn}
    \mathcal{D}_n:=U_{n_j}(\frac{H}{2})-V_{n_j}=\sum_{k\ge 1}\sum_{a\in \{2,3\}^k}\sum_{l_0+\ldots +l_k=n}V_{l_0}\underline{\Phi_{a_1}V_{l_1}}\cdots\underline{\Phi_{a_k}V_{l_k}}
\end{equation}
and
\begin{equation}\label{equ:5.13}
\widehat{\mathcal{D}}_n
:= \sum_{k \geq 1} 8^k
\sum_{\substack{
    l_0 + \cdots + l_k = n
}}
\widehat{V}_{l_0} \,
\widehat{\underline{\Phi_{2} V_{l_1}}} \cdots
\widehat{\underline{\Phi_{2} V_{l_k}}},
\end{equation} then for all $s\ge 1$, 
    \begin{equation}
        |\mathbb{E}\big[\prod_{j=1}^s \tr ({\mathcal{D}}_{n_j})\big]|\le \mathbb{E}\big[\prod_{j=1}^s \tr( \widehat{\mathcal{D}}_{n_j})\big]
    \end{equation}
\end{lemma}
\begin{proof}

We adjust all terms $\underline{\Phi_3 V_l}$ to $\underline{\widehat{\Phi_2 V_l}}$ using Lemma~\ref{lem:5.4}. For fixed $k$, there are at most $2^k$ possible placements of $\Phi_3$. Consequently, we increase the combinatorial factor from $4^k$ in Lemma~\ref{lem:5.4} to $8^k$ in \eqref{equ:5.13}, completing the proof.
\end{proof}

\begin{figure}[htbp]
    \centering
    \begin{minipage}[t]{0.48\textwidth}
        \centering
        \includegraphics[width=\textwidth]{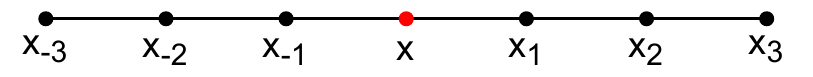}
        \caption{ \(\eta(x_{-1})=\eta(x_{1})\).  
    Condition \(\mathbf1(x_{i}\neq x_{i+2})\) forcing  \(\eta(x_{-3})\neq\eta(x_{-1})\) and \(\eta(x_{1})\neq\eta(x_{3})\),  
    but  \(\eta(x)=\eta(x_{-2})\) or \(\eta(x)=\eta(x_{2})\) possible.}
        \label{fig:red_point}
    \end{minipage}
    \hfill
    \begin{minipage}[t]{0.48\textwidth}
        \centering
        \includegraphics[width=\textwidth]{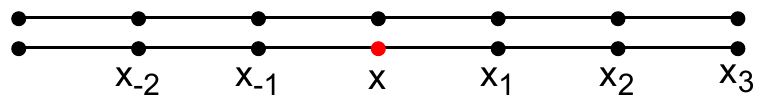}
        \caption{Red-point pairing, Case (i). No backtracking path occurs.}
        \label{fig:red_case1}
    \end{minipage}
\end{figure}
\begin{figure}[htbp]
    \centering
    \begin{minipage}[t]{0.45\textwidth}
        \centering
        \includegraphics[width=0.9\textwidth]{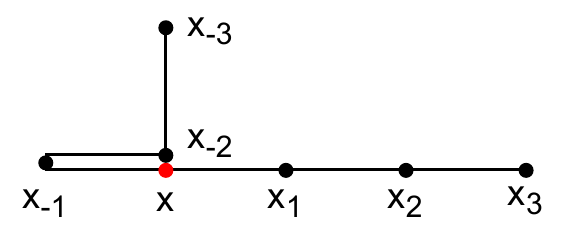}
        \caption{Red point pairing, Case (ii). The backtracking path creates a tail
        .}
        \label{fig:red_case2}
    \end{minipage}
    \hfill
    \begin{minipage}[t]{0.45\textwidth}
        \centering
        \includegraphics[width=0.9\textwidth]{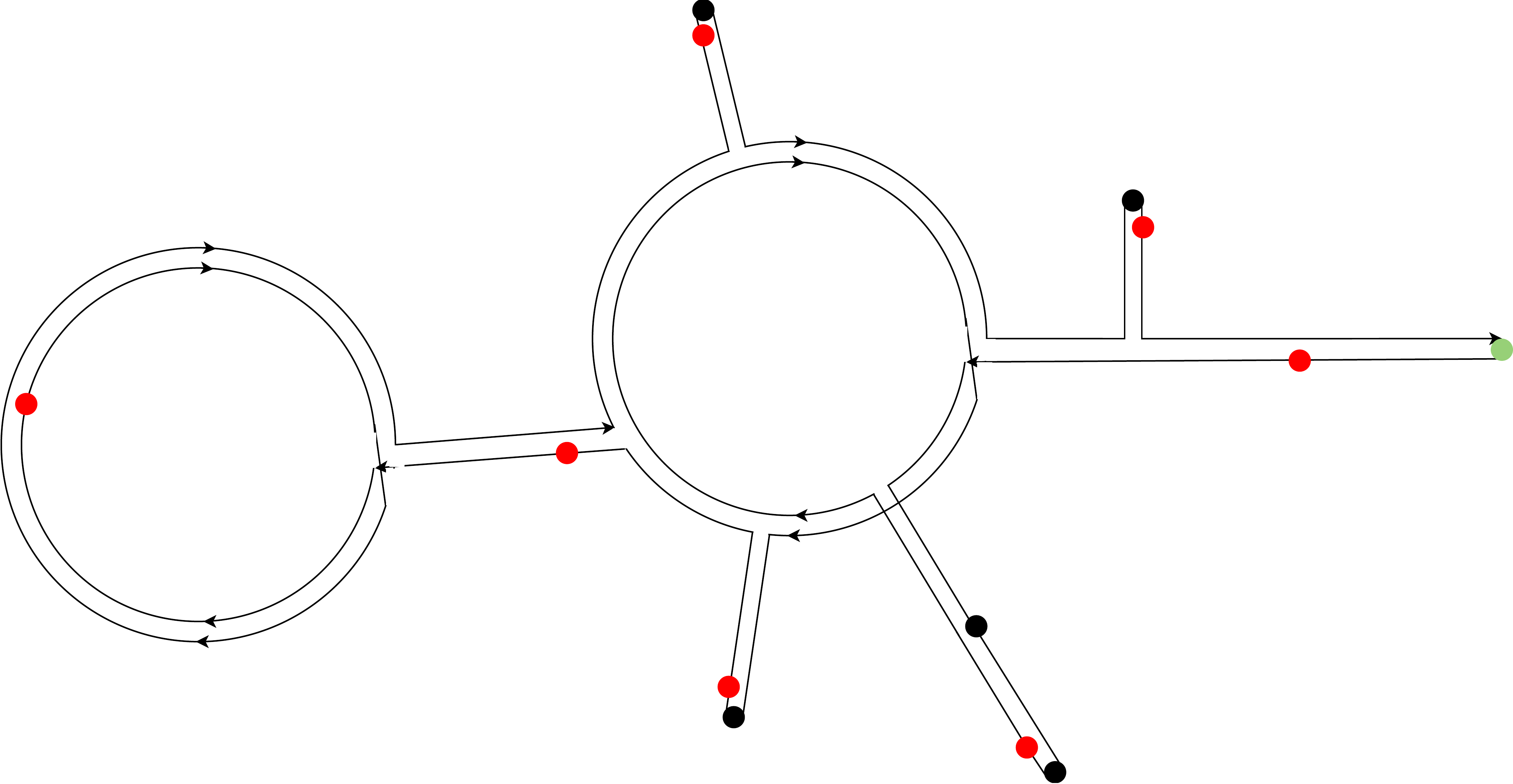}
        \caption{Full diagram with tail edges and red points. The green vertex is both  starting and ending  vertex; every backtracking path must attach at a  red point of case (ii), while other red points follow Case (i).}
        \label{fig:tail_diagram}
    \end{minipage}
\end{figure}

\begin{figure}[htbp]
    \centering
    \begin{minipage}[t]{0.45\textwidth}
        \centering
        \includegraphics[width=0.9\textwidth]{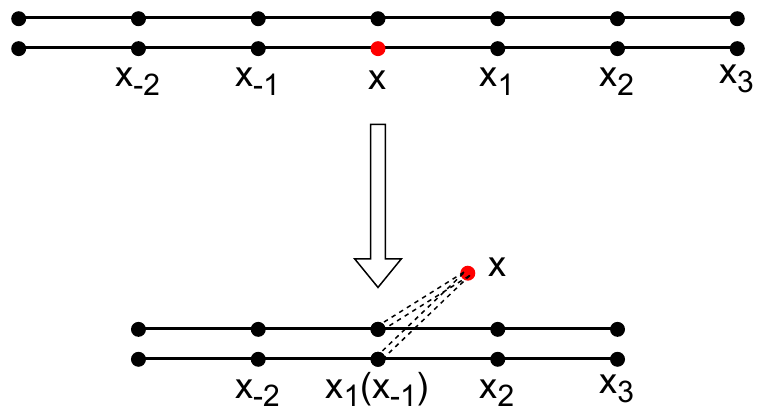}
        \caption{Shrink the edge adjacent to red point in pairing of Case (i).}
        \label{fig:red_shrink_case1}
    \end{minipage}
    \hfill
    \begin{minipage}[t]{0.45\textwidth}
        \centering
        \includegraphics[width=0.9\textwidth]{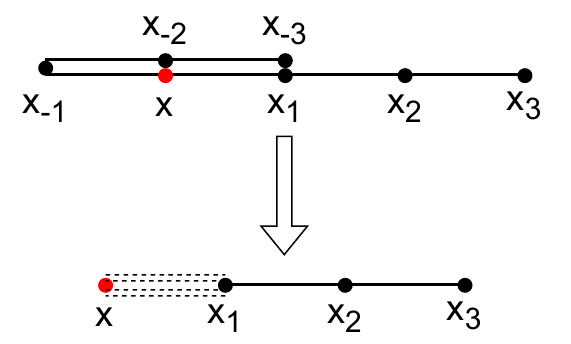}
        \caption{Shrink the edge adjacent to red point in pairing of Case (ii) when $\eta(x_{-3})= \eta(x_1)$.}
        \label{fig:red_shrink_case2}
    \end{minipage}
\end{figure}

\begin{figure}[htbp]
  \centering
  \includegraphics[width=\textwidth]{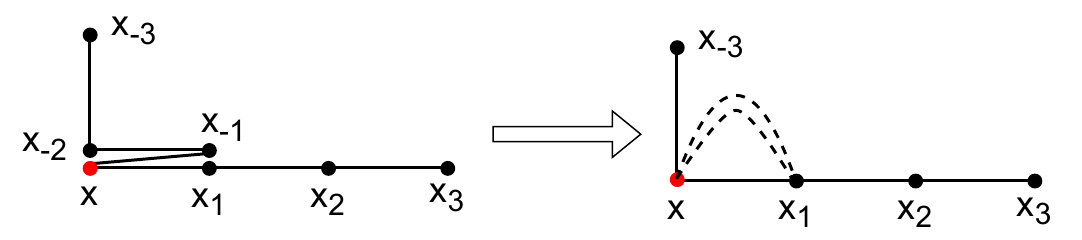}
  \caption{Shrink the edge adjacent to the red point in paring of case (ii) when \(\eta(x_{-3}) \neq \eta(x_1)\).}
  \label{fig:red_shrink_case3}
\end{figure}

\begin{figure}[htbp]
    \centering
    \includegraphics[width=\textwidth]{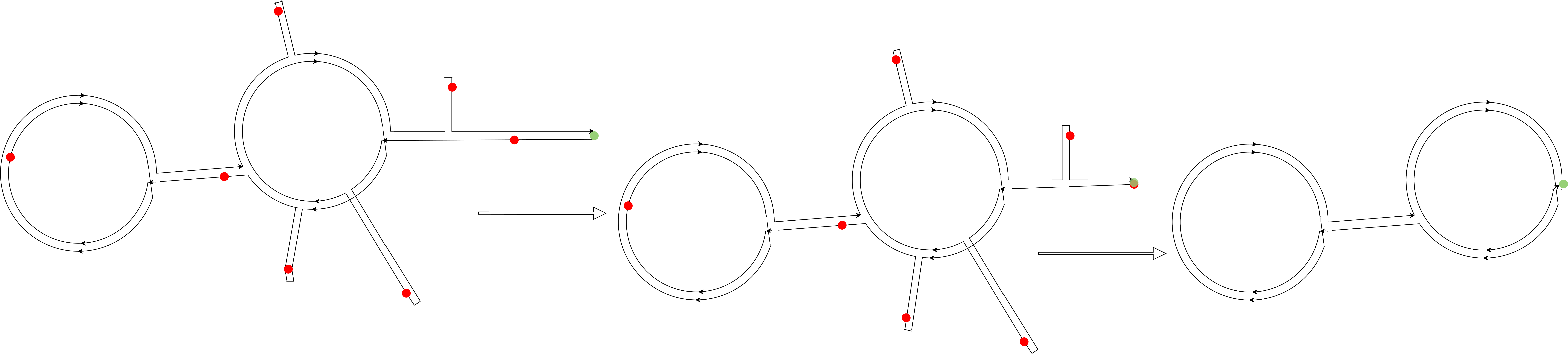}
    \caption{(a) First step: shrink the tail edge adjacent to the marked vertex (green). (b) Second step: shrink all tail edges induced by the red points.}
    \label{fig:tail_shrink}
\end{figure}

 The following lemma for  bounding  $\widehat{\mathcal{D}}_n$ plays a fundamental role in  establishing universality of sub-Gaussian   IRM.
\begin{lemma}\label{lem:D_n=0}
Let $n=n_1+\cdots+n_s$, 
if  $t_N \ll n  = O(N^{1/3})$ and $  \theta\gamma n^2 t_N \ll N$ as $N\to \infty$,  then
\begin{equation}\label{equ:5.22}
    \mathbb{E}\Bigl[\prod_{j=1}^s \tr\!\bigl(\tfrac{1}{n_j+1}\,\widehat{\mathcal{D}}_{n_j}\bigr)\Bigr] = o(1).
\end{equation}
\end{lemma}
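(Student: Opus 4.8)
The plan is to expand $\widehat{\mathcal D}_{n_j}$ via \eqref{equ:5.13} into a sum of trace monomials in $\widehat V_{l_0}\,\widehat{\underline{\Phi_2V_{l_1}}}\cdots\widehat{\underline{\Phi_2V_{l_{k_j}}}}$, and then estimate the expected product term by term, summing over all choices of $k_1,\dots,k_s\ge 1$, all sign/index strings, and all compositions $l_0+\cdots+l_{k_j}=n_j$. The key observation is that each insertion of a factor $\widehat{\underline{\Phi_2V_{l}}}=\widehat\Phi_2\widehat V_{l-2}$ forces a \emph{diagonal return}: $(\widehat\Phi_2)_{xy}=\delta_{xy}\sum_z B_{xz}^2\sigma_{xz}^2(|\widetilde W_{xz}|^2-1)$ is supported on the diagonal and, when a non-backtracking path is glued to itself through such a factor, it produces a ``tail'' (a doubly-traversed branch) attached at a red point, exactly as in Figures~\ref{fig:red_point}--\ref{fig:tail_diagram}. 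After Okounkov-type contraction of all such tails (Figures~\ref{fig:red_shrink_case1}--\ref{fig:tail_shrink}), one arrives at a reduced diagram $\Gamma$ on the same surface, and the contribution is controlled by a diagram function of the type $F_\Gamma$ analyzed in Section~\ref{sec:section3}, but now carrying an extra weight: each $\widehat\Phi_2$ insertion costs a factor of order $\theta/N$ (from $\mathbb E[B_{xz}^2\sigma_{xz}^2(|\widetilde W_{xz}|^2-1)]$ summed over the free index $z$, using $\sum_z\sigma_{xz}^2=1$ and the sub-Gaussian bound), while simultaneously shortening the total path length available for the non-backtracking pieces. Thus each of the $k:=k_1+\cdots+k_s\ge s$ insertions brings a small factor roughly $\theta\gamma n t_N/N$ (the extra $n t_N$ coming from the combinatorics of distributing the shortened length among the $\widehat V$ blocks, exactly as in the $\Sigma_2$ estimate of Theorem~\ref{prop:U_asy}).

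Concretely, I would proceed in the following steps. \textbf{Step 1:} Fix $k_1,\dots,k_s$ and the combinatorial data, and apply the graph-reduction machinery (vertex splitting, Lemma~\ref{lem:vertex_split}; diagram counting, Lemma~\ref{diagramno}) together with the upper bounds of Proposition~\ref{prop:F_upper_bound} to the reduced diagram, tracking carefully the $8^k$ prefactor from \eqref{equ:5.13}, the $(n_j+1)^{-1}$ normalizations, and the extra $(\theta)^{k}$ from the $k$ variance factors. \textbf{Step 2:} Show that the net effect of one $\widehat\Phi_2$-insertion, after summing over its position, its internal free index, and the composition variable it consumes, is a multiplicative factor bounded by $C\theta\gamma n^2 t_N/N$ relative to the pure non-backtracking ($k=0$) contribution — but the $k=0$ term is absent since $k\ge1$ in $\widehat{\mathcal D}_n$, so the leading contribution already carries one such small factor per trace. \textbf{Step 3:} Sum the resulting geometric-type series in $k$: since $\theta\gamma n^2 t_N\ll N$, the series $\sum_{k\ge s}(C\theta\gamma n^2 t_N/N)^{k}$ converges and is $o(1)$, and the uniform estimates of Theorem~\ref{prop:U_asy}~\ref{item:U_upper} guarantee that the diagram sums producing the $\widehat V$-blocks are absolutely summable with the right powers of $n$ and $N$. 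Combining, $\mathbb E\big[\prod_j\tr(\tfrac{1}{n_j+1}\widehat{\mathcal D}_{n_j})\big]=O\big((\theta\gamma n^2 t_N/N)^s\big)=o(1)$.

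\textbf{Main obstacle.} The delicate point is Step~2: making precise that inserting a diagonal $\widehat\Phi_2$ genuinely gains a full factor $\theta n t_N/N$ (not merely $\theta/N$), which requires correctly accounting for how the red-point tails are contracted and how the shortened length $n-\sum l_i$ is redistributed — this is where the interplay between the mixing hypotheses \ref{itm:B1}--\ref{itm:B2} and the sub-Gaussian parameter $\theta$ enters, and it is exactly the mechanism by which the sharp condition $\theta t_N\ll N^{1/3}$ (equivalently $\theta\gamma n^2 t_N\ll N$ at the edge scale $n\sim N^{1/3}$) appears. One must also verify that the reduced diagrams arising here are still \emph{typical connected diagrams} in $\mathcal T_s$ so that Lemma~\ref{diagramno} applies, and handle the degenerate cases (e.g. when a tail collapses the marked vertex or when $l_i=0$) separately, as indicated in Figures~\ref{fig:red_shrink_case1}--\ref{fig:tail_shrink}.
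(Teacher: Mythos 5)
Your overall strategy is the same as the paper's (interpret each $\widehat{\Phi}_2$ insertion as a ``red point'', contract the resulting tails, bound the reduced diagram by the machinery of Section~\ref{sec:section3}, and sum a series that is small because $\theta\gamma n^2 t_N\ll N$), but the two steps that actually carry the proof are missing or incorrect. First, your accounting of the per-insertion gain is based on a quantity that vanishes: since $\mathbb{E}[B_{xz}^2]=1$ and $\mathbb{E}[|\widetilde W_{xz}|^2]=1$, the entries of $\widehat\Phi_2$ in \eqref{equ:Phi+} are exactly centered, so no factor ``$\theta/N$ from $\mathbb{E}[B_{xz}^2\sigma_{xz}^2(|\widetilde W_{xz}|^2-1)]$'' is available; $\widehat\Phi_2$ contributes only when the factor $|\widetilde W_{xz}|^2-1$ is Wick-paired with other edges of the path. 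The true source of smallness is that such a pairing forces a label coincidence at the red point, and summing the free label through the two incident transition factors gives $\sum_x p_1(x_1,x)p_1(x,x_1)=p_2(x_1,x_1)\le \gamma t_N/N$ by \ref{itm:B1}, together with an extra $\theta$ from integrating the Bernoulli variables on the coupled edges and a combinatorial factor of order $n^{2k}/k!$ for the positions of the $k$ red points and the lengths of the attached tails; this yields the series $\sum_{k\ge1}\frac{n^{2k}}{k!}(\theta\gamma t_N/N)^k(1+\theta\gamma n t_N/N)^k=o(1)$. You correctly guess the final per-insertion factor $\theta\gamma n^2 t_N/N$, but you explicitly defer exactly this estimate to your ``main obstacle'', so the core quantitative step of the lemma is left unproven.

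Second, you do not treat nontrivial couplings of the ordinary $\widehat H$-edges at all. Conditionally on the Bernoulli variables the entries are Gaussian, but after Wick pairing several paired edges may sit over the same base edge $\{i,j\}$; integrating out the $B_{ij}$'s then produces factors $\theta^{k-1}$ when an edge is visited $2k$ times, and these repeated-edge configurations are \emph{not} covered by Proposition~\ref{prop:F_upper_bound} (which is an estimate on the variance-profile transition probabilities only) nor by the $8^k$ and $\theta^k$ prefactors you track for the $\widehat\Phi_2$ insertions. The paper's proof devotes a full second step to this: merging coupled blocks into auxiliary vertices, a spanning-tree bound, and a generating-function estimate showing all couplings contribute $o(1)$ provided $N\gg n(\theta\gamma t_N+\sqrt{\theta\gamma n t_N})$, which follows from $\theta\gamma n^2 t_N\ll N$. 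Without this coupling analysis (and without it you also cannot justify your implicit claim that the reduced, uncoupled diagrams are the only relevant ones), the argument does not close.
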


\begin{proof}

We focus initially on the $s=1$ case; the general case follows by applying identical arguments term-by-term.

Similar to Gaussian IRM case, each trace moment is represented diagrammatically through Wick pairings of the centered Gaussian entries, conditioned on the Bernoulli random variables $B_{ij}$. 

\textbf{Reduction outline}  can be summarized  as follows. 

\begin{enumerate}
  \item[(i)] {\bf Wick pairings.} 
 Condition on Bernoulli variables $B_{ij}$ and apply Wick's formula to the Gaussian entries, and we  have   
    diagrammatic correspondence: each non-vanishing term corresponds to a pairing of $H$-edges in the graphical expansion.
  \item[(ii)]   {\bf Forbidden backtracks.}  By construction of the nonbacktracking powers $V_n$, no two \emph{consecutive} $H$–edges may pair.  However, when  inserting  operators $\underline{\widehat{\Phi_{a_i}V_{l_i}}}$, those forced “gaps” (the indicator $\mathbf{1}(x_i\neq x_{i+2})$) are \emph{not} imposed across the insertion points,  see Figure \ref{fig:red_point} for example).  Consequently, pairings may create local backtracks---these show up as extra “tails” in the diagram.  
 We call  such configurations \textbf{diagrams with tail edges}, analogous to cases without red points in \cite{feldheim2010universality, sodin2010spectral, liu2023edge}. 
  \item[(iii)] {\bf Two sources of tails.}
    \begin{itemize}
      \item[(a)] At the endpoints of the trace: since in the trace we require $x_0=x_n$ but do not enforce $\mathbf{1}(x_1\neq x_{n-1})$, the first and last edges can pair.
      \item[(b)] At a $\Phi_2$ insertion: if 
      \begin{equation}
        H_{x_{-1},x_0}\;(\widehat{\Phi}_2)_{x_0x_0}\;H_{x_0,x_1}=H_{x_{-1},x_0}\;\big(\sum_{y}|\widehat{H}_{x_0 y}|^2-\sigma_{x_0 y}^2\big)\;H_{x_0,x_1},
      \end{equation}
      then there is no restriction for $y$, so $y=x_1$ and $y=x_{-1}$ become possible, again creating a tail.
    \end{itemize}
  \item[(iv)] {\bf $\Phi_2$ itself forbidding  local backtracks.}  By definition,
  \begin{equation}
    (\widehat{\Phi}_{2})_{xx}
    \;=\;
    \sum_{y:y\ne x}B_{xy}^2\sigma_{xy}^2(|\widetilde{W}_{xy}|^2-1)+B_{xx}^2\sigma_{xx}^2(|\widetilde{W}_{xx}|^2-1),
  \end{equation} 
  the subtracted variance term precisely removes any contribution from a backtracking pairing ($\widetilde{W}_{xy}$ and $\widetilde{W}_{xy}$ are paired in opposite direction) of the two $H_{x y}$ edges at that vertex.  Hence no backtracking pairing can occur \emph{inside} a single $\Phi_2$ block.
  \item[(v)]{\bf  Integration over $B_{ij}$.}  Since $ 
\mathbb{E}[(B_{ij})^{2k}] \;=\;\theta^{\,k-1},$ 
whenever an (unordered) edge $\{x,y\}$ appears $2k$ times in the preimage of the diagram,  integrating  out the $B_{ij}$’s  contributes a factor $\theta^{k-1}$ .  We can thus introduce   a \textbf{\emph{coupling}} structure 
\begin{equation}
\mathcal{C} \;=\;\{\mathcal{C}_e\}_{e},    
\end{equation}
where each $\mathcal{C}_e$ is the multiset of preimage-edges glued to the base-edge $e$. See Figure \ref{fig:blue_edge} for coupling examples.
 \item[(vi)]{\bf  Tail shrinking.} 
We further shrink the tail edge incident to the marked point. This modification relaxes the boundary summation condition from $\sum_{e\in \partial D_j} w_e = n_j$ to $\sum_{e\in \partial D_j} w_e \leq n_j$, consistent with our prior analysis; see Figure~\ref{fig:tail_shrink}.

 \item[(vii)]{\bf  Red point.} 
Throughout our analysis, 
we will mark every vertex $x_i$ where a $\Phi_2$ insertion occurs as a \textbf{\emph{red point}} : at each red point its two neighboring labels must coincide, and further local backtracks can occur. 
See Figure~\ref{fig:red_point} for a visual representation.

\end{enumerate}

It remains to prove that any diagram containing at least one red point or nontrivial coupling contributes \(o(1)\). We establish this key claim through two steps:

\medskip
\noindent\textbf{Step 1: Suppression by red-point coincidences.}

Fix a red point \( v \not\in V \), distinct from all original diagram vertices. The vertex labeling at \( v \) admits two configurations in  Figure~\ref{fig:red_point}:
Case(i): $\eta({x})\ne \eta(x_{-2})$ and $\eta(x)\ne \eta(x_{2})$ (Figure \ref{fig:red_case1}); Case (ii): $\eta({x})= \eta(x_{-2})$ or $\eta(x)= \eta(x_{2})$ (Figure \ref{fig:red_case2}). In case (ii), $x_{-2}x_{-1}x$ forms  a backtracking path, and thus contributes a tail to the diagram.

For both case (i) and case (ii), we can shrink the edge adjacent to red points  as its two adjacent preimage-edges must share the same label, see Figure \ref{fig:red_shrink_case1}, \ref{fig:red_shrink_case2} and \ref{fig:red_shrink_case3}. In Figure \ref{fig:red_shrink_case1}, \ref{fig:red_shrink_case2}, summing over the label of $x$ yields at most
\begin{equation}
\sum_{x}
p_{1}(x_1,x)\,p_{1}(x,x_1)=p_2(x_1,x_1)
\;\le\;
\frac{\gamma\,t_N}{N}.
\end{equation}
Such red points couple the adjacent edges, which leads to an extra $\theta$ factor. In Figure \ref{fig:red_shrink_case3}, we remove the dashed line directly. If we remove the edges $(x_1,x),~(x,x_1)$, the rest is still the diagram with edge weight $w_e'= w_{e}-2$ (Figure \ref{fig:red_shrink_case1}, \ref{fig:red_shrink_case2}) or $w_e-1$ (in case $\eta(x_{-3})\ne \eta(x_1)$, Figure \ref{fig:red_shrink_case3}). It is also possible to couple $\{x,x_1\}$ with other edge, but there are at most $k$ such pairs, contributing a factor \begin{equation} \sum_{i=0}^{k}\theta^i\binom{k}{i}\Big(\frac{n\cdot \gamma t_N}{N}\Big)^i=\Big(1+\frac{ \gamma\theta n t_N}{N}\Big)^k.\end{equation} 
Here  the binomial factor $\binom{k}{i}$ appears since   there are $i$ edges (chosen from $k$ edges) coupled with other edges and $n^i$ edges coming  from the total possible choices of the position of coupled edges. 

Now we sum over all the possible position of red points and weight of tail edges. We first choose the position of red points to grow the tail edge and then sum over the length of the tails, thus the number of such choices does not exceed ${n^{2k}}/{k!}$. After removing all tail edges and their incident vertices, the reduced diagram lies in $\mathscr{D}_{s;\beta}^*$ (Figure~\ref{fig:tail_shrink}). 
The subsequent summation over red points then yields a global combinatorial factor
\begin{equation}
\sum_{k\ge 1}\frac{n^{2k}}{k!}\,\Bigl(\tfrac{\theta \gamma t_N}{N}\Bigr)^{k}(1+\frac{ \gamma\theta n t_N}{N})^k=e^{\frac{\theta \gamma n^2 t_N }{N}(1+\frac{\theta n\gamma t_N}{N})}-1,
\end{equation}
which vanishes as \(N \to \infty\) under the scaling condition
\begin{equation}
N \gg \theta\gamma n^2 t_N.
\end{equation}

If a red point instead lies within \(V\) or on any edge with weight $w_e\le2$, a similar (and strictly stronger) suppression bound holds.

\noindent\textbf{Step 2: Control of    couplings.}

We now show that any nontrivial coupling (i.e.\ \(l>0\)) yields only a negligible contribution.  Let
\begin{equation}
l \;=\;\sum_{e}\bigl(\lvert\mathcal C_e\rvert-1\bigr),
\qquad
t \;=\;l \;+\;\sum_{e:\,\lvert\mathcal C_e\rvert>1}1,
\end{equation}
so there are $t-l$ coupled-edge blocks.  Merging each block into a single weight-1 edge and contracting those edges to “blue” vertices produces $\overline\Gamma$, see Step 1 and 2 in Figure~\ref{fig:blue_edge}.  Assuming no two blocks touch each other (i.e., they do have no endpoint in common) or an vertex in $V$,
\begin{equation}
|V(\overline\Gamma)|=|V(\Gamma)|+t-l,
\qquad
|E(\overline\Gamma)|=|E(\Gamma)|+t.
\end{equation}

By Proposition~\ref{prop:F_upper_bound}, removing edges outside a spanning tree of $\overline\Gamma$ (see Step 3 in Figure~\ref{fig:blue_edge}) and summing over labels yields
\begin{equation}
\frac{n^{\,\lvert V(\overline\Gamma)\rvert-1}}
     {(\lvert V(\overline\Gamma)\rvert-1)!}
\Bigl(\frac{(\gamma\,t_N)\lor n}{N}\Bigr)^{\,\lvert E(\overline\Gamma)\rvert-\lvert V(\overline\Gamma)\rvert+1}.
\end{equation}
Restoring all $t$ contracted edges (see Step 4 in Figure~\ref{fig:blue_edge}) and removing $l$ multiple edges,  we introduce the factor
\begin{equation}
\Bigl(\frac{\theta\,\gamma\,t_N}{N}\Bigr)^{l}.
\end{equation}
Hence, for a fixed coupling,
\begin{align}
&\frac{n^{\,\lvert V(\overline\Gamma)\rvert-1}}
     {(\lvert V(\overline\Gamma)\rvert-1)!}
\Bigl(\tfrac{(\gamma\,t_N)\lor n}{N}\Bigr)^{\,\lvert E(\overline\Gamma)\rvert-\lvert V(\overline\Gamma)\rvert+1}
\Bigl(\tfrac{\theta\,\gamma\,t_N}{N}\Bigr)^{l}\\
&=\Bigl(\tfrac{\theta\,\gamma\,t_N}{N}\Bigr)^{l}\frac{n^{\,\lvert V(\overline\Gamma)\rvert-1}}
     {(\lvert V(\overline\Gamma)\rvert-1)!}
\Bigl(\tfrac{(\gamma\,t_N)\lor n}{N}\Bigr)^{\,\lvert E(\Gamma)\rvert-\lvert V(\Gamma)\rvert+1+l}\tag{use $|E(\overline \Gamma)|-|V(\overline \Gamma)|=|E(\Gamma)|-|V(\Gamma)|+l$}\\
&\le \Bigl(\tfrac{\theta\,\gamma\,t_N}{N}\Bigr)^{l}\Bigl(\tfrac{n}{N}\Bigr)^{l}\frac{n^{\,\lvert V(\overline\Gamma)\rvert-1}}
     {(\lvert V(\overline\Gamma)\rvert-1)!}
\Bigl(\tfrac{(\gamma\,t_N)\lor n}{N}\Bigr)^{\,\lvert E(\Gamma)\rvert-\lvert V(\Gamma)\rvert+1}\tag{use $n\ge \gamma t_N$}\\
&\le \Bigl(\tfrac{\theta\,\gamma\,t_N}{N}\Bigr)^{l}\Bigl(\tfrac{n}{N}\Bigr)^{l}\frac{n^{\,\lvert V(\Gamma)\rvert-1+t-l}}
     {(\lvert V(\Gamma)\rvert-1)!}
\Bigl(\tfrac{(\gamma\,t_N)\lor n}{N}\Bigr)^{\,\lvert E(\Gamma)\rvert-\lvert V(\Gamma)\rvert+1}\tag{use $|V(\overline\Gamma)|=|V(\Gamma)|+t-l$}\\
&=(\theta\,\gamma\,t_N)^{l}
\,n^{\,t}N^{-2l}\,
\frac{n^{\,\lvert V(\Gamma)\rvert-1}}{(\lvert V(\Gamma)\rvert-1)!}
\Bigl(\tfrac{(\gamma\,t_N)\lor n}{N}\Bigr)^{\,\lvert E(\Gamma)\rvert-\lvert V(\Gamma)\rvert+1}\notag
\\
&\le
(\theta\,\gamma\,t_N)^{l}
\,\big(\frac{n}{N}\big)^{\,t}\,
\frac{n^{\,\lvert V(\Gamma)\rvert-1}}{(\lvert V(\Gamma)\rvert-1)!}
\Bigl(\tfrac{(\gamma\,t_N)\lor n}{N}\Bigr)^{\,\lvert E(\Gamma)\rvert-\lvert V(\Gamma)\rvert+1}\tag{use $2l\ge t$}.
\end{align}

Next, we sum over all couplings. Suppose that there are $h_i$ blocks of size $i\ge2$ in $\mathcal{C}$, then  we have
\begin{equation}
l=\sum_{i\ge2}(i-1)h_i,
\quad
t=\sum_{i\ge2}i\,h_i.
\end{equation}
The number of way for choosing  $t$ edges to couple  is   at most
\begin{equation}
2^t\frac{\bigl(t+\lvert E(\Gamma)\rvert\bigr)^{\,\lvert E(\Gamma)\rvert-1}}
     {(\lvert E(\Gamma)\rvert-1)!}.
\end{equation}
  This is because, once  the number of coupled edge   in $\mathcal C_e$ with  $e\in E(\Gamma)$ being denoted by  $a_e$, we have  $\sum_{e\in E(\Gamma)}a_e=t$, and there are 2 choices for each coupled edge  since each $e\in E(\Gamma)$ is traversed twice and has two side. 
  The number of arranging of each block of size $i$ is 
\begin{equation}
\frac{t!}{\prod_{i=2}^\infty h_i!\,(i!)^{h_i}}.
\end{equation}
 Hence the total coupling contribution is
\begin{equation}\label{equ:5.29}
\sum_{\substack{h_i\ge0\\\sum\limits_{i\ge2} i\,h_i=t}}
(\theta\,\gamma\,t_N)^{l}
\,(\frac{n}{N})^{\,t}\,2^t\frac{\bigl(t+\lvert E(\Gamma)\rvert\bigr)^{\,\lvert E(\Gamma)\rvert-1}}
     {(\lvert E(\Gamma)\rvert-1)!}\frac{t!}{\prod_{i=2}^\infty h_i!(i!)^{h_i}}.
\end{equation}
To estimate \eqref{equ:5.29}, introduce the exponential generating function (set $M=\theta\,\gamma\,t_N$)
\begin{equation}
A(z)
=\sum_{h_i\ge0}
z^{\sum i\,h_i}
\prod_{i=2}^\infty
\frac{M^{(i-1)h_i}}{h_i!\,(i!)^{h_i}}
=\exp\!\Bigl(\tfrac{e^{Mz}-1-Mz}{M}\Bigr).
\end{equation}
Then the coefficient  for the term $z^t$
\begin{equation}
    \begin{aligned}
{[z^t]}A(z)&\le\;
[z^t]\,
\exp\!\bigl(Mz^2e^{Mz}\bigr)=\sum_{i=0}^{\lfloor t/2\rfloor}
\frac{M^{i}}{i!}
\frac{M^{\,t-2i}}{(t-2i)!}\\
&=
\frac{M^{t}}{t!}\!\sum_{i=0}^{\lfloor t/2\rfloor}\frac{t!}{i!\,(t-2i)!i!}i!M^{-i}\le \frac{3^tM^t}{t!}\sum_{i=0}^{\lfloor t/2\rfloor}\big(\frac{t}{M}\big)^{i}\le 3^t\frac{M^t+t^{[t/2]}M^{\lceil t/2\rceil}}{(t-1)!}.
\end{aligned}
\end{equation}
So, using $t\le n$, the sum in \eqref{equ:5.29} can be bounded by
\begin{equation}
\begin{aligned}
    &\sum_{t=1}^n
\Bigl(\frac{n}{N}\Bigr)^{t}2^t
\frac{\bigl(t+\lvert E(\Gamma)\rvert\bigr)^{\,\lvert E(\Gamma)\rvert-1}}
     {(\lvert E(\Gamma)\rvert-1)!}t
3^t\bigl((\theta\gamma\,t_N)^{t}+(\theta\gamma\,t_N\,n)^{t/2}\bigr)\\
&\le \sum_{t=1}^n
\Bigl(\frac{n}{N}\Bigr)^{t}12^t
e^{t+|E(\Gamma)|}
\bigl((\theta\gamma\,t_N)^{t}+(\theta\gamma\,t_N\,n)^{t/2}\bigr)\\
&\le C^{|E(\Gamma)|}\sum_{t=1}^n
\Bigl(\frac{Cn}{N}(\theta\gamma t_N+\sqrt{\theta\gamma n t_N})\Bigr)^{t}
= o(1),
\end{aligned}
\end{equation}
provided
\begin{equation}
N\;\gg\;n\bigl(\theta\gamma t_N + \sqrt{\theta\gamma n t_N}\bigr).
\end{equation}

Hence the uncoupled case (i.e. $t=0$) dominates, and in that case any diagram with red point has been  shown to be $o(1)$. 

Combining these two steps completes completes the proof of \eqref{equ:5.22}.
\end{proof}

\begin{figure}
    \centering
    \includegraphics[width=\linewidth]{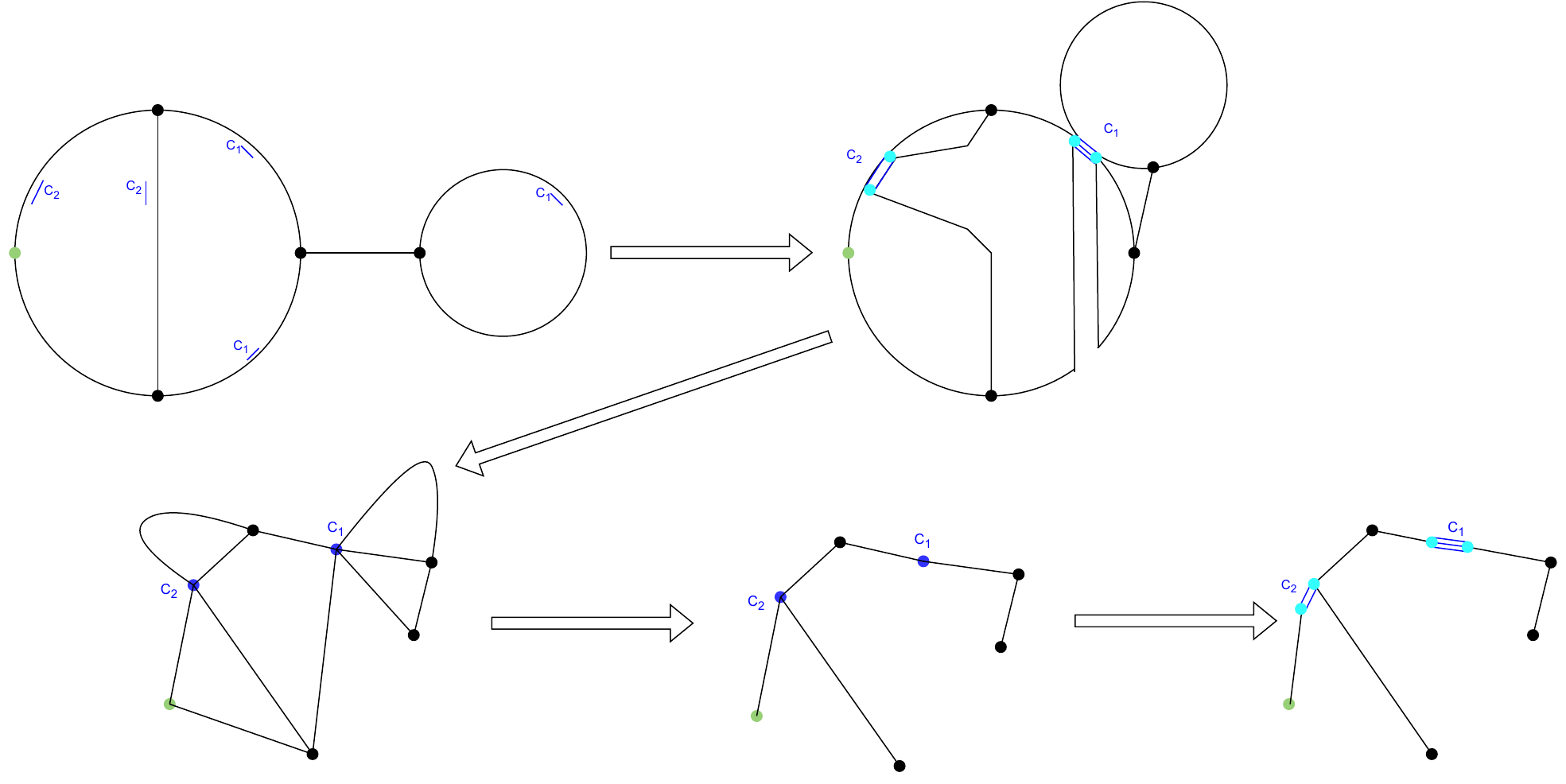}
    \caption{Coupling of blue edges.  \emph{(a)}~In the original graph each (non–blue) edge is really a pair of directed edges (omitted here).  \emph{(b)}~First we couple the edges in $\mathcal C_1$ and $\mathcal C_2$.  \emph{(c)}~Next we merge parallel edges into single blue vertices.  \emph{(d)}~We extract a spanning tree of the resulting graph.  \emph{(e)}~Finally, we restore the original parallel edges.}
    \label{fig:blue_edge}
\end{figure}

At this stage, with Lemma~\ref{lem:D_n=0}, we are now able to complete the proof of Theorem~\ref{thm:main_thm}. 
 
\begin{proposition}\label{prop:V=U}
If  $t_N \ll n_j = O(N^{1/3})$ and $  \theta\gamma  n^2t_N \ll N$,  then 
\begin{equation}
\mathbb{E}\Big[\prod_{j=1}^s \tr\Big( \frac{1}{n_j+1} V_{n_j} \Big)\Big]
=\mathbb{E}\Big[\prod_{j=1}^s \tr\Big( \frac{1}{n_j+1} \widehat{V}_{n_j} \Big)\Big]  + o(1).
\end{equation}
\end{proposition}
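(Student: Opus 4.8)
The plan is to compare the two sides term by term by expanding each trace over closed non-backtracking walks, exploiting that $H=\Sigma_N\circ W_N$ and $\widehat H=\Sigma_N\circ\widehat W_N$ share the same variance profile $\Sigma_N$, while the entries of $\widehat W_N$ realize the extremal sub-Gaussian even moments $\mathbb{E}[\widehat W_{ij}^{2k}]=\theta^{k-1}(2k-1)!!$. It suffices to treat $s=1$; the general case follows by running the same argument simultaneously on the $s$ closed walks, since all estimates invoked below are already phrased in terms of $n=\sum_j n_j$.

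First I would expand $\mathbb{E}\big[\prod_j\tr(\tfrac{1}{n_j+1}\widehat V_{n_j})\big]$: conditioning on the Bernoulli variables $\{B_{ij}\}$ and applying Wick's formula to the Gaussian weights $\widetilde W_{ij}$ turns each trace into a sum over ribbon graphs whose edges are Wick pairs; pairs lying on a common base pair $\{i,j\}$ are glued into a single base edge, and the resulting coupling data $\mathcal C=\{\mathcal C_e\}$ is exactly the structure analysed in Step~2 of the proof of Lemma~\ref{lem:D_n=0}. Since there are no $\Phi$-insertions here, there are no red points, and the only tails are the (at most $s$) ones produced at the trace seams; these are shrunk as in that lemma, relaxing each boundary constraint $\sum_{e\in\partial D_j}w_e=n_j$ to $\le n_j$ and leaving a reduced diagram in $\mathscr{D}_{s;\beta}^*$ with edge weights $w_e\ge1$ and edge multiplicities $2k_e\ge2$. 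For the sub-Gaussian matrix $H$ one cannot use Wick, but the expectation still factorizes over distinct base edges, $\mathbb{E}\big[\prod_e H_e^{m_e}\big]=\prod_e\mathbb{E}[H_e^{m_e}]$, and by the symmetry of the entries only configurations with every $m_e=2k_e$ even survive, again organized into the same reduced diagrams. In both expansions I would split off the \emph{simple part}, where all $k_e=1$ (equivalently, the coupling is trivial), from the \emph{error part}, where some $k_e\ge2$.

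For the simple part, each base edge $e=\{x,y\}$ contributes the factor $\mathbb{E}[H_{xy}^2]=\sigma_{xy}^2$ on the $H$ side and $\mathbb{E}[\widehat H_{xy}^2]=\mathbb{E}[B_{xy}^2]\,\mathbb{E}[\widetilde W_{xy}^2]\,\sigma_{xy}^2=\sigma_{xy}^2$ on the $\widehat H$ side, while the remaining combinatorial data (which diagrams, which labelings, which weights) are identical; hence the two simple parts coincide exactly, and I call their common value $M$. For the error part I would invoke the sub-Gaussian moment bound $\mathbb{E}[H_{xy}^{2k}]\le\theta^{k-1}(2k-1)!!\,\sigma_{xy}^{2k}=\mathbb{E}[\widehat H_{xy}^{2k}]$ (the last equality by extremality of $\widehat W_{xy}=\theta^{1/2}\mathrm{Bern}(\theta^{-1})\,g$, as in Lemma~\ref{lem:5.4}), which dominates, diagram by diagram, the absolute value of the $H$-contribution by the $\widehat H$-contribution. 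The latter is precisely the sum of the nontrivial-coupling terms with $l=\sum_e(k_e-1)>0$, which the coupling generating-function estimate in Step~2 of the proof of Lemma~\ref{lem:D_n=0} bounds by $O\big(\sum_{t\ge1}\big(\tfrac{Cn}{N}(\theta\gamma t_N+\sqrt{\theta\gamma n t_N})\big)^t\big)=o(1)$: indeed $\theta\gamma n^2 t_N\ll N$ gives $n\theta\gamma t_N\ll N$, and together with $n=O(N^{1/3})$ it also gives $\theta\gamma n^3 t_N\ll N^{4/3}\le N^2$, hence $n\sqrt{\theta\gamma n t_N}\ll N$. Thus the error part is $o(1)$ for $\widehat H$ and, being dominated, also $o(1)$ for $H$; so both sides equal $M+o(1)$, which proves the proposition.

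The step I expect to be the main obstacle is the combinatorial matching inside the error part: one must verify that, after averaging over the $B_{ij}$ and performing the Wick pairings inside a coupling block of size $k_e$, the $\widehat H$-expansion indeed produces the weight $\theta^{k_e-1}(2k_e-1)!!$ per base edge together with exactly the block-arrangement counts used in Step~2 of the proof of Lemma~\ref{lem:D_n=0}, so that the sub-Gaussian domination is genuinely term-by-term and the coupling estimate transfers verbatim. A secondary, more routine point is checking that the trace-seam tails, and, for $s>1$, base edges shared by walks belonging to different traces, fit into the same reduced-diagram framework of $\mathscr{D}_{s;\beta}^*$, which they do because Proposition~\ref{prop:F_upper_bound} and the coupling bound are already set up for connected diagrams with the relaxed weight constraints.
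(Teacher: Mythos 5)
Your proposal is correct and follows essentially the same route as the paper: the paper's own (very terse) proof likewise observes that the uncoupled contributions to the two expansions agree exactly because the second moments of $H$ and $\widehat H$ coincide entrywise, and that all diagrams with nontrivial coupling are negligible by Step~2 of the proof of Lemma~\ref{lem:D_n=0}, with the sub-Gaussian moment domination $\mathbb{E}[H_{xy}^{2k}]\le\theta^{k-1}(2k-1)!!\,\sigma_{xy}^{2k}=\mathbb{E}[\widehat H_{xy}^{2k}]$ transferring that bound to the $H$-side. Your additional verification that $\theta\gamma n^2 t_N\ll N$ and $n=O(N^{1/3})$ imply the scaling condition $N\gg n(\theta\gamma t_N+\sqrt{\theta\gamma n t_N})$ used there is a correct filling-in of a detail the paper leaves implicit.
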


\begin{proof}
From Step 2 in the proof of Lemma~\ref{lem:D_n=0}, we know that diagrams involving any non-trivial coupling of edges can be  neglected. In the absence of coupling, the expectations for $V_n$ and $\widehat{V}_n$ agree exactly by construction. Thus, their difference vanishes in expectation, completing the proof.
\end{proof}

\begin{corollary}\label{coro:U=U}
    If   $t_N \ll n_j=O(N^{\frac{1}{3}})$ and $  \theta\gamma n^2 t_N \ll N $,  then
    \begin{equation}
        \mathbb{E}\Big[\prod_{j=1}^s \tr \frac{1}{n_j+1}U_{n_j}(\frac{H}{2})\Big]=\mathbb{E}\Big[\prod_{j=1}^s \tr \frac{1}{n_j+1}U_{n_j}(\frac{\widehat{H}}{2})\Big]+o(1).
    \end{equation}
\end{corollary}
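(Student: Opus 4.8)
The plan is to reduce the statement to Proposition~\ref{prop:V=U} together with Lemma~\ref{lem:D_n=0} by splitting each Chebyshev polynomial into its non-backtracking part plus a remainder. Concretely, write $U_{n_j}(H/2)=V_{n_j}+\mathcal D_{n_j}$ as in~\eqref{def:Dn}, and likewise $U_{n_j}(\widehat H/2)=\widehat V_{n_j}+\mathcal D^{(\widehat H)}_{n_j}$, where $\mathcal D^{(\widehat H)}_{n_j}:=U_{n_j}(\widehat H/2)-\widehat V_{n_j}$ is the analogous remainder for the $\theta$-GOE matrix $\widehat H$. Multilinearity of the trace product then expands $\mathbb{E}\bigl[\prod_j\tr\tfrac{1}{n_j+1}U_{n_j}(H/2)\bigr]$ as a sum over subsets $S\subseteq[s]$ of terms $\mathbb{E}\bigl[\prod_{j\in S}\tr\tfrac{\mathcal D_{n_j}}{n_j+1}\prod_{j\notin S}\tr\tfrac{V_{n_j}}{n_j+1}\bigr]$, and identically for $\widehat H$. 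It therefore suffices to compare the two sides term by term in $S$.

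For $S=\varnothing$ the two terms are $\mathbb{E}\bigl[\prod_j\tr\tfrac{V_{n_j}}{n_j+1}\bigr]$ and $\mathbb{E}\bigl[\prod_j\tr\tfrac{\widehat V_{n_j}}{n_j+1}\bigr]$, which agree up to $o(1)$ by Proposition~\ref{prop:V=U} under the standing assumptions $t_N\ll n_j=O(N^{1/3})$ and $\theta\gamma n^2 t_N\ll N$. For $S\neq\varnothing$ I would argue that \emph{both} the $H$-term and the $\widehat H$-term are themselves $o(1)$. Inserting the Chebyshev path expansion into each factor $\mathcal D_{n_j}$ ($j\in S$) and keeping the $k=0$ slot for each $V_{n_j}$ ($j\notin S$), Lemma~\ref{lem:5.4} applies with some $k_j=0$ — the edge-wise bound $\mathbb{E}|W_{ij}|^{2k}\le\theta^{k-1}(2k-1)!!=\mathbb{E}|\widehat W_{ij}|^{2k}$ needed for those slots is exactly hypothesis~\ref{itm:A1} — and absorbing the $\Phi_3\mapsto\widehat\Phi_2$ reshuffling (the passage from $4^k$ to $8^k$) as in~\eqref{equ:5.13} gives
\[
\Bigl|\mathbb{E}\Bigl[\prod_{j\in S}\tr\tfrac{\mathcal D_{n_j}}{n_j+1}\prod_{j\notin S}\tr\tfrac{V_{n_j}}{n_j+1}\Bigr]\Bigr|
\le \mathbb{E}\Bigl[\prod_{j\in S}\tr\tfrac{\widehat{\mathcal D}_{n_j}}{n_j+1}\prod_{j\notin S}\tr\tfrac{\widehat V_{n_j}}{n_j+1}\Bigr],
\]
and an identical bound for the $\widehat H$-term (with $\mathcal D^{(\widehat H)}$ in place of $\mathcal D$, using that $\widehat H$ is itself $\theta$-sub-Gaussian with extremal moments). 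The right-hand side is $o(1)$ because, with $S\neq\varnothing$, every non-vanishing diagram in its expansion carries at least one $\widehat\Phi_2$-insertion and hence at least one red point, so the red-point/coupling suppression argument from the proof of Lemma~\ref{lem:D_n=0} runs unchanged — the extra ``clean'' non-backtracking factors $\widehat V_{n_j}$, $j\notin S$, introduce no new obstruction — and produces a global factor $\exp\!\bigl(\tfrac{\theta\gamma n^2 t_N}{N}(1+\tfrac{\theta\gamma n t_N}{N})\bigr)-1$ together with a coupling sum, both $o(1)$ under $\theta\gamma n^2 t_N\ll N$. Summing over the finitely many $S$ yields the claim.

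The main obstacle is bookkeeping rather than conceptual: one must verify that Lemma~\ref{lem:5.4} genuinely survives the presence of $k_j=0$ slots, so that mixed products of $\mathcal D$'s and $V$'s are dominated by the all-$\Phi_2$ majorant, and that the red-point suppression in the proof of Lemma~\ref{lem:D_n=0} is unaffected by the clean $V$-factors. Both are direct extensions of statements already in place, so no new estimate is required.
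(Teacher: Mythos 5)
Your proposal is correct, and it starts from the same decomposition as the paper ($U_{n_j}(H/2)=V_{n_j}+\mathcal D_{n_j}$ via \eqref{def:Dn}, expansion over subsets $S$, Proposition~\ref{prop:V=U} for the $S=\varnothing$ term), but it disposes of the cross terms by a different mechanism. The paper applies H\"older's inequality to each mixed term, factoring it into $\bigl(\prod_{j}\mathbb{E}[(\tr\frac{1}{n_j+1}V_{n_j})^{2s}]\prod_{j}\mathbb{E}[(\tr\frac{1}{n_j+1}\mathcal D_{n_j})^{2s}]\bigr)^{1/2s}$, so that Lemma~\ref{lem:D_n=0} (through the majorant of \eqref{equ:5.13}) can be invoked verbatim on pure powers of $\mathcal D$, at the cost of also needing uniform control of the $2s$-th moments of the clean factors $\tr\frac{1}{n_j+1}V_{n_j}$. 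You instead keep the mixed product intact, dominate it by the $\widehat{(\cdot)}$-majorant via Lemma~\ref{lem:5.4} extended to insertion-free slots ($k_j=0$), and then re-run the red-point/coupling suppression of Lemma~\ref{lem:D_n=0} on the mixed object, using that $S\neq\varnothing$ forces at least one $\widehat\Phi_2$-insertion globally; this avoids H\"older and the high-moment bound for the $V$-factors, but requires the two bookkeeping checks you flag (that the entrywise comparison in Lemma~\ref{lem:5.4} tolerates slots where only plain $H$-edges appear — which is exactly \ref{itm:A1} with equality at the second moment — and that the suppression argument is insensitive to red-point-free circuits), both of which are indeed immediate. A further small difference in your favor: you treat the $\widehat H$ side explicitly via the analogous remainder $\mathcal D^{(\widehat H)}_{n_j}$, whereas the paper's displayed estimate only compares $U(H/2)$ with $V$ and leaves the step from $\widehat V$ back to $U(\widehat H/2)$ implicit.
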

\begin{proof}
    By \eqref{def:Dn}, we obtain
    \begin{align}
        &\left|\mathbb{E}\Big[\prod_{j=1}^s \tr \frac{1}{n_j+1}U_{n_j}(\frac{H}{2})\Big]-\mathbb{E}\Big[\prod_{j=1}^s \tr \frac{1}{n_j+1}V_{n_j}\Big]\right|\notag\\&=\left|\mathbb{E}\Big[\prod_{j=1}^s \tr \frac{1}{n_j+1}(V_{n_j}+\mathcal{D}_{n_j})\Big]-\mathbb{E}\Big[\prod_{j=1}^s \tr \frac{1}{n_j+1}V_{n_j}\Big]\right|\notag\\
        &\le \sum_{S\subset [s],~S\neq \emptyset}\left|\mathbb{E}\Big[\prod_{j\in S} \tr \frac{1}{n_j+1}V_{n_j}\prod_{j\in S^c}\tr\frac{1}{n_j+1}\mathcal{D}_{n_j}\Big]\right|\\
        &\le \sum_{S\subset [s],~S\neq \emptyset}\Big(\prod_{j\in S}\mathbb{E}\Big[ (\tr \frac{1}{n_j+1}V_{n_j})^{2s}\Big]\prod_{j\in S^c}\mathbb{E}\Big[(\tr\frac{1}{n_j+1}\mathcal{D}_{n_j})^{2s}\Big]\Big)^{\frac{1}{2s}}=o(1).\notag
    \end{align}
    Here in the last line we use the H\"older inequality and bounds in Lemma \ref{lem:D_n=0}. 
    
    Combining  Proposition \ref{prop:V=U}, we thus    complete the proof.
\end{proof}
\begin{remark}
    The proof of Corollary \ref{coro:U=U} for the deformed case is similar but requires slight modifications. We first adopt the Chebyshev expansion (Proposition \ref{prop:path_expansion_H_plus_A}) for deformed case and sum over the subscripts of consecutive $\Phi_A$ to obtain $(A^k)_{xy}$ in $\mathcal{D}_n$. Then, we apply Lemma \ref{lem:5.4} to establish an upper bound by replacing $(A^k)_{xy}$ with its absolute value, $|(A^k)_{xy}|$, similar to the step in Lemma \ref{lem:lemma5.5}. From this point, we proceed with the Wick pairing step from Lemma \ref{lem:D_n=0} to obtain an upper bound for the coupling of the inner edges. Following the argument in Lemma \ref{lem:D_n=0}, the diagram function is consequently bounded by the right-hand side of \eqref{equ:3.5} (from the Gaussian case), where the $(A^k)_{xy}$ terms have been replaced by $|(A^k)_{xy}|$, and the remainder of the proof proceeds identically.
\end{remark}
To conclude this section, we complete the proof of the general case of Theorem~\ref{thm:main_thm}.

\begin{proof}[Proof of Theorem~\ref{thm:main_thm}: general case]

By Corollary~\ref{coro:U=U}, the mixed moments of Chebyshev polynomials under the conditions of Theorem~\ref{thm:main_thm} exhibit asymptotics identical to those of the GOE/GUE ensemble. The proof of Theorem~\ref{thm:main_thm} therefore follows a procedure largely identical to the Gaussian case.
\end{proof}

\section{Extension to inhomogeneous Wishart matrices}\label{sec:section6}

\begin{definition}[Inhomogeneous Wishart matrices]\label{def:wishart}
Let \(M\le N\) and  $\alpha = {M}/{N}$ where $M$ may depend on $N$, define   
\begin{equation} \label{non-H}
H_N \;=\; \Sigma_N \circ W_N,
\end{equation} as an inhomogeneous Wishart  matrix  and the corresponding additive perturbation (namely, signal-plus-noise matrices)   \begin{equation}\label{eq:deformed-wishart}
X_N \;=\; (H_N + A_N)\,(H_N + A_N)^*.
\end{equation}
where \(W_N=(W_{ij})\) is an \(M\times N\) random matrix with real or complex   entries  and \(\Sigma_N=(\sigma_{ij})\) is a deterministic  \(M\times N\)  matrix  with nonnegative entries. 
Precisely, the following assumptions hold:

\begin{enumerate}[leftmargin=*,label=(C\arabic*)]
  \item \textbf{(Entry moment)}  
 The entries $\{W_{ij}\}_{i,j=1}^N$ are independent, symmetric, and satisfy
 \begin{equation}
      \begin{cases}
       \mathbb{E}[W_{ij}^2]=1, & \text{real 
        case},\\
        \mathbb{E}[|W_{ij}|^2]=1,\;\mathbb{E}[W_{ij}^2]=0, & \text{complex 
        case}.
      \end{cases}
    \end{equation} 
    and for some \(\theta\ge1\) and all integers \(k\ge2\),
    \begin{equation}
      \mathbb{E}[|W_{ij}|^{2k}]
      \;\le\;
      \theta^{\,k-1}(2k-1)!!,
    \end{equation}
  where   \(\theta=\theta_N\to\infty\)  may be allowed  if needed.

  \item \textbf{(Variance profile)}  
    The profile matrix \(\Sigma_N=(\sigma_{ij})\) satisfies
    \begin{equation}
      \sum_{j=1}^N \sigma_{ij}^2 = 1
      \quad(\forall\,i\in[M]),
      \qquad
      \sum_{i=1}^M \sigma_{ij}^2 = \alpha
      \quad(\forall\,j\in[N]).
    \end{equation}
    Equivalently, the bipartite transition matrix on \(S=[M]\sqcup[N]\) is
    \begin{equation}\label{equ:transition_matrix}
      P_{S}
      = \begin{pmatrix}
          0 & (\sigma_{ij}^2)_{M\times N} \\
          \alpha^{-1}(\sigma_{ij}^2)_{N\times M}^T & 0
        \end{pmatrix}.
    \end{equation}

  \item \textbf{(Rank-\(r\) perturbation)}  
    The deterministic matrix \(A_N\) admits a decomposition
\begin{equation}
    A_N = Q_1\,\Lambda\,Q_2^*, 
    \quad 
    \Lambda = \big( \mathrm{diag}(a_1,\dots,a_r, \underbrace{0,\dots,0}_{M-r}), \; 0_{M \times (N-M)} \big),
\end{equation}
where \(Q_1\) and \(Q_2\) are orthogonal (or unitary) matrices of size \(M\) and \(N\), respectively, and \(\underbrace{0,\dots,0}_{M-r}\) denotes the \(M-r\) zeros in the diagonal block.
     Moreover, for some fixed $\tau>0$, 
    \begin{equation}
      \|A_N\|_{\mathrm{op}}
      \;\le\;
      \sqrt{\alpha} \;+\;\tau\,N^{-1/3}.
    \end{equation}
\end{enumerate}
When  \(W_N\) has i.i.d.\ Gaussian entries,   the resulting model is denoted  by \(\widetilde X_N\) with \(\beta=1,2\).
\end{definition}

\begin{definition}[Short-to-Long Mixing on bipartite chain]\label{def:wishart_mixing}
Given  a sequence of positive integers  \(t_N\),   the bipartite Markov chain with state space \(S=[M]\sqcup[N]\) and transition matrix  $P_S$ given in \eqref{equ:transition_matrix}
is said to be   \emph{Short-to-Long Mixing} at time \(t_N\),  if its \(n\)-step transition probabilities \(p_n(x,y)\) obey:

\begin{enumerate}[leftmargin=*,label=(D\arabic*)]
  \item \label{itm:E1} \textbf{(Short-time average mixing)}  
    There exists \(\gamma\ge1\) and \(N_0>0\) so that for all \(N\ge N_0\) and all \(x,y\in S\),
    \begin{equation}
      \frac1{t_N}\sum_{n=1}^{t_N}p_n(x,y)
      \;\le\;
      \begin{cases}
        \displaystyle \frac{\gamma}{N}, & y\in[N],\\[6pt]
        \displaystyle \frac{\gamma}{M}, & y\in[M].
      \end{cases}
    \end{equation}

  \item \label{itm:E2} \textbf{(Long-time uniform mixing)}  
    There exists \(\delta\in(0,0.1)\) such that for all \(n\ge t_N\) and all \(x,y\in S\),
    \begin{equation}
      \bigl|p_n(x,y)+p_{n+1}(x,y)-\tfrac1{N}\bigr|\;\le\;\frac{\delta}{N}
      \quad\text{if }y\in[N],
      \quad
      \bigl|p_n(x,y)+p_{n+1}(x,y)-\tfrac1{M}\bigr|\;\le\;\frac{\delta}{M}
      \quad\text{if }y\in[M].
    \end{equation}
\end{enumerate}
The additional assumption that  $\delta = \delta_N \to 0$  may be imposed   when necessary.
\end{definition}

\begin{figure}[htbp]
  \centering
  \begin{minipage}{0.48\textwidth}
    \centering
    \includegraphics[width=\textwidth]{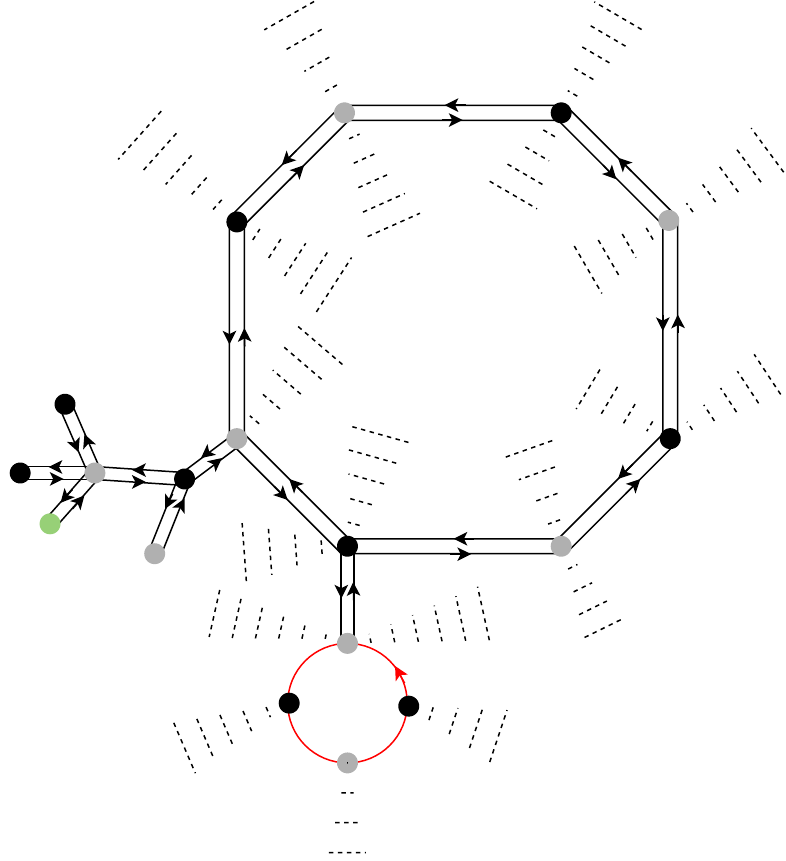}
    \caption{Ribbon-graph examples in the Wishart case.}
    \label{fig:wishart_ribbon_example}
  \end{minipage}
  \hfill
  \begin{minipage}{0.48\textwidth}
    \centering
    \includegraphics[width=\textwidth]{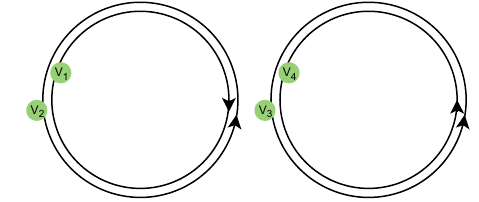}
    \caption{Graphical diagram for the leading term in linear statistics. In real case both orientations are possible but in complex case the second orientation is forbidden.}
    \label{fig:linear_statistics}
  \end{minipage}
\end{figure}

\begin{figure}[t]
    \centering
    \includegraphics[width=0.6\textwidth]{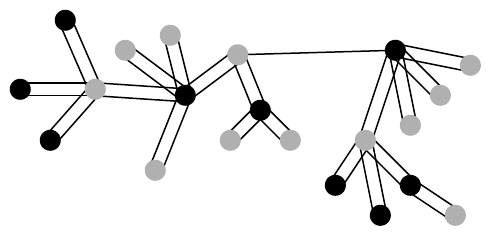}
    \caption{Catalan tree enumeration.  Each white vertex carries weight \(\alpha\), while each black vertex carries weight 1.  The total weight of a path is the product of its vertex weights, giving the \(\alpha\)-Catalan numbers in the Wishart moment expansion.}
    \label{fig:catalan_tree}
\end{figure}

Recall that the Marchenko–Pastur 
law with aspect ratio $0<\alpha\le1$ admits the   density
\begin{equation}
\mu_{\mathrm{MP},\alpha}(\mathrm{d}x)
= \frac{\sqrt{(\lambda_{+}-x)(x-\lambda_{-})}}{2\pi\alpha\,x}\,\mathbf{1}_{[\lambda_{-},\lambda_{+}]}(x)\,\mathrm{d}x,
\end{equation}
where \begin{equation}
     \lambda_{-}=(1-\sqrt{\alpha})^2,\quad \lambda_{+}=(1+\sqrt{\alpha})^2. 
\end{equation}
Its $k$-th moment  is given by 
\begin{equation} 
\quad \mu_0=1,\quad 
\mu_k
=\sum_{j=0}^{k-1}\frac{1}{j+1}\binom{k}{j}\binom{k-1}{j}\,\alpha^j, \ 
 k\ge1.
\end{equation}
\begin{proposition}\label{prop:wishart_moment}
Let $X$ be the real symmetric (or Hermitian) deformed Gaussian IRM–Wishart matrix from \eqref{eq:deformed-wishart}, and fix nonnegative integers $m_1,\dots,m_s$.  For each face $D_j$, set $n_j \;=\;\sum_{e\in\partial D_j}w_e$,  then
\begin{multline}
\label{eq:trace-moments}
\mathbb{E}\Bigl[\prod_{j=1}^s \tr \big(X^{m_j}\big)\Bigr]
=
\sum_{\Gamma \in \mathscr{D}_{s;\beta}}
\;\sum_{\substack{\eta:V(\Gamma)\to [M]\cup[N]}}
\;\sum_{\substack{w_e \ge 1 \\ \sum_{e \in \partial D_j} w_e = n_j, \ \forall j}}
\;\Biggl[\,
\prod_{j=1}^s C_{\mathrm{MP}}(m_j,n_j)\;\Biggr]
\;\\ \times 
\Biggl[\,
\prod_{e=(x,y)\in E_{\mathrm{int}}}p_{w_e}\bigl(\eta(x),\eta(y)\bigr)
\;\prod_{e=(z,w)\in E_b}(A^{(w_e)})_{\eta(z)\eta(w)}
\Biggr].  
\end{multline}
Here
\begin{itemize}
  \item $E_{\mathrm{int}}$ (resp.\ $E_b$) denotes the set of interior (resp.\ open) edges of~$\Gamma$.
  \item For an open edge $e=(z,w)$, the matrix product $(A^{w_e})_{\eta(z)\eta(w)}$ means
    \begin{equation}
(A^{\,(w_e)})_{\eta(z)\,\eta(w)}
:=
\begin{cases}
(A\,A^*)^{w_e/2}, 
& z,w\in[N],\;w_e\text{ even},\\[6pt]
(A^*\,A)^{w_e/2}, 
& z,w\in[M],\;w_e\text{ even},\\[6pt]
A\,(A^*A)^{\frac{w_e-1}{2}},
& z\in[N],\;w\in[M],\;w_e\text{ odd},\\[6pt]
A^*\,(A\,A^*)^{\frac{w_e-1}{2}},
& z\in[M],\;w\in[N],\;w_e\text{ odd}.
\end{cases}
\end{equation}
  \item With  $\mu_k$ being     the $k$-th moment of the Marchenko–Pastur law  with   $\alpha=M/N$,  
    \begin{equation}
    C_{\mathrm{MP}}(m,n):
    = 
    \begin{cases}
      \mu_m, 
        & n=0, \\[6pt]
      \dfrac{m}{n}
\displaystyle\sum_{\substack{k_1+\cdots+k_n=m\\k_1,\ldots,k_n\ge1}}
      \prod_{i=1}^n \mu_{k_i},
        & 1\le n\le m, \\[6pt]
      0, & n>m.
    \end{cases}
    \end{equation}
\end{itemize}
\end{proposition}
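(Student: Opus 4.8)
The plan is to adapt the polygon‑gluing / Wick‑expansion argument behind Proposition~\ref{prop:ribbon} (that is, \cite[Propositions~2.5--2.6]{geng2024outliers}) to the bipartite structure forced by $X_N=BB^*$ with $B:=H_N+A_N$. First I would expand each $\tr(X_N^{m_j})=\tr\bigl((BB^*)^{m_j}\bigr)$ as a sum over closed walks of length $2m_j$ on the vertex set $[M]\sqcup[N]$ whose steps alternate between $[M]$ and $[N]$, the odd steps using $B$‑entries and the even steps using $\overline{B}$‑entries; this realizes $\tr(X_N^{m_j})$ as the boundary of a $2m_j$‑gon carrying a marked vertex and a two‑coloring of its vertices. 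Splitting each $B$‑entry into its $H$‑part and its $A$‑part and applying Wick's formula to the centred Gaussian entries of $H$, the expectation $\mathbb{E}\bigl[\prod_j\tr(X_N^{m_j})\bigr]$ becomes a sum over all pairings of the $H$‑labelled sides of the $s$ polygons, the unpaired sides being $A$‑labelled open edges. Exactly as in the Hermitian case recalled in Section~\ref{sec:ribbow_expansion}, each pairing is encoded by a punctured ribbon graph: when $\beta=2$ the condition $\mathbb{E}[W_{ij}^2]=0$ forces opposite‑orientation gluings and an orientable surface, while when $\beta=1$ both orientations occur and non‑orientable surfaces appear.

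Next I would run the Okounkov contraction (Definition~\ref{def:Okounkov_contraction}) on each ribbon graph while tracking the $[M]/[N]$ color of every vertex. Two reductions produce the right‑hand side of \eqref{eq:trace-moments}. First, removing a chain of $w_e$ consecutive divalent vertices along an edge $e$ and summing over their labels yields $p_{w_e}(\eta(x),\eta(y))$: indeed $\mathbb{E}|H_{xy}|^2=\sigma_{xy}^2$, and by construction of $P_S$ in \eqref{equ:transition_matrix} these $\sigma_{xy}^2$ (on $[M]\to[N]$ steps) and $\alpha^{-1}\sigma_{xy}^2$ (on $[N]\to[M]$ steps) are precisely the one‑step transition probabilities of the bipartite chain, the $\alpha^{-1}$ on the $[N]$‑block being exactly what renders $P_S$ stochastic; multiplying $w_e$ of them and summing the intermediate labels reassembles the matrix power $p_{w_e}$, while the parity of $w_e$ and the colors of the endpoints of $e$ select, for an open edge, which of $(AA^*)^{w_e/2}$, $(A^*A)^{w_e/2}$, $A(A^*A)^{(w_e-1)/2}$, $A^*(AA^*)^{(w_e-1)/2}$ appears. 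Second, collapsing a plane (Catalan) tree hanging off the skeleton and summing over its interior labels contributes a purely combinatorial weight: processing leaves inward, each summed interior $[N]$‑vertex gives a factor $\sum_{j\in[N]}\sigma_{ij}^2=1$ and each summed interior $[M]$‑vertex a factor $\sum_{i\in[M]}\sigma_{ij}^2=\alpha$, so a tree ``of size $k$'' contributes $\sum_{j\ge0}\tfrac1{j+1}\binom{k}{j}\binom{k-1}{j}\alpha^j=\mu_k$, the $k$‑th Marchenko--Pastur moment (Figure~\ref{fig:catalan_tree}). Distributing these trees cyclically among the $n_j$ skeleton steps bounding face $j$, with the sizes $k_1,\dots,k_n$ summing to $m_j$ and a cycle‑lemma factor $m_j/n_j$ for the choice of marked corner, gives exactly $C_{\mathrm{MP}}(m_j,n_j)$; the degenerate case in which the whole $m_j$‑gon collapses to one vertex is the trivial diagram and carries $C_{\mathrm{MP}}(m_j,0)=\mu_{m_j}$.

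What survives the contraction is, by construction, a diagram $\Gamma\in\mathscr{D}_{s;\beta}$ (orientable when $\beta=2$) with interior edges carrying the factors $p_{w_e}$, open edges carrying $(A^{(w_e)})_{\eta(z)\eta(w)}$, and the constraint $\sum_{e\in\partial D_j}w_e=n_j$ attached to the coefficient $C_{\mathrm{MP}}(m_j,n_j)$; summing over $\Gamma$, over the labelings $\eta:V(\Gamma)\to[M]\cup[N]$, and over admissible weights $\{w_e\}$ then yields \eqref{eq:trace-moments}. I expect the main obstacle to be the bookkeeping in the second reduction: one must verify that the $[M]/[N]$ colors of the collapsed and removed vertices are tracked consistently, so that the powers of $\alpha$ come out right---producing the $\alpha$‑Catalan numbers $\mu_k$ rather than the plain Catalan numbers of the Wigner case---and that the remaining label sums genuinely reconstitute powers of the single stochastic matrix $P_S$ even though the row sums of $\Sigma_N^{\circ 2}$ equal $1$ on the $[M]$‑block and $\alpha$ on the $[N]$‑block. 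The non‑orientable gluings and the open‑edge parity constraints in the real case are a secondary but routine check. A slightly more economical route is to linearize via the Hermitian dilation $\mathcal{B}=\begin{pmatrix}0&B\\ B^*&0\end{pmatrix}$, use $\tr(X_N^{m})=\tfrac12\tr(\mathcal{B}^{2m})$ for $m\ge1$ (valid since $BB^*$ and $B^*B$ share their nonzero spectrum and $M\le N$), and apply the proof of Proposition~\ref{prop:ribbon} to $\mathcal{B}=\mathcal{H}+\mathcal{A}$, where $\mathcal{H}$ and $\mathcal{A}$ are the dilations of $H_N$ and $A_N$; the rescaling from the non‑doubly‑stochastic profile of $\mathcal{H}$ to the stochastic chain $P_S$ is then the precise origin of the $\alpha$‑factors in $C_{\mathrm{MP}}$.
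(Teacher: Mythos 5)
Your proposal follows essentially the same route as the paper's proof: a Wick/polygon-gluing expansion over (possibly non-orientable) ribbon graphs, Okounkov contraction with the bipartite $[M]/[N]$ coloring, collapse of the black/white Catalan trees producing the $\alpha$-weighted Marchenko--Pastur moments $\mu_k$, and the cycle-lemma factor $m_j/n_j$, which together yield exactly $C_{\mathrm{MP}}(m_j,n_j)$ times the product of the $p_{w_e}$ and the $A$-factors. The $\alpha$-bookkeeping you flag as the main obstacle is precisely what the paper absorbs into the paired black/white Catalan-path weights $b_i,w_i$ via the identity $\sum_{i=0}^{k} b_i w_{k-i}=\mu_{k+1}$, so your plan matches the paper's argument in all essentials.
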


\begin{proof}
We first express each trace-moment $\mathbb{E}\bigl[\prod_{j=1}^s \tr X^{m_j}\bigr]$ as a sum over ribbon graphs~\(\Gamma\), see for example Figure \ref{fig:wishart_ribbon_example}.

Color vertices in \([N]\) black and those in \([M]\) white.  Each circuit of length \(m_j\) in~\(\Gamma\) decomposes into Catalan paths (which we shrink via Okounkov contraction as in Definition \ref{def:Okounkov_contraction}) together with the remaining edges in $\partial D_j$, an even number \(2n_j\).

It remains to compute the total weight of all ways to insert Catalan subpaths so that \(m_j-n_j\) pairs of steps are removed.  Write \(b_i\) (resp.\ \(w_i\)) for the total weight of all black (resp.\ white) Catalan subpaths of length \(2i\).  Then the total Catalan-insertion weight in the \(j\)th circuit is
\begin{equation}
\sum_{\substack{t_1+\cdots+t_{2n_j}=\,m_j-n_j\\t_i\ge0}}
\;\prod_{i=1}^{n_j} b_{t_{2i-1}}\;w_{t_{2i}}.
\end{equation}
Now we sum over the adjacent black and white vertices, see Figure \ref{fig:catalan_tree}. Since
\begin{equation}
\sum_{i=0}^k b_i\,w_{k-i} \;=\;\mu_{k+1},
\end{equation}
where \(\mu_{k+1}\) is the \((k+1)\)-th moment of the Marchenko–Pastur law, it follows that
\begin{equation}
\sum_{\substack{t_1+\cdots+t_{2n_j}=\,m_j-n_j}}
\prod_{i=1}^{n_j} b_{t_{2i-1}}\,w_{t_{2i}}
\;=\;
\sum_{\substack{k_1+\cdots+k_{n_j}=\,m_j\\k_i\ge1}}
\prod_{i=1}^{n_j}\mu_{k_i}.
\end{equation}

Finally, distributing the marked point uniformly over the \(n_j\) “skeleton” edges introduces the factor
\(\tfrac1{n_j}\sum_{r=1}^{n_j}k_r = \tfrac{m_j}{n_j}\).  Hence by setting 
\begin{equation}
 C_{\mathrm{MP}}(m_j,n_j)
\;=\;
\frac{m_j}{n_j}
\sum_{\substack{k_1+\cdots+k_{n_j}=\,m_j\\k_i\ge1}}
\prod_{i=1}^{n_j}\mu_{k_i},
\end{equation}
we know that the Catalan trees contribute the weight of 
\begin{equation}
\prod_{j=1}^s C_{\mathrm{MP}}(m_j,n_j),
\end{equation}
and the remaining part contributes
\begin{equation}
    \sum_{\Gamma \in \mathscr{D}_{s;\beta}}
\;\sum_{\substack{\eta:V(\Gamma)\to [M]\cup[N]}}
\;\sum_{\substack{w_e \ge 1 \\ \sum_{e \in \partial D_j} w_e = n_j, \ \forall j}}
\Biggl[\,
\prod_{e=(x,y)\in E_{\mathrm{int}}}p_{w_e}\bigl(\eta(x),\eta(y)\bigr)
\;\prod_{e=(z,w)\in E_b}(A^{(w_e)})_{\eta(z)\eta(w)}
\Biggr].  
\end{equation}
Thus we complete  the proof. 
\end{proof}

Similar to Proposition \ref{prop:ribbon}, we have 
\begin{theorem}\label{thm:Pn_moments}
Let \(X=(H_N + A_N)(H_N + A_N)^*\) be the deformed inhomogeneous Wishart matrix of Definition~\ref{def:wishart} with Gaussian entries, and let \(\{\mathcal P_n\}_{n\ge0}\) be a   family of polynomials defined by
\begin{equation}\label{def:Pn}
    x^m \;=\;\sum_{n=0}^{m} C_{\mathrm{MP}}(m,n)\,\mathcal P_{n}(x), \quad m=0,1,\ldots,
\end{equation}
then for any nonnegative integers \(n_1,\dots,n_s\),
\begin{equation}\label{eq:Pn_moments}
\mathbb{E}\Bigl[\prod_{j=1}^s \tr\mathcal P_{n_j}(X)\Bigr]
=
\sum_{\Gamma\in\mathscr{D}_{s;\beta}}
\;\sum_{\eta:V(\Gamma)\to [M]\cup[N]}
\;\sum_{\substack{w_e \ge 1 \\ \sum_{e \in \partial D_j} w_e = n_j, \ \forall j}}
\;\prod_{e\in E_{\mathrm{int}}}p_{w_e}\bigl(\eta(x_e),\eta(y_e)\bigr)
\;\prod_{e\in E_b}(A^{w_e})_{\eta(z_e)\eta(w_e)}.
\end{equation}
Here
\begin{itemize}[leftmargin=2em]
  \item \(\mathscr{D}_{s;\beta}\) is the set of \(s\)-face ribbon graphs (orientable if \(\beta=2\)) (see Definition \ref{def:diagram}),  
  \item \(E_{\mathrm{int}}\) denotes the interior edges of \(\Gamma\), and \(E_b\) its boundary edges (see Definition \ref{def:diagram}),  
  \item for each interior edge \(e=(x_e,y_e)\),  \(p_{w_e}(\eta(x_e),\eta(y_e))\) denotes the \(n\)-step transition probability  (see \eqref{equ:transition_matrix} for the Markov chain),  
  \item for each boundary edge \(e=(z_e,w_e)\), \((A^{(w_e)})_{\eta(z_e)\eta(w_e)}\) is defined as in Proposition \ref{prop:wishart_moment}.  
\end{itemize}
\end{theorem}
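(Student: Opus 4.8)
The plan is to follow the proof of Proposition~\ref{prop:T=Gamma} almost verbatim, replacing the Chebyshev change of basis by the Marchenko--Pastur one encoded in~\eqref{def:Pn}. The first step is to record the structural properties of the lower‑triangular matrix $C:=\bigl(C_{\mathrm{MP}}(m,n)\bigr)_{m,n\ge0}$: by definition $C_{\mathrm{MP}}(m,n)=0$ for $n>m$, while $C_{\mathrm{MP}}(0,0)=\mu_0=1$ and, for $m\ge1$, the only composition $k_1+\cdots+k_m=m$ with all $k_i\ge1$ is the all‑ones one, so that $C_{\mathrm{MP}}(m,m)=\tfrac{m}{m}\,\mu_1^{\,m}=1$. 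Hence $C$ is unitriangular and invertible over $\mathbb{Q}[\alpha]$; consequently the polynomials $\mathcal{P}_n$ defined by~\eqref{def:Pn} exist, are unique, and $\mathcal{P}_n$ is monic of degree $n$. Writing $\mathcal{P}_n(x)=\sum_{m=0}^n D(n,m)\,x^m$ with $D:=C^{-1}$, the defining relation~\eqref{def:Pn} is equivalent to the orthogonality identity
\[
\sum_{m\ge0} D(n,m)\,C_{\mathrm{MP}}(m,\ell)=\delta_{n,\ell},\qquad n,\ell\ge0,
\]
which is the exact analogue of~\eqref{equ:orth_relation}.

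Next, I would expand each trace $\tr\mathcal{P}_{n_j}(X)=\sum_{m_j=0}^{n_j}D(n_j,m_j)\tr X^{m_j}$ and insert the Gaussian ribbon‑graph expansion of Proposition~\ref{prop:wishart_moment}. Abbreviating $\ell_j:=\sum_{e\in\partial D_j}w_e$ and
\[
\mathcal{W}(\Gamma,\eta,\{w_e\}):=\prod_{e=(x,y)\in E_{\mathrm{int}}}p_{w_e}\bigl(\eta(x),\eta(y)\bigr)\;\prod_{e=(z,w)\in E_b}(A^{(w_e)})_{\eta(z)\eta(w)},
\]
this gives
\begin{align*}
\mathbb{E}\Bigl[\prod_{j=1}^s\tr\mathcal{P}_{n_j}(X)\Bigr]
&=\sum_{0\le m_j\le n_j}\;\prod_{j=1}^s D(n_j,m_j)\;
\mathbb{E}\Bigl[\prod_{j=1}^s\tr X^{m_j}\Bigr]\\
&=\sum_{0\le m_j\le n_j}\;\prod_{j=1}^s D(n_j,m_j)
\sum_{\Gamma\in\mathscr{D}_{s;\beta}}\;\sum_{\eta}\;\sum_{\{w_e\ge1\}}
\Bigl(\prod_{j=1}^s C_{\mathrm{MP}}(m_j,\ell_j)\Bigr)\,\mathcal{W}(\Gamma,\eta,\{w_e\}).
\end{align*}
All sums here are finite, so one may interchange the $\{m_j\}$‑summation with those over $\Gamma$, $\eta$, $\{w_e\}$; for fixed $\Gamma,\eta,\{w_e\}$ the bracket factorizes over $j$, and for each $j$
\[
\sum_{m_j=\ell_j}^{n_j} D(n_j,m_j)\,C_{\mathrm{MP}}(m_j,\ell_j)=\delta_{n_j,\ell_j}
\]
by the orthogonality identity (legitimate since $C_{\mathrm{MP}}(m_j,\ell_j)=0$ unless $\ell_j\le m_j$, while $\ell_j\le n_j$ holds automatically for any surviving term). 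Retaining only the configurations with $\ell_j=n_j$ for every $j$ collapses the whole expression to the right‑hand side of~\eqref{eq:Pn_moments}.

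Finally, I would account for the degenerate configurations. A face $D_j$ with $n_j=0$ can only be matched by diagrams in which that face has empty boundary ($\ell_j=0$, in particular the trivial component consisting of a single marked vertex), and the orthogonality identity at $\ell=0$, namely $\sum_{m\ge0}D(n,m)\,C_{\mathrm{MP}}(m,0)=\sum_{m\ge0}D(n,m)\,\mu_m=\delta_{n,0}$, ensures these contribute only when $n_j=0$, consistently with $\mathcal{P}_0\equiv1$; the trivial diagrams carry the same dimensional weights as in Proposition~\ref{prop:wishart_moment} and Proposition~\ref{prop:ribbon}. Thus the argument is a purely finite linear‑algebraic reduction with no analytic content. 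The only point that genuinely requires verification is the unitriangularity of $C$---equivalently, the well‑posedness of~\eqref{def:Pn} and the orthogonality identity above---together with a careful matching of the trivial‑diagram conventions inherited from Proposition~\ref{prop:wishart_moment}; once these are settled, the proof is structurally identical to that of Proposition~\ref{prop:T=Gamma}.
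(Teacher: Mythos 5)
Your argument is correct and is essentially the paper's own: the paper reduces Theorem \ref{thm:Pn_moments} to Proposition \ref{prop:wishart_moment} by induction on the total degree, which is exactly your unitriangular inversion (using $\mu_1=1$, hence $C_{\mathrm{MP}}(m,m)=1$) carried out degree by degree, in parallel with the orthogonality argument used for Proposition \ref{prop:T=Gamma}. The additional points you verify---well-posedness of \eqref{def:Pn}, legitimacy of interchanging the (effectively finite) sums, and the trivial-diagram convention for faces with $n_j=0$---are precisely the details the paper's one-line proof leaves implicit.
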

\begin{proof}
Start from Proposition~\ref{prop:wishart_moment} and proceed  by induction on the total degree \(n = n_1 + \cdots + n_s\), one can complete the proof.
\end{proof}

\begin{proposition}\label{cor:Pn_chebyshev}
For every integer \(n\ge1\),   \(\mathcal P_n\) defined  in \eqref{def:Pn} can be expressed in terms of Chebyshev polynomials of first kind as  
\begin{equation}\label{equ:Pn_to_Tn}
\frac{1}{2\,\alpha^{n/2}}\,\mathcal P_n(x)
\;=\;
T_n\!\Bigl(\frac{x - (1+\alpha)}{2\sqrt{\alpha}}\Bigr)
\;-\;
\int T_n\!\Bigl(\frac{t - (1+\alpha)}{2\sqrt{\alpha}}\Bigr)\,d\mu_{\mathrm{MP},\alpha}(t).
\end{equation}
Moreover, it can also  explicitly be    given by  Chebyshev polynomials of second  kind 
\begin{equation}\label{equ:Un_Pn}
 U_n\!\bigl(\tfrac{x-(1+\alpha)}{2\sqrt\alpha}\bigr)
   +\sqrt\alpha\,U_{n-1}\!\bigl(\tfrac{x-(1+\alpha)}{2\sqrt\alpha}\bigr)
\;=\;
\sum_{i=0}^{n-1} \frac{1}{2}\alpha^{\lfloor \frac{i}{2}\rfloor-\frac{n}{2}}\;\mathcal P_{\,n-i}(x).
\end{equation}
\end{proposition}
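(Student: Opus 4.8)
\emph{Proof plan.} The key to both formulas is the quadratic substitution
\[
x \;=\; 1+\alpha+2\sqrt\alpha\cos\theta \;=\;(1+\sqrt\alpha\,e^{i\theta})(1+\sqrt\alpha\,e^{-i\theta}),
\]
i.e.\ $x=(1+u)(1+\bar u)$ with $u=\sqrt\alpha\,e^{i\theta}$, $u\bar u=\alpha$, and $y:=\frac{x-(1+\alpha)}{2\sqrt\alpha}=\cos\theta$. Since $T_n(\cos\theta)=\cos n\theta$, setting $R_n(x):=2\alpha^{n/2}T_n(y)=u^n+\bar u^n$ gives a polynomial of degree $n$ in $x$ (with $R_0=2$). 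I first record that \eqref{def:Pn} is unitriangular: $C_{\mathrm{MP}}(m,m)=\mu_1^{\,m}=1$, so it determines $\{\mathcal P_n\}$ uniquely as the monic degree-$n$ polynomials with $\mathcal P_0=1$; hence it suffices to exhibit a monic family solving \eqref{def:Pn} and recognise it. The same unitriangularity, applied after integrating \eqref{def:Pn} against $\mu_{\mathrm{MP},\alpha}$ and using $C_{\mathrm{MP}}(m,0)=\mu_m=\int x^m\,d\mu_{\mathrm{MP},\alpha}$, forces $\int\mathcal P_n\,d\mu_{\mathrm{MP},\alpha}=0$ for all $n\ge1$, a fact I reuse below.

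For \eqref{equ:Pn_to_Tn} I would expand $x^m=(1+u)^m(1+\bar u)^m=\sum_{a,b}\binom ma\binom mb u^a\bar u^b$ and collect the terms with $|a-b|=k$ fixed, using $u\bar u=\alpha$ and the $a\leftrightarrow b$ symmetry, to get
\[
x^m=\beta_0(m)+\sum_{k=1}^m\beta_k(m)\,R_k(x),\qquad \beta_k(m)=\sum_{b\ge0}\binom{m}{b+k}\binom mb\alpha^{\,b}.
\]
Integrating against $\mu_{\mathrm{MP},\alpha}$ pins down $\beta_0(m)$ and rewrites this as $x^m=\mu_m+\sum_{k\ge1}\beta_k(m)\bigl(R_k(x)-\int R_k\,d\mu_{\mathrm{MP},\alpha}\bigr)$. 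It then remains to verify the coefficient identity $\beta_k(m)=C_{\mathrm{MP}}(m,k)$ for $1\le k\le m$. With $M(z)=\sum_j\mu_jz^j$, the Marchenko--Pastur moment generating function satisfies $M=1+zM+\alpha z^2M^2$; writing $W=zM$ one has $z=W/(1+W+\alpha W^2)$ and $M-1=W(1+\alpha W)$, and Lagrange inversion turns $C_{\mathrm{MP}}(m,k)=\frac mk[z^m](M-1)^k$ into a coefficient extraction equal to $\beta_k(m)$ (equivalently, a Vandermonde rewriting gives $\beta_k(m)=\alpha^{-k}[t^{m+k}]\bigl((1+t)(1+\alpha t)\bigr)^m$, the classical Narayana/ballot identity). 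Together with uniqueness this yields $\mathcal P_n=R_n-\int R_n\,d\mu_{\mathrm{MP},\alpha}$ for $n\ge1$, which is \eqref{equ:Pn_to_Tn}.

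For \eqref{equ:Un_Pn} I would iterate $2T_n=U_n-U_{n-2}$ to get $U_n(y)=2\sum_{1\le k\le n,\ k\equiv n\ (2)}T_k(y)+\mathbf 1(n\text{ even})$, substitute $T_k(y)=\tfrac12\alpha^{-k/2}\mathcal P_k(x)+\int T_k\,d\mu_{\mathrm{MP},\alpha}$ from \eqref{equ:Pn_to_Tn} there and in the analogous expansion of $\sqrt\alpha\,U_{n-1}(y)$, and collect. The non-constant part regroups, with the $k\equiv n$ terms of $U_n$ contributing coefficient $\alpha^{-k/2}$ and the $k\equiv n-1$ terms of $\sqrt\alpha\,U_{n-1}$ contributing $\alpha^{(1-k)/2}$, into exactly the combination $\sum_{i=0}^{n-1}(\cdot)\,\mathcal P_{n-i}(x)$ on the right of \eqref{equ:Un_Pn}; the only thing that can survive besides this is one additive constant $C$ assembled from the numbers $\int T_k\,d\mu_{\mathrm{MP},\alpha}$. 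That $C=0$ follows by matching $\mu_{\mathrm{MP},\alpha}$-means of the two sides: the right side has mean $0$ since $\int\mathcal P_k\,d\mu_{\mathrm{MP},\alpha}=0$ for $k\ge1$, and the left side has mean $0$ because, under the substitution above (where $d\mu_{\mathrm{MP},\alpha}$ has $\theta$-density $\propto\sin^2\theta/|1+\sqrt\alpha e^{i\theta}|^2$ on $[0,\pi]$) and using $U_n(\cos\theta)\sin\theta=\sin(n+1)\theta$, one finds $\int\bigl(U_n(y)+\sqrt\alpha\,U_{n-1}(y)\bigr)d\mu_{\mathrm{MP},\alpha}\propto\int_0^\pi\sin\theta\,\mathrm{Im}\,\frac{u^{n+1}}{1+u}\,d\theta=0$ for $n\ge1$, since $\frac{u^{n+1}}{1+u}=\sum_{j\ge0}(-1)^j u^{\,n+1+j}$ carries no $u^1$ term and $\int_0^\pi\sin\theta\sin p\theta\,d\theta=0$ for integers $p\ge2$.

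The only step that is more than formal Chebyshev bookkeeping is the Narayana-type identity $\beta_k(m)=C_{\mathrm{MP}}(m,k)$; everything else is routine manipulation of the recurrences $2T_n=U_n-U_{n-2}$ and $x=1+\alpha+2\sqrt\alpha y$. I would also re-verify the precise power of $\alpha$ in the coefficients of \eqref{equ:Un_Pn} directly against the grouping in the final step.
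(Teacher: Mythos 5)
Your proposal takes a genuinely different route from the paper's. The paper's proof is indirect and citation-based: setting $A=0$ in the diagrammatic expansion of Theorem \ref{thm:Pn_moments}, it computes the limiting covariance of the linear statistics $\tr\,\mathcal P_n(X)$, concludes that $\{\alpha^{-n/2}\mathcal P_n\}$ is the orthogonal family for Wishart linear statistics, identifies it with the centered shifted Chebyshev polynomials by uniqueness together with the known result of Kusalik--Mingo--Speicher, and then obtains \eqref{equ:Un_Pn} from the Feldheim--Sodin orthogonality of $U_n+\sqrt\alpha\,U_{n-1}$ plus a one-line ``moment matching''. You instead argue directly from the defining relation \eqref{def:Pn}: unitriangularity ($C_{\mathrm{MP}}(m,m)=\mu_1^m=1$) pins $\mathcal P_n$ down uniquely, the substitution $x=(1+u)(1+\bar u)$ expands $x^m$ in the basis $u^k+\bar u^k=2\alpha^{k/2}T_k(y)$, and \eqref{equ:Pn_to_Tn} reduces to the single identity $\beta_k(m)=\sum_{b\ge0}\binom{m}{b+k}\binom{m}{b}\alpha^b=C_{\mathrm{MP}}(m,k)$. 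This is more elementary and self-contained, avoiding the random-matrix detour and the external orthogonality results; the price is that this Narayana-type identity, which carries the entire content of the first claim, is only asserted. It is true (it checks for small $m,k$ and does follow from Lagrange inversion applied to $M=1+zM+\alpha z^2M^2$), but the Lagrange computation is not completely routine --- note $\phi(W)=1+W+\alpha W^2\neq(1+W)(1+\alpha W)$ --- so it must be written out. Your mean-zero computation for the additive constant is fine, up to justifying the expansion of $1/(1+u)$ when $\alpha=1$, where $|u|=1$.

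One concrete problem: carrying out your regrouping does \emph{not} produce the right-hand side of \eqref{equ:Un_Pn} as printed. Telescoping $2T_k=U_k-U_{k-2}$ and substituting \eqref{equ:Pn_to_Tn} gives the coefficient $\alpha^{-k/2}$ for $\mathcal P_k$ when $k\equiv n\pmod 2$ and $\alpha^{(1-k)/2}$ otherwise, i.e. $U_n(y)+\sqrt\alpha\,U_{n-1}(y)=\sum_{i=0}^{n-1}\alpha^{\lceil i/2\rceil-n/2}\,\mathcal P_{n-i}(x)$: there is no factor $\tfrac12$, and the exponent carries a ceiling, not a floor. Already $n=1$ exhibits the discrepancy: $U_1(y)+\sqrt\alpha\,U_0(y)=(x-1)/\sqrt\alpha=\alpha^{-1/2}\mathcal P_1(x)$, whereas the printed right-hand side is $\tfrac12\alpha^{-1/2}\mathcal P_1(x)$; for $n=2$ even the relative $\alpha$-powers of the two terms disagree ($\alpha^{-1}\mathcal P_2+\mathcal P_1$ versus $\tfrac12\alpha^{-1}(\mathcal P_2+\mathcal P_1)$). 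So the display \eqref{equ:Un_Pn} contains typos that your phrase ``exactly the combination on the right'' would silently inherit; the coefficient re-verification you postpone to the end is precisely where this must be resolved, by stating the corrected coefficients (the paper's own one-line moment-matching argument does not detect the error either).
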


\begin{proof}
{\bf Step 1.}  Take $A=0$ in  the diagrammatic expansion   and consider the classical Wishart ensemble as established in \eqref{eq:Pn_moments}, in the large $N$ limit one can prove that  the traces
$ N^{-1}\tr\mathcal P_n(X) $ for every  $n\ge1$
have mean zero and satisfy
\begin{equation}
\lim_{N\to\infty}
\mathrm{Cov}\bigl(\tr\mathcal P_{n_1}(X),\;\tr\mathcal P_{n_2}(X)\bigr)
=\delta_{n_1,n_2}\,\frac{2n_1}{\beta}\,\alpha^{n_1}.
\end{equation}
Therefore,   \(\{\alpha^{-n/2}\mathcal P_n\}\)  (up to normalization) consists of   the unique orthogonal family for the linear statistics of sample covariance matrices with parameter \(\alpha\).  

{\bf Step 2.}   It is a well-known fact (e.g.\ \cite[Theorem 1]{MR2320312}) that the centered and shifted Chebyshev polynomials
\begin{equation}
\widetilde T_n(x)
:=T_n\!\Bigl(\tfrac{x-(1+\alpha)}{2\sqrt\alpha}\Bigr)
\;-\;
\int T_n\!\Bigl(\tfrac{t-(1+\alpha)}{2\sqrt\alpha}\Bigr)\,d\mu_{\mathrm{MP}}(t)
\end{equation}
exactly form the orthogonal family.  By the uniqueness of orthogonal polynomials, one immediately obtains the relation \eqref{equ:Pn_to_Tn}.  

{\bf Step 3.}   Finally, the linear combination
\begin{equation}
U_n\!\bigl(\tfrac{x-(1+\alpha)}{2\sqrt\alpha}\bigr)
\;+\;
\sqrt\alpha\,U_{n-1}\!\bigl(\tfrac{x-(1+\alpha)}{2\sqrt\alpha}\bigr)
\end{equation}
is     orthogonal with respect to   \(\mu_{\mathrm{MP}}\), as established in   \cite[Prop.\,I.5.2]{feldheim2010universality}.  Direct moment-matching on $\mu_{\mathrm{MP}}$ consequently yields expansion \eqref{equ:Un_Pn}.
\end{proof}

Introduce a family of  polynomials 
\begin{equation}
Q_n(x) =U_n\!\bigl(\tfrac{x-(1+\alpha)}{2\sqrt\alpha}\bigr)
\;+\;
\sqrt\alpha\,U_{n-1}\!\bigl(\tfrac{x-(1+\alpha)}{2\sqrt\alpha}\bigr),
\end{equation}
then we can obtain a path expansion; see Proposition \ref{prop:Q_path_expansion}. Then, adapting the argument for inhomogeneous Wigner matrices yields
\begin{theorem}
    Given any fixed integers  $s\ge 1$ and  $t_i\ge 1$ for $1\le i\le s$, $1\le j\le t_i$, if  $\|A\|\le \sqrt{\alpha}+O(N^{-\frac{1}{3}})$ with $ \epsilon\le \alpha\le 1$ for some fixed $\epsilon>0$, and
    \begin{equation}
        \theta t_N\ll n_{i,j}\le  \tau N^{\frac{1}{3}}
    \end{equation}
    for any fixed constant $\tau$  such that   $t_1n_1+\cdots+t_kn_k$ is even, then   
    \begin{equation}
        \mathbb{E}\Big[\prod_{i=1}^{s}\tr \prod_{j=1}^{t_i} (\frac{1}{n_{i,j}}Q_{n_{i,j}}(\frac{X}{2}))\Big]=\mathbb{E}\Big[\prod_{i=1}^{s}\tr \prod_{j=1}^{t_i}(\frac{1}{n_{i,j}}Q_{n_{i,j}}(\frac{\widetilde{X}}{2}))\Big]+o(1).
    \end{equation}
\end{theorem}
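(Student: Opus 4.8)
The plan is to mirror, for the Wishart setting, the route taken for inhomogeneous Wigner matrices in Sections~\ref{sec:section4} and~\ref{sec:section5}, with the polynomials $Q_n$ playing the role that $U_n(\cdot/2)$ played there. Throughout, normalize so that the ``bulk'' edge of $X$ sits at $\lambda_+=(1+\sqrt\alpha)^2$ and the argument $\tfrac{x-(1+\alpha)}{2\sqrt\alpha}$ maps this edge to $1$, exactly the point where Chebyshev polynomials of the second kind have their exponential growth. First I would treat the Gaussian case. By Theorem~\ref{thm:Pn_moments} together with the expansion~\eqref{equ:Un_Pn}, the mixed moments $\mathbb{E}\big[\prod_i\tr\prod_j \tfrac{1}{n_{i,j}}Q_{n_{i,j}}(X/2)\big]$ admit a diagrammatic expansion over ribbon graphs in $\mathscr{D}_{s;\beta}$, whose summands are, up to bounded combinatorial coefficients, precisely the diagram functions $F_\Gamma$ built from the $n$-step transition probabilities of the bipartite chain $P_S$ in~\eqref{equ:transition_matrix} and from powers of $A$. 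The key point is that, under the bipartite Short-to-Long Mixing conditions~\ref{itm:E1}--\ref{itm:E2}, the upper-bound estimates of Proposition~\ref{prop:F_upper_bound} and the asymptotic comparison of Theorem~\ref{thm:F=tildeF} go through verbatim: the only structural differences — that edges alternate between black ($[N]$) and white ($[M]$) vertices and that open edges carry $AA^*$ or $A^*A$ blocks depending on parity — affect neither the spanning-forest counting nor the replacement of $p_{w_e}(x,y)$ by its uniform value, since $\alpha\in[\epsilon,1]$ keeps $M\asymp N$. Hence one obtains, as in Theorem~\ref{prop:U_asy} and Theorem~\ref{prop:super_4}, that for $\theta t_N\ll n_{i,j}=O(N^{1/3})$ and $\|A\|\le\sqrt\alpha+O(N^{-1/3})$, the cumulants are absolutely summable, diagram-wise convergent, and agree with those of the pure-Gaussian model $\widetilde X$ up to $o(\prod_{i,j}n_{i,j})$; after dividing by $\prod_{i,j}n_{i,j}$ this gives the claimed identity in the Gaussian case. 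The product-formula step needs the analogue of~\eqref{prodexpansion}: since $Q_n(x/2)=U_n(\cdot)+\sqrt\alpha\,U_{n-1}(\cdot)$ in the shifted variable, one re-expands products of $Q$'s into a linear combination of single $U_k$'s (hence single $Q_k$'s via~\eqref{equ:Un_Pn}) with nonnegative integer coefficients controlled exactly as in the proof of Theorem~\ref{prop:super_4}, the normalization check $Q_n(\lambda_+/2)=(1+\sqrt\alpha)(n+1)$ replacing $U_n(1)=n+1$.

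Next I would remove the Gaussian assumption. Following Section~\ref{sec:section5}, introduce the non-backtracking powers $V_n$ for the rectangular matrix $H=\Sigma_N\circ W_N$ (now with the chain on $S=[M]\sqcup[N]$), the associated $\Phi_2,\Phi_3$ operators, and their $\theta$-GOE/GUE counterparts $\widehat\Phi_2,\widehat\Phi_3,\widehat V_n$. The Chebyshev path expansion for $Q_n$ — stated as Proposition~\ref{prop:Q_path_expansion} in the appendix — expresses $Q_{n}(X/2)$ as a sum over insertions of $\underline{\Phi_{a}V_{l}}$ blocks into $V$-strings, exactly parallel to the Erd\H{o}s--Knowles expansion used in Section~\ref{sec:section5}. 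The entrywise moment comparison of Lemma~\ref{lem:5.4} is a statement about a single scalar entry $W_{xy}$ and its Bernoulli-thinned Gaussian surrogate, so it is insensitive to whether the underlying matrix is square or rectangular; it therefore yields, as in the Wigner case, that $|\mathbb{E}[\prod_j\tr\mathcal D_{n_j}]|\le \mathbb{E}[\prod_j\tr\widehat{\mathcal D}_{n_j}]$ where $\mathcal D_{n}=Q_n(X/2)-V_n$ (up to the convention that $V_n$ denotes the $V$-only string with both $U_n$ and $\sqrt\alpha U_{n-1}$ contributions). Then Lemma~\ref{lem:D_n=0} — the red-point and coupling analysis — carries over essentially word for word: the red-point shrinking uses only $p_2(x_1,x_1)\le \gamma t_N/N$ from~\ref{itm:E1}, and the coupling bound uses only the spanning-tree estimate of Proposition~\ref{prop:F_upper_bound} and $\mathbb{E}[B_{ij}^{2k}]=\theta^{k-1}$; both hold here, so any diagram with a red point or a nontrivial coupling is $o(1)$ provided $\theta\gamma n^2 t_N\ll N$, which is implied by $\theta t_N\ll n=O(N^{1/3})$. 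Consequently $\mathbb{E}[\prod_j\tr\tfrac{1}{n_j}V_{n_j}]=\mathbb{E}[\prod_j\tr\tfrac{1}{n_j}\widehat V_{n_j}]+o(1)$ (Proposition~\ref{prop:V=U} analogue), and a H\"older argument identical to Corollary~\ref{coro:U=U} upgrades this to the $Q_n$'s, completing the sub-Gaussian case and hence the theorem.

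I expect the main obstacle to be bookkeeping rather than conceptual: one must verify that the bipartite/parity structure does not break any of the combinatorial estimates. Concretely, the subtle points are (a) checking that in the ribbon-graph expansion of Theorem~\ref{thm:Pn_moments} the open-edge contributions, which are products of $AA^*$/$A^*A$ blocks with parity-dependent odd-length corrections $A(A^*A)^{(w-1)/2}$, still satisfy the cycle bound of~\eqref{equ:3.11} — this follows from $\|A\|_{\mathrm{op}}\le\sqrt\alpha+O(N^{-1/3})$ and $\mathrm{rank}\,A=r$ by the same Cauchy--Schwarz argument, but the alternation of white/black vertices around an open cycle must be tracked; (b) ensuring that the Catalan-tree/Marchenko--Pastur factors $C_{\mathrm{MP}}(m,n)$ that were absorbed into the definition of $\mathcal P_n$ (and hence $Q_n$) are uniformly bounded on the relevant range, which reduces to $\mu_k\le C^k$ for $\alpha\le 1$, true since $\mu_k\le \binom{2k}{k}$; and (c) that the shifted Chebyshev argument $(x-(1+\alpha))/(2\sqrt\alpha)$ interacts correctly with the hard-edge scaling limit $\lambda\mapsto \lambda_++N^{-2/3}y$ in the final step — here Lemma~\ref{lem:B.9_Chebyshev} applies after a linear change of variables, giving again the $\sinc$ kernel, with the $\sqrt\alpha U_{n-1}$ term contributing a lower-order correction of size $O(1/n)$. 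Once these checks are in place, the proof is a transcription of the Wigner argument with $[N]\rightsquigarrow [M]\sqcup[N]$, $U_n(\cdot/2)\rightsquigarrow \tfrac{1}{n}Q_n(\cdot/2)$, and $P_N\rightsquigarrow P_S$, and I would present it as such, citing Sections~\ref{sec:section4}--\ref{sec:section5} for the details that are identical.
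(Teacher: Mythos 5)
Your proposal is correct and follows exactly the route the paper intends: the paper itself offers no written proof of this theorem beyond the remark that it follows by ``adapting the argument for inhomogeneous Wigner matrices,'' supported by Theorem~\ref{thm:Pn_moments}, Proposition~\ref{cor:Pn_chebyshev}, the bipartite mixing conditions~\ref{itm:E1}--\ref{itm:E2}, and the path expansion of Proposition~\ref{prop:Q_path_expansion}. Your sketch — Gaussian case via the $\mathcal P_n$/$Q_n$ diagram expansion with the Section~3--4 estimates transplanted to the chain on $[M]\sqcup[N]$, then the sub-Gaussian extension via the $V_n$, $\Phi_2$, $\Phi_3$ path expansion and the red-point/coupling analysis of Section~5 — is precisely that adaptation, and the subtleties you flag (parity/bipartite bookkeeping, the $AA^*$/$A^*A$ open-edge cycles, and the $\sqrt\alpha\,U_{n-1}$ correction at the edge) are the right ones to check.
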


\begin{theorem}[{\bf Universality for Wishart-type  IRM}]\label{thm:main_thm_Wishart} For $\alpha\in(0,1]$, with   $X_N=(H_N +A_N)(H_N +A_N)^*$ given  in Definition \ref{def:wishart}, assume that  $A_N$  has $q$ critical  eigenvalues $a_j = \sqrt{\alpha} + \tau_j N^{-1/3}$ for $j=1,\ldots,q$ with $\tau_j \in \mathbb{R}$, and $r-q$ bulk eigenvalues $a_j \in (-\sqrt{\alpha},\sqrt{\alpha})$ for $j=q+1,\ldots,r$, where $0 \leq q \leq r$ is fixed. If $(P_S, S)$ satisfies  the Short-to-Long Mixing    conditions \ref{itm:E1} and \ref{itm:E2} with $\theta t_N\ll N^{\frac{1}{3}}$, then
the first $k$ largest  eigenvalues of $X_N$ 
converge   in distribution  to those of  deformed GOE/GUE matrix. Similar results hold for first $k$  smallest eigenvalues   when $\alpha<1$.
\end{theorem}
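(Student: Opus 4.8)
The plan is to mirror the proof of Theorem~\ref{thm:main_thm}, replacing the Chebyshev moments $U_{n}(X/2)$ by the polynomials $Q_{n}$ adapted to the Marchenko--Pastur background, and exploiting that the affine map $x\mapsto \bigl(x-(1+\alpha)\bigr)/(2\sqrt{\alpha})$ sends the soft edge $\lambda_{+}=(1+\sqrt{\alpha})^{2}$ to $1$ and (when $\alpha<1$) the soft edge $\lambda_{-}=(1-\sqrt{\alpha})^{2}$ to $-1$, so that the edge behaviour of $Q_{n}$ near $\lambda_{\pm}$ is governed by that of $U_{n}$ near $\pm1$. The central input is the moment-comparison theorem stated just above Theorem~\ref{thm:main_thm_Wishart}: for $\theta t_{N}\ll n_{i,j}\le\tau N^{1/3}$ (subject to the global parity constraint on $\sum_{i,j}n_{i,j}$), the mixed moments of products of $\tfrac{1}{n_{i,j}}Q_{n_{i,j}}$ of $X_N$ coincide with those of the deformed Gaussian Wishart model $\widetilde X_N$ up to $o(1)$, and its hypotheses are precisely the Short-to-Long Mixing conditions~\ref{itm:E1}--\ref{itm:E2} together with $\theta t_{N}\ll N^{1/3}$ and $\alpha$ bounded away from $0$.

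First I would record the Wishart analogue of the uniform trace bound of Theorem~\ref{prop:U_asy}\,\ref{item:U_upper}: starting from the diagrammatic expansion for $\tr Q_{n}$ obtained via Theorem~\ref{thm:Pn_moments} and Proposition~\ref{cor:Pn_chebyshev}, and applying the diagram-function estimates of Proposition~\ref{prop:F_upper_bound} to the bipartite chain $(S,P_{S})$, one gets the same type of bound for $\mathbb{E}\bigl[\prod_{j}\tr(\tfrac{1}{n_{j}+1}Q_{n_{j}})^{4}\bigr]$. Combined with the lower bound $\tfrac{1}{n+1}U_{n}(1+x)\ge e^{Cn\sqrt{x}}$ of Lemma~\ref{lem:B.8} --- which transfers to $Q_{n}$ since $Q_{n}(x)=U_{n}(u)+\sqrt{\alpha}\,U_{n-1}(u)$ with both summands positive and increasing for $u\ge1$ --- and the same Markov/exponential-Chebyshev argument as in Corollary~\ref{coro:tail_bound}, this yields the right-tail estimate $\mathbb{P}\bigl(\lambda_{\max}(X_N)>\lambda_{+}+xN^{-2/3}\bigr)\le C(1+e^{C\tau x})e^{-cx^{3/2}}$ in the regime $x^{2}t_{N}\ll N^{1/3}$. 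When $\alpha<1$ the identical argument applied near $-1$ controls the left soft edge, hence $\lambda_{\min}(X_N)$.

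Next I would pass to the scaling limit: with $n=\lfloor tN^{1/3}\rfloor$ and $x=\lambda_{+}+yN^{-2/3}$ one has $u=\bigl(x-(1+\alpha)\bigr)/(2\sqrt{\alpha})=1+y\,(2\sqrt{\alpha})^{-1}N^{-2/3}$, so Lemma~\ref{lem:B.9_Chebyshev} applied to both $U_{n}(u)$ and $U_{n-1}(u)$ gives
\[
\frac{1}{n+1}\,Q_{n}(x)\;\longrightarrow\;(1+\sqrt{\alpha})\,\frac{\sin\!\big(t\sqrt{-y/\sqrt{\alpha}}\,\big)}{t\,\sqrt{-y/\sqrt{\alpha}}}\,,
\]
with the analogous limit near $\lambda_{-}$ when $\alpha<1$; the constant prefactor and the rescaling of $y$ are absorbed into the standard Wishart edge scale and are identical for $X_N$ and $\widetilde X_N$. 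Then, exactly as in the proof of Theorem~\ref{thm:main_thm}, I would split $\tr(\tfrac{1}{n+1}Q_{n})^{2k}$ into a bulk part, negligible by the pointwise bound $|U_{n}(u)|\le(-2(u-1))^{-1/2}\wedge 2n$ of Lemma~\ref{lem:B.8}; a far-tail part, negligible by the tail estimate above together with the uniform moment bound; and an edge window $\lambda_{i}=\lambda_{+}+y_{i}N^{-2/3}$ on which the scaling limit applies. Using the matching moments for the family $\{Q_{n}\},\{Q_{n+1}\}$ (chosen to respect the parity constraint) and the continuity theorem~\cite[Theorem~B.10]{liu2023edge}, one concludes that the top-$k$ edge correlation functions of $X_N$ converge weakly to those of $\widetilde X_N$, i.e.\ to the deformed Tracy--Widom/BBP laws \cite{baik2005phase,bloemendal2016limits}; when $\alpha<1$ the same argument at $\lambda_{-}$ gives the smallest-eigenvalue statement.

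The main obstacle is the genuinely Wishart-specific bookkeeping: one must carry the bipartite parity constraint (only even-length returning walks contribute) consistently through the choice of test polynomials $Q_{n},Q_{n+1}$, and one must verify that $\lambda_{+}$ --- and, for $\alpha<1$, also $\lambda_{-}$ --- is a \emph{regular} soft edge, so that the $N^{-2/3}$ Airy scaling and the change of variables $y\mapsto(-y)^{1/2}$ behind Lemma~\ref{lem:B.9_Chebyshev} are valid; this regularity fails at $\lambda_{-}=0$ exactly when $\alpha=1$ (a hard edge governed by Bessel rather than Airy statistics), which is why the smallest-eigenvalue conclusion is restricted to $\alpha<1$. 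All of the heavy analytic lifting --- the Gaussian diagram expansion, the diagram-function estimates, and the sub-Gaussian graph-reduction and coupling arguments --- has already been carried out for the bipartite chain in the preceding results, so what remains here is the comparatively soft passage from matched polynomial moments to edge eigenvalue statistics.
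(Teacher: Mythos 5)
Your proposal is correct and follows essentially the same route as the paper: the paper's proof of Theorem~\ref{thm:main_thm_Wishart} is precisely a reduction to the matched-moment theorem for the polynomials $Q_n$ followed by a rerun of the Gaussian-case edge argument (tail bound, scaling limit via Lemma~\ref{lem:B.9_Chebyshev}, trace splitting, continuity theorem), exactly as you outline, including the prefactor $1+\sqrt{\alpha}$ at the upper edge. The only nuance is the stated reason for excluding the smallest eigenvalues at $\alpha=1$: the paper phrases it as $Q_n$ testing the lower edge with weight $1-\sqrt{\alpha}$, which vanishes at $\alpha=1$ so those eigenvalues become untestable by the method --- the method-level counterpart of your hard-edge/Bessel observation.
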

\begin{proof}
The proof is almost the same as that of  Theorem \ref{thm:main_thm},  with one essential distinction:  the polynomial $Q_n$ tests the largest eigenvalues with weight $1+\sqrt{\alpha}$ and the smallest eigenvalues with weight $1-\sqrt{\alpha}$. For $\alpha=1$, only the largest eigenvalues contribute in the limit, rendering the smallest eigenvalues untestable in this case.
\end{proof}

Theorem~\ref{thm:main_thm_Wishart} is broadly applicable, much like Theorem~\ref{thm:main_thm} (see Section~\ref{appl} below).
  These include non-Hermitian matrices $H_N$ in \eqref{non-H} chosen as random band matrices with independent entries and general variance profiles, as well as  sparse random matrices with  structured variance profiles.

\section{Applications and further questions}\label{sec:section7}
 In this final section, we explore broader applications and open questions arising from our methods and main results. We begin by demonstrating immediate applications of Theorem~\ref{thm:main_thm} to classical random matrix ensembles, facilitated by the short-to-long mixing property introduced in Definition~\ref{mixingdef}. This mechanism establishes a fundamental connection between edge universality and random walk mixing on large graphs. We conclude by outlining several open questions naturally suggested by our framework.

\subsection{Applications} \label{appl}

We list direct applications of our main theorem to inhomogeneous variants of several canonical random matrix models, without a full treatment of the background  and   recent developments concerning these models.  Unless otherwise indicated, all notation follows Definition \ref{def:inhomo}, Definition~\ref{mixingdef} or  Theorem~\ref{thm:main_thm}.

\subsubsection{Generalized Wigner matrices}
Consider  generalized Wigner matrices with \texorpdfstring{\(N\)}{N}-dependent variance lower and upper bounds \(c_N\) and \(C_N\).   This allows \(c_N\) and \(C_N\) to grow or decay with \(N\), extending earlier results---see, e.g., \cite{MR3253704}---where \(c_N\) and \(C_N\) are assumed to be fixed constants.
\begin{theorem}[Generalized Wigner matrices]\label{thm:GW}
If   the variance profile of the random matrix  \(H\) defined in  Definition \ref{def:inhomo} 
satisfies  
\begin{equation}\label{eq:GW_N_dependent}
    \sum_{j}\sigma_{ij}^2=1, \quad 0< c_N < N\,\sigma_{ij}^2 < C_N< \infty,
\end{equation}
then under the constraint \begin{equation} \label{cC}
    \frac{C_N}{c_N}\, \theta\, \log N\;\ll\;N^{\frac{1}{3}},
\end{equation}
  the edge universality in Theorem~\ref{thm:main_thm} holds for \(H\).
\end{theorem}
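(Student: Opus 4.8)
The plan is to obtain Theorem~\ref{thm:GW} as a direct corollary of Theorem~\ref{thm:main_thm}, applied with no deformation (so $r=q=0$ and $A_N=0$). Thus the whole task reduces to choosing an integer sequence $t_N$ for which the variance-profile Markov chain $([N],P_N)$, $P_N=(\sigma_{ij}^2)$, satisfies the Short-to-Long Mixing conditions \ref{itm:B1}--\ref{itm:B2} of Definition~\ref{mixingdef} at time $t_N$, and for which $\theta\,t_N\ll N^{1/3}$. Two elementary structural facts make this possible. First, since $H$ is symmetric (respectively Hermitian), $\sigma_{ij}^2=\sigma_{ji}^2$, so $P_N$ is a symmetric, hence doubly stochastic, matrix and the uniform measure is stationary. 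Second, summing $c_N<N\sigma_{ij}^2<C_N$ over $j$ and using $\sum_j\sigma_{ij}^2=1$ forces $c_N<1<C_N$; in particular every entry of $P_N$ is strictly positive, so the chain is irreducible and aperiodic and $1-c_N\in(0,1)$.

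Next I would run a one-step Doeblin minorization. Let $\Pi$ denote the $N\times N$ matrix all of whose entries equal $1/N$, and set $R:=(1-c_N)^{-1}(P_N-c_N\Pi)$, which is again a symmetric doubly stochastic matrix, all of whose entries are at most $C_N/((1-c_N)N)$. From $R\Pi=\Pi R=\Pi^2=\Pi$ one checks by induction that
\begin{equation}
P_N^{\,n}=\bigl(1-(1-c_N)^n\bigr)\,\Pi+(1-c_N)^n R^{\,n},\qquad n\ge 1,
\end{equation}
and since the entries of $R^{\,n}$ are non-increasing in $n$ and bounded by those of $R$, this yields the pointwise bound
\begin{equation}
\Bigl|p_n(x,y)-\tfrac1N\Bigr|\ \le\ (1-c_N)^{\,n-1}\,\frac{C_N}{N}\ \le\ e^{-c_N(n-1)}\,\frac{C_N}{N}\qquad\text{for all }x,y\in[N].
\end{equation}
Now take $t_N$ to be (the running maximum of) the smallest integer satisfying both $t_N\ge C_N/c_N$ and $(1-c_N)^{\,t_N-1}C_N\le \delta_N$ for a prescribed $\delta_N\to0$, say $\delta_N=1/N$. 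Using $-\log(1-c_N)\ge c_N$ and $\log C_N\le C_N$, such $t_N$ can be chosen with
\begin{equation}
t_N\ \le\ \frac{C\,(C_N+\log N)}{c_N}\ \le\ \frac{2C\,C_N\log N}{c_N},
\end{equation}
the last step using $C_N\ge1$ and $\log N\ge1$. For this $t_N$, condition \ref{itm:B2} is immediate from the previous display, and condition \ref{itm:B1} follows from $\sum_{n\ge1}(1-c_N)^{n-1}=c_N^{-1}$ via
\begin{equation}
\frac1{t_N}\sum_{n=1}^{t_N}p_n(x,y)\ \le\ \frac1N+\frac{C_N}{c_N\,t_N\,N}\ \le\ \frac2N,
\end{equation}
so that \ref{itm:B1} holds with $\gamma=2$.

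The scaling hypothesis of Theorem~\ref{thm:main_thm} is then precisely \eqref{cC}: $\theta\,t_N\le 2C\,(C_N/c_N)\,\theta\log N\ll N^{1/3}$, with $t_N\ll N^{1/3}$ automatic since $\theta\ge1$. Hence Theorem~\ref{thm:main_thm} applies and gives the Tracy--Widom law for the largest eigenvalues of $H$, completing the proof.

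I do not expect a genuine obstacle here; the argument is in essence a mixing-time estimate. The two points that require care---and that account for the exact shape of \eqref{cC}---are the following. Condition \ref{itm:B1} is an \emph{averaged} short-time bound with $\gamma$ a true constant, and the refined estimate $p_n(x,y)\le \tfrac1N+(1-c_N)^{n-1}\tfrac{C_N}{N}$ (whose built-in $\tfrac1N$-scale is essential: the cruder $p_n(x,y)\le\tfrac1N+(1-c_N)^n$ would force $t_N\gtrsim N/c_N$) contributes $\tfrac{C_N}{c_N t_N N}$ to the average, so $t_N$ must be taken of order at least $C_N/c_N$, rather than the naive mixing scale $\sim(\log N)/c_N$; this is the origin of the factor $C_N/c_N$. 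In addition, the diagram-wise asymptotics of Theorem~\ref{thm:F=tildeF} require the error term $\tfrac{t_N}{n}+\delta$ to vanish, so one must realize \ref{itm:B2} with $\delta=\delta_N\to0$ rather than a fixed $\delta\in(0,0.1)$, and one must check $t_N/n\to0$; both hold for the choice above, again by \eqref{cC} and the fact that the relevant test degrees $n$ are of order $N^{1/3}$.
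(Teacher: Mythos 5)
Your proof is correct and takes essentially the same route as the paper: both rest on the Doeblin decomposition $P_N=c_N\Pi+(1-c_N)R$ with a uniform component, geometric mixing at rate $1-c_N$, the choice $t_N\asymp (C_N/c_N)\log N$, verification of \ref{itm:B1}--\ref{itm:B2}, and an appeal to Theorem~\ref{thm:main_thm}; the only difference is cosmetic, since the paper gets the pointwise bound from a coupling/total-variation estimate and checks \ref{itm:B1} by splitting the sum at $(\log N)/c_N$, whereas your semigroup identity yields the slightly sharper $\bigl|p_n(x,y)-\tfrac1N\bigr|\le(1-c_N)^{n-1}C_N/N$ and a one-line geometric-series check with $\gamma=2$. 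One small wording should be corrected: the entries of $R^n$ need not be entrywise non-increasing in $n$; what you actually use, and what does hold because $R$ is stochastic, is that $\max_{x,y}(R^n)_{xy}\le\max_{x,y}R_{xy}$.
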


\begin{proof}
Let $P = 
\bigl(\sigma_{ij}^2\bigr)_{i,j=1}^N$,  rewrite    \(P\) as
\begin{equation}
    P := c_N\,J + (1-c_N)\,T,
    \quad
    J = \frac1N \mathbf{1}\mathbf{1}^T,
\end{equation} where $\mathbf{1}$ denotes a column vector with all 1 entries and    \(T\) is  a  doubly stochastic matrix. Note that
$ TJ=JT=J$, 
by a standard coupling argument, the total-variation distance of \(P\) satisfies
\begin{equation}
    \bigl\|p_n(x,\cdot)-\pi\bigr\|_{\mathrm{TV}}
    \;\le\;(1-c_N)^n,
\end{equation}
where \(\pi\) is the uniform distribution on \(\{1,\dots,N\}\).  Hence for all \(x,y\),
\begin{equation}
    p_n(x,y)
    \;\le\;
    \frac{C_N}{N}
    \;\wedge\;
    \Bigl((1-c_N)^n + \frac{1}{N} \Bigr).
\end{equation}
Choose
\begin{equation}
    t_N = 100  \frac{\,C_N }{c_N} \log N,
\end{equation}
split  a sum over $n$ into two parts  according to whether \(n \le ( 100\log N)/{c_N}\) or \(n \ge ( 100\log N)/{c_N}\), and  we  then arrive at  
\begin{equation}
    \sum_{n=1}^{t_N} p_n(x,y)
    \;\le\;
    \frac{100\,C_N \log N}{c_N\,N}
    + \frac{2\,t_N}{N}
    \;\le\;
    \frac{3\,t_N}{N},
\end{equation} 
This verifies condition~\ref{itm:B1}.

Similarly, for \(n \ge t_N\), we have
\begin{equation}
    \bigl| N\,p_n(x,y) - 1 \bigr|
    \;\le\; N(1-c_N)^n
    \;\le\; N^{-99},
\end{equation}
which establishes condition~\ref{itm:B2}.

By Theorem~\ref{thm:main_thm}, any generalized Wigner matrix that satisfies  \eqref{eq:GW_N_dependent} and has  \(\theta\)-sub-Gaussian entries, with $\theta t_N\ll N^{{1}/{3}}$, exhibits Tracy-Widom edge universality, provided the constraint \eqref{cC}.
\end{proof}

\subsubsection{Random band matrices}

Given a box lattice 
$ 
\Lambda_L \;=\; (\mathbb Z / L\mathbb Z)^d$ with  a large integer $L>0$, 
  and   a density  function   \(f:\mathbb{R}^d\to\mathbb{R}\),  we  define a  \emph{variance profile}
\begin{equation} \label{rbmprofile}
\sigma_{xy}^2
\;=\;
\frac{1}{M}\sum_{n\in\mathbb{Z}^d}
f\Bigl(\tfrac{x-y+nL}{W}\Bigr),
\qquad x,y\in\Lambda_L,
\end{equation}
where $W>0$ is the bandwidth parameter and the discrete normalization 
\begin{equation}
M \;=\;\sum_{n\in\mathbb{Z}^d}f\bigl(nW^{-1}\bigr)
\end{equation}
 ensures \(\sum_y\sigma_{xy}^2=1\).  The  resulting matrix  \(H\) is  a  random band matrix  of size $N:=L^d$ on $d$-dimensional  periodic lattice. 
Applying the local central limit theorem and upper bound for the transition probability, as established in e.g. \cite{gu2024local}, we can easily verify condition \ref{itm:B1} and \ref{itm:B2}.

We impose the following assumptions on $f$.
\begin{definition}\label{ass:ft}
There exist constants $ \alpha \in (0, 2]$, $c_0 > 0$, $\varepsilon > 0$, and $0 < \rho < 1$ such that the following assumptions are satisfied:
\begin{enumerate}
  \item (normalization) $f$ is a probabiltiy density
  \begin{equation}\label{eq:prob-density}
    f(x)\ge 0, \quad \int_{\mathbb R^d} f(u)\,du \;=\; 1;
  \end{equation}
  \item (small-frequency control) for all $|\xi|\le \varepsilon$,
  \begin{equation}\label{eq:small-freq}
    |\widehat f(\xi)| \;\le\; \exp\!\bigl(-c_0 |\xi|^\alpha \bigr);
  \end{equation}
  \item (high-frequency contraction) for all $|\xi| > \varepsilon$,
  \begin{equation}\label{eq:high-freq}
    |\widehat f(\xi)| \;\le\; \rho < 1;
  \end{equation}
  \item (regularity/decay) $f$ is a continuous function and for some   constants $T, K > d$ and $C_T, C_{K} > 1$,   
  \begin{equation}\label{equ:D1}
      f(x)\le \frac{C_T}{(1+|x|)^{T}},
    \qquad |\widehat f(\xi)| \;\le\; \frac{C_K}{(1+|\xi|)^{K}} 
      \qquad  x,\xi \in \mathbb R^d.
  \end{equation}
\end{enumerate}
\end{definition}

\begin{theorem}[Random band matrices]\label{thm:RBM}
Let \(H\) be a random band matrix with variance profile \eqref{rbmprofile}, whose entries are symmetric and uniformly sub-Gaussian. 
Suppose that the density  function $f$ satisfies the assumptions in Definition~\ref{ass:ft}, and that 
\[
W^{-K}\ll N^{-\frac{4}{3}}.
\]
Then edge universality holds under the following bandwidth condition
\begin{equation}\label{equ:bandwidth}
W \gg
\begin{cases}
L^{\,1-\tfrac{d}{3\alpha}}, & d<\alpha,\\[0.3em]
L^{\frac{2}{3}}\log L, & d=\alpha,\\[0.3em]
L^{\frac{2}{3}}, & d>\alpha.
\end{cases}
\end{equation}
\end{theorem}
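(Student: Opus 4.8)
\textbf{Proof proposal for Theorem~\ref{thm:RBM}.}
The plan is to apply Theorem~\ref{thm:main_thm} with an appropriate choice of mixing time $t_N$, so the entire task reduces to verifying the Short-to-Long Mixing conditions \ref{itm:B1} and \ref{itm:B2} for the Markov chain $([N],P_N)$ with $P_N=(\sigma_{xy}^2)$ and $N=L^d$, and then checking that the quantitative requirement $\theta\,t_N\ll N^{1/3}$ is implied by the bandwidth hypothesis \eqref{equ:bandwidth}. The key observation is that $P_N$ is exactly the transition matrix of a random walk on $\Lambda_L=(\mathbb Z/L\mathbb Z)^d$ with one-step increment distribution proportional to $f((\cdot)/W)$ (wrapped periodically), so its $n$-step transition probability $p_n(x,y)$ is governed by the Fourier multiplier $\widehat f(W\xi)^n$ on the dual lattice $(2\pi/L)(\mathbb Z/L\mathbb Z)^d$. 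Concretely, $p_n(x,y)=N^{-1}\sum_{\xi}\widehat{f_W}(\xi)^n\,e^{i\xi\cdot(x-y)}$ up to the normalization $M$, and so $|p_n(x,y)-1/N|\le N^{-1}\sum_{\xi\neq 0}|\widehat{f_W}(\xi)|^n$.

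First I would establish \ref{itm:B2} (long-time uniform mixing). Split the sum over $\xi\neq0$ into the low-frequency regime $|W\xi|\le\varepsilon$ and the high-frequency regime $|W\xi|>\varepsilon$. In the low regime, \eqref{eq:small-freq} gives $|\widehat{f_W}(\xi)|^n\le\exp(-c_0 n|W\xi|^\alpha)$; since the smallest nonzero $|\xi|$ on the dual lattice is $\asymp 1/L$, the worst term is $\exp(-c_0 n (W/L)^\alpha)$, and summing the (sub-)Gaussian-type tail over the lattice shows the whole low-frequency contribution is at most $(W/L)^{-d}\exp(-c n (W/L)^\alpha)$, which is $\le\delta/N$ once $n\gtrsim (L/W)^\alpha$ with an extra logarithmic factor to absorb the $(W/L)^{-d}$ prefactor. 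In the high regime, \eqref{eq:high-freq} gives $|\widehat{f_W}(\xi)|\le\rho<1$ for $\varepsilon<|W\xi|\le W$ — but I must be careful, because the number of such $\xi$ is $\asymp N$, so a naive bound $N\rho^n/N=\rho^n$ does \emph{not} beat $1/N$ unless $n\gtrsim\log N$; to get the required $N^{-1}$-type decay I instead combine $\rho^{n-1}$ with the polynomial decay \eqref{equ:D1}, i.e. $\sum_{|W\xi|>\varepsilon}|\widehat{f_W}(\xi)|^n\le\rho^{n-1}\sum_\xi |\widehat f(W\xi)|\le\rho^{n-1}\cdot C W^d\le N\rho^{n-1}$, hence this contribution to $|p_n-1/N|$ is $\le\rho^{n-1}$, which is $\le\delta/N$ for $n\ge C\log N$. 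Therefore, setting $t_N$ to be a large constant times $(L/W)^\alpha$ when $d\le\alpha$, and $t_N=C\log N$ when $d>\alpha$ (taking the max with $C\log N$ in all cases), \ref{itm:B2} holds, and with $\delta=\delta_N\to0$ if we let the constant in $t_N$ grow slowly. One subtlety: the periodic-wrapping defect and the truncation error between $\sigma_{xy}^2$ and the exact lattice walk are controlled by the tail bound \eqref{equ:D1} together with the hypothesis $W^{-K}\ll N^{-4/3}$, ensuring these perturbations do not spoil either mixing estimate.

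Next I would verify \ref{itm:B1} (short-time average mixing): we need $t_N^{-1}\sum_{n=1}^{t_N}p_n(x,y)\le\gamma/N$. Here the natural tool is a local (central) limit theorem for the band walk, as in \cite{gu2024local}: for $1\le n\le t_N$ one has the uniform on-diagonal-type bound $p_n(x,y)\le C\,(n^{1/\alpha}W)^{-d}\wedge (C/M)$ — that is, the walk after $n$ steps has spread out over a region of volume $\asymp(n^{1/\alpha}W)^d$, so $p_n(x,y)\lesssim (n^{1/\alpha}W)^{-d}$ once this exceeds $1$, capped by the one-step bound $p_1\le C/M\asymp W^{-d}$ for small $n$. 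Averaging, $t_N^{-1}\sum_{n=1}^{t_N}(n^{1/\alpha}W)^{-d}$: when $d<\alpha$ the exponent $d/\alpha<1$ and the sum is dominated by $n\asymp t_N$, giving $\asymp (t_N^{1/\alpha}W)^{-d}$, which equals $\gamma/N=\gamma L^{-d}$ precisely when $t_N^{1/\alpha}W\asymp L$, i.e. $t_N\asymp(L/W)^\alpha$ — consistent with the choice above; when $d=\alpha$ the sum is $\asymp W^{-d}t_N^{-1}\log t_N$ and one again matches $L^{-d}$ with $t_N\asymp(L/W)^\alpha\log$; when $d>\alpha$ the sum converges and is $\asymp W^{-d}t_N^{-1}$, so for $t_N=C\log N$ we need $W^{-d}\lesssim L^{-d}\log N$, i.e. $W\gtrsim L(\log N)^{-1/d}$ — but this is far weaker than \eqref{equ:bandwidth}, so it holds comfortably. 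Thus \ref{itm:B1} holds with the stated $t_N$.

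Finally, I would check the quantitative condition $\theta\,t_N\ll N^{1/3}=L^{d/3}$. In the case $d<\alpha$, $t_N\asymp(L/W)^\alpha$, and $\theta t_N\ll L^{d/3}$ becomes $(L/W)^\alpha\ll L^{d/3}/\theta$, i.e. $W\gg L^{1-d/(3\alpha)}\,\theta^{1/\alpha}$, matching \eqref{equ:bandwidth} (absorbing the sub-Gaussian parameter into the $\gg$; if $\theta$ is a constant it is exactly the stated exponent). In the case $d=\alpha$, the extra logarithm in $t_N$ produces the $\log L$ factor in the $d=\alpha$ line of \eqref{equ:bandwidth}. In the case $d>\alpha$, $t_N\asymp\log N$, so $\theta\log N\ll L^{d/3}$ holds automatically, and the binding constraint is instead the mild one coming from \ref{itm:B1} (plus $W\gg L^{2/3}$ as stated, which is needed to have a diffusive window long enough for the local limit theorem and to keep $\delta_N\to0$; more precisely $W\gg L^{2/3}$ guarantees $t_N=C\log N\ll N^{1/3}$ trivially and ensures the delocalization window contains the edge scale). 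Once all of \ref{itm:B1}, \ref{itm:B2}, and $\theta t_N\ll N^{1/3}$ are verified, Theorem~\ref{thm:main_thm} applies directly (with $A_N=0$, $q=r=0$) and yields Tracy--Widom edge universality, completing the proof. The main obstacle is the high-frequency part of \ref{itm:B2}: getting genuine $O(1/N)$ decay there rather than merely $o(1)$ requires the polynomial Fourier decay \eqref{equ:D1} to compensate the $\asymp N$ terms, and handling the periodic/truncation defects uniformly is the technically delicate point — this is exactly where the hypothesis $W^{-K}\ll N^{-4/3}$ enters.
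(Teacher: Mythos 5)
Your overall strategy --- verify \ref{itm:B1} and \ref{itm:B2} by Fourier/local-limit bounds for the band walk, choose a mixing time $t_N$, and invoke Theorem~\ref{thm:main_thm} with $A_N=0$ --- is the same as the paper's, and your treatment of the cases $d<\alpha$ and $d=\alpha$ matches it in substance. However, there is a genuine gap in the case $d>\alpha$. You set $t_N=C\log N$ there, but condition \ref{itm:B2} cannot hold at that scale: for $C\log N\le n\ll (L/W)^{\alpha}$ the walk has only spread over a region of volume $\asymp (n^{1/\alpha}W)^{d}\ll L^{d}$, so $p_n(0,0)\asymp W^{-d}n^{-d/\alpha}\gg N^{-1}$. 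This is exactly the low-frequency term in your own decomposition, which forces $n\gtrsim (L/W)^{\alpha}$ \emph{regardless} of whether $d$ is larger or smaller than $\alpha$; the dichotomy $d\lessgtr\alpha$ only affects the summability in \ref{itm:B1}, not the mixing threshold in \ref{itm:B2}. Moreover, even granting your $t_N$, your check of \ref{itm:B1} goes wrong in the final comparison: with $\tfrac1{t_N}\sum_{n\le t_N}p_n\asymp N^{-1}+W^{-d}t_N^{-1}$ and $t_N=C\log N$, the requirement $W^{-d}t_N^{-1}\le\gamma/N$ reads $W\gtrsim L(\log N)^{-1/d}$, which is far \emph{stronger} than $W\gg L^{2/3}$, not ``far weaker'' as you assert; so under the hypotheses of the theorem your verification fails in this regime.

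The fix is the paper's choice: take $t_N$ polynomially large, anywhere in the window $(L/W)^{\alpha}\ll t_N\ll N^{1/3}$, and use the uniform bound $p_n(x,y)\le C\bigl(W^{-d}n^{-d/\alpha}+N^{-1}\bigr)$ from Theorem~\ref{thm:clt-upper} (this is where $W^{-K}\ll N^{-4/3}$ enters, to make the $nW^{-K}$ Poisson-summation error negligible; note also that the paper bounds the high-frequency Fourier contribution by $W^{-d}\rho^{n}\lesssim W^{-d}n^{-d/\alpha}$, so no $\log N$ threshold is ever needed for \ref{itm:B2}, in contrast to your $\rho^{n-1}$ bound). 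Then for $d>\alpha$ condition \ref{itm:B1} amounts to $W^{-d}\lesssim t_N/N$, i.e.\ $t_N\gtrsim (L/W)^{d}$, and combining this with $\theta t_N\ll N^{1/3}=L^{d/3}$ is precisely what produces the stated bandwidth condition $W\gg L^{2/3}$ in \eqref{equ:bandwidth}. In your write-up this constraint is instead attributed to vague ``diffusive window'' considerations, so the actual origin of the $d>\alpha$ threshold is missed.
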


\begin{proof}
Fix a time scale $\left( {L}/{W}\right)^{\alpha}\ll t_N\ll N^{1/3}$.  
For $t_N \le n = O(N^{1/3})$, by Theorem \ref{thm:clt-upper} and the estimate $W^{-K}\ll N^{-4/3}$, we obtain that for $n\gg (L/W)^{\alpha}$,  
\begin{equation}
    |Np_n(0,x)-1|=o(1),
\end{equation}
which verifies condition~\ref{itm:B2}.  

We now turn to condition~\ref{itm:B1}. Applying \eqref{equ:band_mixing} gives
\begin{equation}
\sum_{n=1}^{t_N} p_n(x,y)
\le C \sum_{n=1}^{t_N}\Bigl(W^{-d} n^{-\frac{d}{\alpha}}+N^{-1}\Bigr).
\end{equation}
The right-hand side can be estimated as
\begin{equation}
\sum_{n=1}^{t_N} p_n(x,y)
\le \frac{C t_N}{N} +
\begin{cases}
C' W^{-d}\, t_N^{\,1 - \frac{d}{\alpha}}, & d < \alpha, \\[0.3em]
C' W^{-d}\, \log t_N, & d = \alpha, \\[0.3em]
C' W^{-d}, & d > \alpha,
\end{cases}
\end{equation}
for some $N$-independent constants $C, C' > 0$.  

Finally, for the the bandwidth $W$ satisfying \eqref{equ:bandwidth}, the condition~\ref{itm:B1} is fulfilled. Thus by applying Theorem \ref{thm:main_thm}, we finish the proof.
\end{proof}

\noindent\textbf{Power-law random band matrices.} If the profile $f$ has a finite second moment (so that the small-frequency exponent is $\alpha=2$ in \eqref{eq:small-freq}), the classical central limit theorem applies and the associated random walk is diffusive with a Gaussian fixed point. 
By contrast, when $f$ exhibits heavy tails with decay $|x|^{-d-\alpha}$ for some $0<\alpha<2$, the CLT breaks down and the random walk converges to an $\alpha$-stable process. 
In that heavy-tailed regime the mixing is faster and the critical bandwidth changes accordingly; the precise power-law thresholds are given in \eqref{equ:bandwidth} and reflect the $\alpha$-stable scaling. 
Theorems \ref{thm:main_thm} and \ref{thm:RBM} treat both the Gaussian attraction domain ($\alpha=2$) and the stable attraction domain ($0<\alpha<2$) in a unified way.

\noindent\textbf{Non-periodic and structured variants.} 
It is straightforward to check that, under the same assumptions, conditions~\ref{itm:B1} and~\ref{itm:B2} also hold in the non-periodic setting. 
However, in order to preserve the doubly stochastic structure of the variance profile, the transition probability should be chosen to correspond to a reflecting random walk. 
This yields a partial answer to the problem stated in~\cite[Section~6, Remark~V]{liu2023edge} in the super-critical regime.

\subsubsection{Random block matrices}

Define a block tridiagonal model  that is also  know as  the block   Wegner orbital  model \cite{wegner1979disordered,MR3915294}
\begin{equation}
X = \sqrt{1-\lambda}\,H + \sqrt{\lambda}\,\Lambda,
\qquad \lambda\in[0,1],
\end{equation}
where \begin{equation}
H=\mathrm{diag}(H_1,\dots,H_D), 
\quad 
\Lambda=\begin{pmatrix}
0      & A_1    & 0      & \cdots & A_D^* \\
A_1^*  & 0      & A_2    & \cdots & 0     \\
\vdots & \ddots & \ddots & \ddots & \vdots\\
A_D    & 0      & \cdots & A_{D-1}^* & 0
\end{pmatrix}.
\end{equation}
Here every   \(H_i\)   is an   \(M\times M\) Wigner matrix with   $\theta$-sub-Gaussian  entries and every   \(A_i\) is an independent non-Hermitian \(M\times M\) matrix with i.i.d. $\theta$-sub-Gaussian  entries, and all matrices are independent.   This model has been studied
in\cite{MR3915294,fan2025localization,stone2025random,yang2025delocalization,khang2025localization}.  

An interesting problem is the phase transition for eigenvalue statistics: when  $\lambda=0$, the eigenvalue statistics come from $D$ independent matrices; for $\lambda={1}/{2}$, the model is a block random band matrix  in dimension  $d=1$; for    $\lambda=1$, it is  a block bi-diagonal model.
\begin{theorem}[Random block matrices]\label{thm:block}
If   there exist $\theta$ and $t_N$ such that  \(\theta t_N\ll N^{1/3}\) and    
\begin{equation}\label{eq:block-cond1}
\sqrt{t_N\,\lambda}\;\gg\;D,
\quad\text{and}\quad
(1-\lambda)\,t_N\;\gg\;1,
\end{equation}
then  Assumptions~\ref{itm:B1}-\ref{itm:B2} hold,  implying that   edge universality holds for $X$.
\end{theorem}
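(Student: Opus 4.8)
The plan is to reduce Theorem~\ref{thm:block} to an application of Theorem~\ref{thm:main_thm} by verifying the Short-to-Long Mixing conditions \ref{itm:B1}--\ref{itm:B2} for the Markov chain $([N],P_N)$ whose transition matrix is $P_N=(\sigma_{xy}^2)$ determined by the block structure. First I would write down $P_N$ explicitly: a state $x\in[N]$ lies in one of the $D$ blocks of size $M$, so $P_N$ splits as $(1-\lambda)P_{\mathrm{diag}} + \lambda P_{\mathrm{off}}$, where $P_{\mathrm{diag}}$ is block-diagonal with each block equal to the uniform stochastic matrix $\tfrac1M\mathbf 1\mathbf 1^T$ on that block (this is the variance profile of a single Wigner block $H_i$, after the $\sum_j\sigma_{ij}^2=1$ normalization), and $P_{\mathrm{off}}$ is the block-tridiagonal (periodic, $D$-cycle) stochastic matrix coming from the $A_i$'s, which on blocks acts as the simple random walk on the cycle $\mathbb Z/D\mathbb Z$ tensored with the uniform distribution within each target block. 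Thus the chain factorizes as: pick a block-level position performing a lazy random walk on the $D$-cycle (move to a neighbouring block with probability $\lambda$, stay with probability $1-\lambda$ — the within-block reshuffle is instantaneous because each block-step lands uniformly in the target block), and, once $\ge 1$ within-block step has ever been taken, the within-block coordinate is exactly uniform on $[M]$.

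The key quantitative input is therefore the mixing of the lazy walk on the cycle $\mathbb Z/D\mathbb Z$ with holding probability $1-\lambda$: after $n$ steps the number of actual moves is $\mathrm{Bin}(n,\lambda)$, so the block-coordinate distribution is within $O(e^{-cn\lambda/D^2})$ of uniform once $n\lambda\gg D^2$, i.e.\ once $n\gg D^2/\lambda$; and the within-block coordinate is uniform as soon as at least one of the $n$ steps was a ``stay'' move, which (since the diagonal weight is $1-\lambda$) fails with probability $\lambda^n$, or — more relevantly — is reshuffled by any move at all, so after $n$ steps the within-block coordinate has been refreshed unless $n=0$; the only delicate point is when $\lambda$ is close to $1$, handled by the second condition in \eqref{eq:block-cond1}, $(1-\lambda)t_N\gg1$, guaranteeing many ``stay'' steps have occurred. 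Combining the two, for $n\ge t_N$ with $t_N$ chosen so that $\lambda t_N\gg D^2$ (equivalently $\sqrt{t_N\lambda}\gg D$) and $(1-\lambda)t_N\gg1$, one gets $|p_n(x,y)-1/N|\le \delta/N$ with $\delta=\delta_N\to0$, which is \ref{itm:B2}. For \ref{itm:B1} one bounds $p_n(x,y)\le 1/M$ times the block-level transition probability, and $\tfrac1{t_N}\sum_{n=1}^{t_N}p_n(x,y)\le \tfrac1{t_N}\sum_{n=1}^{t_N}\big(\tfrac1M\cdot\tfrac{C}{\sqrt{n\lambda}\wedge D}+\tfrac1N\big)$; since $M=N/D$ and $\tfrac1{t_N}\sum_{n\le t_N}(n\lambda)^{-1/2}\lesssim (t_N\lambda)^{-1/2}\le D/ (t_N\lambda)^{1/2}\cdot\tfrac1D \ll \tfrac1D$ under $\sqrt{t_N\lambda}\gg D$, the whole average is $\le \gamma/N$ for an absolute $\gamma$, giving \ref{itm:B1}. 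Finally, the hypothesis $\theta t_N\ll N^{1/3}$ is exactly the moment/time condition required by Theorem~\ref{thm:main_thm}, so that theorem applies and yields edge universality for $X$ (after the trivial rescaling by $\sqrt{1-\lambda}$, which does not affect the normalized variance profile or the edge statistics).

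The main obstacle I anticipate is the careful treatment of the regime where $\lambda$ is neither bounded away from $0$ nor from $1$, where the two constraints in \eqref{eq:block-cond1} interact: when $\lambda\to1$ the within-block uniformity needs the ``stay'' events supplied by $(1-\lambda)t_N\gg1$, and one must check that the block-level walk, which is then nearly a deterministic rotation, still equidistributes on $\mathbb Z/D\mathbb Z$ after $t_N$ steps — this requires the deviation of $\mathrm{Bin}(n,\lambda)$ to spread the walk, i.e.\ $n\lambda(1-\lambda)\gg D^2$ rather than merely $n\lambda\gg D^2$; reconciling this with the stated hypothesis (and confirming it is implied, perhaps after slightly strengthening $t_N$ by a constant or replacing $\lambda$ by $\lambda(1-\lambda)$ in the first condition) is the one place the argument is not purely mechanical. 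Everything else — the factorization of $P_N$, the cycle-walk estimates, the sum bound for \ref{itm:B1} — is routine local-CLT-type bookkeeping, for which I would cite the standard random-walk-on-$\mathbb Z_D$ estimates and the earlier verification pattern used in the proofs of Theorems~\ref{thm:GW} and~\ref{thm:RBM}.
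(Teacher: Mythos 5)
Your route is the same as the paper's: factor the chain as $p_n(x,y)=\tfrac1M\,q_n(a,b)$, where $q_n$ is the lazy symmetric walk on the block cycle $\mathbb Z/D\mathbb Z$ (hold with probability $1-\lambda$, move to each neighbour with probability $\lambda/2$), verify \ref{itm:B1} by the local-CLT bound $q_n(a,b)\le C\bigl((1+n\lambda)^{-1/2}+D^{-1}\bigr)$ summed over $n\le t_N$ using $\sqrt{t_N\lambda}\gg D$, verify \ref{itm:B2} by mixing of the cycle walk, and then invoke Theorem~\ref{thm:main_thm} (the moment condition $\theta t_N\ll N^{1/3}$ being exactly what that theorem requires). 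Your \ref{itm:B1} computation, which keeps the $1/D$ floor in the heat-kernel bound, is if anything slightly more careful than the paper's displayed inequality.

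However, the "main obstacle" you flag is not a real obstacle, and the reasoning behind it contains two confusions worth correcting. First, the within-block coordinate is exactly uniform on $[M]$ after \emph{every} step, whether a hold or a move, because both the diagonal blocks (from $H_i$) and the off-diagonal blocks (from $A_i$) have constant variance profile $\tfrac1M$; so $p_n(x,y)=\tfrac1M q_n(a,b)$ holds for all $n\ge1$, and $(1-\lambda)t_N\gg1$ has nothing to do with within-block uniformity. Second, when $\lambda\to1$ the block chain is not a near-deterministic rotation: since $\Lambda$ is Hermitian (blocks $A_i$ and $A_i^*$), each move goes to either neighbour with probability $\lambda/2$, so the walk is a nearly simple \emph{symmetric} random walk whose spatial spread after $n$ steps has variance of order $n\lambda$ irrespective of $1-\lambda$; you do not need the $\mathrm{Bin}(n,\lambda)$ fluctuation, hence no condition $t_N\lambda(1-\lambda)\gg D^2$. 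In Fourier terms, $\widehat q(k)=1-2\lambda\sin^2(\pi k/D)$: the low modes obey $|\widehat q(k)|^n\le e^{-cn\lambda k^2/D^2}$ and are controlled by $\sqrt{t_N\lambda}\gg D$, while the modes near $k=D/2$ obey $|\widehat q(k)|\le 2\lambda-1\le e^{-2(1-\lambda)}$, and the sole purpose of $(1-\lambda)t_N\gg1$ is to damp this near-period-two (parity) obstruction. Combining the two gives $\sum_{k\ne0}|\widehat q(k)|^n=o(1)$ for $n\ge t_N$, hence $|Np_n(x,y)-1|=|Dq_n(a,b)-1|=o(1)$, which is \ref{itm:B2} exactly under the hypotheses \eqref{eq:block-cond1} as stated — no strengthening is needed. (Also, no rescaling by $\sqrt{1-\lambda}$ is required: the variance profile is already doubly stochastic since $(1-\lambda)+\lambda=1$.)
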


\begin{proof}
Identify the block index set with \(\mathbb{Z}/D\mathbb{Z}\), and let \(q_n(a,b)\) be the \(n\)-step transition probability of the Markov chain that  
 stays at the same block with probability $1-\lambda$ and 
 {moves to a neighbor} with probability $ \lambda/2$.
Then for \(x\) in block \(a\) and \(y\) in block \(b\),
\begin{equation}\label{eq:block-pn}
p_n(x,y)
= \frac{1}{M}\,q_n(a,b).
\end{equation}
Classical mixing on \(\mathbb{Z}/D\mathbb{Z}\) implies that if \(n\ge t_N\) satisfies the condition in \eqref{eq:block-cond1}, the chain mixes rapidly, yielding
\begin{equation}\label{eq:block-B2}
\bigl|N\,p_n(x,y)-1\bigr|\;\ll\;1,
\end{equation}
so Assumption~\ref{itm:B2} holds.  Moreover, summing \eqref{eq:block-pn} over \(1\le n\le t_N\) gives
\begin{equation}\label{eq:block-B1}
\sum_{n=1}^{t_N}p_n(x,y)
\;\le\;
\sum_{n=1}^{t_N}\frac{C}{M\sqrt{1+n\lambda}}
\;\le\;
\frac{\gamma t_N}{N},
\end{equation} 
verifying Assumption~\ref{itm:B1} for some constant $\gamma>0$.  The theorem then follows from Theorem~\ref{thm:main_thm}.
\end{proof}
Similar conclusions hold in higher dimensions, which we omit for brevity.  
We also note, following \cite{fan2025localization}, that when \(D\) is fixed, a sharp phase transition in the edge statistics occurs at \(\|A\|_{\mathrm{HS}} \sim N^{1/3}\),
which is equivalent to \(\lambda \sim N^{-1/3}\).  
Therefore, the condition~\eqref{eq:block-cond1} in Theorem~\ref{thm:block} is optimal for fixed \(D\).  
Moreover, in contrast to the settings of \cite{MR3915294,fan2025localization,stone2025random,yang2025delocalization,khang2025localization}, the matrix entries in our model are neither restricted to be Hermitian nor Gaussian, and may even exhibit heavy-tailed behavior and more general variance profile.

A more refined analysis could potentially characterize the complete localization–delocalization transition at the spectral edge, but we leave such an investigation to future work.

\subsubsection{Inhomogeneous sparse  random matrices}
Let \(G=(G_{ij})\) be the adjacency matrix of a weighted inhomogeneous Erd\H{o}s-R\'enyi graph on \(N\) vertices, with
\begin{equation}
  G_{ij} \;=\; \mathrm{Bern}(p_{ij})\,\sqrt{w_{ij}},
  \qquad
  \sum_{j}p_{ij}w_{ij}=d, \quad\forall i.
\end{equation}
Define the normalized Hadamard product
\begin{equation}
  H \;=\;\frac{1}{\sqrt{d}}\;G\circ W,
\end{equation}
where \(W\) is a Wigner matrix with symmetric and  \(\theta_{ij}\)-sub-Gaussian entries. Set 
\begin{equation}
\theta \;=\;\max_{i,j}\frac{\theta_{ij}}{p_{ij}},
\end{equation}
then 
\begin{theorem}[Sparse IRM]\label{thm:sparse}
If 
\(\Sigma_N: = (\sqrt{p_{ij}w_{ij}/d})\)  
satisfies Assumptions~\ref{itm:B1}-\ref{itm:B2} with  $\theta\,t_N \;\ll\; N^{1/3}$, then  
 the edge universality holds for \(H\).
\end{theorem}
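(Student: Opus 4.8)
The plan is to verify that the variance profile $\Sigma_N = (\sqrt{p_{ij}w_{ij}/d})$ defines a doubly stochastic matrix satisfying the Short-to-Long Mixing conditions \ref{itm:B1}--\ref{itm:B2}, and then invoke Theorem~\ref{thm:main_thm} directly. The hypothesis already assumes the mixing conditions hold, so the core of the argument is bookkeeping: (i) checking that $H = d^{-1/2} G \circ W$ indeed fits the framework of Definition~\ref{def:inhomo} with variance-profile matrix $P_N = (\sigma_{ij}^2)$ where $\sigma_{ij}^2 = p_{ij} w_{ij}/d$; (ii) checking that the rescaled entries $H_{ij}/\sigma_{ij}$ are $\theta$-sub-Gaussian with the stated $\theta = \max_{i,j} \theta_{ij}/p_{ij}$; and (iii) confirming that $\theta t_N \ll N^{1/3}$ is exactly the moment condition required by Theorem~\ref{thm:main_thm}.

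First I would record that Assumption~\ref{itm:A2} holds: by hypothesis $\sum_j p_{ij} w_{ij} = d$ for every $i$, so $\sum_j \sigma_{ij}^2 = 1$, i.e.\ $P_N$ is a Markov transition matrix. (One should also note that in the adjacency-matrix setting $G$ is symmetric, so $p_{ij} w_{ij} = p_{ji} w_{ji}$, hence $P_N$ is symmetric and thus genuinely doubly stochastic; this is what makes the semicircle law applicable, cf.\ the remark following Theorem~\ref{thm:main_thm}.) Next I would verify Assumption~\ref{itm:A1}: write $H_{ij} = \sigma_{ij} \widetilde W_{ij}$ where $\widetilde W_{ij} = G_{ij} W_{ij} / (\sqrt{d}\,\sigma_{ij}) = \mathrm{Bern}(p_{ij}) W_{ij} / \sqrt{p_{ij}}$. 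These are symmetric (since $W_{ij}$ is), have variance $1$ (since $\mathbb{E}[\mathrm{Bern}(p_{ij})]/p_{ij} = 1$ and $\mathbb{E}[W_{ij}^2]=1$), and for $k \ge 2$,
\begin{equation}
\mathbb{E}\bigl[|\widetilde W_{ij}|^{2k}\bigr] = p_{ij}^{-(k-1)}\,\mathbb{E}\bigl[|W_{ij}|^{2k}\bigr] \le p_{ij}^{-(k-1)}\,\theta_{ij}^{k-1}(2k-1)!! \le \theta^{k-1}(2k-1)!!,
\end{equation}
using the sub-Gaussian bound on $W_{ij}$ and the definition of $\theta$. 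The diagonal normalization can be handled identically (or absorbed, since the diagonal only affects lower-order terms). So $H$ is an inhomogeneous random matrix in the sense of Definition~\ref{def:inhomo} (with $A_N = 0$, or with an arbitrary fixed-rank deformation if one wants the BBP statement).

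With these verifications in hand, the hypothesis that $\Sigma_N$ satisfies \ref{itm:B1}--\ref{itm:B2} with $\theta t_N \ll N^{1/3}$ is precisely the hypothesis of Theorem~\ref{thm:main_thm}, and the conclusion follows. I do not expect any genuine obstacle here — the statement is deliberately phrased so that the mixing verification is outsourced to the hypothesis. If one wanted a more self-contained version (proving \ref{itm:B1}--\ref{itm:B2} from natural assumptions on the $p_{ij}, w_{ij}$), the main work would be a coupling or spectral-gap estimate for the random walk on the weighted graph, analogous to the argument in the proof of Theorem~\ref{thm:GW} where $P$ is split as $c_N J + (1-c_N)T$; the critical point would be controlling $t_N$ so that both the short-time average bound $\frac{1}{t_N}\sum_{n\le t_N} p_n(x,y) \le \gamma/N$ and the $\theta t_N \ll N^{1/3}$ constraint can hold simultaneously, which forces $\theta$ (hence the sparsity) not to be too large — exactly the tension made explicit in the generalized Wigner case.
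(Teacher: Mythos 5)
Your proposal is correct and matches the paper's (implicit) argument: the paper treats Theorem~\ref{thm:sparse} as an immediate application of Theorem~\ref{thm:main_thm}, with the mixing conditions assumed in the hypothesis, and the only substantive check is the one you perform — that $H=\Sigma_N\circ\widetilde W$ with $\widetilde W_{ij}=\mathrm{Bern}(p_{ij})W_{ij}/\sqrt{p_{ij}}$ has unit-variance symmetric entries satisfying $\mathbb{E}[|\widetilde W_{ij}|^{2k}]\le \theta^{k-1}(2k-1)!!$ for $\theta=\max_{i,j}\theta_{ij}/p_{ij}$, together with row-stochasticity of $(p_{ij}w_{ij}/d)$. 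No gap; your moment computation and the doubly stochastic verification are exactly the bookkeeping the paper leaves to the reader.
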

Previous results \cite{MR2964770,MR3098073} show that in the homogeneous case, where $w_{ij}=1$ and \(p_{ij}=\theta^{-1}\), the largest eigenvalue obeys Tracy-Widom whenever
$  
  \theta^{-1}\;\gg\;N^{-1/3}.
$ 
In our framework, one verifies that Assumptions~\ref{itm:B1}-\ref{itm:B2} hold with $t_N=1$ thus as long as
 $ 
  \theta \;\ll\; N^{1/3},$ 
Theorem~\ref{thm:sparse} yields edge universality in this sparse regime.  Moreover, Lee-Schnelli \cite{lee2018} proved that for homogeneous Erd\H{o}s-R\'enyi graph if
 $
  \theta \;\gtrsim\;N^{1/3},
$ 
then a deterministic shift appears at the spectral edge, showing this threshold is sharp. See e.g. \cite{huang2022edge,he2023edge} and references therein for recent results and progress for edge universality of homogeneous sparse Erd\H{o}s-R\'enyi graph.

\subsubsection{IRM with heavy-tailed entries}

\begin{theorem}[IRM  with heavy-tailed entries]\label{thm:heavy} If the IRM matrix  $X$  in Definition~\ref{def:inhomo}  satisfies  Assumptions~\ref{itm:B1}–\ref{itm:B2} with $t_N \ll N^{\tfrac13 - \zeta}$ 
 for some $\zeta\in (0, \tfrac13)$, and 
the  entries of $W$ have uniform moments   with  some fixed $\epsilon_0>0$, 
\begin{equation} \label{uniformM}
\sup_{i,j}\mathbb{E}\bigl[\lvert W_{ij}\rvert^{\tfrac{4+\epsilon_0}{\zeta}}\bigr]\le C,
\end{equation}
then the largest eigenvalue of \(X\) 
 converges in distribution to the Tracy–Widom law.
\end{theorem}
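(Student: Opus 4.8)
The plan is to reduce Theorem~\ref{thm:heavy} to the sub-Gaussian edge universality already established in Theorem~\ref{thm:main_thm} via a two-scale truncation argument in the spirit of \cite{MR3161313} (Lee--Yin) and \cite{soshnikov1999universality}. First I would fix a truncation level at scale $N^{\zeta/4}$ (more precisely, truncate the symmetrized entries $W_{ij}$ at a threshold $B_N = N^{\zeta/4 - \kappa}$ for a small $\kappa>0$ to be optimized), writing $W_{ij} = W_{ij}^{<} + W_{ij}^{>}$ where $W_{ij}^{<} = W_{ij}\mathbf{1}(|W_{ij}|\le B_N)$ and $W_{ij}^{>}$ is the remainder. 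Because the entries are symmetric, the truncated part $W_{ij}^{<}$ remains centered and symmetric, and its variance differs from that of $W_{ij}$ only by $O(B_N^{-(2+\epsilon_0/\zeta)})$ by the uniform moment bound \eqref{uniformM}; after rescaling the variance profile by a factor $1+o(N^{-1})$ we may assume $W^{<}$ has exactly the prescribed variances, and the perturbation this introduces is negligible at the edge scale $N^{-2/3}$ by standard eigenvalue interlacing / Hoffman--Wielandt estimates. The truncated matrix $\Sigma_N\circ W^{<}$ is $\theta'$-sub-Gaussian with $\theta' \lesssim B_N^2 = N^{\zeta/2 - 2\kappa}$, so the constraint $\theta' t_N \ll N^{1/3}$ required by Theorem~\ref{thm:main_thm} follows from $t_N \ll N^{1/3 - \zeta}$ provided $\zeta/2 + (1/3 - \zeta) < 1/3$, i.e.\ always, with room to spare; hence the truncated matrix has Tracy--Widom edge statistics.

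The second and main step is to show that the large part $H^{>} := \Sigma_N\circ W^{>}$ does not affect the largest eigenvalue on the $N^{-2/3}$ scale, which requires controlling both its operator norm and, crucially, showing it does not create a spurious outlier. Here I would argue that with high probability $W^{>}$ has no more than $O(N^{1-\delta})$ nonzero entries (a union bound using $\mathbb{P}(|W_{ij}|>B_N)\le C B_N^{-(4+\epsilon_0)/\zeta}$ summed over $\le N^2$ entries gives an expected count $\le N^2 B_N^{-(4+\epsilon_0)/\zeta}\to 0$ for suitable $\kappa$), so in fact $W^{>}\equiv 0$ with probability $1-o(1)$ — this is the cleanest route and exploits that the exponent in \eqref{uniformM} is comfortably larger than $2/\zeta$. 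More carefully: $N^2 B_N^{-(4+\epsilon_0)/\zeta} = N^{2 - (4+\epsilon_0)(1/4 - \kappa/\zeta)} = N^{1 - \epsilon_0/4 + o(1)}$ if $\kappa$ is chosen small, which is not yet $o(1)$; so one should instead truncate at the higher level $B_N = N^{1/2 + \kappa'}$ for the \emph{norm/outlier} bookkeeping and only at $N^{\zeta/4-\kappa}$ for the \emph{moment-matching} part, i.e.\ use a genuine two-level truncation $W = W^{<<} + W^{<>} + W^{>>}$. The top piece $W^{>>}$ (entries above $N^{1/2+\kappa'}$) vanishes a.s.\ by the union bound; the middle piece $W^{<>}$ is centered, has small variance, and is bounded entrywise, so its contribution to eigenvalue perturbation is $O(N^{-1/2+\kappa' - ?})$ and in particular $\ll N^{-2/3}$ after a more careful second-moment estimate on $\|H^{<>}\|_{\mathrm{op}}$ using the doubly stochastic variance structure (Assumption~\ref{itm:A2}) — this is where I would invoke the operator-norm estimates for inhomogeneous matrices cited in the introduction \cite{MR3878726,bandeira2024matrix,brailovskaya2024extremal}, or simply a crude moment bound via Proposition~\ref{prop:U_asy}\,\ref{item:U_upper} applied to the middle-scale matrix.

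The expected main obstacle is the middle piece: one must show $\|H^{<>}\|_{\mathrm{op}}\ll N^{-2/3}$, which is delicate because $H^{<>}$ has entries of typical size up to $\sigma_{ij}\cdot N^{1/2+\kappa'}$ but extremely small probability of being large, so naive norm bounds ($\sqrt{N}\times$ max entry) are far too lossy; the right estimate must combine the small variance (each row of $H^{<>}$ has $\ell^2$-norm $\ll N^{-(2+\epsilon_0/\zeta)/2\cdot(\text{something})}$ in expectation, well below $1$) with a high-moment Markov bound, essentially re-running the trace-moment machinery of Section~\ref{sec:section4} for this auxiliary ensemble. A cleaner alternative, which I would actually pursue, is to avoid the middle scale entirely: truncate once at $B_N = N^{\zeta/4 - \kappa}$, accept that $\theta' = N^{\zeta/2 - 2\kappa}$, note this still satisfies $\theta' t_N \ll N^{1/3}$, and bound the deleted part $W^{>}$ by the union-bound sparsity estimate, which \emph{does} give $o(1)$ expected nonzero entries precisely because $(4+\epsilon_0)/\zeta > 2/\zeta \cdot (1 + \epsilon_0/4) $ forces $N^2 B_N^{-(4+\epsilon_0)/\zeta} = N^{2 - (4+\epsilon_0)(1/4-\kappa/\zeta)}$, and for $\kappa$ small and $\epsilon_0$ fixed the exponent $2 - 1 - \epsilon_0/4 + O(\kappa) < 1 $... — in fact to make this genuinely $o(1)$ one needs the exponent $<0$, which requires $(4+\epsilon_0)(1/4-\kappa/\zeta) > 2$, failing for $\epsilon_0$ small. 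So the honest conclusion is that the single-truncation route forces the two-level scheme after all, and the real work — the step I flag as the crux — is the operator-norm control of the intermediate-scale matrix $H^{<>}$, for which I would adapt the Chebyshev trace-moment bound \eqref{eq:Chebyshev_trace_bound} to the rescaled ensemble and extract the needed $N^{-2/3-c}$ decay, after which the argument closes by writing $X = X^{<<} + H^{<>}$ (plus the a.s.-vanishing top part) and applying Weyl's inequality together with the already-established Tracy--Widom law for $X^{<<}$.
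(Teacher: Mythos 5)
Your overall strategy (truncate, then invoke Theorem~\ref{thm:main_thm} for the truncated, sub-Gaussian matrix) is the paper's, but the execution has a genuine gap that originates in the choice of truncation level $B_N=N^{\zeta/4-\kappa}$. The hypothesis $t_N\ll N^{1/3-\zeta}$ is calibrated precisely so that the sub-Gaussian parameter $\theta=N^{\zeta}$ is admissible in Theorem~\ref{thm:main_thm}; there is no need to force a smaller $\theta'\approx N^{\zeta/2-2\kappa}$, and it is exactly this unnecessary choice that creates your tail problem. Truncating once at the level $N^{\zeta/2}$, Markov's inequality with the $(4+\epsilon_0)/\zeta$ moment from \eqref{uniformM} gives $\mathbb{P}\bigl(|W_{ij}|\ge N^{\zeta/2}\bigr)\le C N^{-2-\epsilon_0/2}$, so a union bound over the $N^2$ entries shows that the truncated matrix coincides with $X$ \emph{entrywise} with probability $1-O(N^{-\epsilon_0/2})$: there is no removed piece to control at all, and no Weyl/interlacing step is needed. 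The truncated entries are $N^{\zeta}$-sub-Gaussian, $\theta t_N\ll N^{1/3}$ holds by assumption, and the only residual issue is the variance deficit $\mathbb{E}\bigl[|W_{ij}|^2\mathbf{1}(|W_{ij}|\ge N^{\zeta/2})\bigr]=O(N^{-1/2})$, which is harmless because it perturbs each diagram function by $O(nN^{-1/2})=o(1)$ at the relevant scale $n\sim N^{1/3}$ (this is how the paper absorbs it; your claim that the variance can be restored by a $1+o(N^{-1})$ rescaling is not justified at your lower truncation level when $\epsilon_0$ is small).

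Your fallback two-level scheme cannot be repaired as stated, because the step you yourself flag as the crux, $\|H^{<>}\|_{\mathrm{op}}\ll N^{-2/3}$, is generically false. Already in the mean-field case $\sigma_{ij}^2=N^{-1}$, the middle piece contains with probability tending to one (whenever $\epsilon_0<4$) at least one entry of modulus at least $N^{-1/2+\zeta/4-\kappa}$, and a single such entry already forces $\|H^{<>}\|_{\mathrm{op}}\ge N^{-1/2+\zeta/4-\kappa}\gg N^{-2/3}$. Consequently Weyl's inequality combined with any operator-norm bound on the discarded intermediate scales can never close the argument; insensitivity of the edge distribution to those scales would have to be established by a moment or Green-function comparison, not by a deterministic perturbation bound. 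The correct, and much shorter, route is the single truncation at $N^{\zeta/2}$ described above.
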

\begin{proof}
    In this case, it suffices to take \(\theta \ll N^\zeta\) in order to satisfy the moment condition in Theorem \ref{thm:main_thm}. Consider a Wigner-type matrix \(W = (W_{ij})\) whose entries have the uniform moments,  \eqref{uniformM}
    and we introduce  a truncated matrix ${W}^{<}$ that has entries defined  by 
\begin{equation}
{W}^{<}_{ij} := W_{ij} \cdot \mathbf{1}(|W_{ij}| < N^{\frac{\zeta}{2}}).
\end{equation}
By the Markov inequality,  we have
\begin{equation}
    \mathbb{P}\big(W_{ij}\ne W_{ij}^<\big)\le \frac{1}{N^{2+\frac{\epsilon}{2}}} \mathbb{E}\big[|W_{ij}|^{\frac{4+\epsilon_0}{\zeta}}\big]\le CN^{-2-\frac{\epsilon_0}{2}},
\end{equation}
  which implies  that with high probability
\begin{equation}
\mathbb{P}\left(W_{ij} = {W}^{<}_{ij}, \quad \forall \,  1 \le i,j \le N\right) \ge 1 - C N^{-\frac{\epsilon_0}{2}}.
\end{equation}
Therefore, the truncation error becomes asymptotically negligible as \(N \to \infty\). Indeed, since \({W}^{<}_{ij}\) is \(N^{\zeta}\)-sub-Gaussian, we obtain
\begin{equation}
\begin{aligned}
\mathbb{E}\big[|W_{ij} - {W}^{<}_{ij}|^2\big]
&= \mathbb{E}\big[|W_{ij}|^2 \cdot \mathbf{1}(|W_{ij}| \ge N^\frac{\zeta}{2})\big] 
\le \sqrt{\mathbb{E}[|W_{ij}|^4]} \cdot \sqrt{\mathbb{P}(|W_{ij}| \ge N^\frac{\zeta}{2})} \\
&\le \sqrt{\mathbb{E}[|W_{ij}|^4]} \cdot \sqrt{\frac{1}{N}\mathbb{E}[|W_{ij}|^{\frac{2}{\zeta}}]} = O(N^{-\frac{1}{2}}).
\end{aligned}
\end{equation} 
Furthermore,  we have
\begin{equation}
    \mathbb{E}\big[|{W}^{<}_{ij}|^2\big]=1+O(N^{-\frac{1}{2}}).
\end{equation}

Such errors do not affect the asymptotic behavior of diagram functions, as \(n N^{-\frac{1}{2}} \to 0\) when \(n \sim N^{\frac{1}{3}}\). Consequently, choosing \(\theta_N = N^{\zeta}\) in Theorem~\ref{thm:main_thm} preserves edge universality.
\end{proof}

Edge universality holds for heavy-tailed Wigner and sample-covariance matrices   when entries have  only polynomial-decay tails, 
as shown in \cite{MR2191882,MR2899796,MR3161313,MR3809475}  
and references therein.

As a concrete application of Theorem \ref{thm:heavy}, for Wigner matrices whose entries satisfy \(\mathbb{E}[|W_{ij}|^{12+\epsilon}] < \infty\), edge universality still holds true. This result is somewhat weaker than those in \cite{MR3161313,MR3809475}. The distinction lies in the regime of interest: if the distributions of \(W_{ij}\) vary with \(N\), and their fourth moments diverge as \(\mathbb{E}[|W_{ij}|^4] \geq N^{\frac{1}{3}}\), then a non-universal correction emerges near the spectral edge (see \cite{lee2018} for examples). In contrast, for fixed heavy-tailed distributions, the truncated matrix has much smaller fourth moments, well below the \(N^{1/3}\) threshold.  For critical tail behavior in  random band or sparse   matrices, we refer to \cite{MR4546632,han2024deformed}.

\subsubsection{Weighted \texorpdfstring{$d$}{d}-regular graphs}
Let $G_N$ be the adjacency matrix of a $d$-regular graph on $N$ vertices, and can be treated  $G_N$ as the variance-profile matrix $\Sigma_N$.  If all nontrivial eigenvalues satisfy $|\lambda_i(G_N)|=o(d)$ for $i=2,\ldots, N$, then Assumptions~\ref{itm:B1}-\ref{itm:B2} hold whenever
  $d\gg N^{2/3}.$
This  ensures edge universality for  $d$-regular  weighted matrix  $H$. 

\smallskip\noindent
In particular, equip each edge of $G_N$ with an independent Rademacher sign $\sigma_{ij}\in\{\pm1\}$, and denote the signed adjacency by $\sigma\circ G_N$.  Equivalently, this is the “new’’ part of the adjacency of the random 2-lift $\widetilde G_N$.  By \cite[Lemma 3.1]{MR2279667},
\begin{equation}
  \mathrm{Spec}(\widetilde G_N)
  = \mathrm{Spec}(G_N)\;\cup\;\mathrm{Spec}(\sigma\circ G_N).
\end{equation}
 An immediate corollary of   Theorem~\ref{thm:main_thm}  is as follows.

\begin{corollary}\label{coro:sign_d_regular}
  Let $G_N$ be the adjacency matrix of a $d$-regular graph on $N$ vertices,   assume $d\gg N^{2/3+\tau}$ for some fixed $\tau>0$ and $|\lambda_2(G_N)|,|\lambda_N(G_N)|=o(d)$.  Then edge universality holds for the extreme eigenvalues of $\sigma\circ G_N$. In particular, the rescaled largest eigenvalue converges weakly to $\mathrm{TW}_1$ which  has  the Tracy-Widom $F_1$ distribution, that is, 
  \begin{equation}
    N^{\frac{2}{3}}\Bigl(\tfrac{\lambda_1(\sigma\circ G_N)}{\sqrt{d}} - 2\Bigr)  \longrightarrow   \mathrm{TW}_1.
  \end{equation}
\end{corollary}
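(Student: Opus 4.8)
\textbf{Proof proposal for Corollary~\ref{coro:sign_d_regular}.}
The plan is to recognize $\sigma\circ G_N$ as an instance of the inhomogeneous random matrix $H_N=\Sigma_N\circ W_N$ of Definition~\ref{def:inhomo} and then verify the hypotheses of Theorem~\ref{thm:main_thm}. First I would set $W_{ij}=\sigma_{ij}$ to be the i.i.d.\ Rademacher (symmetric, $1$-sub-Gaussian, $\theta=1$) signs on the edges of $G_N$, so that $H_N=\frac{1}{\sqrt d}\,\sigma\circ G_N$ has variance-profile matrix with squared entries $\sigma_{ij}^2=(G_N)_{ij}/d$. Since $G_N$ is $d$-regular, $\sum_j (G_N)_{ij}/d=1$ for every $i$, so Assumption~\ref{itm:A2} holds, and $P_N=\tfrac1d G_N$ is exactly the transition matrix of the simple random walk on the $d$-regular graph $G_N$; there is no deformation, so $A_N=0$, $r=q=0$, and Assumption~\ref{itm:A3} is vacuous. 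Thus $\frac{1}{2\sqrt d}\,\sigma\circ G_N$ is covered by Theorem~\ref{thm:main_thm} as soon as the Short-to-Long Mixing conditions \ref{itm:B1}--\ref{itm:B2} are checked for $P_N$ with $\theta\,t_N\ll N^{1/3}$.

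Next I would verify the mixing conditions using the spectral gap. The eigenvalues of $P_N=\tfrac1d G_N$ are $1=\mu_1>\mu_2\ge\cdots\ge\mu_N$ with $\max_{i\ge2}|\mu_i|=o(1)$ by the hypothesis $|\lambda_2(G_N)|,|\lambda_N(G_N)|=o(d)$; write $\beta_N:=\max_{i\ge2}|\mu_i|=o(1)$. Decomposing in the orthonormal eigenbasis, the $n$-step probabilities satisfy, for all $x,y$,
\begin{equation}
\Bigl|p_n(x,y)-\tfrac1N\Bigr|=\Bigl|\sum_{i\ge2}\mu_i^{\,n}\,\psi_i(x)\psi_i(y)\Bigr|\le \beta_N^{\,n}\sum_{i\ge2}|\psi_i(x)||\psi_i(y)|\le \beta_N^{\,n},
\end{equation}
where the last bound uses Cauchy--Schwarz and $\sum_i\psi_i(x)^2=1$. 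Hence choosing $t_N$ to be any sequence of integers with $t_N\to\infty$, $\beta_N^{\,t_N}\to0$ (possible since $\beta_N=o(1)$; e.g.\ $t_N=\lceil \log N/\log(1/\beta_N)\rceil$ or any slower-growing choice compatible with the moment constraint), condition \ref{itm:B2} holds with $\delta=\delta_N=\beta_N^{\,t_N}\to0$. For \ref{itm:B1}, from the same bound $p_n(x,y)\le \tfrac1N+\beta_N^{\,n}$ we get $\frac{1}{t_N}\sum_{n=1}^{t_N}p_n(x,y)\le \tfrac1N+\frac{1}{t_N}\sum_{n\ge1}\beta_N^{\,n}\le \tfrac1N+\tfrac{1}{t_N}\cdot\tfrac{\beta_N}{1-\beta_N}$, and since $t_N\to\infty$ the second term is $o(1/N)$ provided $t_N$ is chosen so that $t_N=o(N)$, which is automatic; thus \ref{itm:B1} holds with any $\gamma\ge 2$ for large $N$. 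With $\theta=1$, the constraint $\theta\,t_N\ll N^{1/3}$ reduces to $t_N\ll N^{1/3}$, which is compatible with the growth rate needed above whenever $d\gg N^{2/3+\tau}$—indeed, for Ramanujan-type or random $d$-regular graphs one has $\beta_N=O(\sqrt{d}/d+\ldots)$ small enough that a fixed or slowly growing $t_N$ suffices; in general one only needs $\log N/\log(1/\beta_N)\ll N^{1/3}$, which follows from $\beta_N$ bounded away from $1$.

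With the hypotheses of Theorem~\ref{thm:main_thm} verified (real symmetric case, $\beta=1$, no deformation), the theorem yields that the first $k$ largest eigenvalues of $\tfrac{1}{2\sqrt d}\,\sigma\circ G_N$ converge in distribution to those of a GOE matrix; rescaling, $N^{2/3}\bigl(\tfrac{\lambda_1(\sigma\circ G_N)}{\sqrt d}-2\bigr)\to \mathrm{TW}_1$, the Tracy--Widom $F_1$ law. The decomposition $\mathrm{Spec}(\widetilde G_N)=\mathrm{Spec}(G_N)\cup\mathrm{Spec}(\sigma\circ G_N)$ from \cite[Lemma~3.1]{MR2279667} then transfers the statement to the random $2$-lift. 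The main obstacle I anticipate is not conceptual but bookkeeping: one must confirm that the relation $\beta_N=o(1)$ (equivalently $|\lambda_2|,|\lambda_N|=o(d)$) together with $d\gg N^{2/3+\tau}$ genuinely permits an integer sequence $t_N$ simultaneously satisfying $\beta_N^{t_N}\to0$ and $t_N\ll N^{1/3}$; this is where the quantitative rate of the spectral gap enters, and for the stated regime $d\gg N^{2/3+\tau}$ the gap is comfortably large (e.g.\ for the signed adjacency of a random $d$-regular graph the nontrivial eigenvalues are $O(\sqrt d)$ with high probability, giving $\beta_N=O(d^{-1/2})$ and hence $t_N$ of order $\log N/\log d$, far below $N^{1/3}$), so the obstruction is mild but should be stated carefully to cover the deterministic hypothesis as given.
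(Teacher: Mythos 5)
Your reduction of $\sigma\circ G_N$ to Theorem~\ref{thm:main_thm} (variance profile $P_N=\tfrac1d G_N$, Rademacher entries with $\theta=1$, $A_N=0$) is exactly the paper's route, but your verification of the short-time condition \ref{itm:B1} has a genuine gap. From $p_n(x,y)\le \tfrac1N+\beta_N^{\,n}$ you conclude that $\tfrac{1}{t_N}\sum_{n\ge1}\beta_N^{\,n}\le \tfrac{1}{t_N}\cdot\tfrac{\beta_N}{1-\beta_N}$ is $o(1/N)$ "provided $t_N=o(N)$"; this is backwards — that term is $O(1/N)$ only if $t_N\gtrsim N\beta_N$, and the hypothesis gives merely $\beta_N=o(1)$ (e.g.\ $\beta_N=1/\log N$ is allowed), so $N\beta_N$ can far exceed the ceiling $t_N\ll N^{1/3}$ imposed by the moment condition. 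Worse, the failure is not an artifact of the estimate: for adjacent $x,y$ one has $p_1(x,y)=1/d$ exactly, so $\tfrac{1}{t_N}\sum_{n=1}^{t_N}p_n(x,y)\ge \tfrac{1}{t_N d}$, and with your choice $t_N\asymp \log N/\log(1/\beta_N)=O(\log N)$ this is $\gg \gamma/N$ whenever $d\ll N/\log N$, which is the whole regime of interest. So \ref{itm:B1} is simply not verified by your argument, and the hypothesis $d\gg N^{2/3+\tau}$ — which you never actually invoke in the mixing step — is left doing no work.

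The repair is to combine the spectral bound with the deterministic bound $p_n(x,y)\le \tfrac1d$ valid for every $n\ge1$ (since $p_n(x,y)=\tfrac1d\sum_{z\sim y}p_{n-1}(x,z)\le\tfrac1d$). Let $n_0=O(\log N)$ be such that $\beta_N^{\,n}\le \delta/N$ for $n\ge n_0$ (possible since $\beta_N=o(1)$), and choose $t_N$ with $\max\bigl(n_0,\;C\,n_0\,N/d\bigr)\le t_N\ll N^{1/3}$; such a $t_N$ exists precisely because $d\gg N^{2/3+\tau}$ forces $n_0 N/d=O(N^{1/3-\tau}\log N)\ll N^{1/3}$. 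Then \ref{itm:B2} holds (with $\delta$, not your $\delta_N=\beta_N^{t_N}$ — note the condition is $|p_n-\tfrac1N|\le\delta/N$, so you in fact need $N\beta_N^{\,t_N}$ small, another small slip in your bookkeeping), and \ref{itm:B1} follows from $\tfrac{1}{t_N}\sum_{n=1}^{t_N}p_n(x,y)\le \tfrac{n_0}{t_N d}+\tfrac{1+\delta}{N}\le \tfrac{\gamma}{N}$. This balancing of the lower constraint $t_N\gtrsim N/d$ against the upper constraint $t_N\ll N^{1/3}$ is the precise point where $d\gg N^{2/3+\tau}$ enters, and it is the step missing from your proposal; the remainder (applying Theorem~\ref{thm:main_thm} with $A_N=0$ to get $\mathrm{TW}_1$, and the 2-lift spectral decomposition) is fine and matches the paper.
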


\smallskip\noindent
On the other hand, when  $G_N$ is drawn uniformly at random,   \cite{MR4777082} shows for $d\gg N^{2/3+\tau}$,
\begin{align}
  N^{\frac{2}{3}}\Bigl(\tfrac{\lambda_2(G_N)+d/N}{\sqrt{d(N-d)/N}} - 2\Bigr)
  &\;\xrightarrow[N\to\infty]{\mathrm{D}}\;\mathrm{TW}_1, \label{eq:second-eig-TW}
\end{align}
and with high probability
\begin{align}
  \lambda_2(G_N)
  = \bigl(2\sqrt{d(N-d)/N}-d/N\bigr)&\bigl(1+O(N^{-2/3+\varepsilon})\bigr)
   \label{eq:second-eig-bound}
\end{align}
Hence, in probability,
\begin{equation}\label{equ:lambda<lambda}
  \lambda_2(G_N) < \lambda_1(\sigma\circ G_N).
\end{equation}

This demonstrates that for  the random 2-lift $\widetilde{G}_N$, the second eigenvalue of its adjacency matrix $\lambda_2(A_{\widetilde{G}_N})$ is governed by the "new" spectrum. Combining these observations yields:

\begin{theorem}[Random 2-lifts]\label{thm:2lift}
  Let $\widetilde G_N$ be the  random 2-lift of a uniformly random $d$-regular graph with $d\gg N^{2/3+\tau}$, then
  \begin{equation}
    N^{\frac{2}{3}}\Bigl(\tfrac{\lambda_2(\widetilde G_N)}{\sqrt{d-1}} - 2\Bigr)
    \;\xrightarrow[N\to\infty]{\mathrm{D}}\;
    \mathrm{TW}_1.
  \end{equation}
\end{theorem}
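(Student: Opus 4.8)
The plan is to combine three ingredients: (i) the spectral decomposition of the $2$-lift, $\mathrm{Spec}(\widetilde G_N)=\mathrm{Spec}(G_N)\cup\mathrm{Spec}(\sigma\circ G_N)$ from \cite[Lemma 3.1]{MR2279667}; (ii) the edge universality for the signed adjacency matrix $\sigma\circ G_N$ established in Corollary \ref{coro:sign_d_regular}; and (iii) the known asymptotics \eqref{eq:second-eig-TW}--\eqref{eq:second-eig-bound} for the second eigenvalue of a uniformly random $d$-regular graph from \cite{MR4777082}. The key structural observation is that the trivial eigenvalue $d$ of $G_N$ lifts to the Perron eigenvalue $\lambda_1(\widetilde G_N)=d$ of $\widetilde G_N$, so $\lambda_2(\widetilde G_N)=\max\{\lambda_2(G_N),\,\lambda_1(\sigma\circ G_N)\}$.

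First I would record that with $d\gg N^{2/3+\tau}$ the random $d$-regular graph $G_N$ is, with high probability, a very good expander: by \cite{MR4777082} one has $|\lambda_2(G_N)|,|\lambda_N(G_N)|=o(d)$, indeed $\lambda_2(G_N)=(2\sqrt{d(N-d)/N}-d/N)(1+O(N^{-2/3+\varepsilon}))$ w.h.p.\ as in \eqref{eq:second-eig-bound}. Conditioning on the realization of $G_N$ (which lies in this good-expander event w.h.p.), the signs $\sigma_{ij}$ are independent Rademacher variables, so $\sigma\circ G_N$ is an inhomogeneous random matrix with variance profile $\Sigma_N=d^{-1}G_N$, which is doubly stochastic because $G_N$ is $d$-regular; the spectral gap $|\lambda_i(G_N)|=o(d)$ for $i\ge 2$ forces the associated Markov chain to satisfy the Short-to-Long Mixing conditions \ref{itm:B1}--\ref{itm:B2} with $t_N=O(1)$ (one-step mixing up to $o(1)$ error since $(G_N/d)^n$ converges geometrically to $N^{-1}\mathbf 1\mathbf 1^T$ with rate $(|\lambda_2(G_N)|/d)=o(1)$). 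Since the entries are bounded hence $\theta$-sub-Gaussian with $\theta=O(1)$, the hypothesis $\theta t_N\ll N^{1/3}$ holds trivially, and Corollary \ref{coro:sign_d_regular} gives $N^{2/3}(\lambda_1(\sigma\circ G_N)/\sqrt{d}-2)\Rightarrow \mathrm{TW}_1$ after noting $\sqrt d\sim\sqrt{d-1}$.

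Next I would compare the two candidate second eigenvalues. From \eqref{eq:second-eig-bound}, $\lambda_2(G_N)=2\sqrt{d(N-d)/N}-d/N+O(\sqrt d\,N^{-2/3+\varepsilon})$, whereas $\lambda_1(\sigma\circ G_N)=2\sqrt d+O(\sqrt d\,N^{-2/3})$ by the previous paragraph. Since $d\gg N^{2/3}$ we have $\sqrt{d(N-d)/N}=\sqrt d\,(1-d/(2N)+O((d/N)^2))=\sqrt d-\sqrt d\,d/(2N)+\cdots$, and $\sqrt d\,d/N\gg \sqrt d\,N^{-2/3}$ precisely because $d\gg N^{1/3}$; thus $\lambda_2(G_N)=2\sqrt d-\Theta(\sqrt d\,d/N)$ is smaller than $\lambda_1(\sigma\circ G_N)=2\sqrt d-O(\sqrt d\,N^{-2/3})$ by a gap of order $\sqrt d\,d/N$, which dwarfs the $\mathrm{TW}$-fluctuation scale $\sqrt d\,N^{-2/3}$. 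This gives \eqref{equ:lambda<lambda} in probability, so w.h.p.\ $\lambda_2(\widetilde G_N)=\lambda_1(\sigma\circ G_N)$, and the conclusion follows from the convergence in (ii) together with $\sqrt d/\sqrt{d-1}\to 1$. I expect the only delicate point to be making the conditioning argument rigorous — checking that the good-expander event for $G_N$ has probability $1-o(1)$ and that, on this event, the mixing constants $\gamma,\delta$ in \ref{itm:B1}--\ref{itm:B2} are uniform enough for Theorem \ref{thm:main_thm} (equivalently Corollary \ref{coro:sign_d_regular}) to apply with the stated rate; everything else is bookkeeping with the explicit asymptotics of \cite{MR4777082}.
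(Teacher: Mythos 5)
Your overall architecture coincides with the paper's: the lift decomposition $\mathrm{Spec}(\widetilde G_N)=\mathrm{Spec}(G_N)\cup\mathrm{Spec}(\sigma\circ G_N)$ from \cite{MR2279667}, edge universality for the signed part via Corollary~\ref{coro:sign_d_regular}, and the comparison $\lambda_2(G_N)<\lambda_1(\sigma\circ G_N)$ in probability using \eqref{eq:second-eig-TW}--\eqref{eq:second-eig-bound} from \cite{MR4777082}; your identification $\lambda_2(\widetilde G_N)=\max\{\lambda_2(G_N),\lambda_1(\sigma\circ G_N)\}$, the scale comparison $\sqrt d\,d/N\gg\sqrt d\,N^{-2/3}$, and the $\sqrt d\sim\sqrt{d-1}$ bookkeeping are all correct and are exactly what the paper does implicitly.

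The one step that fails as written is your verification of the Short-to-Long Mixing conditions with $t_N=O(1)$. The transition matrix is $P_N=d^{-1}G_N$, so $p_1(x,y)=1/d$ whenever $x\sim y$; Condition~\ref{itm:B1} with bounded $t_N$ would therefore force $1/d\lesssim\gamma/N$, i.e.\ $d\gtrsim N$, which is false in the regime $N^{2/3}\ll d\ll N$. (Condition~\ref{itm:B2} is indeed fine after a bounded number of steps, since $|p_n(x,y)-N^{-1}|\le\bigl(\max_{i\ge2}|\lambda_i(G_N)|/d\bigr)^n\approx(c/\sqrt d)^n$ and $d$ is polynomial in $N$; the binding constraint is~\ref{itm:B1}.) The correct choice is $t_N\asymp N/d$: bound a bounded number of early steps by $p_n(x,y)\le 1/d$ each and all later steps by $(1+o(1))/N$ via the spectral-gap estimate above, so that the average in~\ref{itm:B1} is $O\bigl(1/(t_Nd)\bigr)+O(1/N)=O(1/N)$. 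Consequently the hypothesis $\theta t_N\ll N^{1/3}$ of Theorem~\ref{thm:main_thm} is not ``trivial''; it is equivalent to $N/d\ll N^{1/3}$ and is precisely where the threshold $d\gg N^{2/3}$ enters the universality step, not merely through the citation of \cite{MR4777082}. Since you may instead invoke Corollary~\ref{coro:sign_d_regular} as a black box (its hypotheses, $d\gg N^{2/3+\tau}$ and $|\lambda_2(G_N)|,|\lambda_N(G_N)|=o(d)$, hold w.h.p.\ by \cite{MR4777082}), this slip does not change the structure of the argument, but the mixing verification you sketch must be replaced by the $t_N\asymp N/d$ argument for the conditional application of edge universality to be legitimate.
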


\begin{corollary}[Repeated 2-lifts]\label{cor:k-lift-TW-max}
For any given integer  $k\ge1$, define iteratively $G_N^{(i)}$ as a random 2-lift of $G_N^{(i-1)}$ for $i=1,\dots,k$, where    $G_N^{(0)}$ is a uniformly random $d$-regular graph.   If $d\gg N^{2/3+\tau}$, then
\begin{equation}
  (2^{k-1}N)^{\frac{2}{3}}\bigl(\tfrac{\lambda_2(G_N^{(k)})}{\sqrt{d}}-2\bigr)
  \;\xrightarrow[N\to\infty]{\mathrm{D}}\;
  \max_{1\le i\le k} 2^{\frac{2(k-i)}{3}}\mathcal{X}_i,
\end{equation} 
where $\mathcal{X}_i$ are i.i.d. random variable with the same  Tracy-Widom distribution $F_1$.
\end{corollary}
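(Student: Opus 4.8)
The plan is to reduce the statement to the $k=1$ case (Theorem~\ref{thm:2lift} and Corollary~\ref{coro:sign_d_regular}) by peeling off one $2$-lift at a time; the only genuinely new ingredient is the asymptotic independence of the ``new'' edge eigenvalues produced at the successive levels.

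First I would record the iterated spectral decomposition. Writing $\sigma_i$ for the independent uniform Rademacher signing used to pass from $G_N^{(i-1)}$ to $G_N^{(i)}$, repeated application of \cite[Lemma~3.1]{MR2279667} gives, as multisets,
\[
\mathrm{Spec}\!\big(G_N^{(k)}\big)=\mathrm{Spec}\!\big(G_N^{(0)}\big)\;\bigsqcup\;\bigsqcup_{i=1}^{k}\mathrm{Spec}\!\big(\sigma_i\circ G_N^{(i-1)}\big),
\]
where $\sigma_i\circ G_N^{(i-1)}$ is $2^{i-1}N$-dimensional. Since a uniformly random $d$-regular graph, and a random $2$-lift of a connected $d$-regular graph with $d\ge3$, are each connected with probability $1-o(1)$, a union bound over the $k$ (fixed) levels shows $G_N^{(k)}$ is connected with high probability, so $\lambda_1(G_N^{(k)})=d$ is simple; and since $\|\sigma_i\circ G_N^{(i-1)}\|_{\mathrm{op}}\le(2+o(1))\sqrt d\ll d$ with high probability, no signed block contributes the eigenvalue $d$. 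Hence, with high probability,
\[
\lambda_2\!\big(G_N^{(k)}\big)=\max\Big\{\lambda_2\!\big(G_N^{(0)}\big),\ \max_{1\le i\le k}\lambda_1\!\big(\sigma_i\circ G_N^{(i-1)}\big)\Big\}.
\]
The base-graph term is negligible: a short computation from \eqref{eq:second-eig-bound} yields $\lambda_2(G_N^{(0)})/\sqrt d-2\le-c\min\{d/N,1\}$ with high probability, so, using $d\gg N^{2/3+\tau}$ and $k$ fixed, $(2^{k-1}N)^{2/3}\big(\lambda_2(G_N^{(0)})/\sqrt d-2\big)\to-\infty$ in probability. Thus it suffices to analyse the maximum of the $k$ signed-block top eigenvalues.

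Next I would introduce the filtration $\mathcal F_i:=\sigma\big(G_N^{(0)},\sigma_1,\dots,\sigma_i\big)$, so that $G_N^{(i-1)}$ is $\mathcal F_{i-1}$-measurable while $\sigma_i$ is independent of $\mathcal F_{i-1}$. Using Step~1, \eqref{eq:second-eig-bound}, the analogous bound on $\lambda_{\min}(G_N^{(0)})$ from \cite{MR4777082}, and Corollary~\ref{coro:sign_d_regular} at each level (which also controls the smallest eigenvalue of a signed block), one shows inductively that $|\lambda_2(G_N^{(i-1)})|,|\lambda_{\min}(G_N^{(i-1)})|=o(d)$ with high probability for every $i\le k$. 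Consequently, conditionally on $\mathcal F_{i-1}$ the matrix $\sigma_i\circ G_N^{(i-1)}$ is the ``new'' signed adjacency of a fixed $d$-regular graph which is spectrally gapped with high probability, on $2^{i-1}N\le 2^{k-1}N$ vertices with $d\gg(2^{i-1}N)^{2/3+\tau/2}$; so Corollary~\ref{coro:sign_d_regular} applies conditionally, and its conclusion depends on the base graph only through the spectral-gap condition — which is precisely what forces Conditions~\ref{itm:B1}--\ref{itm:B2} (with $\theta=1$) for the profile $G_N^{(i-1)}/\sqrt d$ — while the limit $\mathrm{TW}_1$ is universal. Setting
\[
Z_i^{(N)}:=(2^{k-1}N)^{2/3}\Big(\tfrac{\lambda_1(\sigma_i\circ G_N^{(i-1)})}{\sqrt d}-2\Big)=2^{\frac{2}{3}(k-i)}\,(2^{i-1}N)^{2/3}\Big(\tfrac{\lambda_1(\sigma_i\circ G_N^{(i-1)})}{\sqrt d}-2\Big),
\]
conditional edge universality gives $\mathbb E\big[f(Z_k^{(N)})\mid\mathcal F_{k-1}\big]\to\mathbb E[f(\mathcal X_k)]$ in $L^1$ for every bounded continuous $f$, with $\mathcal X_k\sim\mathrm{TW}_1$. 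Writing $\mathbb E\big[\prod_{i\le k}f_i(Z_i^{(N)})\big]=\mathbb E\big[\prod_{i\le k-1}f_i(Z_i^{(N)})\,\mathbb E[f_k(Z_k^{(N)})\mid\mathcal F_{k-1}]\big]$, noting that $Z_1^{(N)},\dots,Z_{k-1}^{(N)}$ are $\mathcal F_{k-1}$-measurable and equal $2^{2/3}$ times their level-$(k-1)$ analogues, and inducting on $k$ (the base $k=1$ being Corollary~\ref{coro:sign_d_regular} for the random base $G_N^{(0)}$), one obtains $\big(Z_1^{(N)},\dots,Z_k^{(N)}\big)\xrightarrow[N\to\infty]{\mathrm D}\big(2^{\frac{2}{3}(k-1)}\mathcal X_1,\dots,\mathcal X_k\big)$ with $\mathcal X_1,\dots,\mathcal X_k$ i.i.d.\ $\mathrm{TW}_1$. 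Combining with Steps~1--2 and the continuity of $F_1$ gives the asserted convergence of $(2^{k-1}N)^{2/3}\big(\lambda_2(G_N^{(k)})/\sqrt d-2\big)$ to $\max_{1\le i\le k}2^{\frac{2}{3}(k-i)}\mathcal X_i$.

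The main obstacle will be the third step, i.e.\ upgrading ``one level at a time'' edge universality into genuine asymptotic independence. Concretely, one must verify that the convergence in Corollary~\ref{coro:sign_d_regular} is \emph{uniform over the admissible base graph}, so that it can be invoked conditionally on $\mathcal F_{i-1}$ even though $G_N^{(i-1)}$ is itself random and built from $\sigma_1,\dots,\sigma_{i-1}$. This needs (a) checking that the Short-to-Long Mixing Conditions~\ref{itm:B1}--\ref{itm:B2} with $\theta t_N\ll N^{1/3}$ depend on the base graph only through its spectral gap, which by the previous step is controlled w.h.p.\ at every level, and (b) that the Tracy--Widom limit carries no dependence on the base graph, so that the conditional expectations converge in $L^1$ (equivalently in probability) to a deterministic constant. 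Both are implicit in the proof of Corollary~\ref{coro:sign_d_regular}; making this uniformity explicit, and absorbing the exceptional events where connectivity or the spectral gap fails, is the one place the argument goes beyond a black-box appeal to the earlier results.
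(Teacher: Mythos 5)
Your proposal is correct and follows essentially the same route as the paper: induction on the number of lifts, applying Corollary~\ref{coro:sign_d_regular} conditionally at each level after verifying the $o(d)$ spectral-gap condition, extracting the scaling factor $2^{2(k-i)/3}$, and deducing asymptotic independence from the fact that the conditional limit is the deterministic $\mathrm{TW}_1$ law independent of the base graph. Your treatment merely makes explicit two points the paper leaves implicit, namely the filtration/conditional-expectation argument for joint convergence and the quantitative negligibility of $\lambda_2(G_N^{(0)})$ after rescaling.
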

\begin{proof}
We proceed by induction on the number of 2-lift steps \(k\). For \(k=1\), the claim follows directly from Corollary~\ref{coro:sign_d_regular} and \eqref{equ:lambda<lambda}. 

Assume now that the statement holds for \(k-1\) lifts. Consider \(G_N^{(k)}\) formed by signed matrix \(\sigma^{(k)} \circ G_N^{(k-1)}\). Under the assumption \(d \gg N^{2/3+\tau}\), and with high probability that all nontrivial eigenvalues of \(G_N^{(k-1)}\) are \(o(d)\), Corollary~\ref{coro:sign_d_regular} applies again, showing that the largest new eigenvalue (after rescaling) converges in distribution to \(\mathrm{TW}_1\), independently of $G_N^{(k-1)}$.

At step \(i\), the graph \(G_N^{(i)}\) has \(2^i N\) vertices. Therefore, the appropriate scaling for Tracy–Widom fluctuations at this step is \((2^i N)^{2/3} = 2^{2i/3} N^{2/3}\). Rewriting this in terms of the final graph size \(2^k N\), the eigenvalue arising from the \(i\)-th lift should be rescaled by \(2^{2(k-i)/3} N^{2/3}\) to match the normalization at scale \(2^k N\).

The largest new eigenvalues from each lift are asymptotically independent. The second largest eigenvalue of \(G_N^{(k)}\) is thus asymptotically determined by the maximum among these \(k\) independent Tracy–Widom fluctuations, each scaled by \(2^{2(k-i)/3}\). This proves the claim.
\end{proof}

\subsubsection{Dimension-free deviation inequality}
In \cite{brailovskaya2024extremal}, 
 both small and large deviation inequalities were established for inhomogeneous Gaussian matrices under minimal sparsity assumptions.  As a direct application of Corollary~\ref{coro:tail_bound}, in the undeformed case we recover Tracy-Widom fluctuations in the small-deviation regime, provided that assumptions \ref{itm:B1} and \ref{itm:B2} hold.

If one is only interested in the \(O(N^{-2/3})\) fluctuation scale,  assumption \ref{itm:B2} may be weakened to
\begin{equation}
    p_n(x,y)\;\le\; C\,N^{-1}, 
    \quad \forall \, n\ge t_N.
\end{equation}
When  $t_N\ll N^{^{{1}/{3}}}$, we have 
\begin{equation}
    \Big(\Big(\frac{C n^2 t_N}{N}\Big)^{[\frac{n}{t_N}]}\land 1\Big)\le 1
\end{equation}
in Theorem \ref{prop:U_asy} \ref{item:U_upper}. Following the argument in Corollary \ref{coro:tail_bound}, we can remove the restriction $x^2 t_N N^{-\frac{1}{3}}< c_0$ in Corollary~\ref{coro:tail_bound} with a worser small-deviation bound for all $x\ge 0$,
\begin{equation}
    \mathbb{P}\Bigl(\|\tfrac12X\|_{\mathrm{op}}>1+xN^{-\frac{2}{3}}\Bigr)
    \;\le\;
    C\,e^{-c\,x^{\frac{3}{4}}}.
\end{equation}
This bound may be sharpened by a more detailed analysis of Theorem~\ref{prop:U_asy}.  Notably, in contrast to \cite{brailovskaya2024extremal}, there is no dimension factor \(N\) in the tail probability bound. Such tail decay could extend to $\theta$-sub-Gaussian case by carefully analyzing the bound in Lemma \ref{lem:D_n=0}.   

In summary, we have 
\begin{theorem}[Deviation inequalities]\label{thm:deviation}
Under Assumptions  ~\ref{itm:B1}-\ref{itm:B2}, 
there are   constants $C$ and $c$    only depending on  $\gamma$ and $\theta$,  such that for $t_N\ll N^{1/3}$, 
\begin{equation}
\mathbb{P}\Bigl(\|\tfrac12 X\|_{\mathrm{op}}>1+xN^{-\frac{2}{3}}\Bigr)\le C\,e^{-c\,x^{\frac{3}{4}}}.
\end{equation}
\end{theorem}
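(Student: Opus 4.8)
The plan is to control the right tail of $\|\tfrac12 X\|_{\mathrm{op}}$ by testing it against the positive, exponentially growing polynomials $\tfrac{1}{n+1}U_n(\cdot)$, taking $n$ of order $N^{1/3}$ with a free multiplicative parameter to be optimized at the end, and $A_N=0$ so that $a=0$. The two inputs are: the pointwise lower bound $\tfrac{1}{n+1}U_n(1+u)\ge e^{Cn\sqrt u}$ of Lemma~\ref{lem:B.8} for $u\in[0,0.1)$ (with the even stronger $\tfrac{1}{n+1}U_n(1+u)\ge e^{cn}$ for $u\ge 0.1$), together with the positivity $U_n(t)>0$ for $t\ge1$, which lets one discard the bulk and negative eigenvalues; and the moment estimate of Theorem~\ref{prop:U_asy}\,\ref{item:U_upper}. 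Expanding $\mathbb{E}\big[\tr\big(\tfrac{1}{n+1}U_n(\tfrac X2)\big)^{2k}\big]$ via the Chebyshev product formula and~\eqref{equ:4.31} exactly as in the proof of Corollary~\ref{coro:tail_bound}, one bounds this quantity by the right-hand side of Theorem~\ref{prop:U_asy}\,\ref{item:U_upper} evaluated at $n$, and hence obtains an exponential-moment inequality $\mathbb{E}\big[e^{c_1 n\sqrt{(\|\tfrac X2\|_{\mathrm{op}}-1)_+}}\big]\le 1+(\text{that bound})$.

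The key simplification — and the reason the restriction $x^2 t_N N^{-1/3}<c_0$ of Corollary~\ref{coro:tail_bound} disappears — is that it suffices to use only the \emph{weakened} long-time estimate $p_n(x,y)\le CN^{-1}$ for all $n\ge t_N$, which is implied by~\ref{itm:B2}; under it the factor $\big((Cn^2 t_N/N)^{\lfloor n/t_N\rfloor}\wedge 1\big)$ in Theorem~\ref{prop:U_asy}\,\ref{item:U_upper} is simply $\le1$. Writing $n=\lfloor\beta N^{1/3}\rfloor$, the three terms of that bound become $e^{C\beta^{3/2}}$, $e^{C\beta^2 t_N N^{-1/3}}$ and $e^{C\beta^3}$; since $t_N\ll N^{1/3}$ the middle exponent is $o(\beta^2)$, so the sum is $\le C e^{C\beta^3}$ for all $\beta>0$. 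Thus $\mathbb{E}\big[e^{c_1\beta N^{1/3}\sqrt{(\|\tfrac X2\|_{\mathrm{op}}-1)_+}}\big]\le C e^{C\beta^3}$ \emph{uniformly in $\beta$}, with no coupling between $\beta$ and the deviation size. This is precisely where one loses the exponent $3/2$: in Corollary~\ref{coro:tail_bound} the full strength of~\ref{itm:B2} is used to suppress the cubic term $e^{Cn^3/N}$, so $\beta$ may be run up to size $\sim x$ and one balances $\beta^{3/2}$ against $\beta\sqrt x$; here the cubic survives and one can only balance $\beta^3$ against $\beta\sqrt x$.

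Markov's inequality then gives $\mathbb{P}\big(\|\tfrac12 X\|_{\mathrm{op}}>1+xN^{-2/3}\big)\le C\,e^{C\beta^3-c_1\beta\sqrt x}$ for every admissible $\beta$; choosing $\beta=c_2\,x^{1/4}$ with $c_2$ small enough that $Cc_2^2<c_1$ balances the exponents and yields $\mathbb{P}(\cdot)\le C\,e^{-c\,x^{3/4}}$ for all $x\ge1$ (the bound is trivial for $x<1$, and for astronomically large $x$ the event is deterministically empty since $\|X\|_{\mathrm{op}}=N^{O(1)}$). This settles the Gaussian, undeformed case. For general $\theta$-sub-Gaussian entries I would run the reduction of Section~\ref{sec:section5}: decompose $U_n(\tfrac H2)=V_n+\mathcal D_n$ as in~\eqref{def:Dn}, expand $\tr\big(\tfrac1{n+1}U_n(\tfrac H2)\big)^{2k}$ via the Chebyshev product formula into mixed traces $\tr U_{k'}(\tfrac H2)=\tr V_{k'}+\tr\mathcal D_{k'}$, control the $V$-contribution through Proposition~\ref{prop:V=U} reduced to the $\theta$-GOE matrix $\widehat V_n$, and control the $\mathcal D$-contribution by $\big|\mathbb{E}[\prod_j\tr\mathcal D_{n_j}]\big|\le\mathbb{E}[\prod_j\tr\widehat{\mathcal D}_{n_j}]$ and Lemma~\ref{lem:D_n=0}. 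Integrating out the Bernoulli variables produces factors of size $e^{C\theta\gamma n^2 t_N/N}=e^{C\gamma\beta^2(\theta t_N N^{-1/3})}=e^{o(\beta^2)}$, again subsumed in $e^{C\beta^3}$, so the same exponential-moment bound holds for $H$, with $C,c$ depending only on $\gamma$ and $\theta$ through the combinatorial factors $4^k,8^k$ and assumption~\ref{itm:A1}.

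The step I expect to be the main obstacle is making this sub-Gaussian reduction \emph{quantitative and uniform in the growing degree} $n\sim x^{1/4}N^{1/3}$. Lemma~\ref{lem:D_n=0}, Proposition~\ref{prop:V=U} and Corollary~\ref{coro:U=U} are stated only as $o(1)$ comparisons valid for $n=O(N^{1/3})$, whereas here $n$ may exceed $N^{1/3}$ by the factor $x^{1/4}$; one must therefore re-extract from the proof of Lemma~\ref{lem:D_n=0} — the red-point shrinking and the blue-edge coupling generating function — bounds in $\beta$ that remain $\le e^{C\beta^3}$, and verify that the $\widehat V_n$-moments obey the same diagrammatic bound as in the Gaussian case after inserting the $\theta$-weights. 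These are refinements of arguments already present in the paper, but they are exactly the "careful analysis of the bound in Lemma~\ref{lem:D_n=0}" deferred in the remark preceding the theorem.
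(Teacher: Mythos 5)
Your proposal follows essentially the same route as the paper: the paper proves Theorem~\ref{thm:deviation} precisely by rerunning the argument of Corollary~\ref{coro:tail_bound} under the weakened form of~\ref{itm:B2} (only $p_n(x,y)\le CN^{-1}$ for $n\ge t_N$), observing that then $\bigl((Cn^2t_N/N)^{\lfloor n/t_N\rfloor}\wedge 1\bigr)\le 1$ in Theorem~\ref{prop:U_asy}~\ref{item:U_upper} so the cubic term survives, and balancing $\beta^3$ against $\beta\sqrt{x}$ to obtain the $x^{3/4}$ exponent without the restriction $x^2t_NN^{-1/3}<c_0$. The quantitative extension to $\theta$-sub-Gaussian entries via a sharpened, degree-uniform version of Lemma~\ref{lem:D_n=0} is deferred in the paper at exactly the same point you flag as the main obstacle, so your sketch matches the paper's intended argument.
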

\subsubsection{GOE-GUE transition for 
IRM}

We can also extend  an interpolating ensemble   with unequal real and imaginary parts, which has been studied  by Feldheim and Sodin    \cite[Section V]{feldheim2010universality},  to the inhomogeneous setting.  Indeed,  replace the GOE/GUE   matrix  $W$ in our definition \ref{def:inhomo}   $H=\Sigma_N\circ W$ with   the general Gaussian ensemble,
\begin{equation}\label{equ:mix_gaussian}
    (W_N)_{ij}\overset{d}{=}\begin{cases}
        \mathcal{N}(0,1/(1+\alpha^2))+i\mathcal{N}(0,\alpha^2/(1+\alpha^2)),& i\ne j\\
        \mathcal{N}(0,2/(1+\alpha^2)),&i=j,
    \end{cases}
\end{equation}
where the real parts are symmetry and the imaginary parts are anti-symmetric. Note that   for $i\ne j$,
\begin{equation}
    \mathbb{E}[W^2_{ij}]=\frac{1-\alpha^2}{1+\alpha^2},~~\mathbb{E}[|W_{ij}|^2]=1,
\end{equation}
we immediately obtain GOE for T  $\alpha=0$, $\alpha=1$ for GUE, and an anti-Symmetric GOE for $\alpha=\infty$(  see \cite[Chapter 13]{Mehta2004} for the third ensemble).
\begin{theorem}
    Let $H_N$ be an IRM matrix   defined in Definition \ref{def:inhomo} but with real/complex Gaussian entries replaced by \eqref{equ:mix_gaussian},  the following limits for the largest eigenvalue $\lambda_1$ hold.
\begin{enumerate}
    \item[(1)] If $0\le \alpha\ll N^{-\frac{1}{6}}$, then 
    \begin{equation}
        N^{\frac{2}{3}}(\lambda_1-2)\overset{D}{\rightarrow} \mathrm{TW}_1.
    \end{equation}
    \item[(2)] If $N^{-\frac{1}{6}}\ll \alpha\le \infty$, then 
    \begin{equation}
        N^{\frac{2}{3}}(\lambda_1-2)\overset{D}{\rightarrow}  \mathrm{TW}_2.
    \end{equation}
\end{enumerate}
\end{theorem}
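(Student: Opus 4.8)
The plan is to mimic the strategy of Feldheim and Sodin \cite[Section V]{feldheim2010universality}, combined with the Chebyshev polynomial moment machinery developed in Sections~\ref{sec:section2}--\ref{sec:section5} of this paper. The key observation is that the interpolating Gaussian ensemble in \eqref{equ:mix_gaussian} interpolates between GOE ($\alpha=0$) and GUE ($\alpha=1$), and its edge statistics are governed by mixed moments of $U_{n_j}(X/2)$ with $n_j\sim \tau_j N^{1/3}$. First I would recompute the diagrammatic (ribbon graph) expansion of these Chebyshev moments for the $\alpha$-ensemble, exactly as in Section~\ref{sec:section2}, but keeping track of the orientation weight: in the Wick pairing of the entries of $W_N$, a pair of directed edges glued in the \emph{same} direction contributes the extra factor $\mathbb{E}[W_{ij}^2]=\frac{1-\alpha^2}{1+\alpha^2}$ rather than $1$. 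Consequently the expansion of $\mathbb{E}[\prod_j \tr U_{n_j}(X/2)]$ runs over \emph{all} reduced ribbon graphs (orientable and non-orientable), with each non-orientable gluing — equivalently, each handle/crosscap that is not present on an orientable surface — weighted by a power of $\frac{1-\alpha^2}{1+\alpha^2}$. This is the analogue of Proposition~\ref{prop:T=Gamma} and Theorem~\ref{Chebyshevmoment} with a continuous orientation parameter.

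Next I would carry out the asymptotic analysis of Section~\ref{sec:section4} on this refined expansion. For the $\alpha$-ensemble with the \emph{homogeneous} profile $\sigma_{ij}^2=1/N$, the diagram function $\widetilde F_\Gamma$ associated to a typical connected diagram $\Gamma$ carries a weight $\bigl(\tfrac{1-\alpha^2}{1+\alpha^2}\bigr)^{c(\Gamma)}$, where $c(\Gamma)$ counts the non-orientable ``twists''. Since a single twist lowers the Euler characteristic, the leading contribution (the orientable, planar part giving the Airy/Tracy--Widom answer) is common to all $\alpha$, while the first non-orientable correction — the term that distinguishes $F_1$ from $F_2$ — carries the weight $\tfrac{1-\alpha^2}{1+\alpha^2}$ times an extra factor of $N^{-1/3}$-scale relative size $n^{?}N^{-?}$ that, after the $n\sim N^{1/3}$ rescaling, contributes a term of order $\alpha^2 N^{1/3}\cdot N^{-1/2}\sim \alpha^2 N^{-1/6}$ in the cumulant (this is exactly where the threshold $\alpha\sim N^{-1/6}$ enters). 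Thus: if $\alpha\ll N^{-1/6}$ the non-orientable corrections vanish in the limit and the scaled moments converge to the GOE ($\beta=1$) limits; if $N^{-1/6}\ll\alpha$ the weight $\tfrac{1-\alpha^2}{1+\alpha^2}$ — after the appropriate power is taken, which grows like the number of twists — suppresses \emph{all} non-orientable diagrams, leaving only the orientable ones, which give precisely the GUE ($\beta=2$) limits. (For $\alpha=\infty$ the weight is $-1$, and one checks as in \cite[Section V]{feldheim2010universality} that the signs still conspire to kill the non-orientable contributions at the edge; alternatively the antisymmetric GOE is unitarily conjugate in distribution to a shifted GUE-type object at the edge.) Once the scaled mixed moments of $U_{n_j}(X/2)$, and of the ``off-by-one'' products as in \eqref{equ:U7=U7}, are shown to converge to those of GUE (resp.\ GOE), the conclusion follows verbatim from the continuity-theorem argument used in the proof of Theorem~\ref{thm:main_thm}: split the trace $\sum_i (U_n(\lambda_i/2))^{2k}$ into bulk, edge, and far-edge pieces, control the bulk by Lemma~\ref{lem:B.8}, the far edge by the tail bound of Corollary~\ref{coro:tail_bound}, and identify the edge piece via Lemma~\ref{lem:B.9_Chebyshev}.

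The main obstacle I anticipate is the \emph{bookkeeping of the non-orientable contributions}: one must show that, for $\alpha$ in the regime $N^{-1/6}\ll\alpha\le\infty$, the total contribution of \emph{all} ribbon graphs carrying at least one twist — summed over the (super-exponentially many in $\ell$) typical diagrams with the combinatorial bound of Lemma~\ref{diagramno} — is $o(n^s)$ after rescaling, uniformly in $\alpha$ including the degenerate endpoint $\alpha=\infty$ where the per-twist weight has modulus $1$ and one must instead exploit cancellation of signs (each twist contributes $(-1)$, and the parity structure of the non-orientable genus expansion forces these to cancel in the limit, as in Feldheim--Sodin). A secondary (milder) obstacle is verifying that the crossover is genuinely at $\alpha\sim N^{-1/6}$ and not some other power: this requires the sharp asymptotics of the first non-orientable diagram function — essentially a Möbius-graph analogue of the computation behind Theorem~\ref{thm:F=tildeF} — showing its rescaled size is $\Theta(\alpha^2 N^{-1/6})$ and not smaller. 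Both points are technical refinements of machinery already in place; the novelty is purely in tracking the orientation weight through the existing expansion.
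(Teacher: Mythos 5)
Your overall skeleton (redo the ribbon-graph/Chebyshev expansion keeping track of the orientation weight $\mathbb{E}[W_{ij}^2]=\frac{1-\alpha^2}{1+\alpha^2}$ on same-direction gluings, then pass to the limit diagramwise and conclude via the moment-continuity argument of Section~\ref{sec:section4}) is exactly the route the paper takes, which simply adapts Feldheim--Sodin's Section~V to the inhomogeneous setting. However, your description of the mechanism that produces the threshold $\alpha\sim N^{-1/6}$ is wrong in a way that would derail the proof. First, the logic of regime (1) is inverted: if the non-orientable (twisted) diagrams ``vanish in the limit'', what remains are precisely the orientable diagrams, i.e.\ the GUE limit, not the GOE limit; for $\alpha\ll N^{-1/6}$ you need the twisted diagrams to \emph{retain} their full weight so that the cumulants match those of GOE. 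Second, there is no ``extra factor of $N^{-1/3}$-scale relative size'' attached to a crosscap at the edge scaling: in the parameterization $|E|=3\ell+s$, $|V|=2\ell+s$ a crosscap changes $\ell$ by one just as a handle changes it by two, and since $(n^3/N)^{\ell}=O(1)$ for $n\sim N^{1/3}$, non-orientable diagrams contribute at order one (this is exactly why $\mathrm{TW}_1\neq\mathrm{TW}_2$). If your claimed per-twist suppression of relative size $\alpha^2N^{-1/6}$ were correct, the non-orientable diagrams would be negligible for \emph{all} fixed $\alpha\ge 0$ and both regimes would give $\mathrm{TW}_2$, contradicting the statement; moreover $\alpha^2N^{-1/6}\to 0$ forces only $\alpha\ll N^{1/12}$, so it does not even produce the right threshold.

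The correct mechanism, which your write-up is missing, is the summation over the \emph{edge weights} $w_e$ of twisted edges. A twisted edge of the reduced diagram is traversed by $w_e$ same-direction Wick pairings and therefore carries the factor $\bigl(\tfrac{1-\alpha^2}{1+\alpha^2}\bigr)^{w_e}$ with $w_e$ ranging up to $n\sim N^{1/3}$. In the homogeneous comparison (as in Theorem~\ref{thm:F=tildeF}) an untwisted edge contributes $\sum_{w_e\le n}N^{-1}\asymp n/N$, while a twisted one contributes
\begin{equation}
\frac1N\sum_{w_e\le n}\Bigl(\tfrac{1-\alpha^2}{1+\alpha^2}\Bigr)^{w_e}\;\asymp\;\frac{1}{N}\min\bigl(n,\alpha^{-2}\bigr),
\end{equation}
so each twisted edge is suppressed relative to the untwisted case by $\min\bigl(1,(\alpha^2 n)^{-1}\bigr)$. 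The crossover is therefore at $\alpha^2 n\sim 1$, i.e.\ $\alpha\sim n^{-1/2}\sim N^{-1/6}$: for $\alpha\ll N^{-1/6}$ the factor $(\tfrac{1-\alpha^2}{1+\alpha^2})^{w_e}=e^{-2\alpha^2 w_e(1+o(1))}\to 1$ uniformly for $w_e\lesssim n$, so every diagram function converges to its GOE value and one gets $\mathrm{TW}_1$; for $\alpha\gg N^{-1/6}$ every diagram containing a twisted edge is $o(1)$ after the $n^{-s}$ rescaling, leaving the orientable (GUE) diagrams and $\mathrm{TW}_2$ (at $\alpha=\infty$ the same conclusion follows from the oscillating sum $\sum_{w_e\le n}(-1)^{w_e}=O(1)$, which is the sign cancellation you allude to). With this corrected mechanism, the rest of your argument (uniform bounds as in Proposition~\ref{prop:F_upper_bound} to justify diagramwise limits, then the trace-splitting and continuity step) goes through as in the Gaussian proof of Theorem~\ref{thm:main_thm}.
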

\begin{proof}
The argument follows essentially the same approach as in \cite[Section V]{feldheim2010universality}, with the sole modification being the adaptation to our inhomogeneous setting.
\end{proof}
\subsubsection{IRM with quaternion entries}

  Our methods naturally extend to   matrix models with quaternion entries ($\beta=4$). In particular, the edge eigenvalue analysis developed by Feldheim and Sodin for quaternion Wigner and Wishart ensembles \cite[Section V]{feldheim2010universality} can be directly adapted to the inhomogeneous edge setting with only minor modifications.

\subsection{Further questions}
 
We conclude this paper by highlighting    several basic questions.  

\begin{enumerate}[label=(\roman*)]
    \item \textbf{Edge universality beyond Assumption~\ref{itm:B1}.}  
    Assumption~\ref{itm:B1} is both essential and technical. 
    Its violation can introduce non-universal corrections at the spectral edge when analyzing Tracy-Widom fluctuations. Nevertheless, the core edge universality may remain intact, provided these corrections are incorporated; see \cite{liu2023edge} for an example in the context of band matrices.
    
    \item \textbf{Moment condition and edge corrections.}    In Theorem \ref{thm:main_thm}, we require the moment condition $\theta t_N \ll N^{1/3}$ in order to establish edge universality. Violating this condition may also lead to non-universal corrections at the spectral edge. However, as in the previous case, edge universality may be still true after such corrections. This phenomenon has been observed in the setting of sparse random matrices; see e.g.\ \cite{huang2022edge}. We also refer the reader to \cite{aggarwal2022mobility} for   edge statistics results in the heavy-tailed setting. 

 For simplicity, 
we also assume all odd moments of the matrix entries vanish. However, extending Theorem~\ref{thm:main_thm} to non-symmetric settings or to sharp moment assumptions remains an important  open question.

\item \textbf{Variance profile without the doubly stochastic condition.}
Assumption~\ref{itm:A2} imposes a doubly stochastic structure on the variance profile, which guarantees that the limiting empirical spectral distribution is the semicircle law.  This restriction, however, is somewhat artificial and may be only technical. Removing Assumption~\ref{itm:A2} is a very interesting problem; nonetheless, local universality of spectral statistics may still hold without it.  For related results in matrix models that do not possess a doubly stochastic structure but require comparable variance profile, see e.g.\ \cite{MR3719056} for bulk universality, \cite{MR4089499} for edge universality and \cite{ducatez2024large} for a large deviation principle.

    \item \textbf{Delocalization of edge eigenvectors.}  
    This paper focuses solely on eigenvalue statistics. A natural further question is whether the eigenvectors associated with the edge eigenvalues are delocalized under Assumptions~\ref{itm:B1} and~\ref{itm:B2}. We refer to \cite{yang2025delocalization2} for recent progress on edge eigenvector delocalization for band matrix models.
    
    \item \textbf{Inhomogeneous non-Hermitian random matrices.} In Theorem \ref{thm:main_thm_Wishart}, we establish the universality of the largest singular values for inhomogeneous non-Hermitian matrices. However, the precise conditions under which eigenvalue universality holds for inhomogeneous non-Hermitian matrices remain unclear; we refer to \cite{MR4221653} for  edge universality of non-Hermitian random matrices with  i.i.d. entries and to \cite{liu2024} for    phase transition of eigenvalues of  deformed Ginibre ensembles.

\end{enumerate}

\section*{Acknowledgments} 

We thank Jiaqi Fan and Fan Yang for valuable discussions and for kindly explaining their work~\cite{stone2025random,fan2025localization} to us. We also thank Ruohan Geng for valuable conversations,  particularly  on the diagram expansion of the deformed Tracy-Widom law. We appreciate Jiaoyang Huang for valuable discussions on random lifts of $d$-regular graphs. We are thankful to Ramon van Handel for valuable discussions on the fluctuation scale at the spectral edge during Guangyi's visit to Princeton. We are grateful to Yuanyuan Xu for valuable discussions on the challenge 
 of edge statistics for non-mean-field random matrices. This work was supported by the National Natural Science Foundation of China under Grants \#12371157 and \#12090012.

\let\oldthebibliography\thebibliography
\let\endoldthebibliography\endthebibliography
\renewenvironment{thebibliography}[1]{
  \begin{oldthebibliography}{#1}
    \setlength{\itemsep}{0.5em}
    \setlength{\parskip}{0em}
}
{
  \end{oldthebibliography}
}

\bibliographystyle{alpha}
\begin{spacing}{0}
\small
\bibliography{Reference}
\end{spacing}

\appendix

\section{Chebyshev expansion in  deformed Hermitian case} \label{Chebyshevdeform}

\begin{proposition}[Deformed path expansion]\label{prop:path_expansion_H_plus_A}
Let \(U_n(x)\) be the Chebyshev polynomial of the second kind, 
and let \(V_n,\Phi_2,\Phi_3,\underline{\Phi_2V_n},\underline{\Phi_3V_n}\) be as in Definition~\ref{def:nonbacktracking}.  Introduce
\begin{equation}
\underline{\Phi_A V_n}\;\equiv\;\sum_{i=1}^n A^i\,V_{n-i},
\end{equation}
recording that each insertion of \(A\) of total multiplicity \(i\) carries no backtracking constraint, then
\begin{equation}\label{eq:main_path_expansion_HA}
U_n\bigl((H+A)/2\bigr)
=
\sum_{k\ge0}\;\sum_{\alpha\in\{2,3,A\}^k}\;\sum_{\substack{\ell_0+\cdots+\ell_k=n}} 
V_{\ell_0}
\;\underline{\Phi_{\alpha_1} V_{\ell_1}}\;\cdots\;\underline{\Phi_{\alpha_k} V_{\ell_k}},
\end{equation}
where each symbol \(\alpha_i\in\{2,3,A\}\) indicates an insertion of \(\Phi_2,\Phi_3,\Phi_A\) respectively, and the summation   runs over all nonnegative integers \(\ell_i\ge0\).
\end{proposition}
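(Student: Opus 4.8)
The plan is to reduce the deformed path expansion \eqref{eq:main_path_expansion_HA} to the undeformed one stated in the Erd{\H o}s--Knowles proposition, by an inductive argument on $n$ that mirrors the original derivation. First I would recall the key algebraic identity underlying the Chebyshev path expansion: for the Chebyshev polynomials of the second kind one has the three-term recurrence $U_n(x)=2x\,U_{n-1}(x)-U_{n-2}(x)$, equivalently $x\,U_{n-1}(x)=\tfrac12\bigl(U_n(x)+U_{n-2}(x)\bigr)$. Applying this with $x=(H+A)/2$ gives $(H+A)\,U_{n-1}\bigl((H+A)/2\bigr)=U_n\bigl((H+A)/2\bigr)+U_{n-2}\bigl((H+A)/2\bigr)$, so that $U_n\bigl((H+A)/2\bigr)=(H+A)\,U_{n-1}\bigl((H+A)/2\bigr)-U_{n-2}\bigl((H+A)/2\bigr)$. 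The strategy is then to feed the inductive hypothesis for $U_{n-1}$ and $U_{n-2}$ into the right-hand side and track how left-multiplication by $H+A=H+A$ interacts with the building blocks $V_{\ell}$, $\underline{\Phi_{a}V_{\ell}}$, $\underline{\Phi_A V_\ell}$.

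The core combinatorial step is to understand $H\cdot V_\ell$ and $A\cdot V_\ell$. For the $H$-part, this is exactly the content of the original Erd{\H o}s--Knowles analysis: left-multiplication of the non-backtracking power $V_\ell$ by $H$ produces $V_{\ell+1}$ plus correction terms involving $\Phi_2$ and $\Phi_3$, because a step of the walk may either extend non-backtrackingly (giving $V_{\ell+1}$), or backtrack (removing two steps, producing $\Phi_2 V_{\ell-1}$, i.e. $\underline{\Phi_2 V_{\ell+1}}$), or revisit an edge through the diagonal correction (producing $\underline{\Phi_3 V_{\ell+1}}$). For the $A$-part, since $A$ is a deterministic finite-rank matrix carrying no non-backtracking constraint against itself or against $H$, left-multiplication by $A$ simply prepends one factor of $A$; iterating, a block of $i$ consecutive $A$-multiplications with no intervening constraint contributes $A^i V_{\ell-i}$, which is precisely the definition $\underline{\Phi_A V_\ell}=\sum_{i=1}^{\ell}A^i V_{\ell-i}$. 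I would formalize this by grouping the word $\alpha\in\{2,3,A\}^k$ into maximal blocks; alternatively, and more cleanly, one checks that the generating identity $U_n=(H+A)U_{n-1}-U_{n-2}$ is satisfied by the claimed right-hand side of \eqref{eq:main_path_expansion_HA}, using the recursive relations $V_{\ell_0}\mapsto H\cdot V_{\ell_0}$ and $V_{\ell_0}\mapsto A\cdot V_{\ell_0}$ described above, together with the base cases $n=0$ (where the RHS is $V_0=\mathbb{I}=U_0$) and $n=1$ (where the RHS is $V_1+\underline{\Phi_A V_1}=H+A=2\cdot\tfrac{H+A}{2}=U_1((H+A)/2)$, noting $\underline{\Phi_2 V_1}=\underline{\Phi_3 V_1}=0$).

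The main obstacle I anticipate is bookkeeping the $U_{n-2}$ cancellation correctly in the presence of the three types of insertions: when $H$ acts on the leading $V_{\ell_0}$ of a term in the $U_{n-1}$ expansion and backtracks, it generates a $\underline{\Phi_2}$ insertion that lowers the total degree by $2$, and one must verify that the sum of all such contributions, together with the explicit $-U_{n-2}$ term, reproduces exactly the $U_n$ expansion with no leftover. This is the same subtlety as in \cite[Proposition 6.2]{EK11Quantum}; the deformation only adds the purely ``additive'' channel $\Phi_A$, which never backtracks and hence never interacts with the $U_{n-2}$ bookkeeping — it only lengthens words. Concretely I would argue: (i) establish the undeformed identity verbatim from \cite{EK11Quantum} as the $A=0$ case; (ii) show that replacing $H+A$ for $H$ in the recurrence introduces exactly the extra terms $\sum_{i\ge1}A^i V_{\ell_0-i}\cdot(\text{rest})$ at the front of each word, and that these organize into the $\alpha_1=A$ subsum; (iii) close the induction. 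I expect step (ii), verifying that no new backtracking-type corrections arise from the $A$-multiplication (because $A$ carries no $\mathbf1(x_i\neq x_{i+2})$ constraint and the matrix $A$ is not random, so there is nothing to ``center''), to be the only genuinely new point relative to the cited proposition, and it is routine once the block structure of the words in $\{2,3,A\}^k$ is made explicit.
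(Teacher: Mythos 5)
Your proposal is correct and follows essentially the same route as the paper's proof: induction on $n$ via the recurrence $U_n\bigl((H+A)/2\bigr)=(H+A)\,U_{n-1}\bigl((H+A)/2\bigr)-U_{n-2}\bigl((H+A)/2\bigr)$, with base cases $n=0,1$, splitting into the $H$-part (handled by the Erd\H{o}s--Knowles identity $HV_{\ell_0}=V_{\ell_0+1}+V_{\ell_0-1}+\Phi_2V_{\ell_0-1}+\underline{\Phi_3V_{\ell_0+1}}$ and reindexing) and the $A$-part, which by $\underline{\Phi_A V_\ell}=\sum_{i\ge1}A^iV_{\ell-i}$ yields exactly the words with $\alpha_1=A$. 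One small correction to your bookkeeping: the term that cancels against $-U_{n-2}$ is the plain $V_{\ell_0-1}$ contribution of $HV_{\ell_0}$ (missing from your list of channels), not the $\Phi_2$ insertion, since $\Phi_2V_{\ell_0-1}=\underline{\Phi_2V_{\ell_0+1}}$ keeps total weight $n$ and is retained in the expansion.
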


\begin{proof}
Set
\begin{equation}
D_n \;=\; U_n\bigl((H+A)/2\bigr)
-\sum_{k,\alpha,\ell_i}
V_{\ell_0}\,\underline{\Phi_{\alpha_1}V_{\ell_1}}\cdots\underline{\Phi_{\alpha_k}V_{\ell_k}}, 
\end{equation}
we will verify \(D_n=0\) by induction on \(n\).

\medskip
\noindent\textbf{Base step.}  For \(n=0\), both sides equal \(U_0=I\).  For \(n=1\),
\begin{equation}
U_1((H+A)/2)=H+A
=V_1+\underline{\Phi_A V_0},
\end{equation}
since \(V_1=H\) and \(\underline{\Phi_A V_0}=A\).  Hence \(D_1=0.\)

\medskip
\noindent\textbf{Induction step.}  Assume \(D_m=0\) for all \(m<n\).  Start from  the recursion relation for Chebyshev polynomials,
\begin{equation}
U_n((H+A)/2)
={(H+A)}\,U_{n-1}((H+A)/2)
- U_{n-2}((H+A)/2),
\end{equation}
apply the induction hypothesis to \(U_{n-1},U_{n-2}\), split
\begin{equation}
(H+A)U_{n-1}
=H U_{n-1} + A U_{n-1},
\end{equation}
and we then proceed in  two cases. 

\textbf{(i) The \(H\)-term.}
By induction hypothesis,
\begin{equation}
U_{n-1}((H+A)/2)
=\sum_{k,\alpha,\ell_i}
V_{\ell_0}\,\underline{\Phi_{\alpha_1}V_{\ell_1}}\cdots\underline{\Phi_{\alpha_k}V_{\ell_k}},
\end{equation}
hence
\begin{equation}
H U_{n-1}
=\sum_{k,\alpha,\ell_i}(H V_{\ell_0})\,\underline{\Phi_{\alpha_1}V_{\ell_1}}\cdots
\underline{\Phi_{\alpha_k}V_{\ell_k}}.
\end{equation}
Using \cite[Lemma 6.1]{EK11Quantum} (note the are slight different in notation $\underline{\Phi_3V_{\ell_0-2}}$),
\begin{equation}
H V_{\ell_0}
=V_{\ell_0+1}+V_{\ell_0-1}+\Phi_2V_{\ell_0-1}+\underline{\Phi_3V_{\ell_0+1}},
\end{equation}
(with the understanding that terms with negative index vanish), one splits this into four sums.  Reindexing \(\ell_0+1\to\ell_0\) in the first and \(\ell_0-1\to\ell_0\) in the others, and then subtracting \(U_{n-2}\) removes exactly those with total insertion–weight one higher.  The net result reproduces precisely the terms in \eqref{eq:main_path_expansion_HA} with the case $l_0>0$ and the cases  $l_0=0$ with \(\alpha_1\in\{2,3\}\). 

\textbf{(ii) The \(A\)-term.}
Similarly,
\begin{equation}
A U_{n-1}
=\sum_{k,\alpha,\ell_i}
A\Bigl(V_{\ell_0}\,\underline{\Phi_{\alpha_1}V_{\ell_1}}\cdots\underline{\Phi_{\alpha_k}V_{\ell_k}}\Bigr).
\end{equation}
By definition of \(\underline{\Phi_A V_{\ell_0}}=\sum_{i=1}^{\ell_0}A^iV_{\ell_0-i}\), this exactly yields the terms in \eqref{eq:main_path_expansion_HA} with the first insertion \(\alpha_1=A\). 

Hence \(D_n=0\) for all \(n\), proving the proposition.
\end{proof}

\section{Chebyshev expansion in the Wishart Case}

\begin{definition}[Nonbacktracking powers of $H$]\label{def:NBP-Wishart}
Let $H\in\mathbb{C}^{N\times M}$ be a $N\times M$ matrix, and let $x,y$  respectively denote  elements in $[N], [M]$.  For each integer $n\ge0$, define the nonbacktracking power matrix $V_n$ (viewed as an operator) as follows:

\paragraph{Even case ($n=2m$):}
\begin{itemize}
  \item $V_{2m}:\,\mathbb{C}^{[N]}\to\mathbb{C}^{[N]}$, with
  \begin{equation}
  (V_{2m})_{x_0x_{2m}} = \sum_{y_1,x_2,\dots,y_{2m-1}}
  \Bigl[\prod_{i=0}^{2m-2}\mathbf{1}(\zeta_i\neq\zeta_{i+2})\Bigr]\,
  H_{x_0y_1}H^*_{y_1x_2}\cdots H^*_{y_{2m-1}x_{2m}},
  \end{equation}
  summing over $y_{2k+1}\in[M],\,x_{2k}\in[N]$. Here $\zeta_i$ denotes $x_i$ or $y_i$.
  \item $V_{2m}:\,\mathbb{C}^{[M]}\to\mathbb{C}^{[M]}$, with
  \begin{equation}
  (V_{2m})_{y_0y_{2m}} = \sum_{x_1,y_2,\dots,x_{2m-1}}
  \Bigl[\prod_{i=0}^{2m-2}\mathbf{1}(\zeta_i\neq\zeta_{i+2})\Bigr]\,
  H^*_{y_0x_1}H_{x_1y_2}\cdots H_{x_{2m-1}y_{2m}},
  \end{equation}
  summing over $x_{2k+1}\in[N],\,y_{2k}\in[M]$.
\end{itemize}

\paragraph{Odd case ($n=2m+1$):}
\begin{itemize}
  \item $V_{2m+1}:\,\mathbb{C}^{[N]}\to\mathbb{C}^{[M]}$, with
  \begin{equation}
  (V_{2m+1})_{x_0y_{2m+1}} = \sum_{y_1,x_2,\dots,x_{2m}}
  \Bigl[\prod_{i=0}^{2m-1}\mathbf{1}(\zeta_i\neq\zeta_{i+2})\Bigr]\,
  H_{x_0y_1}H^*_{y_1x_2}\cdots H_{x_{2m}y_{2m+1}},
  \end{equation}
  summing over $y_{2k+1}\in[M],\,x_{2k}\in[N]$.
  \item $V_{2m+1}:\,\mathbb{C}^{[M]}\to\mathbb{C}^{[N]}$, with
  \begin{equation}
  (V_{2m+1})_{y_0x_{2m+1}} = \sum_{x_1,y_2,\dots,y_{2m}}
  \Bigl[\prod_{i=0}^{2m-1}\mathbf{1}(\zeta_i\neq\zeta_{i+2})\Bigr]\,
  H^*_{y_0x_1}H_{x_1y_2}\cdots H^*_{y_{2m}x_{2m+1}},
  \end{equation}
  summing over $x_{2k+1}\in[N],\,y_{2k}\in[M]$.
\end{itemize}
We also set 
\begin{equation}
V_0 = \begin{cases}I_N:\mathbb{C}^{[N]}\to\mathbb{C}^{[N]},\\
I_M:\mathbb{C}^{[M]}\to\mathbb{C}^{[M]},\end{cases}
\quad
V_1 = \begin{cases}H:\mathbb{C}^{[N]}\to\mathbb{C}^{[M]},\\
H^*:\mathbb{C}^{[M]}\to\mathbb{C}^{[N]},\end{cases}
\quad
V_n = 0~(n<0).
\end{equation}
\end{definition}

\begin{definition}\label{def:Phi-Wishart}
Denote
\begin{equation}
(\Phi_2)_{xx'} = \delta_{xx'}\sum_{y\in[M]}\bigl(|H_{xy}|^2-\mathbb{E}[|H_{xy}|^2]\bigr),
\quad
(\Phi_3)_{xx'} = -|H_{x x'}|^2\,H_{x x'},
\end{equation}
for $x,x'\in[N]$, and
\begin{equation}
(\Phi_2)_{yy'} = \delta_{yy'}\sum_{x\in[N]}\bigl(|H_{xy}|^2-\mathbb{E}[|H_{xy}|^2]\bigr),
\quad
(\Phi_3)_{yy'} = -|H_{y y'}|^2\,H^*_{y y'},
\end{equation}
for $y,y'\in[M]$.  In addition,
\begin{equation}
(\Phi_A)_{xy}=A_{xy},
\quad
(\Phi_{A^*})_{yx}=(A^*)_{yx}\end{equation}
For an insertion $\alpha \in \{2,3\}$ and $n \ge 0$, define
\begin{equation}
(\underline{\Phi_\alpha V_{n+\alpha}})_{v_0 v_{n+1}}
= \sum_{v_1,\dots,v_n}
\Biggl[\prod_{i=0}^{n-1} \mathbf{1}(v_i \neq v_{i+2}) \Biggr]
(\Phi_\alpha)_{v_0v_1} \,(Y)_{v_1v_2}\,(Y^*)_{v_2v_3}\,(Y)_{v_3v_4} \cdots (Y/Y^*)_{v_n v_{n+1}},
\end{equation}
where the product alternates between $Y$ and $Y^*$, and   $Y = H$ if $v_1 \in [N]$ and $Y = H^*$ if $v_1 \in [M]$.
For $\alpha\in \{A,A^*\}$, we define 
\begin{equation}
    \underline{\Phi_\alpha V_{n+1}}=\Phi_{\alpha}V_n.
\end{equation}
\end{definition}

\begin{lemma}\label{lem:B.3}
    For the matrix  $V$  in Definition \ref{def:NBP-Wishart}, we have 
    \begin{equation}
        V_n=(H+A)^*V_{n-1}-\alpha V_{n-2}-\underline{\Phi_2 V_n}-\underline{\Phi_3 V_{n}}-\underline{\Phi_{A^*}V_{n}},
    \end{equation}
and
    \begin{equation}
        V_n=(H+A)V_{n-1}- V_{n-2}-\underline{\Phi_2 V_n}-\underline{\Phi_3 V_{n}}-\underline{\Phi_{A}V_{n}},
    \end{equation}
\end{lemma}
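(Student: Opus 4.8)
\textbf{Proof proposal for Lemma~\ref{lem:B.3}.}

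The plan is to mimic the proof of the Wigner-case path expansion (Proposition~\ref{prop:path_expansion_H_plus_A} and \cite[Lemma~6.1]{EK11Quantum}), working directly with the combinatorial definition of the nonbacktracking powers in Definition~\ref{def:NBP-Wishart}. First I would expand $(H+A)^*V_{n-1}$ by writing $(H+A)^* = H^* + A^*$ and inserting the definition of $V_{n-1}$ as a sum over nonbacktracking alternating walks of length $n-1$ starting in the appropriate state space. The key point is that prepending one more step $H^*_{v_0 v_1}$ (or $A^*_{v_0 v_1}$) to such a walk \emph{almost} produces a nonbacktracking walk of length $n$, except that the new constraint $\mathbf 1(v_0 \neq v_2)$ is not automatically imposed. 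So the identity will come from decomposing the set of length-$n$ prepended walks according to whether $v_0 = v_2$ or $v_0 \neq v_2$.

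The main step is the case analysis for the $H^*$ contribution. When $v_0 \neq v_2$, the walk is genuinely nonbacktracking and contributes to $V_n$; when $v_0 = v_2$, the first two steps $H^*_{v_0 v_1} H_{v_1 v_2}$ with $v_0 = v_2$ form a backtracking pair, and summing over $v_1$ produces either $\alpha\, V_{n-2}$ (the "expected diagonal" part, since $\sum_{v_1} \mathbb E[|H_{v_0 v_1}|^2] = \alpha$ when $v_0 \in [N]$, using the variance-profile normalization in Definition~\ref{def:wishart}) or a $\Phi_2$-type correction (the centered fluctuation $\sum_{v_1}(|H_{v_0 v_1}|^2 - \mathbb E|H_{v_0 v_1}|^2)$) or a $\Phi_3$-type correction (when $v_1 = v_3$, i.e.\ a triple coincidence, giving the cubic term $-|H_{v_0 v_1}|^2 H^*_{v_0 v_1}$ as in Definition~\ref{def:Phi-Wishart}). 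The terms $\underline{\Phi_2 V_n}$, $\underline{\Phi_3 V_n}$ in the claimed identity are precisely the bookkeeping for these coincidences propagated further down the walk, so one must check that the "underline" recursion in Definition~\ref{def:Phi-Wishart} matches the induced sub-walk structure — this is the same combinatorial identity as in \cite{EK11Quantum} but with the bipartite alternation $H, H^*$ and the factor $\alpha$ replacing $1$ on half the diagonal returns. The $A^*$ contribution is simpler: since $\Phi_{A^*}$ carries no backtracking constraint and $\underline{\Phi_{A^*} V_n} = \Phi_{A^*} V_{n-1}$ by definition, the term $A^* V_{n-1}$ is absorbed wholesale into $-\underline{\Phi_{A^*} V_n}$ after moving it to the other side (with the sign and index shift as written). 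The second identity is obtained identically, starting instead from $(H+A)V_{n-1}$; the only change is that a backtracking return now costs $\sum_{v_1} \mathbb E|H_{v_0 v_1}|^2 = 1$ when $v_0 \in [M]$ (wait — more precisely, one uses $\sum_{x\in[N]}\sigma_{xy}^2 = \alpha$ versus $\sum_{y\in[N]}\sigma_{xy}^2 = 1$), so the two identities differ only in whether the quadratic "expected" return term is $\alpha V_{n-2}$ or $V_{n-2}$, exactly as stated.

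The hard part will be verifying the exact matching of the $\underline{\Phi_\alpha V_n}$ terms — i.e.\ confirming that the recursive "underline" definition in Definition~\ref{def:Phi-Wishart} reproduces all and only the backtracking-at-the-head configurations, with the correct nonbacktracking constraints on the remaining steps and the correct $Y/Y^*$ alternation inherited from the parity of the position. One must be careful that a coincidence $v_0 = v_2$ does \emph{not} force any further coincidence but \emph{does} remove the constraint linking $v_1$ to what precedes it, so the tail of the walk after the $\Phi_\alpha$ insertion is itself a nonbacktracking walk of the appropriate length — this is exactly what the $\underline{\Phi_\alpha V_{n+\alpha}}$ notation encodes. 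I expect this to go through by the same argument as in the Hermitian case, with bookkeeping for the bipartite structure; no genuinely new idea is needed beyond tracking the asymmetric normalizations $\sum_i \sigma_{ij}^2 = \alpha$ and $\sum_j \sigma_{ij}^2 = 1$.
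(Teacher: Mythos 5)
Your proposal is correct and follows essentially the same route as the paper, which simply remarks that the proof is the same as \cite[Lemma 6.1]{EK11Quantum}: expand $(H+A)^{*}V_{n-1}$ (resp.\ $(H+A)V_{n-1}$) over nonbacktracking alternating walks, split according to whether the prepended step backtracks, and identify the expected return ($\alpha$ or $1$ by the asymmetric normalizations of $\Sigma_N$), the centered $\Phi_2$ fluctuation, the $\Phi_3$ correction, and the trivial absorption of $A^{*}V_{n-1}$ into $\underline{\Phi_{A^{*}}V_n}$. No substantive difference from the paper's (cited) argument.
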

\begin{proof}
    The proof is the same as that in \cite[Lemma 6.1]{EK11Quantum}. 
\end{proof}

\begin{proposition}[Deformed path expansion: Wishart case
]\label{prop:Q_path_expansion}
Define  a family of  polynomials $\mathcal{Q}_n(x)$ recursively by
\begin{equation} \label{Qrec}
\mathcal{Q}_0(x) = 1, \quad
\mathcal{Q}_1(x) = x - 1, \quad
\mathcal{Q}_n(x) = (x - 1 - \alpha)\mathcal{Q}_{n-1}(x) - \alpha \mathcal{Q}_{n-2}(x) \quad (n \ge 2),
\end{equation}
with the convention that $\mathcal{Q}_n(x) = 0$ for $n < 0$. Then for $X = (H+A)(H+A)^*$ and  for each $n \ge 0$, we have
\begin{equation}
\mathcal{Q}_n(X)
= \sum_{k \ge 0} \sum_{\alpha \in \{2,3,A,A^*\}^k} \sum_{\substack{\ell_0 + \cdots + \ell_k = 2n}}
V_{\ell_0}\,\underline{\Phi_{\alpha_1} V_{\ell_1}} \cdots \underline{\Phi_{\alpha_k} V_{\ell_k}}.
\end{equation}
Here, each $\alpha_i$ denotes insertion of $\Phi_2$, $\Phi_3$, $\Phi_A$, or $\Phi_{A^*}$, and the indices satisfy $\ell_i \ge 0$ with $\sum \ell_i = 2n$.
\end{proposition}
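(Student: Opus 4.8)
The plan is to prove the identity by strong induction on $n$, following the template of the deformed Hermitian path expansion (Proposition~\ref{prop:path_expansion_H_plus_A}), with the two recursions of Lemma~\ref{lem:B.3} playing the role of \cite[Lemma~6.1]{EK11Quantum}. Write
\[
\mathcal{D}_n=\mathcal{Q}_n(X)-\sum_{k\ge0}\ \sum_{\alpha\in\{2,3,A,A^*\}^k}\ \sum_{\ell_0+\cdots+\ell_k=2n}V_{\ell_0}\,\underline{\Phi_{\alpha_1}V_{\ell_1}}\cdots\underline{\Phi_{\alpha_k}V_{\ell_k}};
\]
the goal is to show $\mathcal{D}_n=0$ for every $n\ge0$. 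It is useful first to observe that the three-term recurrence \eqref{Qrec} forces the identification $\mathcal{Q}_n(x)=\alpha^{n/2}\bigl(U_n(\tfrac{x-1-\alpha}{2\sqrt\alpha})+\sqrt\alpha\,U_{n-1}(\tfrac{x-1-\alpha}{2\sqrt\alpha})\bigr)$: the coefficients $1+\alpha$ and $\alpha$ are exactly the Jacobi recurrence coefficients of the Marchenko--Pastur law, and the anomalous first coefficient in $\mathcal{Q}_1=x-1$ reflects that this law has first moment $1$ rather than $1+\alpha$. This both explains why the correct grading is $\sum_i\ell_i=2n$ (not $=n$) and makes the statement consistent with Proposition~\ref{cor:Pn_chebyshev}.

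For the base cases: $n=0$ is immediate, since the only contributing word is $V_0=\mathbf I$ and $\mathcal{Q}_0(X)=\mathbf I$. For $n=1$ one expands $\mathcal{Q}_1(X)=X-\mathbf I=(H+A)(H+A)^*-\mathbf I$ and matches it against the words with $\ell_0+\cdots+\ell_k=2$; this is precisely Lemma~\ref{lem:B.3} specialized to $V_2$, together with the degenerate conventions $V_{<0}=0$, $\underline{\Phi_3V_m}=0$ for $m\le2$, and $\underline{\Phi_AV_1}=\Phi_AV_0$, $\underline{\Phi_{A^*}V_1}=\Phi_{A^*}V_0$, where the diagonal contribution $\sum_y\mathbb{E}|H_{xy}|^2$ coming from $HH^*$ is absorbed by the ``$-\mathbf I$'' in $\mathcal{Q}_1$.

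For the inductive step, suppose the expansion holds for $\mathcal{Q}_{n-1}(X)$ and $\mathcal{Q}_{n-2}(X)$, and use $\mathcal{Q}_n(X)=(X-1-\alpha)\mathcal{Q}_{n-1}(X)-\alpha\,\mathcal{Q}_{n-2}(X)$. The heart of the argument is a two-step recursion obtained by composing the two forms of Lemma~\ref{lem:B.3}: applying the $(H+A)^*$-form and then the $(H+A)$-form to the leading factor gives, for $\ell_0\ge2$,
\[
X\,V_{\ell_0}=V_{\ell_0+2}+(1+\alpha)V_{\ell_0}+\alpha V_{\ell_0-2}+\bigl(\text{words with one extra insertion }\Phi_2,\,\Phi_3,\,\Phi_A\text{ or }\Phi_{A^*}\bigr),
\]
with $1+\alpha$ replaced by $1$ when $\ell_0\in\{0,1\}$. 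Applying this to every word in the expansion of $\mathcal{Q}_{n-1}(X)$ and subtracting $(1+\alpha)\mathcal{Q}_{n-1}(X)+\alpha\,\mathcal{Q}_{n-2}(X)$, the $(1+\alpha)V_{\ell_0}$ terms cancel $(1+\alpha)\mathcal{Q}_{n-1}$, while the $\alpha V_{\ell_0-2}$ terms together with the insertion words that lower the total weight by two cancel $\alpha\,\mathcal{Q}_{n-2}$ (this uses the induction hypothesis at level $n-2$ and the reindexings $\ell_0\pm1\mapsto\ell_0$ that drive the telescoping, exactly as in the proof of Proposition~\ref{prop:path_expansion_H_plus_A}). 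What survives is $V_{\ell_0+2}\cdot(\text{rest})$ together with the weight-$2n$ insertion words, which is precisely the claimed expansion of $\mathcal{Q}_n(X)$; hence $\mathcal{D}_n=0$.

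I expect the main obstacle to be bipartite bookkeeping rather than any new idea. Lemma~\ref{lem:B.3} comes in two forms --- leading operator $(H+A)^*$ with coefficient $\alpha$ on $V_{n-2}$, versus leading operator $(H+A)$ with coefficient $1$ --- and which one applies at a given step is dictated by whether the current endpoint lies in $[N]$ or in $[M]$, i.e.\ by the parity/bipartite type of the current word. One must check that this $[N]\leftrightarrow[M]$ alternation is compatible with the weight conventions of Definition~\ref{def:Phi-Wishart} (so that $\underline{\Phi_2V_m}$, $\underline{\Phi_3V_m}$, $\underline{\Phi_AV_m}$, and $\underline{\Phi_{A^*}V_m}$ all carry total weight $m$, consistently with the black/white $\alpha$-Catalan weights of Figure~\ref{fig:catalan_tree}), and that composing the two forms produces exactly the coefficients $1+\alpha$ and $\alpha$ appearing in \eqref{Qrec}; the degenerate small-index cases must be tracked separately, as in the Hermitian proof. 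A shortcut through the chiral embedding $\widetilde H=\begin{pmatrix}0&H+A\\(H+A)^*&0\end{pmatrix}$, whose square is $\mathrm{diag}\bigl(X,(H+A)^*(H+A)\bigr)$, is tempting but is obstructed by the asymmetry $M\neq N$ and the $\sqrt\alpha\,U_{n-1}$ correction term, so the direct induction above appears to be the cleaner route.
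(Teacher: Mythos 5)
Your proposal takes essentially the same route as the paper's proof: compose the two recursions of Lemma~\ref{lem:B.3} to obtain $XV_{\ell_0}=V_{\ell_0+2}+(1+\alpha)V_{\ell_0}+\alpha V_{\ell_0-2}+(\text{words with one extra insertion})$, then induct on $n$ through the three-term recurrence \eqref{Qrec}, with the telescoping cancellation handled as in the deformed Hermitian expansion (Proposition~\ref{prop:path_expansion_H_plus_A}) and \cite[Proposition~6.2]{EK11Quantum}. The paper's own argument is in fact terser, deferring the bipartite and small-index bookkeeping to the cited induction exactly as you do, so your write-up matches it in approach and slightly exceeds it in detail.
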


\begin{proof}
By Lemma~\ref{lem:B.3}, we have
\begin{equation}
\begin{aligned}
    (H+A)^* V_{\ell_0}
    = V_{\ell_0+1} + \alpha V_{\ell_0-1}
    + \underline{\Phi_2 V_{\ell_0+1}} + \underline{\Phi_3 V_{\ell_0+1}} + \underline{\Phi_{A^*} V_{\ell_0+1}}.
\end{aligned}
\end{equation}
Applying $(H+A)$ to both sides gives
\begin{equation}
\begin{aligned}
    (H+A)(H+A)^* V_{\ell_0}
    &= (H+A)V_{\ell_0+1} + \alpha (H+A)V_{\ell_0-1} \\
    &\quad + (H+A)\underline{\Phi_2 V_{\ell_0+1}} + (H+A)\underline{\Phi_3 V_{\ell_0+1}} + (H+A)\underline{\Phi_{A^*} V_{\ell_0+1}}.
\end{aligned}
\end{equation}
Each of these terms expands similarly:
\begin{align}
    (H+A)V_{\ell_0+1}
    &= V_{\ell_0+2} + V_{\ell_0}
    + \underline{\Phi_2 V_{\ell_0+2}} + \underline{\Phi_3 V_{\ell_0+2}} + \underline{\Phi_A V_{\ell_0+2}}, \\
    (H+A)V_{\ell_0-1}
    &= V_{\ell_0} + V_{\ell_0-2}
    + \underline{\Phi_2 V_{\ell_0}} + \underline{\Phi_3 V_{\ell_0}} + \underline{\Phi_A V_{\ell_0}}, \\
    (H+A)\underline{\Phi_{\alpha} V_{\ell_0+1}}
    &= V_1 \underline{\Phi_{\alpha} V_{\ell_0+1}} + \underline{\Phi_A V_1}\, \underline{\Phi_{\alpha} V_{\ell_0+1}}.
\end{align}

Proceeding inductively and following the argument in \cite[Proposition~6.2]{EK11Quantum}, we can obtain the claimed path expansion for $\mathcal{Q}_n(X)$.
\end{proof}

\begin{remark}
    $Q_n(x)$ defined in \eqref{Qrec} admits a representation via   Chebyshev polynomials of the second kind,  
\begin{equation}\label{equ:Q=U}
\mathcal Q_n(x) = \alpha^{\tfrac{n}{2}}\left(
    U_n\left(\tfrac{x - (1+\alpha)}{2\sqrt{\alpha}}\right)
   + \sqrt{\alpha}\, U_{n-1}\left(\tfrac{x - (1+\alpha)}{2\sqrt{\alpha}}\right)
\right).
\end{equation}
To prove it, observe that both sides satisfy the same recurrence relation. Furthermore, direct computation of the first few values (e.g., $n = 0,1$) confirms  that they match. Thus, by induction, the expressions agree for all $n \ge 0$, completing the proof.

On the other hand, by contracting consecutive terms of the form $\underline{\Phi_{A}V_1}\underline{\Phi_{A^*}V_1}\cdots\underline{\Phi_{A}V_1}$, one can derive an useful expression analogous to \eqref{eq:main_path_expansion_HA}.
\end{remark}

\section{Polynomial moments of Gaussian variables}

We derive recursion formulas and inequality estimates for the mixed normal moments.

\begin{lemma}\label{lem:Gaussian_moment}
Let $g\sim\mathcal{N}(0,1)$. For non-negative integers $a$ and $b$, define
\begin{equation}
I(a,b) := \mathbb{E}\!\left[(g^2 - 1)^a g^{2b}\right].
\end{equation}
Then we have
\begin{itemize}
\item[(i)] \textbf{(Recursion)} For $a\ge 2$,
\begin{equation}\label{eq:Ia0-rec}
I(a,0)=  2(a-1)\big(I(a-1,0)+I(a-2,0)\big),
\end{equation} 
and for $a\ge 1, b\ge 1$,
\begin{equation}\label{eq:Iab-rec}
I(a,b)=2a\, I(a-1,b)+ (2b-1)\,I(a,b-1),
\end{equation}
with initial conditions
\begin{equation}
I(0,0) =1,\quad I(1,0) =0,\quad I(0,b) = (2b-1)!!.
\end{equation}

\item[(ii)] \textbf{(Inequality)} For $b=0,\,a\ge 2$ or $b\ge 1,\,a\ge 0$,
\begin{equation}
 \mathbb{E}[g^{2a+2b}] \leq 2^a\, \mathbb{E}[(g^2-1)^a g^{2b}].
\end{equation}
\end{itemize}
\end{lemma}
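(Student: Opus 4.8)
The plan is to deduce everything from Gaussian integration by parts (Stein's identity) $\mathbb{E}[g\,\varphi(g)]=\mathbb{E}[\varphi'(g)]$, valid for any polynomial $\varphi$, combined with the trivial expansion $g^2=(g^2-1)+1$. The recursions of part (i) will come from differentiating a suitable polynomial, and the inequality of part (ii) will follow by expanding $g^{2a}$ binomially in powers of $g^2-1$ and bounding each resulting term by the single term with the largest index.

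For part (i), I would first record the initial data directly: $I(0,0)=\mathbb{E}[1]=1$, $I(1,0)=\mathbb{E}[g^2-1]=0$, and $I(0,b)=\mathbb{E}[g^{2b}]=(2b-1)!!$ from the classical Gaussian moments. For \eqref{eq:Iab-rec} (the case $b\ge1$), write $I(a,b)=\mathbb{E}\!\big[g\cdot\varphi(g)\big]$ with $\varphi(g)=g^{2b-1}(g^2-1)^a$; then $\varphi'(g)=(2b-1)g^{2b-2}(g^2-1)^a+2a\,g^{2b}(g^2-1)^{a-1}$, and Stein's identity gives $I(a,b)=2a\,I(a-1,b)+(2b-1)I(a,b-1)$. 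For \eqref{eq:Ia0-rec} (the case $b=0$), I would combine the $b=1$ instance of the previous recursion, $I(a,1)=I(a,0)+2a\,I(a-1,1)$, with the two identities $I(a,1)=I(a+1,0)+I(a,0)$ and $I(a-1,1)=I(a,0)+I(a-1,0)$ obtained by writing $g^2=(g^2-1)+1$; solving for $I(a+1,0)$ yields $I(a+1,0)=2a\big(I(a,0)+I(a-1,0)\big)$ for $a\ge1$, which after shifting the index is exactly \eqref{eq:Ia0-rec} for $a\ge2$.

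For part (ii), the starting point is $g^{2a}=\big((g^2-1)+1\big)^a=\sum_{k=0}^a\binom{a}{k}(g^2-1)^k$, so multiplying by $g^{2b}$ and taking expectations gives $\mathbb{E}[g^{2a+2b}]=\sum_{k=0}^a\binom{a}{k}I(k,b)$. Since $\sum_{k=0}^a\binom{a}{k}=2^a$, it suffices to prove $I(k,b)\le I(a,b)$ for all $0\le k\le a$ in the stated range. I would obtain this monotonicity from the recursions of part (i): first, $I(a,b)\ge0$ for all $a,b\ge0$ by an immediate induction (the rows $I(0,b)=(2b-1)!!>0$ and $I(a,0)\ge0$ together with \eqref{eq:Iab-rec}--\eqref{eq:Ia0-rec} propagate nonnegativity); then, using nonnegativity, \eqref{eq:Iab-rec} gives $I(a,b)\ge 2a\,I(a-1,b)\ge I(a-1,b)$ for every $a\ge1$, $b\ge1$, so the chain $I(0,b)\le I(1,b)\le\cdots\le I(a,b)$ holds and the bound $\sum_k\binom{a}{k}I(k,b)\le 2^a I(a,b)$ follows for all $a\ge0$, $b\ge1$.

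The one genuinely delicate point — and the main obstacle — is the column $b=0$, where monotonicity fails at the bottom because $I(1,0)=0<1=I(0,0)$; this is precisely why the statement restricts to $a\ge2$ there. I would handle it by checking the small cases $I(2,0)=2$ and $I(3,0)=8$ by hand, so that $\max_{0\le k\le 2}I(k,0)=I(2,0)$ already, and then \eqref{eq:Ia0-rec} gives $I(a,0)\ge 2(a-1)I(a-1,0)\ge 4\,I(a-1,0)\ge I(a-1,0)$ for $a\ge3$, making $I(a,0)=\max_{0\le k\le a}I(k,0)$ for every $a\ge2$; hence $\mathbb{E}[g^{2a}]=\sum_{k=0}^a\binom{a}{k}I(k,0)\le 2^aI(a,0)$, and appending the factor $g^{2b}$ is harmless. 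Collecting the two ranges yields $\mathbb{E}[g^{2a+2b}]\le 2^a\,\mathbb{E}[(g^2-1)^a g^{2b}]$ exactly as stated.
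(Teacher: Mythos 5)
Your proposal is correct and follows essentially the same route as the paper: Stein's identity $\mathbb{E}[g\varphi(g)]=\mathbb{E}[\varphi'(g)]$ for the recursions, then the binomial expansion $g^{2a}=\sum_k\binom{a}{k}(g^2-1)^k$ combined with nonnegativity and monotonicity of $I(\cdot,b)$ (with the $b=0$ column handled separately, exactly as needed) for the inequality. The only difference is cosmetic: the paper obtains \eqref{eq:Ia0-rec} by a direct Stein application to $h(g)=g(g^2-1)^{a-1}$, whereas you derive it from the $b=1$ instance of \eqref{eq:Iab-rec} together with $g^2=(g^2-1)+1$, which yields the same identity.
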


\begin{proof}

We first recall the classical Stein identity for a standard Gaussian variable \(g\):
\begin{equation}\label{stein}
\mathbb{E}[g h(g)] = \mathbb{E}[h'(g)],
\end{equation}
which holds for any polynomial \(h\).

For \(b=0\) and \(a\ge 2\), applying \eqref{stein} with \(h(g) = g (g^2-1)^{a-1}\) yields
\begin{align*}
I(a,0) + I(a-1,0)& =  \mathbb{E}\left[g \cdot \left( g (g^2-1)^{a-1} \right) \right] \\
&= \mathbb{E}\left[(g^2-1)^{a-1} + 2(a-1)g^2 (g^2-1)^{a-2} \right]\\
&=I(a-1,0)+2(a-1)\big(I(a-1,0)+I(a-2,0)\big).
  \end{align*}
Hence, we further have 
\begin{equation} \label{a0rec}
  I(a,0)=  2(a-1)\big(I(a-1,0)+I(a-2,0)\big), \quad a\geq 2.
\end{equation}
On the other hand,  by \eqref{stein} again,  for  $b
\geq 1$  we have   
    \begin{align} \label{abre}
 I(a,b) &= \mathbb{E}\left[g \left( g^{2b-1} (g^2 - 1)^a \right)\right] = \mathbb{E}\left[(2b-1)g^{2b-2}(g^2 - 1)^a + 2a  \cdot g^{2b} (g^2 - 1)^{a-1} \right] \nonumber\\
 &= (2b-1)I(a,b-1) + 2aI(a-1,b).
 \end{align}   
It's easy to verify the initial conditions. This thus completes the proof of part (i).

To prove part (ii). 
Combining   $I(0,0) = 1$ and $I(1,0) = 0$, 
we easily  see  from \eqref{a0rec} that 
\begin{equation}
  I(a,0) \geq I(k,0)\geq 0, \quad \forall \, 0 \leq k \leq a, \quad a \geq 2.
\end{equation}
So by above inequalities,  when \( a \geq 2 \),  we arrive at 
\begin{align*}
\mathbb{E}[g^{2a}]  
= \sum_{k=0}^{a} \mathbb{E}\left[\binom{a}{k} (g^2-1)^k\right]
&= \sum_{k=0}^{a} \binom{a}{k} I(k,0) \leq 2^a I(a,0),
\end{align*}
that is, 
\begin{equation} \label{a0ineq}
\mathbb{E}[g^{2a}] \leq 2^a \mathbb{E}[(g^2-1)^a], \quad a \geq 2.
\end{equation}

By induction using \eqref{a0rec} and the initial values, we see that
\begin{equation}
I(a,b) \ge 0, \quad \forall a,b \ge 0.
\end{equation}
Moreover, for fixed \(b\ge 1\), the recursion \eqref{abre} implies that \(I(a,b)\) is non-decreasing in \(a\):
\begin{equation}
I(a,b) \ge I(k,b), \quad \forall 0 \le k \le a.
\end{equation}
Hence, for \(b\ge 1\),
\begin{equation} \label{abineq}
\mathbb{E}\left[g^{2a+2b}\right] = \mathbb{E}\left[(g^2 - 1 + 1)^a g^{2b}\right]
= \mathbb{E}\left[\sum_{k=0}^{a} \binom{a}{k} (g^2 - 1)^k g^{2b}\right] = \sum_{k=0}^{a} \binom{a}{k} I(k,b)
\leq 2^a I(a,b)
\end{equation}
where the inequality follows from the monotonicity of \(I(a,b)\).

Combining \eqref{a0ineq} and \eqref{abineq} completes the proof of part (ii).
\end{proof}

\section{
Upper bounds for transition probabilities   
}
\label{sec:clt-upper}

\begin{theorem}[Upper bounds for transition probabilities]\label{thm:clt-upper}
If   $f$ satisfies the assumptions in Definition  ~\ref{ass:ft},   
then there exist constants $C = C(d,\alpha,f,K) > 0$ and $C' = C'(d,\alpha,f,K) > 0$ such that for all integers $n \ge 1$ and all $x \in \Lambda_L$,
\begin{equation}\label{equ:band_mixing}
    \Bigl|\, p_n(0,x) - \frac{1}{N}\,\Bigr| 
    \;\le\; C\,W^{-d}\,n^{-\frac{d}{\alpha}} \;+\; C' n W^{-K}.
\end{equation}
In addition,    if
\begin{equation}\label{eq:negligible}
  nW^{-K} \;\ll\; N^{-1},
\end{equation}
then  
\begin{equation}\label{eq:upper-final}
  p_n(0,x) \;\le\; C\bigl(W^{-d}\,n^{-\frac{d}{\alpha}} + N^{-1}\bigr).
\end{equation}
\end{theorem}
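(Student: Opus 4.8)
The plan is to run a Fourier (characteristic-function) analysis on the torus $\Lambda_L=(\mathbb Z/L\mathbb Z)^d$, as in the local limit theorem of \cite{gu2024local}. Since $p_1(x,y)=\sigma_{xy}^2$ depends only on $x-y$, the transition matrix is diagonalized by the characters $e_k(x)=e^{2\pi i k\cdot x/L}$, $k\in(\mathbb Z/L\mathbb Z)^d$, with eigenvalues $\lambda_k=\sum_{y}\sigma_{0y}^2 e^{-2\pi i k\cdot y/L}$. Unfolding the periodization in \eqref{rbmprofile} and applying Poisson summation gives $\lambda_k=\frac{W^d}{M}\sum_{m\in\mathbb Z^d}\widehat f\bigl(W(k/L+m)\bigr)$ with $M=W^d\sum_{m}\widehat f(Wm)=W^d(1+O(W^{-K}))$, where the decay bound $|\widehat f(\xi)|\le C_K(1+|\xi|)^{-K}$ with $K>d$ in \eqref{equ:D1} makes the aliasing terms ($m\neq0$) and the normalization error $O(W^{-K})$ uniformly in $k$. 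Taking the representative $k\in(-L/2,L/2]^d$, one obtains, for $k\neq0$, the pointwise bounds $|\lambda_k|\le|\widehat f(Wk/L)|+C_1W^{-K}$; in particular $|\lambda_k|\le C_2(1+W|k|/L)^{-K}$ for all $k\neq0$, $|\lambda_k|\le e^{-c_0|Wk/L|^\alpha}+C_1W^{-K}$ when $|Wk/L|\le\varepsilon$, and $|\lambda_k|\le\rho+C_1W^{-K}=:\rho'<1$ when $|Wk/L|>\varepsilon$ (here $W$ is taken larger than a constant depending only on $f$, the contrary regime being of no interest).

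By the inversion formula $p_n(0,x)=\frac1N\sum_k\lambda_k^n e_k(x)$ together with $\lambda_0=1$ and $|e_k(x)|=1$, we get the $x$-free bound $\bigl|p_n(0,x)-\frac1N\bigr|\le\frac1N\sum_{k\neq0}|\lambda_k|^n$, so it suffices to estimate this sum, split according to whether $|Wk/L|>\varepsilon$ or $0<|Wk/L|\le\varepsilon$. In the high-frequency range, $|\lambda_k|^n\le(\rho')^{n-1}|\lambda_k|\le C(\rho')^{n-1}(1+W|k|/L)^{-K}$; summing over $k\neq0$ and using $\sum_{k\neq0}(1+W|k|/L)^{-K}\le C(L/W)^d$ (split at $|k|\sim L/W$, with $K>d$) yields a contribution $\le C(\rho')^{n-1}W^{-d}\le CW^{-d}n^{-d/\alpha}$, since an exponential decays faster than any power of $n$. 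In the low-frequency range, split further at the scale $|Wk/L|^\alpha\sim c_0^{-1}\log(W^K/n)$ at which $e^{-c_0|Wk/L|^\alpha}$ overtakes $C_1 nW^{-K}$: below it $|\lambda_k|^n\le e\,e^{-c_0 n|Wk/L|^\alpha}$, and $\frac1N\sum_{k\neq0}e^{-c_0 n|Wk/L|^\alpha}=\frac1N\sum_{k\neq0}e^{-a|k|^\alpha}$ with $a=c_0 n(W/L)^\alpha$ is controlled by the elementary theta-type estimate $\sup_{a>0}a^{d/\alpha}\sum_{k\in\mathbb Z^d\setminus\{0\}}e^{-a|k|^\alpha}<\infty$, giving $\le CW^{-d}n^{-d/\alpha}$; above it every $|\lambda_k|\le 2C_1 nW^{-K}$, so summing at most $N$ such terms contributes $\le CnW^{-K}$. (If $n>W^K$ the right side of \eqref{equ:band_mixing} already exceeds $1$, so the bound is trivial.) Adding the three pieces proves \eqref{equ:band_mixing}. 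Finally, \eqref{eq:upper-final} is immediate: under $nW^{-K}\ll N^{-1}$ one has $p_n(0,x)\le\frac1N+\bigl|p_n(0,x)-\frac1N\bigr|\le CW^{-d}n^{-d/\alpha}+C'nW^{-K}+\frac1N\le C(W^{-d}n^{-d/\alpha}+N^{-1})$ for $N$ large.

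The routine ingredients are the Poisson-summation identity for $\lambda_k$ and the large-deviation-type estimates on $|\widehat f|$ fed into geometric and theta sums. The main obstacle is bookkeeping: one must track the aliasing and normalization errors of size $O(W^{-K})$ uniformly in $k$ and $n$, and combine the exponentially-small-in-$n$ spectral-gap contribution with the polynomial $n^{-d/\alpha}$ main term into a single clean bound of the asserted shape — in particular choosing the low-frequency cut-off so that raising $|\lambda_k|=|\widehat f(Wk/L)|+O(W^{-K})$ to the $n$-th power never produces a spurious $2^n$ factor, and isolating exactly the additive $nW^{-K}$ term. This is essentially the content of the local central limit theorem in \cite{gu2024local}, to which one may alternatively appeal directly.
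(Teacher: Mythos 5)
Your proof is correct and follows essentially the same route as the paper: Fourier inversion on the torus, Poisson summation to reduce $\widehat p(k)$ to $\widehat f(Wk/L)+O(W^{-K})$, and a frequency-region split exploiting the three hypotheses on $\widehat f$ to extract the $W^{-d}n^{-d/\alpha}$ main term and the additive $nW^{-K}$ error. The only differences are bookkeeping ones (you generate the $nW^{-K}$ term from a threshold split inside the low-frequency region and merge the intermediate and far-frequency ranges via the contraction factor, whereas the paper peels off the $O(nW^{-K})$ error first and treats three regions of $\mathbb{Z}^d$ separately), which do not change the substance of the argument.
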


\begin{proof}
We begin with the discrete Fourier representation
\begin{equation}\label{equ:D2}
p_n(0,x)=\frac{1}{N}\sum_{k\in\Lambda_L}\bigl(\widehat p(k)\bigr)^n e^{2\pi i k\cdot x/L},
\end{equation}
where the discrete Fourier transform of the variance profile
\begin{align}
    \widehat{p}(k)&=\frac{1}{M}\sum_{x\in \Lambda}\sum_{n\in \mathbb{Z}^d}
    f\!\left(\frac{x+nW}{W}\right)e^{-\frac{2\pi i k\cdot x}{L}}  
    =\frac{1}{M}\sum_{x\in \mathbb{Z}^d} f\!\left(\frac{x}{W}\right)e^{-\frac{2\pi i k\cdot x}{L}}\notag\\
    &=\frac{W^d}{M}\sum_{n\in \mathbb{Z}^d}\widehat{f}\!\left(W\Bigl(\tfrac{k}{L}+n\Bigr)\right)  
    =\frac{W^d}{M}\widehat{f}\!\left(\tfrac{Wk}{L}\right)+O(W^{-K}), 
\end{align} where in the last two equalities  we have used {Poisson summation} and dropped $|n|>0$ terms via \eqref{equ:D1}. In the Poisson summation step, both series converge absolutely by \eqref{equ:D1}, hence uniformly.  Furthermore,  the same two series  define bounded continuous functions. By Plancherel identity  they agree in the sense of $L^2$ distance, and since two continuous functions that are equal a.e. must coincide everywhere on $\mathbb{R}^d$, the identities hold pointwise.

Taking absolute values in \eqref{equ:D2} and applying the triangle inequality, we obtain
\begin{equation}\label{equ:D5}
|p_n(0,x)-N^{-1}| \le \frac{1}{N}\sum_{k\in\Lambda_L,\, k\ne 0}\Bigl|\widehat f\!\Bigl(\tfrac{Wk}{L}\Bigr)\Bigr|^n \;+\; O\!\left(nW^{-K}\right),
\end{equation}
where the factor $n$ in the error term comes from expanding $(\widehat f+O(W^{-K}))^n$.

Next, we proceed to estimate the main sum: 
\begin{align*}
    \frac{1}{N}\sum_{k\in\Lambda_L,\, k\ne 0}\Bigl|\widehat f\!\Bigl(\tfrac{Wk}{L}\Bigr)\Bigr|^n
    &\le \frac{1}{L^d}\sum_{k\in\mathbb{Z}^d,\, k\ne 0}\Bigl|\widehat f\!\Bigl(\tfrac{Wk}{L}\Bigr)\Bigr|^n\\
    &=\frac{1}{L^d}\Biggl(\sum_{0<|\tfrac{Wk}{L}|<\epsilon} \;+\;
      \sum_{\epsilon\le |\tfrac{Wk}{L}|<100 C_K} \;+\;
      \sum_{100 C_K\le |\tfrac{Wk}{L}|}\Biggr)
      \Bigl|\widehat f\!\Bigl(\tfrac{Wk}{L}\Bigr)\Bigr|^n \\
    &:= I_1+I_2+I_3.
\end{align*}

For $I_1$, Assumption~\ref{ass:ft}(2) implies
\begin{equation}
    I_1 \;\le\; C W^{-d}\int_{\mathbb R^d}\exp\!\bigl(-c_0 n|\xi|^\alpha\bigr)\,d\xi
    \;\le\; C W^{-d} n^{-\tfrac{d}{\alpha}}.
\end{equation}
For $I_2$, Assumption~\ref{ass:ft}(3) gives
\begin{equation}
    I_2 \;\le\; \frac{1}{L^d}\Bigl(\tfrac{100C_K L}{W}\Bigr)^d \rho^n
    \;\le\; C'_K W^{-d}\rho^n
    \;\le\; C''_K W^{-d} n^{-\tfrac{d}{\alpha}},
\end{equation}
while for $I_3$,   Assumption~\ref{ass:ft}(4) leads to 
\begin{equation}
    I_3 \;\le\; C W^{-d}\int_{|\xi|>50C_K}\Bigl(\tfrac{C_K}{(1+|\xi|)^{K}}\Bigr)^n d\xi
    \;\le\; C' W^{-d} n^{-\tfrac{d}{\alpha}}.
\end{equation}

Finally, combining these bounds with \eqref{equ:D5}, we conclude the proof.
\end{proof}

\end{document}